\newcommand{\adddetails}[1]{\ifthenelse{\equal{#1}{0}}{\toggletrue{notes}}{\togglefalse{notes}}}
\newcommand{\details}[1]{\iftoggle{notes}{\begin{quotation}\begin{small}{\color{blue}\textbf{Details.} #1}\end{small}\end{quotation}}{}}
\numberwithin{equation}{section}
\newcommand{\NN}{\omega}
\newcommand{\RR}{\mathbb{R}}
\newcommand{\QQ}{\mathbb{Q}}
\newcommand{\pre}[2]{{}^{#1} #2}
\newcommand{\Can}{{}^{\omega}2}
\newcommand{\can}{{}^{<\omega}2}
\newcommand{\bpi}{\boldsymbol{\pi}}
\newcommand{\AAA}{\mathscr{A}}
\newcommand{\Sloc}{\mathbf{S}}
\newcommand{\pow}{\mathscr{P}}
\newcommand{\lh}{\operatorname{lh}}
\newcommand{\dom}{\operatorname{dom}}
\newcommand{\lev}{\operatorname{\lambda}}
\newcommand{\Lev}{\operatorname{Lev}}
\newcommand{\Aut}{\operatorname{Aut}}
\newcommand{\AutL}{\operatorname{Aut}^*}
\newcommand{\Iso}{\operatorname{Iso}}
\newcommand{\coi}{\operatorname{coi}}
\newcommand{\weight}{\operatorname{weight}}
\newcommand{\Wr}{\operatorname{Wr}}
\newcommand{\supp}{\operatorname{supp}}
\newcommand{\Fin}{\operatorname{Fin}}
\newcommand{\LF}{\operatorname{LF}}
\newcommand{\Max}{\operatorname{Max}}
\newcommand{\UM}{\operatorname{Wsp}}
\newcommand{\Sym}{\operatorname{Sym}}
\newcommand{\ran}{\operatorname{ran}}
\newcommand{\spl}[1]{\mathrm{split}(#1)}
\newcommand{\tr}{\mathcal{T}}
\newcommand{\cat}[1]{\mathbb{#1}}
\newcommand{\markdef}[1]{\textbf{#1}}
\newcommand{\func}[1]{\mathscr{#1}}
\renewcommand{\mid}{\, : \, }
\newenvironment{enumerate-(a)}{\begin{enumerate}[label={\upshape (\alph*)}, leftmargin=2pc]}{\end{enumerate}}
\newenvironment{enumerate-(a)-r}{\begin{enumerate}[label={\upshape (\alph*)}, leftmargin=2pc,resume]}{\end{enumerate}}
\newenvironment{enumerate-(a)-5}{\begin{enumerate}[label={\upshape (\alph*)}, leftmargin=2pc,start=5]}{\end{enumerate}}
\newenvironment{enumerate-(A)}{\begin{enumerate}[label={\upshape (\Alph*)}, leftmargin=2pc]}{\end{enumerate}}
\newenvironment{enumerate-(A)-r}{\begin{enumerate}[label={\upshape (\Alph*)}, leftmargin=2pc,resume]}{\end{enumerate}}
\newenvironment{enumerate-(i)}{\begin{enumerate}[label={\upshape (\roman*)}, leftmargin=2pc]}{\end{enumerate}}
\newenvironment{enumerate-(i)-r}{\begin{enumerate}[label={\upshape (\roman*)}, leftmargin=2pc,resume]}{\end{enumerate}}
\newenvironment{enumerate-(I)}{\begin{enumerate}[label={\upshape (\Roman*)}, leftmargin=2pc]}{\end{enumerate}}
\newenvironment{enumerate-(I)-r}{\begin{enumerate}[label={\upshape (\Roman*)}, leftmargin=2pc,resume]}{\end{enumerate}}
\newenvironment{enumerate-(1)}{\begin{enumerate}[label={\upshape (\arabic*)}, leftmargin=2pc]}{\end{enumerate}}
\newenvironment{enumerate-(1)-r}{\begin{enumerate}[label={\upshape (\arabic*)}, leftmargin=2pc,resume]}{\end{enumerate}}
\newenvironment{itemizenew}{\begin{itemize}[leftmargin=2pc]}{\end{itemize}}
\newenvironment{enumerate-(H1)}{\begin{enumerate}[label={\upshape (H\arabic*)}, leftmargin=2pc]}{\end{enumerate}}
\newenvironment{enumerate-(G1tilde)}{\begin{enumerate}[label={\upshape (\(\widetilde{\mathrm{G}}_\arabic*\))}, leftmargin=2pc]}{\end{enumerate}}
\newenvironment{enumerate-(G1)}{\begin{enumerate}[label={\upshape (G\arabic*)}, leftmargin=2pc]}{\end{enumerate}}
\newenvironment{enumerate-(L1)}{\begin{enumerate}[label={\upshape (L\arabic*)}, leftmargin=2pc]}{\end{enumerate}}
\newenvironment{enumerate-(L'1)}{\begin{enumerate}[label={\upshape (\(\mathrm{L}\arabic*'\))}, leftmargin=2pc]}{\end{enumerate}}
\newenvironment{enumerate-(H'1)}{\begin{enumerate}[label={\upshape (\(\mathrm{H}'\arabic*\))}, leftmargin=2pc]}{\end{enumerate}}
\newenvironment{enumerate-(U1)}{\begin{enumerate}[label={\upshape (U\arabic*)}, leftmargin=2pc]}{\end{enumerate}}
\newenvironment{enumerate-(U1tilde)}{\begin{enumerate}[label={\upshape (\(\widetilde{\mathrm{U}}_\arabic*\))}, leftmargin=2pc]}{\end{enumerate}}
\newenvironment{enumerate-(P1)}{\begin{enumerate}[label={\upshape (P\arabic*)}, leftmargin=2pc]}{\end{enumerate}}
\newenvironment{enumerate-(P1tilde)}{\begin{enumerate}[label={\upshape (\(\widetilde{\mathrm{P}}_\arabic*\))}, leftmargin=2pc]}{\end{enumerate}}
\newtheorem{theorem}{Theorem}[section]
\newtheorem{lemma}[theorem]{Lemma}
\newtheorem{corollary}[theorem]{Corollary}
\newtheorem{proposition}[theorem]{Proposition}
\newtheorem{claim}{Claim}[theorem]
\theoremstyle{definition}
\newtheorem{defin}[theorem]{Definition}
\newtheorem{problem}{Problem}
\theoremstyle{remark}
\newtheorem{remark}[theorem]{Remark}
\newtheorem{ex}[theorem]{Example}
\newtheoremstyle{colon}%
{}
{}
{}
{}
{\bfseries}
{:}
{ }
{}
\theoremstyle{colon}
\newtheorem{condition}{Condition}
\begin{document}

\title{Isometry groups of Polish ultrametric spaces}
\date{\today}
\author[R.~Camerlo]{Riccardo Camerlo}
\address{Dipartimento di matematica, Università di Genova, Via Dodecaneso 35, 16146 Genova --- Italy}
\email{riccardo.camerlo@unige.it}
\author[A.~Marcone]{Alberto Marcone}
\address{Dipartimento di scienze matematiche, informatiche e fisiche, Universit\`a di Udine, Via delle Scienze 208, 33100 Udine --- Italy}
\email{alberto.marcone@uniud.it}
\author[L.~Motto Ros]{Luca Motto Ros}
\address{Dipartimento di matematica \guillemotleft{Giuseppe Peano}\guillemotright, Universit\`a di Torino, Via Carlo Alberto 10, 10121 Torino --- Italy}
\email{luca.mottoros@unito.it}
\subjclass{03E15, 06A06, 20E22, 22F50}

\keywords{Polish ultrametric spaces; isometry groups; generalized wreath products; \( L \)-trees; automorphism groups}
\thanks{We warmly thank S.~Gao, W.~Kubi\'s, M.~Malicki, and S.~Solecki for useful discussions related to the content of this paper.
Camerlo was partially supported by the MUR excellence department project awarded to the Department of Mathematics of the University of Genoa, CUP D33C23001110001.
Marcone and Motto Ros were supported by the Italian PRIN 2022 ``Models, sets and classifications'', prot.\ 2022TECZJA.
The authors are members of INdAM-GNSAGA}

\begin{abstract}
We solve a long-standing open problem, formulated by Krasner in the 1950's, in the context of Polish (i.e.\ separable complete) ultrametric spaces by providing a characterization of their isometry groups using suitable forms of generalized wreath products of full permutation groups. Since our solution is developed in the finer context of topological (Polish) groups, it also solves a problem of Gao and Kechris from 2003.
Furthermore, we provide an exact correspondence between the isometry groups of Polish ultrametric spaces belonging to some natural subclasses and various kinds of generalized wreath products proposed in the literature by Hall, Holland, and Malicki.
\end{abstract}

\maketitle
\details{
\tableofcontents
}

\section{Introduction}

An \markdef{ultrametric} (or \markdef{non-Archimedean metric}) \( d \) on a set \( X \) is a metric satisfying the following strong form of the triangle inequality, for all \( x,y,z \in X\):
\[
d(x,z) \leq \max \{ d(x,y),d(y,z) \} .
\]
Ultrametric spaces have grown in importance in mathematics, where, besides descriptive set theory and general topology, they are used e.g.\ in number theory (\( p \)-adic analysis) and combinatorics (consider e.g.\ the distance given by the weight of the minimax path between vertices in an edge-weighted undirected graph).
Ultrametrics are widely considered in computer science too, where distances between words (such as the \( p \)-closed distance) are often ultrametrics, but also in other fields such as 
condensed matter physics~\cite{Parisi}, 
the theory of aperiodic solids~\cite{Ultrametricforphysicists}, the study of certain discrete models of turbulence of fluids~\cite{Physicalreview} or of disordered systems in equilibrium statistical mechanics~\cite{Parisi2,ParisiNobel}, data science~\cite{Murtagh}, taxonomy~\cite{NumericalEcology3}, and so on.

In the 1950's, the algebraic number theorist Marc Krasner posed the following problem~\cite{Kra,LemSmi}:%
\footnote{Surprisingly, a related problem posed by Pestov, asking for a characterization of all \emph{subgroups} of the isometry groups of ultrametric spaces, turned out to be much simpler than Problem~\ref{problem:Krasner}, and was solved by Lemin and Smirnov~\cite{LemSmi} already in 1986.}
\begin{problem}[Krasner, 1956] \label{problem:Krasner}
Characterize the isometry groups of ultrametric spaces.
\end{problem}
This is one of the oldest problems concerning ultrametric spaces, as it was proposed shortly after the concept of ultrametric was introduced by Krasner himself as an abstraction of the special properties of the \( p \)-adic metrics~\cite{Kra2}.
Although the problem has remained wide open for about 70 years, several partial results were obtained, each one involving some form of generalized wreath product of transitive permutation groups.
For example, Fe\u{\i}nberg~\cite{Fei,Fei2} showed that if \( X \) is a compact ultrametric space, then its isometry group is isomorphic to a (unrestricted) generalized wreath product, denoted by \( \Wr^{\Max}_{\delta \in \Delta} H_\delta \) in this work, as defined by Holland~\cite{holland1969}.
This result was later extended by Nosova and Fe\u{\i}nberg~\cite{FeiNos} to all \( T \)-complete (also called spherically complete) ultrametric spaces.
More recently, Malicki considered what he calls \( W \)-spaces, i.e.\ Polish ultrametric spaces which are locally non-rigid and exact,%
\footnote{See Section~\ref{sec:exactdistances} for the definition.} and showed that their isometry groups can again be characterized in terms of generalized wreath products, this time using a suitable variant of Holland's construction that in this paper is denoted by \( \Wr^{\UM}_{\delta \in \Delta} \Sym(N_\delta) \)~\cite{malick2014}.
Thus, Krasner's Problem~\ref{problem:Krasner}, which has a somewhat vague formulation, has always been intended as the request of isolating the right notion of generalized wreath product giving rise to the isometry groups of (Polish) ultrametric spaces.
We also remark that Krasner's problem and all the partial results mentioned so far are purely algebraic, as no attempt to define a group topology on generalized wreath products appears in the literature.

Problem~\ref{problem:Krasner} reappeared in 2003 in a slightly different context, namely, that of topological groups.
The isometry group \( \Iso(X) \) of any Polish (i.e.\ separable complete) metric space is a Polish group when equipped with the pointwise convergence topology. In their seminal paper~\cite{GaoKec}, Gao and Kechris proved that, indeed, every Polish group is of this form, up to topological group isomorphism; therefore, the class of isometry groups of Polish metric spaces and the class of Polish groups are the same.
This naturally started the search for similar characterizations for interesting subclasses of Polish metric spaces. 
To mention a few examples:
compact Polish groups are the isometry groups of compact metric spaces~\cite{GaoKec,Mel};
closed subgroups of \( \Sym(\omega) \) are the isometry groups of zero-dimensional locally compact Polish metric spaces~\cite{GaoKec}; 
locally compact Polish groups are the isometry groups of proper Polish metric spaces%
~\cite{GaoKec,MalSol}.
Some characterizations are expressed in terms of a specific group operator applied to a certain class of Polish groups: for example, the closed subgroups of groups of the form  \( \prod_{n \in \omega} (\Sym(\omega) \ltimes  G_n^\omega) \), for \( (G_n)_{n \in \omega} \) a sequence of locally compact Polish groups, characterize the isometry groups of locally compact (or, equivalently, \( \sigma \)-compact) Polish metric spaces~\cite{GaoKec}.

Along the same lines, Gao and Kechris posed the following problem~\cite[Problem 10.10]{GaoKec}:
\begin{problem}[Gao and Kechris, 2003] \label{problem:GaoKechris}
Characterize the isometry groups of Polish (or locally compact Polish) ultrametric spaces.
\end{problem}
This time, the problem must be intended in the framework of topological groups.
Very little was known in this direction.
It is easy to see that if \( X \) is a Polish ultrametric space, then \( \Iso(X) \) is isomorphic to a closed subgroup of the infinite symmetric group \( \Sym(\omega) \); thus \( \Iso(X) \) is isomorphic to the automorphism group \( \Aut(T) \) of a first-order countable structure \( T \), and Gao and Shao~\cite{GaoShao2011} showed that \( T \) can always be taken to be an \( R \)-tree.
However, not all closed subgroups of \( \Sym(\omega) \) are isomorphic to the isometry group of a Polish ultrametric space: for example, if \( \Iso( X )\) is nontrivial, then it must contain a nontrivial involution~\cite[Proposition 4.7]{GaoKec}; if moreover it is simple, then it must be isomorphic to either the whole \( \Sym(\omega) \) or to \( \mathbb{Z}_2 \)~\cite[Proposition 4.1]{MalSol}.

The main goal of this paper is to solve Krasner's Problem~\ref{problem:Krasner} for Polish ultrametric spaces (in the algebraic sense, as originally formulated, but also in the finer context of topological groups) and, simultaneously, to fully solve Gao-Kechris' Problem~\ref{problem:GaoKechris} for both arbitrary Polish ultrametric spaces and for the locally compact ones --- indeed, we will see that there is no difference between these two variants of the problem.
The main ingredients we use are the following.

First, in Section~\ref{sec:L-trees} we introduce the notion of \( L \)-tree, a simple combinatorial object that naturally generalizes the \( R \)-trees considered in~\cite{GaoShao2011}.

Then, in Section~\ref{sec:functors} we define various categorical full embeddings among the categories associated to the objects we are interested in: \( L \)-trees, Polish ultrametric spaces, and some natural subclasses of the latter, like uniformly discrete spaces and perfect locally compact spaces.%
\footnote{The connection between ultrametric spaces and (some kind of) trees is quite natural, and is at the base e.g.\ of \cite{Hughes2004}, where a few categorical equivalences between categories of ultrametric spaces (equipped with different kinds of morphisms) and categories of \( \RR \)-trees (not to be confused with \( R \)-trees) are presented.}
This already yields that all such classes give rise to the same collection of automorphism or isometry groups, up to (topological) group isomorphism.

In Section~\ref{sec:wreathproducts} we move instead to (unrestricted) generalized wreath products. 
Although generalized wreath products have been involved in each of the partial results towards the solution of Problem~\ref{problem:Krasner} previously mentioned, the variants of this algebraic construction that can be found in the literature fail to provide a complete answer. To overcome this, we introduce a very general approach to wreath products that we believe could pique the interest also of a group theory oriented reader. More in detail, we first present the known variants (namely, Hall's wreath product \( \Wr^{\Fin}_{\delta \in \Delta} H_\delta \) from~\cite{hall1962}, Holland's wreath product \( \Wr^{\Max}_{\delta \in \Delta} H_\delta \) from~\cite{holland1969}, and Malicki's wreath product \( \Wr^{\UM}_{\delta \in \Delta} H_\delta \)  from~\cite{malick2014}), 
in the unified framework of generalized wreath products over a global domain \( \Wr^S_{\delta \in \Delta} H_\delta\) (Section~\ref{subsec:classicalwreathproduct}). 
Then we consider a fourth option based on locally finite supports  \( \Wr^{\LF}_{\delta \in \Delta} H_\delta \), and show that, among the mentioned four variants, this is the only class giving rise to Polish groups when we equip the generalized wreath products with a very natural topology (see Proposition~\ref{prop:Polishtopologyfrommetric} and the discussion following it). Finally, we introduce two powerful generalizations of the above mentioned generalized wreath products, that is, generalized wreath products over local domains \( \Wr^{\Sloc}_{\delta \in \Delta} H_\delta \) (Section~\ref{subsec:wreathproductlocal}) and projective wreath products \( \Wr^{\Sloc,\bpi}_{\delta \in \Delta} H_\delta \) (Section~\ref{subsec:projectivewreathproduct}).

Combining these tools all together, in Section~\ref{sec:main} we obtain the desired characterization of isometry groups of (locally compact) Polish ultrametric spaces in terms of generalized wreath products of full permutation groups. 
We remark that the following result can be read in purely algebraic terms (i.e.\ omitting any topological consideration on the groups and on their isomorphisms), or in the finer setup of Polish groups.

\begin{theorem}\label{thm:intromain1}
Up to (topological) isomorphism, the following classes of groups are the same:
\begin{enumerate-(1)}
\item \label{thm:intromain1-1}
isometry groups of arbitrary Polish ultrametric spaces,
\item \label{thm:intromain1-2}
isometry groups of locally compact Polish ultrametric spaces,
\item \label{thm:intromain1-4}
generalized wreath products over local domains \( \Wr^{\Sloc}_{\delta \in \Delta} \Sym(N_\delta) \),
\item \label{thm:intromain1-5}
projective wreath products \( \Wr^{\Sloc,\bpi}_{\delta \in \Delta} \Sym(N_\delta) \),
\item \label{thm:intromain1-6}
projective wreath products of the form \( \Wr^{\LF,\bpi}_{\delta \in \Delta} \Sym(N_\delta) \),
\end{enumerate-(1)}
 where the underlying order \( \Delta \) of each wreath product is a countable \( L \)-tree, and \( \Sloc \) is a countable collection of local domains satisfying Holland's maximum condition.%
\end{theorem}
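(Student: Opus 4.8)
The plan is to establish a cycle of inclusions among the five classes. The inclusion of class~\ref{thm:intromain1-2} into class~\ref{thm:intromain1-1} is immediate, since locally compact Polish ultrametric spaces form a subclass of all Polish ultrametric spaces; thus the substantive content is to show that every group arising from~\ref{thm:intromain1-1} already arises from~\ref{thm:intromain1-2}, and that all of these coincide with the three kinds of wreath products in \ref{thm:intromain1-4}--\ref{thm:intromain1-6}.

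First I would use the categorical full embeddings constructed in Section~\ref{sec:functors} to reduce isometry groups of Polish ultrametric spaces to automorphism groups of countable \( L \)-trees. Because a full and faithful functor induces an isomorphism between the automorphism group of an object and that of its image, and the embeddings are between the relevant categories, we obtain that for every Polish ultrametric space \( X \) there is a countable \( L \)-tree \( T \) with \( \Iso(X) \) topologically isomorphic to \( \Aut(T) \), and conversely. The same functors, restricted to the subcategory of perfect locally compact spaces, show that the isometry groups realized by locally compact spaces already exhaust those realized by \( L \)-trees; this is what will ultimately force \ref{thm:intromain1-1} and \ref{thm:intromain1-2} to agree.

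The heart of the argument is then to identify \( \Aut(T) \), for a countable \( L \)-tree \( T \), with a generalized wreath product over local domains. Here I would take \( \Delta \) to be the underlying \( L \)-tree of branching data of \( T \), let \( N_\delta \) record the branches emanating at the node \( \delta \), and read off the local domain structure \( \Sloc \) from the way the partial orders at distinct nodes interact. An automorphism of \( T \) decomposes, node by node, into a choice of permutation of the branches at each node, subject to coherence constraints captured precisely by the local-domain formalism of Section~\ref{subsec:wreathproductlocal}; this yields an algebraic isomorphism \( \Aut(T) \cong \Wr^{\Sloc}_{\delta \in \Delta} \Sym(N_\delta) \), giving \ref{thm:intromain1-4}. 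I would then verify that \( \Sloc \) satisfies Holland's maximum condition --- which should follow from the tree-likeness of \( \Delta \), guaranteeing the required suprema --- and that the pointwise-convergence topology on \( \Aut(T) \), viewed as a closed subgroup of \( \Sym(\omega) \), matches the natural topology on the wreath product. The crucial point is that an automorphism close to the identity must fix a large finite portion of \( T \), which translates exactly into the locally finite support condition; this simultaneously identifies the topological group with the projective variants \( \Wr^{\Sloc,\bpi}_{\delta \in \Delta} \Sym(N_\delta) \) and \( \Wr^{\LF,\bpi}_{\delta \in \Delta} \Sym(N_\delta) \), using the comparison results of Section~\ref{subsec:projectivewreathproduct}, thereby collapsing \ref{thm:intromain1-4}, \ref{thm:intromain1-5}, and \ref{thm:intromain1-6} onto one another.

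Finally, for the converse I would start from an abstract wreath product of one of these three forms, with \( \Delta \) a countable \( L \)-tree and \( \Sloc \) satisfying the maximum condition, and build a canonical countable \( L \)-tree \( T \) whose branching data reproduce \( (\Delta, (N_\delta)_\delta, \Sloc) \); its automorphism group is then the given wreath product. Routing \( T \) through the functor landing in perfect locally compact Polish ultrametric spaces realizes this group as \( \Iso(X) \) for a locally compact \( X \), closing the cycle and yielding in particular \ref{thm:intromain1-1} \( = \) \ref{thm:intromain1-2}. I expect the main obstacle to be the topological half of the central identification: extracting the local-domain data from an \emph{arbitrary} countable \( L \)-tree --- whose order may be dense or without minimum --- and proving that the support and coherence conditions match the pointwise topology exactly. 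It is precisely this matching that the classical constructions of Hall, Holland, and Malicki fail to achieve, and which motivates the local-domain and projective wreath products introduced in Section~\ref{sec:wreathproducts}.
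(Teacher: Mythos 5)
Your overall architecture (spaces \( \leftrightarrow \) trees via the functors of Section~\ref{sec:functors}, the locally compact case handled by \( \func{U} \), then trees \( \leftrightarrow \) wreath products) matches the paper's, which proves this statement as Theorem~\ref{thm:generalcase}. But the heart of your argument has a genuine gap: the claim that an automorphism of an \emph{arbitrary} countable pruned \( L \)-tree ``decomposes, node by node, into a choice of permutation of the branches at each node, subject to coherence constraints captured precisely by the local-domain formalism'', yielding directly \( \Aut(\tr) \cong \Wr^{\Sloc}_{\delta \in \Delta(\tr)} \Sym(N^\tr_\delta) \) with trivial restrictions. This is exactly what fails in general, and it is the reason projective wreath products exist in this paper at all. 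A labeling of the nodes of \( \tr \) by restriction-coherent finite-support sequences can be produced only sector by sector: Lemma~\ref{lem:labelsfinitesupport} labels the part of \( \tr \) above a single node of the condensed tree \( \Delta(\tr) \) (below a fixed upward closed chain), and these partial labelings cannot in general be merged so that moving to a higher level is literal restriction. The direct identification you describe is available only under extra hypotheses --- \( \Delta(\tr) \) linear (Theorem~\ref{thm:homogeneous}) or property~\ref{propertystar} (Theorem~\ref{thm:exactdistancesnew}). In the general case the paper proves \ref{thm:generalcase-2'} \( \Rightarrow \) \ref{thm:generalcase-3} by enumerating the bottom level \( (\delta_j)_{j<J} \) of \( \Delta(\tr) \), labeling each sector separately, and gluing via a \emph{nontrivial} system of projections \( \bpi \) with finite character; only then does Theorem~\ref{thm:proj->unchained} (the rewriting map \( \rho \)) convert the projective wreath product into a generalized wreath product over a \emph{different} family of local domains \( \Sloc' \subseteq \Sloc^{\Max} \) --- in particular \( \Sloc' \) is not of the form \( \Sloc^{\LF} \), and the maximum condition claimed in the statement is exactly what the finite-character hypothesis buys. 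Your proposal never engages with this gluing problem and treats the topology as the only obstacle, whereas the obstruction is combinatorial; your topological heuristic is also off, since local finiteness is a condition on the supports of the labels, not a translation of ``automorphisms near the identity fix a large finite set''.

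In the converse direction your sketch misses a second necessary ingredient: the full automorphism group of the canonical partial order \( P = P^{\Sloc,\bpi} \) is in general strictly larger than the wreath product, which equals only the group \( \AutL(P) \) of \emph{localized} automorphisms (those preserving each \( \Sloc_\delta \)). This is why Lemma~\ref{lemma:fromwreathtotrees} does not take \( P \) itself as the tree but builds a decorated tree \( \tr_P \) over \( L' = \omega^* \cdot L \), attaching to each \( z \in P \) an \( \omega^* \)-chain together with a side branch whose length codes the index \( \#(z) \) of the class \( \Sloc_\delta \) containing \( z \); this coding forces every automorphism of \( \tr_P \) to be localized (Claim~\ref{claim:t_p1}), giving \( \Aut(\tr_P) \cong \AutL(P) \cong \Wr^{\Sloc,\bpi}_{\delta \in \Delta} \Sym(N_\delta) \), after Lemmas~\ref{lem:nominandmax} and~\ref{lem:generalcase} have arranged \( L \) to lack minimum and maximum and turned \( P \) into a pruned \( L \)-tree (here \( \Sloc \subseteq \Sloc^{\Max} \) is used essentially). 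Without such a rigidification step, ``build a tree whose branching data reproduce \( (\Delta, N, \Sloc) \)'' does not recover the wreath product as a full automorphism group.
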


Our characterization is naturally tied to quite old and well-established group theoretic constructions, like generalized wreath products applied to full permutation groups, and it aligns with all previous partial results on the matter in the literature; therefore, we believe that Theorem~\ref{thm:intromain1} provides an adequate and satisfactory solution to both long-standing Problems~\ref{problem:Krasner} and~\ref{problem:GaoKechris} at once.
Moreover, our methods are flexible enough to allow for restricted forms of such
characterization, drawing exact correspondences between certain natural subclasses of Polish ultrametric spaces (like homogeneous or homogeneous discrete spaces, Urysohn spaces, and exact spaces), and various natural variants (both old and new) of generalized wreath products. 
The main results are summarized in the table from Appendix~\ref{sec:table}.
For example, all characterizations can be phrased using only the variant introduced in this paper, namely, the one based on locally finite supports. 
Or we can determine to what extent the generalized wreath products previously appeared in the literature can be used in this context:
\begin{itemizenew}
\item 
Hall's generalized wreath products \( \Wr^{\Fin}_{\delta \in \Delta} \Sym(N_\delta) \) characterize the isometry groups of \emph{homogeneous discrete} Polish ultrametric spaces.
\item 
Malicki's generalized wreath products \( \Wr^{\UM}_{\delta \in \Delta} \Sym(N_\delta) \) characterize the isometry groups of \emph{homogeneous} Polish ultrametric spaces if \( \Delta \) is linear, and of \emph{exact} Polish ultrametric spaces  if \( \Delta \) is an \( L \)-tree. The latter result strengthens~\cite[Theorem 4.13]{malick2014} because it dispenses from the local non-rigidity assumption, and provides its converse.
\item 
Holland's maximum condition, a distinguished feature of his version of generalized wreath products \( \Wr^{\Max}_{\delta \in \Delta} H_\delta \), turns out to be a crucial limiting condition for most of our arguments to work.
\end{itemizenew}

Along the way, we also provide results that are of independent interest in the context of group theory. For example, we show that, under mild conditions, projective wreath products and generalized wreath products over local domains give rise to the same class of groups (see Theorems~\ref{thm:proj->unchained} and~\ref{thm:generalcase}), and we identify natural conditions under which generalized wreath products over local domains can be realized as the more classical generalized wreath products over global domains (Proposition~\ref{prop:closedvsfull}).

Our neat presentation of isometry groups of Polish ultrametric spaces as generalized wreath products unveil their rich combinatorial structure and, together with the concepts and methods developed to study them, may unlock the solution to various key problems in the area concerning the properties of such groups and their actions. We collect some of these problems, together with a few additional results, in Section~\ref{sec:openproblems}.

\section{Preliminaries}

We assume some familiarity with classical descriptive set theory~\cite{Kechris1995}, and with the basics of category theory~\cite{MacLane}. 
In this section, we present only a few concepts and notations which are maybe less standard.

We let \( \mathrm{Card} \) denote the class of all cardinals. The first infinite cardinal \( \aleph_0 \) coincides with the ordinal \( \omega \), which is the order type of \( (\mathbb{N}, {\leq}) \) and is identified with it. 
Given a linear order \( (L,{\leq_L}) \), its coinitiality \( \coi(L) \) is the smallest \( \kappa \in \mathrm{Card} \) such that there is a sequence \( (\ell_\alpha)_{\alpha < \kappa}\) in \( L \) which is coinitial, i.e.\ such that for every \( \ell \in L \) there is \( \alpha < \kappa \) such that \( \ell_\alpha \leq_L \ell \). 
A subset \( C \subseteq L \) is convex if for every \( \ell \leq_L \ell' \leq_L \ell'' \), if \( \ell,\ell'' \in C \) then \( \ell' \in C \) as well. Among the convex subsets of \( L \) we find the intervals, for which we use standard notations like \( [\ell;+\infty)_L \), \( (\ell;\ell')_L \), and alike.
The reverse of a linear order \( L \) is denoted by \( L^* \); in particular, \( \omega^* \) is the linear order \( (\mathbb{N}, {\geq} ) \).
The product of two linear orders \( L \) and \( L' \) is denoted by \( L \cdot L' \), and consists of the cartesian product of \( L \) and \( L' \) ordered antilexicographically. Similarly, products of partial orders are always endowed with the antilexicographic order. 

Suppose that \( (Q,{\leq_Q}) \) is a partial order, and \( (A_q)_{q \in Q} \) is a family of nonempty sets. If \( x \in \prod_{q \in Q} A_q \), for every \( p \in Q \) we let \( x|_p = x \restriction \{ q \in Q \mid q \geq_Q p \} \in \prod_{q \geq_Q p} A_q  \) be the restriction of \( x \) to the cone above \( p \). If \( A_q \in \mathrm{Card} \) for every \( q \in Q \), we also let 
\( \supp(x) = \{ q \in Q \mid x(q) \neq 0 \} \)
be the support of \( x \).

Let \( \RR^+ = \{ r \in \RR \mid r> 0 \} \).
A Polish metric space is a metric space \( (X,d)\) such that \( d \) is complete and induces a second-countable topology. 
Its distance set is \( D = \{ d(x,x') \mid x,x' \in X \} \setminus \{ 0 \} \subseteq \RR^+ \). 
A metric space \( (X,d) \) is uniformly discrete if there is \( r \in \RR^+ \) such that \( B_{d}(x,r) = \{ x \} \) for every \( x \in X \), where \( B_d(x,r) \) is the sphere centered in \( x \) with radius \( r \). 
Equivalently, \( (X,d) \) is uniformly discrete if its distance set \( D \) is bounded away from \( 0 \), that is, \( \inf D > 0\).
Throughout the paper, we will tacitly use the known special properties of ultrametric spaces, such as the fact that every point in a sphere is a center of it, or the fact that if two spheres intersect, then one of them is contained into the other one. Some of the constructions concerning ultrametric spaces that will be used in this work are inspired by those appearing in~\cite{MotSurvey,Camerlo:2015ar}, so the interested reader might consult those papers for more information on the matter.

In this paper, several groups that were introduced as purely algebraic objects, such as the generalized wreath products, will be turned into topological groups by introducing natural topologies on them. To distinguish the two setups, we use \( \simeq \) to denote algebraic isomorphism, while \( G \cong H \) means that \( G \) and \( H \) are isomorphic as topological groups;
when speaking of topological isomorphism, we usually drop the adjective from the terminology. 
Moreover, unless otherwise stated (and with the notable exception of generalized wreath products), symmetric groups \( \Sym(N) \) over a nonempty set \( N \) will be endowed with the pointwise convergence topology with respect to the discrete topology on \( N \), and their subgroups \( H \leq \Sym(N) \) will be equipped with the relative topology.

Finally, recall that a functor \( \func{F} \) between categories \( \cat{A} \) and \( \cat{B} \) is faithful (respectively, full) if for every \( A,A' \in \cat{A} \), the restriction of \( \func{F} \) to the set of arrows between \( A \) and \( A' \) is injective (respectively, surjective onto the set of arrows between \( \func{F}(A) \) and \( \func{F}(A') \)). A (categorical) full embedding is a fully faithful functor which, moreover, is also injective on objects.

\section{\( L \)-trees} \label{sec:L-trees}

Let \( (L, {\leq_L}) \) be a linear order.

\begin{defin} \label{def:L-tree}
An \markdef{\( L \)-tree} is a triple \( \tr = (T,{\leq_T}, \lev_T) \) such that \( (T, {\leq_T} ) \) is a partial order, \( \lev_T \colon T \to L \) is a surjective map, and the following conditions are satisfied, for every \( t,t' \in T\):
\begin{enumerate-(1)}
\item \label{def:L-tree-1}
if \( t <_T t' \) then \( \lev_T(t) <_L \lev_T(t') \); 
\item \label{def:L-tree-2}
for every \( \ell \geq_L \lev_T(t) \) there exists a unique \( t'' \in T \), denoted by \( t|_\ell \), such that \( t'' \geq_T t \) and \( \lev_T(t'') = \ell \) (in particular \( t|_{\lev_T(t)} = t \));
\item \label{def:L-tree-3}
there exists \( \ell \geq_L \max \{ \lev_T(t), \lev_T(t') \} \) such that \( t|_\ell = t'|_\ell\);
\item \label{def:L-tree-4}
if \( t , t' \in T \) are \( \leq_T \)-incomparable, 
then the nonempty set 
\[ 
\{ \ell \geq_L \max \{ \lev_T(t), \lev_T(t') \} \mid t|_\ell \neq t'|_\ell \}
\] 
has a maximum in \( L \), denoted by \( \spl{t,t'} \).
\end{enumerate-(1)}
If condition~\ref{def:L-tree-4} is dropped, then \( \tr \) is called a \markdef{weak \( L \)-tree}.
The cardinality of a weak \( L \)-tree \( \tr \), denoted by \( |\tr| \), is the cardinality of its domain \( T \). 
\end{defin}

Notice that if the reverse of \( L \) is well-founded (or even just \( \omega+1 \) does not embed into \( L \)),
then every weak \( L \)-tree is an \( L \)-tree. Picture~\ref{fig:L-tree} depicts an \( L \)-tree, together with the main ingredients involved in its definition.

\begin{figure}
\centering
\begin{tikzpicture}
\draw [thick, <-] (-4,3) to (-4,-3);
\node at (-4,-3.5) {\( L \)};
\node at (0.5,-3.5) {\( T \)};
\draw [thick] (0,3) to [out=300,in=110] (0,1.5) to [out=290,in=35] (-0.1,1.2) to [out=210,in=90] (-1,-0.5) to [out=270,in=125] (-0.3,-2) to [out=310,in=90] (0.3,-3);
\draw [thick] (0,1.5) to [out=290,in=165] (0.2,1.2) to [out=345,in=90] (2,-1.3) to [out=270,in=90] (2.5,-3);
\draw [thick] (2,-1.3) to [out=270,in=90] (1.7,-3);
\draw [thick] (2,-1.3) to [out=270,in=90] (3.3,-3);
\draw [thick] (-1,-0.5) to [out=270,in=30] (-1.8,-1.5) to [out=210,in=90] (-2.5,-3);
\draw [thick] (-0.3,-2) to [out=310, in=90] (-1,-3);
\draw [red,dashed, thick] (3,1.2) to  (-4,1.2);
\node [red] at (-5,1.2) {\begin{small}\( \spl{t,t'} \)\end{small}};
\draw [fill,red] (-0.66,-1.5) circle [radius=0.05];
\node [red] at (-0.36,-1.5) {\begin{small}\( t \)\end{small}};
\draw [red,dashed, thick] (-0.66,-1.5) to  (-4,-1.5);
\node [red] at (-4.8,-1.5) {\begin{small}\( \lev_T(t) \)\end{small}};
\draw [fill,red] (-0.95,0) circle [radius=0.05];
\node [red] at (-0.65,0) {\begin{small}\( t|_\ell \)\end{small}};
\draw [red,dashed, thick] (-0.95,0) to  (-4,0);
\node [red] at (-4.45,0) {\begin{small}\( \ell \)\end{small}};
\draw [fill,red] (1.52,0.3) circle [radius=0.05];
\node [red] at (1.88,0.3) {\begin{small}\( t' \)\end{small}};
\end{tikzpicture}
\caption{A graphical representation of an arbitrary \( L \)-tree, highlighting in particular the splitting level \( \spl{t,t'} \) of the two \( \leq_T \)-incomparable nodes \( t \) and \( t' \).}
\label{fig:L-tree}
\end{figure}

Examples of \( L \)-trees are the usual descriptive set-theoretic trees \( T \subseteq {}^{< \omega} A \) for any set \(A\), once they are turned upside down. 
Indeed, it is enough to let \( L = n^* \), where \( n \leq \omega \) is the height of the tree, \( \leq_T \) be the order of \emph{reverse} inclusion (i.e.\ \( t \leq_T s \iff t \supseteq s \), for \( s,t \in T\)), and \( \lev_T \colon T \to n^* \) be the function assigning to each \( t \in T \) its length as a sequence. 
Other examples of \( L \)-trees are  set-theoretic trees of transfinite height and linear orders.

\begin{remark}
Our notion of \( L \)-tree can be regarded as a generalization of the \( R \)-trees introduced by Gao and Shao in~\cite[Definition 6.3]{GaoShao2011}.
Indeed, if \( R \subseteq \RR^+ \) has no maximum, then any \( R \)-tree is just a special instance of an \( L \)-tree for \( L = R \). If instead \( T \) is an \( R \)-tree for some \( R\) with a maximum, then \( T \) looks more like a forest than a tree, as it lacks condition~\ref{def:L-tree-3} from Definition~\ref{def:L-tree}.
Nevertheless, letting \( L = R \cup \{ \max R +1 \} \), the \( R \)-tree \( T \) can easily be turned, without changing its main features, into an \( L \)-tree by adding a root to it: in particular, this procedure preserves the automorphism group of the tree (up to isomorphism).
\end{remark}

Given a weak \( L \)-tree \( \tr \) and \( \ell \in L \), let
\[
\Lev_\ell(\tr) = \{ t \in T \mid \lev_T(t) = \ell \},
\]
which is always nonempty by surjectivity of \( \lev_T\).
Moreover, let
\[  
[\tr] = \{ b \in {}^L T \mid \lev_T(b(\ell)) = \ell  \text{ for every } \ell \text{ and } b(\ell) \leq_T b(\ell') \text{ for every } \ell \leq_L \ell' \}
\]
be the \markdef{body} of \( \tr \), and call its elements \markdef{branches} of \( \tr \). 
We say that \( \tr \) is \markdef{pruned} if
for every \( t \in T \) there is \( b \in [\tr] \) such that \( b(\lev_T(t)) = t\).

Given a (weak) \( L \)-tree \( \tr \) and \( \ell \in L \), we denote by \( \tr|_\ell \) the restriction of \( \tr \) to its elements \( t \in T \) with \( \lev_T(t) \geq_L \ell \). More precisely, \( \tr|_\ell \) is the (weak) \( L' \)-tree \( \tr' = (T', {\leq_{T'}}, \lev_{T'}) \) where \( L' = [\ell;+\infty)_L \), \( T' = \{ t \in T \mid \lev_T(t) \geq_L \ell \}\), and, for every \( t,t' \in T' \),  \( t \leq_{T'} t'\iff t \leq_T t' \) and \( \lev_{T'}(t) = \lev_T(t) \).

An \markdef{embedding} between two weak \( L \)-trees \( \tr_0 \) and \( \tr_1 \) is an injection \( f \colon T_0 \to T_1 \) such that \( \lev_{T_1}(f(t)) = \lev_{T_0}(t) \) and \( t \leq_{T_0} t' \iff f(t) \leq_{T_1} f(t') \), for all \( t,t' \in T_0 \). 
An \markdef{isomorphism} is a surjective embedding, and is an \markdef{automorphism} when \( \tr_0 = \tr_1 \). 
The group of automorphisms of a weak \( L \)-tree \( \tr \) is denoted by \( \Aut(\tr) \), and is equipped with the pointwise convergence topology with respect to the discrete topology on \( T \). 
Therefore, when \( T \) is countable the topological group \( \Aut(\tr) \) is a closed subgroup of \( \Sym(\omega) \), and therefore it is a Polish group.

Given a weak \( L \)-tree \( \tr \), let \( \sim \) be the equivalence relation on \( T \) given by
\[  
t \sim t' \iff f(t) = t' \text{ for some } f \in \Aut(\tr).
\]
We denote by \( [t]\) the \( \sim\)-equivalence class of \( t \in T \).
Notice that if \( t \sim t' \) then \( \lev_T(t) = \lev_T(t') \). 
Moreover, if \( f \in \Aut(\tr) \) witnesses \( t \sim t' \), then for every \( \ell \geq_L \lev_T(t) = \lev_T(t') \) we have \( f( t|_\ell) = t'|_\ell \), hence \( t|_\ell\sim t'|_\ell \).

The \markdef{condensed tree} \( \Delta(\tr) \) of \( \tr \) is the weak \( L \)-tree \( \Delta(\tr) = (\Delta(T), {\leq_{\Delta(T)}}, \lev_{\Delta(T)}) \) where \( \Delta(T)  = \{ [t] \mid t \in T \} \) is the \( \sim \)-quotient of \( T \), and for \( \delta , \delta' \in \Delta(T) \) we set \( \lev_{\Delta(T)}(\delta) = \lev(t) \) for some (equivalently, any) \( t \in \delta \) and \( \delta \leq_{\Delta(T)} \delta' \iff t \leq_T t' \) for some \( t \in \delta \) and \( t' \in \delta' \). Notice that \( \delta \leq_{\Delta(T)} \delta' \) if and only if \( t |_{\lev_{\Delta(T)}(\delta')} \in \delta' \) for some (equivalently, any) \( t \in \delta \).

If \( \tr \) is pruned, then so is its condensed tree \( \Delta(\tr) \). If \( L \) has countable coinitiality, then the converse holds as well by the following technical fact.

\begin{lemma} \label{lem:inhabitated}
Suppose that \(\coi(L) \leq \aleph_0 \), and let \( \tr \) be a weak \( L \)-tree.
Let \( b\in [\Delta (\tr)] \), \( \bar \ell \in L \) and fix \( t\in b( \bar \ell ) \).
Then there is \( b' \in [\tr] \) such that \( b'( \bar \ell )=t\) and \( b'(\ell )\in b(\ell ) \) for every \( \ell\in L \).
%
\end{lemma}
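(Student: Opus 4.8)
The plan is to build the branch \( b' \) downward along a countable coinitial set of levels and then fill in all remaining levels by restriction. Since \( \coi(L) \leq \aleph_0 \), fix a sequence \( (\ell_n)_{n < \omega} \) that is coinitial in \( L \); after prepending \( \bar\ell \) and discarding the levels strictly above \( \bar\ell \), we may assume it is non-increasing with \( \ell_0 = \bar\ell \). I will construct a \( \leq_T \)-decreasing sequence \( (t_n)_{n<\omega} \) with \( t_0 = t \), \( \lev_T(t_n) = \ell_n \), and \( t_n \in b(\ell_n) \), and then set \( b'(\ell) = t_n|_\ell \) whenever \( \ell_n \leq_L \ell \). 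Coinitiality guarantees that some such \( n \) exists for every \( \ell \in L \) (taking \( n = 0 \) when \( \ell \geq_L \bar\ell \)), so this will define \( b' \) on all of \( L \).

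The inductive step is the heart of the argument. Suppose \( t_n \in b(\ell_n) \) has been defined. Since \( b \) is a branch of \( \Delta(\tr) \) and \( \ell_{n+1} \leq_L \ell_n \), we have \( b(\ell_{n+1}) \leq_{\Delta(T)} b(\ell_n) \), so picking any \( s \in b(\ell_{n+1}) \) the characterization of \( \leq_{\Delta(T)} \) gives \( s|_{\ell_n} \in b(\ell_n) = [t_n] \), i.e.\ \( s|_{\ell_n} \sim t_n \). By definition of \( \sim \) there is \( f \in \Aut(\tr) \) with \( f(s|_{\ell_n}) = t_n \); I then put \( t_{n+1} = f(s) \). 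As automorphisms preserve both levels and \( \leq_T \), this yields \( \lev_T(t_{n+1}) = \ell_{n+1} \), \( t_{n+1} = f(s) \leq_T f(s|_{\ell_n}) = t_n \), and \( t_{n+1} \sim s \), hence \( t_{n+1} \in b(\ell_{n+1}) \). The use of an automorphism is exactly what lets me land \emph{below the already chosen \( t_n \)} rather than below some arbitrary representative of \( b(\ell_n) \); this is the step I expect to be the main obstacle, and it is where the local homogeneity encoded in \( \sim \) is essential.

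It remains to check that \( b' \) is well defined and is the desired branch. If \( \ell_n \leq_L \ell_m \leq_L \ell \) then \( t_n \leq_T t_m \) forces \( t_n|_{\ell_m} = t_m \) and hence \( t_n|_\ell = t_m|_\ell \), so the value \( b'(\ell) \) does not depend on the chosen \( n \). Monotonicity \( b'(\ell) \leq_T b'(\ell') \) for \( \ell \leq_L \ell' \) follows similarly by choosing a single \( n \) with \( \ell_n \leq_L \ell \) and using \( t_n|_\ell \leq_T t_n|_{\ell'} \); together with \( \lev_T(b'(\ell)) = \ell \) this shows \( b' \in [\tr] \). Since \( t_0 = t \) and \( \ell_0 = \bar\ell \), we get \( b'(\bar\ell) = t_0|_{\bar\ell} = t \). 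Finally, \( b'(\ell) \in b(\ell) \): writing \( b'(\ell) = t_n|_\ell \) with \( t_n \in b(\ell_n) \), the compatibility of \( \sim \) with restriction recorded before the statement gives \( [t_n|_\ell] = [t_n]|_\ell = b(\ell_n)|_\ell = b(\ell) \), the last equality because \( b \) is a branch. The only degenerate case, when \( \bar\ell = \min L \), is immediate, since then \( L = [\bar\ell;+\infty)_L \) and \( b'(\ell) = t|_\ell \) works directly.
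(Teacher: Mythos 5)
Your proof is correct and follows essentially the same route as the paper's: a recursive descent along a countable coinitial sequence, using at each step an automorphism to relocate a comparable pair of representatives so that the new node lands below the previously chosen one, and then filling in all levels by restriction. The only (cosmetic) differences are that you treat the minimum and no-minimum cases uniformly via a non-increasing sequence, where the paper splits into two cases, and that you obtain the comparable pair from the characterization \( \delta \leq_{\Delta(T)} \delta' \iff s|_{\lev_{\Delta(T)}(\delta')} \in \delta' \) rather than picking arbitrary comparable representatives.
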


\begin{proof}
Assume first that \( L \) has no minimum, so let \( (\ell_n)_{n\in\omega } \) be a strictly \( \leq_L \)-decreasing sequence coinitial in \( L \) with \( \ell_0= \bar \ell \).
We define recursively a sequence \( (t'_n)_{n \in \omega} \) such that \( t'_{n+1} \leq_T 
 t'_n \) and \( t'_n \in b(\ell_n)  \) for every \( n \in \omega \), starting with \( t'_0=t \): setting \( b'(\ell) = t'_n |_\ell \) for some (equivalently, any) \( t'_n \) such that \( \ell \geq_L \lev_T(t'_n) \), we get that \( b' \in [\tr] \) and \( b' \) is as desired.
Suppose we have defined \( t'_n \) so that \( t'_n \in b(\ell_n)\).
Since \( b(\ell_{n+1}) \leq_{\Delta(T)} b(\ell_n) \), let \( t' \in b(\ell_{n+1})\) and \( t'' \in b(\ell_n) \) be such that \( t' \leq_T t'' \).
Let \( f \in \Aut(\tr) \) be such that \( f(t'') = t'_n \): then setting \( t'_{n+1} = f(t') \) we get that \( t'_{n+1} \in b(\ell_{n+1}) \) and \( t'_{n+1} \leq_T t'_n \), as desired.

Suppose \( L \) has a minimum \( \ell^* \).
If \( \bar \ell =\ell^* \) the result is obvious, so assume \( \bar \ell >_L\ell^* \).
Since \( b(\ell^*) \leq_{\Delta(T)} b( \bar \ell ) \), there are \(t^*\in b(\ell^*) \) and \( \bar t \in b( \bar \ell ) \) such that \( t^*\leq_T \bar t \): then it is enough to set \( b'(\ell) = f(t^*)|_\ell \) for every \( \ell \in L \), where \( f \in \Aut(\tr) \) is such that \( f( \bar t ) = t \).
\end{proof}

\details{
As the next example shows, the condensed tree \( \Delta(\tr) \) might fail to be an \( L \)-tree, even when \( \tr \) is an \( L \)-tree itself. 

\begin{ex}\label{ex:counterexampleDelta}
Let
\( L = \omega + \omega^* = \{ 0, 1, 2, \ldots, 2^*, 1^*, 0^* \}\) and consider the following \( L \)-tree \( \tr = (T,{\leq_T},\lev_T) \):
\begin{enumerate-(a)}
 \item    
\begin{multline*}
    T = \can \cup \{ t_\alpha^n \mid \alpha \in \Can, \alpha \text{ is eventually constant}, n \in \NN \}\\
    \cup \{ t_\alpha^\diamond \mid \alpha \in \Can, \alpha \text{ is eventually } 1 \};
\end{multline*}
\item  \( \lev_T(s) = \lh(s)^*\) when \( s \in \can \), \( \lev_T(t_\alpha^n) = n\), and \( \lev_T(t_\alpha^\diamond) = 0\);
\item 
\( \leq_T \) is defined by the following clauses:
\begin{itemizenew}
\item in \( \can \) the order \(\leq_T \) is reverse inclusion;
\item \( t_\alpha^n \leq_T t_{\alpha'}^{n'} \) if and only if \( \alpha = \alpha' \) and \( n \leq n'\);
\item \( t_\alpha^\diamond \leq_T t_{\alpha'}^{n} \) if and only if \( \alpha = \alpha' \) and \( n>0\);
\item \( t_\alpha^n \leq_T s \) if and only if \( s \subset \alpha \), and similarly \( t_\alpha^\diamond \leq_T s \) if and only if \( s \subset \alpha \).
\end{itemizenew}
\end{enumerate-(a)}
The condensed tree \( \Delta(\tr) \) of \( \tr \) consists of:
\begin{itemize}
\item a single \( \sim \)-class \( [0^n] \) at each level \( n^*\in\omega^* \);
\item two distinct \( \sim \)-classes \( [t_{0^{ \NN }}^n] \) and \( [t_{1^{ \NN }}^n] \), at each level \( n\in \omega \).
\end{itemize}
Any pair consisting of \( [t_{0^\NN}^n] \) and \( [t_{1^\NN}^{n'}] \) witnesses that \( \Delta(\tr) \) fails to satisfy condition \ref{def:L-tree-4} in Definition \ref{def:L-tree}.  
\end{ex}
}

On the other hand, the condensed tree \( \Delta(\tr) \) might fail to be an \( L \)-tree, even when \( \tr \) is an \( L \)-tree itself. 
This justifies the following definition.

\begin{defin}
An \( L \)-tree \( \tr\) is \markdef{special} if its condensed tree \( \Delta(\tr) \) is still an \( L \)-tree.
\end{defin}

If \( L \) is finite, or even just a reverse well-order, then every \( L \)-tree is special. 
It can be further shown that every \( L \)-tree \( \tr \) can be turned into a special \( L' \)-tree \( \tr' \) such that \( |\tr'|=|\tr| \) and \( \Aut(\tr') \cong \Aut(\tr) \)
by possibly adding suitable ``intermediate levels'' to \( L \).\details{Vogliamo appuntarci i dettagli?}
If \( L \) is a countable linear order, this can also be obtained by composing the functors \( \func{G} \) and \( \func{F}\) from the next section.





\section{Some useful functors}\label{sec:functors}

In this section we consider several embeddings among various categories of \( L \)-trees and complete ultrametric spaces.

Given a linear order \( L \), let \( \cat{T}_L\) be the category of all \( L \)-trees with their embeddings, and \( \cat{T}_L^{\mathrm{pr}}\) be the induced subcategory consisting of all pruned \( L \)-trees. 
These categories are clearly determined by the isomorphism type of \( L \).
Indeed, if \( f \colon L \to L' \) is an isomorphism of linear orders, then each \( L \)-tree \( \tr \) can be transformed into a corresponding \( L' \)-tree \( \tr'\) by letting \( T = T' \), \( t \leq_{T'} t' \iff t \leq_T t' \), and \( \lev_{T'} = f \circ \lev_T \). 
This transformation yields a categorical isomorphism between \( \cat{T}_L \) and \( \cat{T}_{L'} \), which moreover preserves all relevant properties such as being pruned, being special, and so on. 
Notice also that if \( \tr' \in \cat{T}_{L'} \) is obtained from \( \tr \in \cat{T}_{L} \) using the transformation above, then \( \Aut(\tr') \cong \Aut(\tr) \).

For \( D \subseteq \RR^+\), let \( \cat{U}_D \) be the category whose objects are the nonempty complete ultrametric spaces with distance set contained in \( D \), and whose arrows are the distance-preserving functions (i.e.\ the isometric embeddings).
Let also \( \cat{U}_D^0 \) be the induced subcategory of \( \cat{U}_D \) of discrete spaces.



\subsection{The functor \(\func{U}\)} \label{subsec:U}
Throughout this subsection, we assume
\smallskip
\begin{center}
\fbox{\begin{minipage}{0.85\textwidth}
\begin{condition} \label{condition1}
\( D \subseteq D' \subseteq \RR^+ \) are such that  \( \inf D  > 0 \) and \( \inf D'  = 0 \).
\end{condition}
\end{minipage}}\end{center}
\smallskip

\noindent
We define the functor \( \func{U} \colon \cat{U}_D \to \cat{U}_{D'} \) as follows.

Let \( (r_n)_{n \in \omega} \) be a strictly decreasing coinitial sequence in \( D' \) such that \( r_0 < \inf D \).
Let \( \bar U \) be a \emph{strongly rigid} perfect compact Polish ultrametric space with distance set \( \{ r_n \mid n \in \omega \} \), where strongly rigid means that the only isometric embedding from \( \bar U \) into itself is the identity (and hence, in particular, \( \Iso(\bar U) \) is trivial).
For example, such a \( \bar U \) can be obtained by equipping the Cantor space \( {}^\omega 2 \) with the compatible complete ultrametric \( \bar{d} \) defined as follows. Let \( \{ s_n \mid n \in \omega \} \) be an enumeration without repetitions of \( {}^{< \omega } 2 \) such that if \( \lh(s_n) < \lh(s_m) \) then \( n < m \). 
Given distinct \( y,y' \in {}^\omega 2 \), let \( s_n \) be the longest common initial segment of \(y\) and \(y'\), and set \( \bar d (y,y')=r_n \). 
The Polish ultrametric space \( \bar U = ({}^\omega 2,\bar d) \) is perfect and compact because, topologically, it is the usual Cantor space. 
Moreover, it is also strongly rigid. 
Indeed, given distinct \( y,y' \in {}^\omega 2\), let \( s_n \) be the longest common initial segment of \(y\) and \(y'\). 
Then there is \( y'' \in {}^\omega 2 \) such that \( \bar d (y,y'') = r_n \), while this fails for \( y' \), which means that no isometric embedding of \( \bar U \) into itself can map \( y \) to \( y' \). 

Let \( \func{U}(U) \in \cat{U}_{D'} \) be the space obtained from \( U \in \cat{U}_D \) by replacing each of its points with a distinct copy of \( \bar U \). Formally, if \( d \) is the ultrametric on \( U \), then \( \func{U}(U) \) is obtained by endowing \( U \times \bar U\) with the complete ultrametric \( \hat d \) defined by 
\[  
\hat d((x,y),(x',y')) = 
\begin{cases}
d(x,x') & \text{if } x \neq x' \\
\bar{d}(y,y') & \text{if } x = x'.
\end{cases}
\]
Moreover, given an isometric embedding \( \psi \colon U \to U' \) with \( U,U' \in \cat{U}_D \), define \( \func{U}(\psi) \colon \func{U}(U) \to \func{U}(U') \) by setting \( \func{U}(\psi) (x,y) = (\psi(x),y) \), for all \( x \in U \) and \( y \in \bar U \).

\begin{theorem} \label{thm:toperfectlocallycompact}
The functor \( \func{U} \colon \cat{U}_D \to \cat{U}_{D'} \) is a categorical full embedding such that \( \func{U}(U) \) is a perfect locally compact space with \( \weight(\func{U}(U)) = \max \{ |U|, \aleph_0 \} \).
\end{theorem}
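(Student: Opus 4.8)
The plan is to establish in turn that \( \func{U} \) is a well-defined functor into \( \cat{U}_{D'} \), that it is a categorical full embedding, and that each image \( \func{U}(U) \) has the asserted topological features. The only genuinely delicate point is fullness, and there the strong rigidity of \( \bar U \) is precisely what makes the argument go through.

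First I would check that \( \func{U} \) is a functor. The map \( \hat d \) is visibly symmetric and vanishes exactly on the diagonal; the strong triangle inequality follows from a short case analysis according to which of the first coordinates agree, using that \( \hat d \) reduces to \( d \) or to \( \bar d \) in each case together with the separation of scales \( \bar d \leq r_0 < \inf D \leq d \) (where the last two inequalities compare the respective ranges of values). The same separation yields completeness: in any \( \hat d \)-Cauchy sequence the first coordinate is eventually constant, since distinct first coordinates force distance at least \( \inf D \), and one then invokes completeness of \( \bar U \) on the fibre. Since the distance set of \( \func{U}(U) \) is contained in \( D \cup \{ r_n \mid n \in \omega \} \subseteq D' \), we get \( \func{U}(U) \in \cat{U}_{D'} \). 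That \( \func{U}(\psi) \) is an isometric embedding and that \( \func{U} \) preserves identities and composition are immediate from the defining formulas.

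Faithfulness and injectivity on objects are routine: since \( \bar U \neq \emptyset \), the arrow \( \func{U}(\psi) \) determines \( \psi \) through its first coordinate, and from \( \hat d \) one recovers both the set \( U \) and the metric \( d \) (for instance \( d(x,x') = \hat d((x,y),(x',y)) \) whenever \( x \neq x' \)), so distinct objects have distinct images. The main step is fullness. Given an isometric embedding \( \Psi \colon \func{U}(U) \to \func{U}(U') \), I would first note that \( \Psi(\{x\} \times \bar U) \) has diameter at most \( r_0 \), and that any subset of \( \func{U}(U') \) of diameter at most \( r_0 \) lies in a single fibre, because two points with different first coordinates are at distance at least \( \inf D > r_0 \). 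Hence \( \Psi \) sends \( \{x\} \times \bar U \) into some \( \{x'\} \times \bar U \), which defines \( \psi \colon U \to U' \) by \( \psi(x) = x' \). Identifying both fibres with \( \bar U \), the restriction of \( \Psi \) becomes an isometric self-embedding of \( \bar U \), so by strong rigidity it is the identity; thus \( \Psi(x,y) = (\psi(x),y) \) for every \( y \). Evaluating \( \hat d \) at a common second coordinate then shows that \( \psi \) is an isometric embedding, and \( \func{U}(\psi) = \Psi \) by construction.

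Finally, topologically \( \func{U}(U) \) is the disjoint sum of the clopen fibres \( \{x\} \times \bar U \), each isometric to \( \bar U \) and at pairwise distance at least \( \inf D \). As \( \bar U \) is perfect, no point of \( \func{U}(U) \) is isolated, so the space is perfect. Each fibre is compact, being an isometric copy of \( \bar U \), and is a neighbourhood of each of its points (it contains the open ball of radius \( \inf D \) about any of them), so \( \func{U}(U) \) is locally compact. For the weight I would use that the weight of a metrizable space equals its density character, and that the density character of a disjoint sum of \( |U| \) copies of the separable space \( \bar U \) is \( \max \{ |U|, \aleph_0 \} \) (the space being infinite since \( \bar U \) is), giving \( \weight(\func{U}(U)) = \max \{ |U|, \aleph_0 \} \).
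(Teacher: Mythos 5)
Your proof is correct and takes essentially the same approach as the paper: the separation of scales \( \bar d \leq r_0 < \inf D \) forces any isometric embedding to preserve the fibres \( \{x\} \times \bar U \), and the strong rigidity of \( \bar U \) then pins down the second coordinate, which is exactly the content of the paper's key claim in its fullness argument. The additional detail you supply for the ultrametric inequality, completeness, faithfulness, and the weight computation corresponds precisely to the parts the paper dismisses as routine.
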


\begin{proof}
It is easy to see that, by construction, \( \func{U} (U)\in \cat{U}_{D'} \) is a nonempty perfect locally compact complete ultrametric space with weight 
\[ 
|U|\cdot \weight(\bar U)  = \max \{ |U|,\aleph_0 \} .
\]
Moreover, \( \func{U} \) is clearly a faithful functor which is injective on objects, so we only need to show that it is full. 

Let \( \varphi \colon \func{U}(U) \to \func{U}(U') \) be an arbitrary isometric embedding. Let \( d \), \( d' \), \( \hat d \), and \( \hat d' \) be the ultrametrics on \( U \), \( U' \), \( \func{U}(U) \), and \( \func{U}(U') \), respectively. 
\begin{claim} \label{claim:toperfectlocallycompact}
Let \( x \in U \) and \( y \in \bar U \). Then \(\varphi(x,y) = (x',y) \), for some \( x' \in U' \).
\end{claim}

\begin{proof}[Proof of the claim]
Let \( x ' \in U' \) and \( y' \in \bar U \) be such that \( \varphi(x, y) = (x',y') \). Since 
\( B_{\hat d}((x,y),\inf D ) = \{ x \} \times \bar U \) and \( B_{\hat d'}((x',y'),\inf D) = \{ x' \} \times \bar U\), it follows from the definition of \( \hat d \) and the strong rigidy of \( \bar U \) that the map sending \( z \in \bar U \) to the unique \( z' \in \bar U \) such that \( \varphi(x,z) = (x',z') \) is the identity. Therefore, \( y' = y \). 
\end{proof}

Now fix any \( \bar y \in \bar U \). Let \( \psi \colon U \to U' \) be defined by setting \( \psi(x) = x' \) for the unique \( x' \in U' \) such that \( \varphi(x,\bar y) = (x', \bar y) \).
Notice that the definition of \( \psi \) does not depend on the choice of \( \bar y \).
It is  easy to verify that \( \psi \) is an isometric embedding, and that \( \func{U}(\psi) = \varphi \).
\end{proof}

In particular, for any \( U \in \cat{U}_D \) the restriction of the fully faithful functor \( \func{U} \) to \( \Iso(U) \) witnesses \( \Iso(U) \simeq \Iso(\func{U}(U)) \); we now check that this isomorphism is also \emph{topological} when, as usual, both groups are equipped with the pointwise convergence topology.

\begin{corollary} \label{cor:toperfectlocallycompact}
For every \( U \in \cat{U}_D \), 
\[
\Iso(U) \cong \Iso(\func{U}(U)).
\]
\end{corollary}

\begin{proof}
Let \( d \) be the ultrametric on \( U \) and \( \hat d \) be the ultrametric on \( \func{U}(U) \).
A fundamental system of neighborhoods of the identity of \( \Iso(U) \) is given by the sets of the form 
\( V_{x,r} = \{ \psi \in \Iso(U) \mid d(\psi(x),x) < r \} \), for \( x \in U \) and \( 0 < r < \inf D \). By choice of \( r \), we indeed get that \( V_{x,r} = \{ \psi \in \Iso(U) \mid \psi(x) = x \} \), and moreover \( V_{x,r} = V_{x,r'} \) for all \( r,r' < \inf D \). On the other hand, a fundamental system of neighborhoods of the identity of \( \Iso(\func{U}(U)) \) is given by the sets of the form 
\( W_{x,y,r} = \{ \varphi \in \Iso(\func{U}(U)) \mid \hat d(\varphi(x,y),(x,y)) < r  \}  \), for \( x \in U \), \( y \in \bar U \), and \( 0 < r < \inf D \). 
By choice of \( r \), definition of \( \hat d\), and Claim~\ref{claim:toperfectlocallycompact}, we get \( W_{x,y,r} = \{ \varphi \in \Iso(\func{U}(U)) \mid \varphi(x,y) = (x,y)  \}  \), and moreover \( W_{x,y,r} = W_{x,y',r'} \) for all \( y,y' \in \bar U \) and \( r ,r' < \inf(D) \). Finally, by definition of \( \func{U}(\psi) \) we have that for every \( x \in U \) and \( r < \inf D \), \( \func{U}(V_{x,r}) = W_{x,y,r} \) for some (equivalently, any) \( y \in \bar U \). This concludes the proof.
\end{proof}

\subsection{The functor \(\func{F}\)} \label{subsec:F}
Throughout this subsection, we assume
\smallskip

\begin{center}
\fbox{\begin{minipage}{0.85\textwidth}
\begin{condition} \label{condition2}
\( D \subseteq L\subseteq \RR^+ \) are such that
\begin{itemizenew}
\item 
either \(D\) has no maximum or there exists \( \ell \in L \) with \( \ell > \max D \);
\item 
either \( \inf D = 0 \) or there exists \( \ell \in L\) with \( \ell \leq \inf D \).
\end{itemizenew}
\end{condition}
\end{minipage}}\end{center}
\smallskip

\noindent
We define the functor \( \func{F} \colon \cat{U}_D \to \cat{T}_{L}^{\mathrm{pr}} \) as follows.

Given \( U \in \cat{U}_D \), let \( d \) be its ultrametric.
Let \( \func{F}(U) \) be the triple \( \tr = (T, {\leq_T}, \lev_T) \) where:
\begin{itemizenew}
\item
\( T = \{ (B,\ell) \mid \ell \in L \text{ and } B=B_{d}(x,\ell) \text{ for some } x \in U \} \),
\item 
\( (B,\ell) \leq_T (B',\ell') \iff \ell \leq \ell' \text{ and } B \subseteq B' \), and
\item 
\( \lev_T \colon T \to L \) is the map sending each \( (B,\ell) \) to \( \ell \). 
\end{itemizenew}

\begin{remark} \label{12072025}
Notice that \( \func{F}(U)=\{ (B_d(x,\ell ),\ell )\mid x\in A_U \text{ and } \ell\in L \} \) for any dense \( A_U\subseteq U \).
This is useful to control the cardinality of \( \func{F}(U) \), as in Theorem~\ref{thm:spacestotrees}.
\end{remark}

\begin{lemma} \label{lem:spacestotrees}
For every \( U \in \cat{U}_D \), \( \func{F}(U) \) is a pruned \( L \)-tree.
\end{lemma}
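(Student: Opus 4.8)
The plan is to verify the four clauses of Definition~\ref{def:L-tree} together with prunedness directly, translating every statement about cones and splittings into a statement about the ultrametric $d$ on $U$. Throughout I will use the two standard facts about ultrametric balls recalled in the preliminaries: every point of a ball $B_d(x,\ell)$ is itself a center of it, so that $B_d(x,\ell)=B_d(y,\ell)$ whenever $y\in B_d(x,\ell)$; and two balls of the same radius are either equal or disjoint. Writing a node as $t=(B,\ell)$ with $B=B_d(x,\ell)$, the first fact yields the explicit formula
\[
t|_{\ell'}=(B_d(x,\ell'),\ell') \qquad (\ell'\geq_L\ell),
\]
which is independent of the chosen center $x$ (since any center of $B$ lies in $B_d(x,\ell')$) and is the computational heart of the argument.

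First I would dispatch the easy clauses. That $\leq_T$ is a partial order is immediate from antisymmetry of $\subseteq$ and of $\leq$ on $\RR^+$; surjectivity of $\lev_T$ onto $L$ uses only that $U\neq\emptyset$, since for any fixed $x\in U$ the pair $(B_d(x,\ell),\ell)$ witnesses each $\ell\in L$. For clause~\ref{def:L-tree-1}, if $(B,\ell)<_T(B',\ell')$ had $\ell=\ell'$, then $B\subseteq B'$ would be nonempty balls of equal radius, hence equal by the dichotomy, contradicting $(B,\ell)\neq(B',\ell')$; so $\lev_T(t)<_L\lev_T(t')$. For clause~\ref{def:L-tree-2}, existence of $t|_{\ell'}$ is witnessed by $(B_d(x,\ell'),\ell')$, and uniqueness follows because any ball of radius $\ell'$ containing $x$ equals $B_d(x,\ell')$, as $x$ is one of its centers.

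The core is the analysis of merging and splitting. For $t=(B_d(x,\ell),\ell)$, $t'=(B_d(x',\ell'),\ell')$ and any $\ell''\geq_L\max\{\ell,\ell'\}$, the formula above gives $t|_{\ell''}=t'|_{\ell''}$ iff $B_d(x,\ell'')=B_d(x',\ell'')$ iff $x'\in B_d(x,\ell'')$ iff $d(x,x')<\ell''$. For clause~\ref{def:L-tree-3} I therefore need a single $\ell''\in L$ with $\ell''\geq_L\max\{\ell,\ell'\}$ and $\ell''>_L d(x,x')$: if $d(x,x')<_L\max\{\ell,\ell'\}$ I take $\ell''=\max\{\ell,\ell'\}$, and otherwise $d(x,x')\in D$ and I invoke the first item of Condition~\ref{condition2} (either $D$ has no maximum, yielding a larger element of $D\subseteq L$, or there is $\ell\in L$ with $\ell>\max D\geq d(x,x')$) to produce the required level. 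For clause~\ref{def:L-tree-4}, incomparability of $t,t'$ first forces $d(x,x')\geq_L\max\{\ell,\ell'\}$: otherwise, assuming $\ell\leq_L\ell'$, one would get $x\in B_d(x',\ell')$ and hence $B_d(x,\ell)\subseteq B_d(x',\ell')$, i.e.\ $t\leq_T t'$. The split set then equals $\{\ell''\in L\mid \max\{\ell,\ell'\}\leq_L\ell''\leq_L d(x,x')\}$, which is nonempty with maximum $d(x,x')\in D\subseteq L$; thus $\spl{t,t'}=d(x,x')$.

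Finally, prunedness is immediate: given $t=(B_d(x,\ell),\ell)$, the map $b\colon\ell'\mapsto(B_d(x,\ell'),\ell')$ is a branch of $\func{F}(U)$ with $b(\ell)=t$, since $\ell_1\leq_L\ell_2$ yields $B_d(x,\ell_1)\subseteq B_d(x,\ell_2)$. The only genuinely delicate point is clause~\ref{def:L-tree-3}: the boundary case $d(x,x')=\max D$ would fail without the first item of Condition~\ref{condition2}, and this is also where one must commit to the open-ball reading of $B_d$, as that is precisely what makes the split set in clause~\ref{def:L-tree-4} a closed interval admitting the maximum $d(x,x')$. Everything else reduces to the two elementary facts about ultrametric balls.
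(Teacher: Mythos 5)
Your proof is correct and follows essentially the same route as the paper's: the explicit restriction formula \( (B,\ell)|_{\ell'} = (B_d(x,\ell'),\ell') \), the use of the first item of Condition~\ref{condition2} to find a level above \( d(x,x') \) for clause~\ref{def:L-tree-3}, the identification \( \spl{t,t'} = d(x,x') \in D \subseteq L \) for clause~\ref{def:L-tree-4}, and the branch \( \ell' \mapsto (B_d(x,\ell'),\ell') \) for prunedness. The only differences are cosmetic: you make explicit a few verifications the paper leaves implicit (partial order, surjectivity of \( \lev_T \), uniqueness in clause~\ref{def:L-tree-2}) and isolate once the equivalence \( t|_{\ell''} = t'|_{\ell''} \iff d(x,x') < \ell'' \), which the paper uses in-line --- your remark about the open-ball convention is indeed the reading the paper relies on.
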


\begin{proof}
We have to check that conditions~\ref{def:L-tree-1}--\ref{def:L-tree-4} of Definition~\ref{def:L-tree} are satisfied by the triple \( \tr = (T, {\leq_T}, \lev_T) \) defined as above.
Condition \ref{def:L-tree-1} holds by definition. As for \ref{def:L-tree-2}, given \( (B,\ell) \) and \( \ell' \geq \ell \), it is enough to set \( (B,\ell)|_{\ell'} = (B_d(x,\ell'),\ell')\) for some (equivalently, any) \( x \in B \).

Given now \( (B,\ell),(B',\ell') \in T \), let \( x,x' \) be such that \( B = B_{d}(x,\ell) \) and \( B' = B_{d}(x',\ell') \).
Without loss of generality we can assume \(\ell \leq \ell'\).
If \( d(x,x') < \ell' \), then \( (B,\ell ) \le_T (B',\ell') \).
If \( \ell' \leq d(x,x') \), then by the first assumption of Condition~\ref{condition2} we can find \( \ell''\in L \) such that \( \ell''>d(x,x') \): setting \( B'' = B_{d}(x,\ell'') \) we get that \( x,x' \in B'' \), and hence both \( (B'',\ell'') \geq_T (B,\ell) \) and \( (B'',\ell'') \geq_T (B',\ell') \) because \( \ell'' \geq \ell' \geq \ell \).
This shows that~\ref{def:L-tree-3} is satisfied as well.

Finally, assume that \( (B,\ell) \) and \( (B',\ell') \) are \( \leq_T \)-incomparable, so that \( B \cap B' = \emptyset \). 
Pick any \( x \in B \) and \( x' \in B' \), and set \( \bar \ell = d(x,x')  \in D \subseteq L \).
For every \( \ell'' \in L \), if \( \max \{ \ell,\ell' \} \leq \ell'' \leq \bar \ell \) then \( (B,\ell)|_{\ell''} = (B_{d}(x,\ell''),\ell'') \neq (B_{d}(x',\ell''),\ell'') = (B',\ell')|_{\ell''} \) because \( B_{d}(x,\ell'') \cap B_{d}(x',\ell'') = \emptyset \); on the other hand, if \( \ell'' > \bar \ell \), then \( (B,\ell)|_{\ell''} = (B_{d}(x,\ell''),\ell'') = (B_{d}(x',\ell''),\ell'') = (B',\ell')|_{\ell''} \). Thus~\ref{def:L-tree-4} is satisfied and \( \bar \ell = \spl{(B,\ell),(B',\ell')}\). 

Moreover, the \( L \)-tree \( \tr \) is pruned because if \( (B,\ell) \in T \), then for every \( x \in B \) the sequence \( b \in {}^{L} T \) defined by \( b(\ell') = (B_{d}(x,\ell'),\ell') \) for every \( \ell' \in L \) is an element of \( [\tr] \) such that \( b(\ell) = (B,\ell) \), as desired. 
\end{proof}

Let \( U,U' \in \cat{U}_D \) with ultrametrics \( d \) and \( d' \), respectively, and let \( \tr = \func{F}(U) \) and \( \tr'= \func{F}(U') \). For every isometric embedding \( \psi \colon U \to U' \), let \( \func{F}(\psi) \) be the map \( f \colon T \to T' \) defined by \( f(B,\ell) = (B_{d'}(\psi(x),\ell),\ell ) \) for some (equivalently, any) \( x \in B \). It is easy to check that \( f \) is a well-defined embedding between the pruned \( L \)-trees \( \tr \) and \( \tr' \).

\begin{theorem} \label{thm:spacestotrees} 
The functor \( \func{F} \colon \cat{U}_D \to \cat{T}_{L}^{\mathrm{pr}} \) is a categorical full embedding  such that \( |\func{F}(U)| \leq  \weight(U) \cdot |L| \) for every \( U \in \cat{U}_D \).
\end{theorem}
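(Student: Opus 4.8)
The plan is to show that $\func{F}$ is a categorical full embedding with the stated cardinality bound. Three properties must be verified: faithfulness, injectivity on objects, and fullness; the cardinality bound then follows from Remark~\ref{12072025}.

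\emph{Faithfulness and injectivity on objects.} First I would observe that an isometric embedding $\psi\colon U\to U'$ can be recovered from $\func{F}(\psi)$: since $\inf D>0$ is excluded only in the degenerate case, the smallest balls of the form $B_d(x,\ell)$ for sufficiently small $\ell$ are singletons $\{x\}$ (using the second clause of Condition~\ref{condition2}, which provides $\ell\le\inf D$ when $\inf D>0$, so that $B_d(x,\ell)=\{x\}$), and $\func{F}(\psi)$ sends $(\{x\},\ell)$ to $(\{\psi(x)\},\ell)$. Thus $\psi$ is determined by $\func{F}(\psi)$, giving faithfulness; a similar argument recovers $U$ from the tree $\func{F}(U)$ together with its metric (the distances are read off from the levels at which branches split, via $\spl{\cdot,\cdot}$), yielding injectivity on objects. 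When $\inf D=0$, instead I would recover $\psi(x)$ as the unique point in the intersection $\bigcap_\ell B_{d'}(\psi(x),\ell)$ of the nested balls along the branch corresponding to $x$, using completeness of $U'$.

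\emph{Fullness.} This is the heart of the matter and the step I expect to be the main obstacle. Given an arbitrary embedding $f\colon \func{F}(U)\to\func{F}(U')$ of pruned $L$-trees, I must produce an isometric embedding $\psi\colon U\to U'$ with $\func{F}(\psi)=f$. The idea is that each point $x\in U$ determines a branch $b_x\in[\func{F}(U)]$ with $b_x(\ell)=(B_d(x,\ell),\ell)$; applying $f$ levelwise gives a branch $f\circ b_x\in[\func{F}(U')]$ (here I use that embeddings preserve levels and the order, hence map branches to branches). This branch is a $\le_{T'}$-decreasing chain of balls in $U'$ whose radii are coinitial in $L$; by completeness of $U'$ and $\inf D$-considerations, their intersection is a single point $\psi(x)$, and I set $\psi(x)$ to be that point. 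I would then check that $\psi$ is distance-preserving: for distinct $x,x'$, the splitting level $\spl{b_x,b_{x'}}$ equals $d(x,x')$ by the computation in the proof of Lemma~\ref{lem:spacestotrees}, and since $f$ preserves $\spl{\cdot,\cdot}$ (as it preserves levels, the order, and incomparability), the images split at the same level, giving $d'(\psi(x),\psi(x'))=d(x,x')$. Finally, a direct unwinding shows $\func{F}(\psi)=f$ on every node $(B,\ell)$, using that every node lies on some branch $b_x$ because the tree is pruned.

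\emph{Cardinality bound.} By Remark~\ref{12072025}, $\func{F}(U)=\{(B_d(x,\ell),\ell)\mid x\in A_U,\ \ell\in L\}$ for any dense $A_U\subseteq U$. Choosing $A_U$ of size $\weight(U)$ (a dense subset realizing the weight exists for metric spaces), the set of pairs has cardinality at most $\weight(U)\cdot|L|$, establishing the bound. The one subtlety to handle carefully in the fullness argument is the behaviour at the extremes of $L$: when $\inf D=0$ the intersection defining $\psi(x)$ is genuinely a limit requiring completeness, whereas when $\inf D>0$ Condition~\ref{condition2} guarantees a smallest relevant level at which balls are already singletons, so that $\psi$ can be read off directly and no limit is needed.
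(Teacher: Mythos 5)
Your proposal is correct and follows essentially the same route as the paper's proof: fullness via the branches \( b_x(\ell) = (B_d(x,\ell),\ell) \), the dichotomy between completeness of \( U' \) when \( \inf D = 0 \) and singleton balls at some level \( \ell \leq \inf D \) when \( \inf D > 0 \), distance preservation through splitting levels together with \( D \subseteq L \), and the cardinality bound from Remark~\ref{12072025} with a dense set of size \( \weight(U) \). The only cosmetic difference is in faithfulness, where you recover \( \psi \) from \( \func{F}(\psi) \) via singleton balls or intersections of nested balls, while the paper argues more directly by exhibiting a single node \( (B_d(x,\ell),\ell) \) with \( \ell = d'(\psi_0(x),\psi_1(x)) \) whose images under \( \func{F}(\psi_0) \) and \( \func{F}(\psi_1) \) are disjoint balls, hence distinct.
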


\begin{proof}
Let \( \tr = \func{F}(U) \in \cat{T}_{L}^{\mathrm{pr}} \).
Then \( |\tr| \leq \weight(U) \cdot |L| \) because by Remark \ref{12072025} the map \( (x,\ell) \mapsto (B_{d}(x,\ell),\ell) \) is a surjection from \( A_U \times L \) onto \( T \), for any dense set $A_U\subseteq U$. 

The functor \( \func{F}\) is clearly injective on objects.
We now check that it is also faithful. Fix \( U , U' \in \cat{U}_D \) with ultrametrics \( d \) and \( d' \), respectively, and let \( \psi_0,\psi_1 \colon U \to U' \) be distinct isometric embeddings. 
Since \( \psi_0 \neq \psi_1\), there is \( x \in U \) such that \( \psi_0(x) \neq \psi_1(x) \): let \( \ell = d'(\psi_0(x),\psi_1(x)) \in D \subseteq L \), and let \( B = B_{d}(x,\ell) \). Then \( \func{F}(\psi_0)(B,\ell) = (B_{d'}(\psi_0(x),\ell),\ell) \) and \( \func{F}(\psi_1)(B,\ell) = (B_{d'}(\psi_1(x),\ell),\ell) \), which implies that \( \func{F}(\psi_0)(B,\ell) \neq \func{F}(\psi_1)(B,\ell) \) because \( B_{d'}(\psi_0(x),\ell) \cap B_{d'}(\psi_1(x),\ell) = \emptyset\) by choice of \( \ell \). This shows that \( \func{F}(\psi_0) \neq \func{F}(\psi_1) \).

It remains to show that \( \func{F} \) is full. Let \( \tr = \func{F}(U) \) and \( \tr' = \func{F}(U') \) for some \( U,U' \in \cat{U}_D\), and let \( f \colon \tr \to \tr' \) be an embedding. Let \( d \) and \( d' \) be the ultrametrics of \( U \) and \( U' \), respectively. 
For every \( x \in U \), let \( b_x \in [\tr] \) be defined by \( b_x(\ell) = (B_{d}(x,\ell),\ell) \) for every \( \ell \in L \). 
Consider the branch \( b' \in [\tr'] \) defined by \( b'(\ell) =  f(b_x(\ell)) \), and let \( B'_\ell  \) be such that \( f(b_x(\ell)) = (B'_\ell,\ell) \). 
The intersection \( \bigcap_{\ell \in L} B'_\ell \) contains at most one point.
Indeed, suppose towards a contradiction that there are two distinct points \( y,y' \in \bigcap_{\ell \in L} B'_\ell \), and let \( \bar \ell = d'(y,y') \in D \subseteq L \): since \( B'_{\bar \ell} \) is a ball with radius \( \bar \ell \), it cannot contain both \( y \) and \( y' \), a contradiction. 
We claim that \( \bigcap_{\ell \in L } B'_\ell \) is also nonempty. We distinguish two cases. If \( \inf D = 0 \) we use the fact that the balls \( B'_\ell \) are clopen and \( d' \) is complete: in this case, \( \bigcap_{\ell \in L} B'_\ell \) contains the limit of the \( d' \)-Cauchy sequence given by the centers of the balls \( B'_{\ell_n} \), for any sequence \( (\ell_n)_{n \in \omega} \) coinitial in \( L \).
Suppose now that \( \inf D > 0 \), in which case there is some \( \ell \in L \) with \( \ell \leq \inf D \) by the second assumption of Condition~\ref{condition2}: then \( B'_\ell = \{ y \} \) is a singleton, and necessarily \( y \in \bigcap_{\ell \in L} B'_\ell \) because for each \( \ell' \leq \ell \) the ball \( B'_{\ell'} \) must be nonempty and contained in \( B'_\ell \).
We thus showed that in all cases \( \bigcap_{\ell \in L} B'_\ell = \{ y \} \) for some \( y \in U '\): set \( \psi(x) = y \). 

We claim that the map \( \psi \colon U \to U' \) just described is an isometric embedding.
Let \( x,x' \in U \) be distinct, and let \( \bar \ell = d(x,x') \in D \subseteq L \). Let \( b_x,b_{x'} \in [\tr] \) be the branches determined by \( x\) and \( x' \) as above. 
Notice that \( b_x(\ell) \neq  b_{x'}(\ell) \) if and only if  \( \ell \leq \bar \ell \).
Since \( f \) is an embedding, \( f (b_x(\ell)) = (B_{d'}(\psi(x),\ell),\ell) \), and \( f (b_{x'}(\ell)) = (B_{d'}(\psi(x'),\ell),\ell) \), we have that
\( (B_{d'}(\psi(x),\ell),\ell) \neq (B_{d'}(\psi(x'),\ell),\ell) \) if and only if  \( \ell \leq \bar \ell \).
Therefore, \( B_{d'}(\psi(x), \bar \ell) \cap B_{d'}(\psi(x'),\bar \ell) = \emptyset \), while \( B_{d'}(\psi(x), \ell) = B_{d'}(\psi(x'),\ell) \) for every \( \ell > \bar \ell \).
It follows that \( d'(\psi (x),\psi (x')) < \ell \) for every \( \ell \in L \) with \( \ell > \bar \ell \), but \( d'(\psi (x),\psi (x')) \geq \bar \ell \). But since \( U' \in \cat{U}_D \), the distance \( d'(\psi (x),\psi (x')) \) must belong to \( D \), and hence to \( L \): thus \( d'(\psi (x),\psi (x')) = \bar \ell = d(x,x') \), as desired.

Finally, we show that \( \func{F}(\psi) = f \). Fix any \( (B,\ell) \in T\) and pick \( x \in B \), so that \( (B,\ell) = b_x(\ell) \). By the way \( \psi \) is defined, \( \psi(x) \in B' \) where \( B' \) is such that \( f(B,\ell) = (B',\ell) \). But this implies that \( B' = B_{d'}(\psi(x),\ell) \), and thus \( \func{F}(\psi)(B,\ell) = (B',\ell) = f(B,\ell) \), as desired. 
\end{proof}


Analogously to what happened with the functor \( \func{U} \) from the previous subsection, also in this case for every \( U \in \cat{U}_D \) the restriction of \( \func{F} \) to \( \Iso(U) \) is a \emph{topological} group isomorphism between \( \Iso(U) \) and \( \Aut(\func{F}(U)) \).

\begin{corollary} \label{cor:spacestotrees}
For every \( U \in \cat{U}_D \),
\[  
\Iso(U) \cong \Aut(\func{F}(U)) .
\]
\end{corollary}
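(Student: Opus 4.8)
The plan is to first upgrade the fullness of \( \func{F} \) from Theorem~\ref{thm:spacestotrees} to a purely algebraic isomorphism \( \Iso(U) \simeq \Aut(\func{F}(U)) \), and then to check that it respects the pointwise convergence topologies, imitating the argument used for Corollary~\ref{cor:toperfectlocallycompact}. Write \( \tr = \func{F}(U) \) and let \( d \) be the ultrametric of \( U \). Since \( \func{F} \) is a full embedding and a functor, its restriction to the monoid of isometric self-embeddings of \( U \) is a monoid isomorphism onto the monoid of self-embeddings of \( \tr \): fullness gives surjectivity, faithfulness gives injectivity, and functoriality gives preservation of composition and identity. First I would observe that such a monoid isomorphism carries invertible elements to invertible elements, and that the invertible elements are exactly \( \Iso(U) \) on one side and \( \Aut(\tr) \) on the other. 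Concretely, if \( \psi \in \Iso(U) \) then \( \func{F}(\psi)\func{F}(\psi^{-1}) = \func{F}(\id_U) = \id_\tr \) and symmetrically, so \( \func{F}(\psi) \in \Aut(\tr) \); conversely, given \( f \in \Aut(\tr) \), fullness yields \( \psi,\chi \) with \( f = \func{F}(\psi) \) and \( f^{-1} = \func{F}(\chi) \), and faithfulness forces \( \psi\chi = \chi\psi = \id_U \), whence \( \psi \in \Iso(U) \). This produces the group isomorphism \( \func{F} \colon \Iso(U) \to \Aut(\tr) \).

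For the topological part, the key computation is the following elementary equivalence, valid for every \( \psi \in \Iso(U) \), \( x \in U \), and \( \ell \in L \): since the balls occurring in the definition of \( \func{F} \) are open, two balls of radius \( \ell \) are equal precisely when their centers are within distance strictly less than \( \ell \), so \( \func{F}(\psi)(B_d(x,\ell),\ell) = (B_d(x,\ell),\ell) \) holds if and only if \( B_d(\psi(x),\ell) = B_d(x,\ell) \), i.e.\ if and only if \( d(\psi(x),x) < \ell \). Thus, setting \( t = (B_d(x,\ell),\ell) \in T \) and \( V_{x,\ell} = \{ \psi \in \Iso(U) \mid d(\psi(x),x) < \ell \} \), the isomorphism \( \func{F} \) maps \( V_{x,\ell} \) exactly onto the basic neighborhood \( N_t = \{ f \in \Aut(\tr) \mid f(t) = t \} \) of the identity of \( \Aut(\tr) \). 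Since every node of \( T \) is of the form \( (B_d(x,\ell),\ell) \) with \( x \in U \) and \( \ell \in L \), the family \( \{ N_t \mid t \in T \} \) is a sub-basis of neighborhoods of the identity for the topology of \( \Aut(\tr) \), and it corresponds under \( \func{F} \) precisely to \( \{ V_{x,\ell} \mid x \in U, \ell \in L \} \).

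The one point requiring care — and the main obstacle — is to verify that \( \{ V_{x,\ell} \mid x \in U, \ell \in L \} \) is itself a sub-basis of neighborhoods of the identity for \( \Iso(U) \); this is exactly where Condition~\ref{condition2} enters. Since \( d \) takes values in \( D \cup \{ 0 \} \), the set \( V_{x,r} \) depends only on \( \{ s \in D \mid s < r \} \), so it suffices to produce, for each \( x \in U \) and each \( r \in \RR^+ \), some \( \ell \in L \) with \( V_{x,\ell} \subseteq V_{x,r} \). If \( \inf D = 0 \), then \( D \subseteq L \) contains arbitrarily small positive reals and any \( \ell \in D \) with \( \ell \leq r \) works; if \( \inf D > 0 \), the second clause of Condition~\ref{condition2} provides \( \ell \in L \) with \( \ell \leq \inf D \), for which \( V_{x,\ell} = \{ \psi \mid \psi(x) = x \} \) is the smallest neighborhood at \( x \), hence contained in every \( V_{x,r} \). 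Granting this, \( \func{F} \) carries a sub-basis of neighborhoods of the identity of \( \Iso(U) \) bijectively onto one of \( \Aut(\tr) \); being injective, it also preserves the finite intersections that form the associated bases, so both \( \func{F} \) and \( \func{F}^{-1} \) are continuous at the identity. As \( \func{F} \) is a group isomorphism between topological groups, it is therefore a homeomorphism, giving \( \Iso(U) \cong \Aut(\func{F}(U)) \).
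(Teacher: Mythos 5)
Your proposal is correct and takes essentially the same route as the paper's proof: the paper likewise obtains the algebraic isomorphism from the full faithfulness of \( \func{F} \) and then matches the neighborhoods \( V_{x,\ell} = \{ \psi \in \Iso(U) \mid d(\psi(x),x) < \ell \} \) with the stabilizer neighborhoods \( W_{B_d(x,\ell),\ell} \) of \( \Aut(\func{F}(U)) \) via exactly your key equivalence \( d(\psi(x),x) < \ell \iff \func{F}(\psi)(B,\ell) = (B,\ell) \). The only difference is that you spell out two points the paper compresses --- the monoid-of-embeddings argument that invertibles correspond to invertibles, and the use of Condition~\ref{condition2} (hidden in the paper's phrase ``by choice of \( L \)'') to see that the sets \( V_{x,\ell} \) with \( \ell \in L \) generate the neighborhood filter of the identity --- both of which you handle correctly.
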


\begin{proof}
Let \( d \) be the ultrametric of \( U \). 
By choice of \( L \), the sets of the form \( V_{x,\ell} = \{ \psi \in \Iso(U) \mid d(\psi(x),x) < \ell \} \) for \( x \in U \) and \( \ell \in L \) form a fundamental system of neighborhoods of the identity of \( \Iso(U) \). 
On the other hand, since \( \tr =  \func{F}(U) \) is given the discrete topology, the sets of the form \( W_{B,\ell} = \{ f \in \Aut(\tr) \mid f (B,\ell) = (B,\ell) \} \) for \( (B,\ell) \in T \) form a fundamental system of neighborhoods of the identity of \( \Aut(\tr) \). 
The result thus follows from the fact that for \( B = B_d(x,\ell) \),
\[  
d(\psi(x),x) < \ell \iff \func{F}(\psi)(B,\ell) = (B,\ell),
\]
and hence \( \func{F}(V_{x,\ell}) = W_{B_d(x,\ell),\ell} \) for every \( x \in U \) and \( \ell \in L \).
\end{proof}

We conclude this subsection by discussing to what extent one can control extra properties of \( \func{F}(U) \) and \( L \).

In general, the \( L \)-trees produced by \( \func{F} \) are not necessarily special.\details{Un controesempio naturale è definire \( U \) tale che \( \func{F}(U) \) sia il \( \tr \) non special dell'Example~\ref{ex:counterexampleDelta}; si può fare con una variante dell'\( U \) dell'Example \ref{ex:notexact}}
However, for every \( U \in \cat{U}_D \) we can arrange the construction so that \( \func{F}(U) \) is special for the chosen \( U \).

\begin{lemma} \label{lem:spacestotreesspecial}
For every \( U \in \cat{U}_D\) there is an \( L \) satisfying  Condition~\ref{condition2} such that 
\( \func{F}(U) \) is special. Moreover:
\begin{itemizenew}
\item 
if \( \inf D > 0 \), then we can assume that \( L \) has a minimum, \( \min L = \inf D \), and \( |L| \leq  \weight(U)^2 + |D| + 2 \);
\item 
alternatively, if \( D \) is countable and \( U \) is separable we can assume that \( L \) has order type \( \eta \), the order type of \( \QQ \).
\end{itemizenew}
\end{lemma}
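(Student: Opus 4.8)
The plan is to reduce specialness of \( \func{F}(U) \) to a statement about a single real ``merge boundary'' attached to each pair of points of \( U \), to prove that this boundary always lies on the right side of the split (so that only \emph{one} level per pair ever needs to be added, never an immediate predecessor), and finally to engineer \( L \) so that all these boundaries are realized as levels while keeping the prescribed cardinality and order type.

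\emph{Reduction.} Write \( b_x(\ell) = (B_d(x,\ell),\ell) \) as in the proof of Lemma~\ref{lem:spacestotrees}, so that every node of \( \func{F}(U) \) is of this form. By Corollary~\ref{cor:spacestotrees} the group \( \Aut(\func{F}(U)) \cong \Iso(U) \) acts on nodes by \( \phi \cdot (B,\ell) = (\phi(B),\ell) \), so \( b_x(\ell) \sim b_{x'}(\ell) \) if and only if some \( \phi \in \Iso(U) \) maps \( B_d(x,\ell) \) onto \( B_d(x',\ell) \); note this depends only on \( U \), not on \( L \). Since \( [t]|_\ell = [t|_\ell] \), for any two incomparable classes represented by points \( x,x' \) the set occurring in condition~\ref{def:L-tree-4} of Definition~\ref{def:L-tree} is a final segment of the \emph{differ-set} \( S_{x,x'} = \{ \ell \in L \mid B_d(x,\ell),\, B_d(x',\ell) \text{ lie in different } \Iso(U)\text{-orbits} \} \). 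Thus \( \func{F}(U) \) is special if and only if \( S_{x,x'} \) has a maximum whenever it is nonempty.

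\emph{The openness lemma (the core).} Viewed inside \( \RR^+ \), the set \( S_{x,x'} \) is downward closed (the agree relation is upward closed, by the observation preceding the condensed tree) and bounded above by \( d(x,x') \), since \( B_d(x,\ell) = B_d(x',\ell) \) for \( \ell > d(x,x') \). Let \( c^* = \sup S_{x,x'} \). I claim \( c^* \in S_{x,x'} \): otherwise some \( \phi \in \Iso(U) \) sends \( B_d(x,c^*) \) onto \( B_d(x',c^*) \), so \( z := \phi(x) \) satisfies \( d(z,x') < c^* \); choosing any real \( \ell^* \in (d(z,x'),c^*) \) gives \( B_d(z,\ell^*) = B_d(x',\ell^*) \) and hence \( \phi(B_d(x,\ell^*)) = B_d(x',\ell^*) \), contradicting \( \ell^* \in S_{x,x'} \). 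Therefore \( S_{x,x'} = (0,c^*(x,x')] \cap \RR^+ \) always \emph{contains} its supremum. Consequently \( S_{x,x'} \cap L \) has a maximum as soon as \( c^*(x,x') \in L \), and we never need \( c^*(x,x') \) to possess an immediate predecessor in \( L \). This is exactly what fails in Example~\ref{ex:counterexampleDelta}, where a differ-set accumulates at an unfilled two-sided cut; the lemma shows such cuts are always fillable from the differ side.

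\emph{Choosing \( L \).} Fix a dense \( A_U \subseteq U \) with \( |A_U| = \weight(U) \) (Remark~\ref{12072025}); since every ball meets \( A_U \), every class is represented by some \( b_x \) with \( x \in A_U \), so it suffices to realize the boundaries in \( C = \{ c^*(x,x') \mid x,x' \in A_U \} \), and \( |C| \leq \weight(U)^2 \) (countable if \( U \) is separable). For the first bullet take \( L = \{\inf D\} \cup D \cup C \cup \{\ell_{\mathrm{top}}\} \), adding \( \ell_{\mathrm{top}} > \max D \) only when \( \max D \) exists; as each \( c^*(x,x') \in [\inf D, d(x,x')] \), this \( L \) has minimum \( \inf D \), satisfies Condition~\ref{condition2}, has cardinality \( \leq \weight(U)^2 + |D| + 2 \), and is special by the reduction. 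For the second bullet \( C \) is countable, so take any countable dense-in-itself \( L \subseteq \RR^+ \) without endpoints containing \( D \cup C \) (e.g.\ complete \( D \cup C \cup (\QQ \cap \RR^+) \) to such a set); it has order type \( \eta \) and contains every boundary. The main obstacle is precisely the openness lemma of the third paragraph, together with the verification that enlarging \( L \) by the levels in \( C \) introduces no new obstruction: the new nodes are again of the form \( b_x(\ell) \), and since \( \Iso(U) \) and the orbit data \( S_{x,x'} \) are intrinsic to \( U \), no new boundaries appear.
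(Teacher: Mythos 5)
Your proposal is correct and takes essentially the same route as the paper's proof of Lemma~\ref{lem:spacestotreesspecial}: your ``openness lemma'' (the differ-set contains its supremum) is precisely the paper's key step showing that \( \bar \ell = \sup L'_{x,x'} \) is a non-equivalence level (the case \( \bar\ell \notin L'_{x,x'} \) in the paper's case analysis), and your set \( C \) of boundaries \( c^*(x,x') \) over a dense \( A_U \) plays exactly the role of the suprema adjoined in~\eqref{13072025}, with the same cardinality bookkeeping (the paper justifies restricting to a dense set via local constancy of \( L'_{x,x'} \), you via representability of every ball by a center in \( A_U \) --- both work). One cosmetic repair: define \( S_{x,x'} \) as a subset of \( \RR^+ \) rather than of \( L \) (the orbit condition makes sense for every real radius), since your argument picks a real \( \ell^* \in (d(z,x'),c^*) \) that need not lie in \( L \); this is clearly your intent, as your final paragraph already treats \( S_{x,x'} \) and \( c^*(x,x') \) as intrinsic to \( U \).
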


\begin{proof}
Let \( d \) be the ultrametric of \( U \) and fix a dense set \( A_U\subseteq U \) of cardinality \( \weight (U) \).
Let \( L' \) be obtained from \( D\) by adding to it \( \inf D \) if the latter is greater than \( 0 \), and \( 2 \max D\) if \( D \) has a maximum, so that \( |L'| \leq |D|+2 \). 
For each \( x,x' \in U \), let \( L'_{x,x'} \) be the set of those \( \ell \in L' \) such that there is no \( \psi \in \Iso(U) \) for which \( d(\psi(x),x') < \ell \), and observe that
 \( L'_{x,x'} \) is  downward closed in \( L' \). Set
\begin{equation} \label{13072025}
L = L' \cup \{ \sup L'_{x,x'} \mid x,x' \in U \text{ and } L'_{x,x'} \neq \emptyset \}.
\end{equation}
Then \( L \) has size \( |L| \leq  \weight(U)^2 + |D| + 2 \).
Indeed, if \( \sup L'_{x,x'}= \bar \ell \), \( d(x,y)< \bar \ell \), and \( d(x',y')< \bar \ell \), then \( L'_{y,y'}=L'_{x,x'} \); so in \eqref{13072025} the points \( x,x' \) can be taken ranging in a dense subset of \( U \).
Also, \( L \) satisfies Condition~\ref{condition2} because \( L \supseteq L' \). Moreover, by construction \( \inf L = \inf L' = \inf D \), and \( L \) has minimum \( \min L = \min L' = \inf D \) if \( \inf D > 0 \).

We need to show that if \( \func{F}\) is defined starting from such \( L \), then \( \tr = \func{F}(U) \) is special, i.e.\ \( \Delta(\tr) \) is an \( L \)-tree. 
The key point is that by the way the \emph{full} functor \( \func{F} \) is defined, 
for all \( \ell \in L \), \( (B,\ell) , (B',\ell ) \in T \), \( x \in B \), and \( x' \in B' \) we have
\begin{equation} \label{eq:simforultra}
(B,\ell) \sim (B',\ell)  \iff d(\psi(x),x') < \ell \text{ for some } \psi \in \Iso(U) ,
\end{equation}
and hence if \( \ell \in L' \) also
\begin{equation} \label{eq:simforultra-2}
(B,\ell) \sim (B',\ell) \iff \ell \notin L'_{x,x'}.
\end{equation}
Pick two \( \leq_{\Delta(T)} \)-incomparable \( [(B,\ell)], [(B',\ell')] \in \Delta(T) \) and two points \( x \in B \) and \( x' \in B' \). 
Notice that for every \( \ell'' \leq \min \{ \ell, \ell' \} \), we have that \( \spl{[(B,\ell)],[(B',\ell')]} \) exists if and only if \( \spl{[(B_d(x,\ell''),\ell'')],[(B_d(x',\ell''),\ell'')]} \) exists, and in that case they coincide.
Therefore, without loss of generality, we may assume that \( \ell = \ell' \), and that \( \ell \in L' \) (since \( L' \) is coinitial in \( L \)).
We need to show that there is a maximal \( \bar \ell \geq \ell \) such that \( (B,\ell)|_{\bar \ell} \not \sim (B',\ell )|_{\bar \ell} \), as in this case \( \bar \ell = \spl{[(B,\ell)],[(B',\ell)]} \). 
Since \( (B,\ell) \not\sim (B',\ell) \), we have \( \ell \in L'_{x,x'} \) by~\eqref{eq:simforultra-2}, and in particular \( L'_{x,x'} \neq \emptyset \). 
Let \( \bar \ell = \sup L'_{x,x'} \in L \), so that \( \bar \ell \geq \ell \).
Suppose first that \( \ell^*\in L \) is such that \( \ell \leq \ell^*< \bar \ell \). Then there is \( \ell'' \in L'_{x,x'} \) such that \( \ell^*\leq \ell'' \).
Therefore \( (B,\ell)|_{\ell''} \not\sim (B',\ell)|_{\ell''} \) by~\eqref{eq:simforultra-2}, and thus \( (B,\ell)|_{\ell^*} \not\sim (B',\ell)|_{\ell^*} \).
Now consider \( \bar \ell \). If \( \bar \ell \in L'_{x,x'}  \), then \( (B,\ell)|_{\bar \ell} \not\sim (B',\ell)|_{\bar \ell} \) by~\eqref{eq:simforultra-2}. If instead \( \bar \ell \notin L'_{x,x'} \), then there are \( \ell^*\in L'_{x,x'} \) which are arbitrarily close to \( \bar \ell \). Therefore, using~\eqref{eq:simforultra} we get that  \( (B,\ell)|_{\bar \ell} \sim (B',\ell)|_{\bar \ell} \) would imply \( (B,\ell)|_{\ell^*} \sim (B',\ell)|_{\ell^*} \) for some \( \ell^*\in L'_{x,x'}  \), contradicting~\eqref{eq:simforultra-2}. Therefore in all cases \( (B,\ell)|_{\bar \ell} \not\sim (B',\ell)|_{\bar \ell} \). Finally, assume that \( \ell^*\in L \) is such that \( \bar \ell < \ell^*\): we claim that \( (B,\ell)|_{\ell^*} \sim (B',\ell)|_{\ell^*} \). If \( \ell^*\in L' \), then \( \ell^*\notin L'_{x,x'} \) by choice of \( \bar \ell \), and therefore the claim follows again from~\eqref{eq:simforultra-2}. If instead \( \ell^*\notin L' \), then by definition of \( L \) it is the supremum of a set of elements of \( L' \). It follows that there is \( \ell'' \in L' \) such that \( \bar \ell < \ell'' < \ell^*\). As already shown, this implies that \( (B,\ell)|_{\ell''} \sim (B',\ell)|_{\ell''} \), and hence also \( (B,\ell)|_{\ell^*} \sim (B',\ell)|_{\ell^*} \). 
This proves that \( \bar \ell \) is as required.

The first item in the additional part is already achieved by the above construction. 
For the second item, instead, one can easily check that it is enough to start the construction with \( L' = D \cup \QQ \): since under our assumptions the resulting \( L \) will be a countable dense linear order with neither a minimum nor a maximum, its order type will be \( \eta \) by Cantor's theorem.
\end{proof}

As observed, the category \( \cat{T}_L^{\mathrm{pr}}\) is determined up to isomorphism by the isomorphism type of \( L \), therefore we get:

\begin{corollary} \label{cor:spacestotrees2}
If \( D \) is countable, then for every separable \( U \in \cat{U}_D \) there is a categorical full embedding \( \func{F}_{\QQ} \colon \cat{U}_D \to \cat{T}^{\mathrm{pr}}_{\QQ} \) such that \( \func{F}_{\QQ}(U) \) is countable and special, and moreover \( \Iso(U) \cong \Aut(\func{F}_{\QQ}(U)) \).
\end{corollary}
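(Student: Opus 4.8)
The plan is to deduce the statement almost immediately by combining the three main results about the functor \( \func{F} \) with the observation, made at the beginning of this section, that an order isomorphism \( L \cong L' \) induces a categorical isomorphism \( \cat{T}_L^{\mathrm{pr}} \cong \cat{T}_{L'}^{\mathrm{pr}} \) preserving all relevant properties (in particular, being special, as well as the isomorphism type of the automorphism groups). So fix a countable \( D \) and a separable \( U \in \cat{U}_D \). First I would invoke the second item of Lemma~\ref{lem:spacestotreesspecial}: since \( D \) is countable and \( U \) is separable, it yields a linear order \( L \) satisfying Condition~\ref{condition2}, of order type \( \eta \), such that the functor \( \func{F} \colon \cat{U}_D \to \cat{T}_L^{\mathrm{pr}} \) built from this \( L \) produces a \emph{special} tree \( \func{F}(U) \).

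Since \( L \) has order type \( \eta \), it is isomorphic to \( \QQ \) as a linear order; fix such an order isomorphism and let \( \Phi \colon \cat{T}_L^{\mathrm{pr}} \to \cat{T}_{\QQ}^{\mathrm{pr}} \) be the induced categorical isomorphism from the discussion preceding this section. Define \( \func{F}_{\QQ} = \Phi \circ \func{F} \). Being the composition of a categorical full embedding (Theorem~\ref{thm:spacestotrees}) with a categorical isomorphism, \( \func{F}_{\QQ} \colon \cat{U}_D \to \cat{T}_{\QQ}^{\mathrm{pr}} \) is again a categorical full embedding.

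It remains to verify the three properties of \( \func{F}_{\QQ}(U) \). Since \( \Phi \) preserves being special, \( \func{F}_{\QQ}(U) = \Phi(\func{F}(U)) \) is special. For countability, Theorem~\ref{thm:spacestotrees} gives \( |\func{F}_{\QQ}(U)| = |\func{F}(U)| \leq \weight(U) \cdot |L| = \aleph_0 \), using \( \weight(U) = \aleph_0 \) (as \( U \) is separable) and \( |L| = \aleph_0 \) (as \( L \) has order type \( \eta \)). Finally, Corollary~\ref{cor:spacestotrees} yields \( \Iso(U) \cong \Aut(\func{F}(U)) \), while \( \Phi \) preserves the isomorphism type of automorphism groups, so \( \Aut(\func{F}_{\QQ}(U)) = \Aut(\Phi(\func{F}(U))) \cong \Aut(\func{F}(U)) \); composing these two isomorphisms gives \( \Iso(U) \cong \Aut(\func{F}_{\QQ}(U)) \), as desired. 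Note that the functor \( \func{F}_{\QQ} \) is allowed to depend on \( U \), which is exactly what happens here, since \( L \) is chosen so as to make \( \func{F}(U) \) special for that particular \( U \).

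There is no genuine obstacle at this stage: all the real work is already contained in Lemma~\ref{lem:spacestotreesspecial}, Theorem~\ref{thm:spacestotrees}, and Corollary~\ref{cor:spacestotrees}. The only points requiring a little care are confirming that the \( L \) provided by the second item of Lemma~\ref{lem:spacestotreesspecial} has order type exactly \( \eta \) (so that it is isomorphic to \( \QQ \) by Cantor's characterization of the countable dense linear order without endpoints), and checking that the transport functor \( \Phi \) carries over every property we need --- both of which are guaranteed by the results cited above.
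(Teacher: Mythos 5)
Your proposal is correct and follows exactly the route the paper intends: the corollary is stated right after the remark that \( \cat{T}_L^{\mathrm{pr}} \) is determined up to categorical isomorphism by the order type of \( L \), so the paper likewise obtains it by combining the second item of Lemma~\ref{lem:spacestotreesspecial} (giving \( L \) of order type \( \eta \) with \( \func{F}(U) \) special), Theorem~\ref{thm:spacestotrees}, Corollary~\ref{cor:spacestotrees}, and transport along the isomorphism \( L \cong \QQ \). Your added checks --- the cardinality bound \( |\func{F}(U)| \leq \weight(U)\cdot|L| = \aleph_0 \) and the dependence of \( \func{F}_\QQ \) on the fixed \( U \) --- are exactly the right points of care and match the paper's reasoning.
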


In particular, for every countable \( D \subseteq \RR^+ \) there is a categorical full embedding from the category of Polish ultrametric spaces with distance set contained in \( D \) into the category of countable pruned \( \QQ \)-trees, and for every Polish ultrametric space \( U \in \cat{U}_D \) fixed in advance we can further ensure that its associated \( \QQ \)-tree is special.



\subsection{The functor \(\func{G}\)} \label{subsec:G}

Throughout this subsection, we assume
\smallskip

\begin{center}
\fbox{\begin{minipage}{0.85\textwidth}
\begin{condition} \label{condition3}
\( L \) is a countable linear order such that either \( L \) has no minimum or else \( L \setminus \{ \min L \} \) has no minimum, and \( D \subseteq \RR^+ \) is such that \( \inf D>0  \) and \( 2 \cdot L\) embeds into \( D \).
\end{condition}
\end{minipage}}\end{center}

\smallskip

\noindent
Notice that since \( \inf D > 0 \), then \( \cat{U}_D = \cat{U}^0_D \), and \( \cat{U}^0_D \) actually consists of \emph{uniformly} discrete spaces.
We define the functor \( \func{G} \colon \cat{T}_L^{\mathrm{pr}} \to \cat{U}^0_{D}\) as follows.

Fix an embedding of \( 2 \cdot L \) into \( D \), and for every \( \ell \in L \) let \( \ell^- \) be the image of \( (0,\ell) \) and \( \ell^+ \) be the image of \( (1,\ell) \), so that \( \ell^- < \ell^+ \). 
Given \( \tr \in \cat{T}_L^{\mathrm{pr}}\), let \( \func{G}(\tr) \in \cat{U}_D \) be defined by equipping \( T \) with the distance function
\[  
d_T(t_0,t_1) = 
\begin{cases}
0 & \text{if } t_0 = t_1 \\
\lev_T(t_i)^- & \text{if } t_{1-i} <_T t_i \text{ for some } i \in \{0,1\} \\
\spl{t_0,t_1}^+ & \text{if } t_0 \text{ and } t_1 \text{ are \( \leq_T \)-incomparable}.
\end{cases}
\]
Obviously \( |\func{G}(\tr)| = |\tr| \) because we did not change the underlying set.
It is straightforward to verify that \( d_T \) is an ultrametric, and that \( d_T(t,t') \geq \lev_T(t)^- \) for all distinct \( t,t' \in T \), hence \( B_{d_T}(t,\lev_T(t)^-) = \{ t \} \) for every \( t \in T \). Moreover, \( (T,d_T) \) is uniformly discrete, and thus complete, because \( \inf D > 0 \). 

\begin{lemma} \label{lem:treestospaces}
For every \( \tr,\tr' \in \cat{T}_L^{\mathrm{pr}} \) and every \( \varphi \colon T \to T' \), the map \( \varphi \) is an embedding between the \( L \)-trees \( \tr \) and \( \tr' \) if and only if it is an isometric embedding between the complete ultrametric spaces \(\func{G}(\tr)\) and \(\func{G}(\tr')\).
\end{lemma}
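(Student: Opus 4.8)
The plan is to prove both directions of the equivalence by unpacking the definition of the distance function $d_T$ on $\func{G}(\tr)$ and relating the three cases of the case-split to the order-theoretic structure of the $L$-tree. The central observation is that $d_T(t_0,t_1)$ records \emph{two} pieces of data simultaneously: whether $t_0,t_1$ are $\leq_T$-comparable (by whether the value is of the form $\ell^-$ or $\ell^+$, which lie in disjoint copies of $L$ inside $D$), and the relevant level or splitting point. First I would record this as the key lemma: because the embedding $2\cdot L \hookrightarrow D$ keeps the $\ell^-$ and $\ell^+$ values in separate, disjoint ranges, an isometric map \emph{must} send comparable pairs to comparable pairs and incomparable pairs to incomparable pairs, and moreover it must preserve levels (since $\lev_T(t)$ is recoverable as the unique $\ell$ with $\ell^- = d_T(t,t')$ for $t' <_T t$, using that $\tr$ is pruned so such a $t'$ exists below, or equivalently via the downward structure).

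For the forward direction, suppose $\varphi$ is an $L$-tree embedding: it is injective, level-preserving ($\lev_{T'}(\varphi(t)) = \lev_T(t)$), and order-reflecting ($t \leq_T t' \iff \varphi(t) \leq_{T'} \varphi(t')$). I would verify $d_{T'}(\varphi(t_0),\varphi(t_1)) = d_T(t_0,t_1)$ by checking each of the three cases. The comparable cases are immediate from level-preservation and the fact that $\varphi$ preserves which element is below: $t_{1-i} <_T t_i$ transfers to $\varphi(t_{1-i}) <_{T'} \varphi(t_i)$, and the distance $\lev_{T'}(\varphi(t_i))^- = \lev_T(t_i)^-$ matches. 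For the incomparable case the work is to show $\spl{\varphi(t_0),\varphi(t_1)} = \spl{t_0,t_1}$. This follows because $\varphi$ commutes with the restriction maps $t \mapsto t|_\ell$: using condition~\ref{def:L-tree-2}, $\varphi(t|_\ell)$ is the unique element above $\varphi(t)$ at level $\ell$, namely $\varphi(t)|_\ell$, so $t_0|_\ell \neq t_1|_\ell \iff \varphi(t_0)|_\ell \neq \varphi(t_1)|_\ell$ by injectivity, whence the two splitting sets from condition~\ref{def:L-tree-4} coincide and so do their maxima.

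For the reverse direction, assume $\varphi$ is an isometric embedding of $\func{G}(\tr)$ into $\func{G}(\tr')$; I must recover that it is an $L$-tree embedding. Injectivity is free. To get level-preservation, I would use that $\lev_T(t)$ is determined by $d_T$-data: concretely, $\ell = \lev_T(t)$ is characterized as the level such that $B_{d_T}(t,\ell^-) = \{t\}$ but $t$ is \emph{not} isolated at any strictly smaller radius of the form $\ell'^-$ with $\ell' <_L \ell$ — and since $\tr$ is pruned, there genuinely exist elements $t'$ with $t' <_T t$ realizing the value $\lev_T(t)^-$, so the level is read off from the metric. The main obstacle, and the step I would treat most carefully, is arguing that $\varphi$ \emph{reflects and preserves comparability} purely from isometry: given $t_0,t_1$ with $t_0 <_T t_1$, the distance $d_T(t_0,t_1) = \lev_T(t_1)^-$ lies in the ``$-$'' copy of $L$, so $d_{T'}(\varphi(t_0),\varphi(t_1))$ must also be a ``$-$''-value, forcing $\varphi(t_0),\varphi(t_1)$ to be comparable in $\tr'$; and the value pins down which of the two is higher (the one whose level equals the recorded $\ell$), so the direction of the inequality is preserved. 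Symmetrically, incomparable pairs have $\spl{\cdot}^+$-valued distances in the ``$+$'' copy, which forces images to be incomparable. Once comparability and its direction are shown to be preserved in both directions (which uses the disjointness of the $\ell^-$ and $\ell^+$ ranges as the crucial input, and this is exactly why the construction uses $2\cdot L$ rather than $L$), the biconditional $t \leq_T t' \iff \varphi(t) \leq_{T'} \varphi(t')$ follows, completing the proof that $\varphi$ is an $L$-tree embedding.
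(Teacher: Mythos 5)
Your forward direction is correct and is essentially the paper's (the paper is terser, but the content — level-preservation, transfer of incomparability, and \( \spl{\varphi(t_0),\varphi(t_1)} = \spl{t_0,t_1} \) via \( \varphi(t|_\ell) = \varphi(t)|_\\ell \) — is the same). The gap is in the reverse direction, at exactly the step you flag as delicate: recovering levels. First, your metric characterization of \( \lev_T(t) \) is wrong as stated: since every distance from \( t \) is \( \geq \lev_T(t)^- \), the ball \( B_{d_T}(t,\ell'^-) \) equals \( \{ t \} \) for \emph{every} \( \ell' \leq_L \lev_T(t) \), so ``isolated at radius \( \ell^- \) but not at smaller radii'' does not single out the level; the relevant invariant is which distances are \emph{realized} from \( t \). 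But even after that repair the argument does not close, because realized distances do not transfer along a non-surjective isometric embedding: from \( t' <_T t \) you learn only that \( d_{T'}(\varphi(t),\varphi(t')) = \lev_T(t)^- \) is an \( \ell^- \)-value, hence \( \varphi(t) \) and \( \varphi(t') \) are \( \leq_{T'} \)-comparable and the \emph{higher} of the two sits at level \( \lev_T(t) \) --- but nothing yet says that \( \varphi(t) \) is the higher one. Your later claim that ``the value pins down which of the two is higher (the one whose level equals the recorded \( \ell \))'' presupposes level-preservation, which is the very thing being proved, so as written the argument is circular. The paper breaks the circle with a three-point configuration: choosing \( t_2 <_T t_1 <_T t_0 \) (possible below any node of non-minimal level, by prunedness), the three images are pairwise comparable (all distances are \( \ell^- \)-values), and the equalities \( d(\varphi(t_0),\varphi(t_1)) = d(\varphi(t_0),\varphi(t_2)) = \lev_T(t_0)^- > d(\varphi(t_1),\varphi(t_2)) \) force \( \varphi(t_0) \) to be the top of the chain, since in a three-element chain the top is the unique point equidistant from, and maximally distant to, the other two. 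Some such argument is needed; pairs alone do not determine the direction of comparability.

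Second, and independently: when \( L \) has a minimum and \( \lev_T(t) = \min L \), prunedness does \emph{not} provide any \( t' <_T t \) (branches are indexed by all of \( L \), so nothing lies below level \( \min L \)), and your method gives no handle on such nodes. This is precisely where the clause of Condition~\ref{condition3} that \( L \setminus \{ \min L \} \) has no minimum must enter --- a hypothesis your proof never invokes, which is a sign the case is missing. The paper handles it separately: a node at level \( \min L \) realizes every distance in \( \{ \ell^- \mid \ell \in L \setminus \{ \min L \} \} \) (via the nodes \( t|_\ell \)), and since \( L \setminus \{ \min L \} \) has no least element, the only nodes of \( T' \) realizing all of these distances are those at level \( \min L \); hence \( \varphi \) preserves the bottom level. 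With these two repairs your outline goes through: once levels are preserved, your derivation of order-preservation in both directions (and your forward direction) is sound.
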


\begin{proof} 
One direction is easy. If \( \varphi \) is an embedding of \( L \)-trees, then, in particular, \( \lev_{T'}(\varphi(t)) = \lev_T(t) \), and \( t_0,t_1 \in T \)  are \( \leq_T \)-incomparable if and only if \( \varphi(t_0), \varphi(t_1) \) are \( \leq_{T'}\)-incomparable too, in which case \( \spl{\varphi(t_0),\varphi(t_1)} = \spl{t_0,t_1} \). Thus \( \varphi \) is also an isometric embedding. 

Vice versa, suppose that \( d_{T'}(\varphi(t_0),\varphi(t_1)) = d_T(t_0,t_1) \) for all \( t_0,t_1 \in T \), so that, in particular, \( \varphi \) is injective.
We first show that \( \varphi \) preserves levels. 
Pick any \( t_0 \in T \). 
If \( \lev_T(t_0) = \min L \), then it realizes all distances in \( \{ \ell^- \mid \ell \in  L \setminus \{ \min L \} \} \), that is: 
for every \( \ell \in L \setminus \{ \min L \} \) there is \( t_1 \in T \) such that \( d_T(t_0,t_1) = \ell^- \) (take \( t_1 = t_0|_{\ell} \)). 
By Condition~\ref{condition3}, we have that \( L \setminus \{ \min L \} \) does not have a minimum (since \( L \) does by case assumption), hence the only nodes in \( T' \) realizing all distances in \( \{ \ell^- \mid \ell \in  L \setminus \{ \min L \} \} \) are those in \( \Lev_{\min L}(T') \): it follows that \( \lev_{T'}(\varphi(t_0)) = \min L = \lev_T(t_0) \).
Suppose now that \( \lev_T(t_0) \) is not the minimum of \( L \). Using again that \(L\) satisfies the first part of Condition~\ref{condition3}, there are infinitely many \( \ell \in L \) below \( \lev_T(t_0) \). 
Since \( \tr \) is pruned, this means that we can find \( t_1,t_2 \in T\)
such that \( t_2 <_T t_1 <_T t_0\). Since \( d_T(t_1,t_0) = d_T(t_2,t_0) = \lev_T(t_0)^- > \lev_T(t_1)^- = d_T(t_1,t_2) \),  
the points
\( \varphi(t_0) \), \( \varphi(t_1) \), and \( \varphi(t_2) \) are pairwise \( \leq_{T'} \)-comparable (otherwise two of them would be at distance \( \ell^+ \) for some \( \ell \in L \), which is impossible since \( \varphi \) is distance preserving), and \( \varphi(t_0) >_{T'} \varphi(t_1),\varphi(t_2) \)
because the distance of \( \varphi(t_0) \) from both \( \varphi(t_1) \) and \( \varphi(t_2) \) is greater than \( d_{T'}(\varphi(t_1),\varphi(t_2)) \).
Hence \( \lev_{T'}(\varphi(t_0))^- = d_{T'}(\varphi(t_0),\varphi(t_1)) = \lev_T(t_0)^-\), and therefore \( \lev_{T'}(\varphi(t_0)) = \lev_T(t_0) \).

It remains to show that \( t_0 <_T t_1 \iff \varphi(t_0) <_{T'} \varphi(t_1) \), for all \( t_0,t_1 \in T \). If \( t_0 <_T t_1 \), then \( \lev_T(t_0) < \lev_T(t_1) \) and \( d_T(t_0,t_1) = \lev_T(t_1)^- \): it follows that \( \varphi(t_0) \) and \( \varphi(t_1) \) are \( \leq_{T'} \)-comparable (because their distance is of the form \( \ell^- \), for some \( \ell \in L \)), and indeed \( \varphi(t_0) <_{T'} \varphi(t_1) \) because \( \lev_{T'}(\varphi(t_0)) = \lev_T(t_0) < \lev_T(t_1) = \lev_{T'}(\varphi(t_1)) \). The argument for the reverse implication is similar.
\end{proof}

In view of Lemma~\ref{lem:treestospaces}, it is natural to let \( \func{G} \) be the identity on arrows. This means that \( \func{G} \) is a fully faithful functor, and since it is obviously injective on objects we get:

\begin{theorem} \label{thm:treestospaces}
The functor \( \func{G} \colon \cat{T}_L^{\mathrm{pr}} \to \cat{U}^0_{D}\) is a categorical full embedding such that \( \func{G} ( \tr ) \) is uniformly discrete and \( |\func{G}(\tr) | = |\tr| \) (hence also \( \weight(\func{G}(\tr)) = |\tr| \)).
\end{theorem}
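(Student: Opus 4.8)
The plan is to read the theorem off almost directly from Lemma~\ref{lem:treestospaces} together with the observations already recorded when \( \func{G} \) was introduced; the genuine work has in fact been carried out in proving that lemma. First I would fix the action of \( \func{G} \) on arrows: since Lemma~\ref{lem:treestospaces} says that a map \( \varphi \colon T \to T' \) is an embedding of \( L \)-trees exactly when it is an isometric embedding of \( \func{G}(\tr) \) into \( \func{G}(\tr') \), the natural choice is to set \( \func{G}(\varphi) = \varphi \). I would then check that this makes \( \func{G} \) a genuine functor into \( \cat{U}^0_{D} \): the object \( \func{G}(\tr) = (T, d_T) \) lies in \( \cat{U}^0_{D} \) because \( d_T \) takes values in \( \{ \ell^-, \ell^+ \mid \ell \in L \} \subseteq D \), and because \( (T, d_T) \) is a uniformly discrete (hence discrete and complete) ultrametric space, as already observed; the easy direction of Lemma~\ref{lem:treestospaces} guarantees that \( \func{G}(\varphi) \) is indeed an arrow of \( \cat{U}^0_{D} \). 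Functoriality is then immediate, since \( \func{G} \) acts as the identity on both underlying sets and on maps.

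Next I would verify the three defining properties of a categorical full embedding. Faithfulness is automatic, because \( \func{G} \) is the identity on arrows, so \( \func{G}(\varphi_0) = \func{G}(\varphi_1) \) forces \( \varphi_0 = \varphi_1 \). Fullness is exactly the content of the nontrivial direction of Lemma~\ref{lem:treestospaces}: any isometric embedding \( \varphi \colon \func{G}(\tr) \to \func{G}(\tr') \) is automatically an \( L \)-tree embedding \( \tr \to \tr' \), hence equals \( \func{G}(\varphi) \) and therefore lies in the image of \( \func{G} \) on arrows. For injectivity on objects, I would apply Lemma~\ref{lem:treestospaces} to the identity map: if \( \func{G}(\tr) = \func{G}(\tr') \), then \( T = T' \) and \( d_T = d_{T'} \), so \( \id_T \) is an isometry and hence an isomorphism of \( L \)-trees, which forces \( \tr = \tr' \); alternatively, one simply observes that \( d_T \) reconstructs both \( \lev_T \) and \( \leq_T \).

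Finally I would dispatch the cardinality and weight claims. The equality \( |\func{G}(\tr)| = |\tr| \) holds because \( \func{G} \) leaves the underlying set \( T \) unchanged, as already remarked, and uniform discreteness was likewise established when \( \func{G} \) was defined. For the weight, I would use that a uniformly discrete metric space carries the discrete topology, so its singletons form the unique minimal base (equivalently, its only dense subset is the whole space); hence \( \weight(\func{G}(\tr)) = |\func{G}(\tr)| = |\tr| \).

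I expect no real obstacle at the level of the theorem itself: every nontrivial point---most importantly the recovery of the level function and of the partial order from the metric---was already handled inside Lemma~\ref{lem:treestospaces}, so what remains is bookkeeping about the functorial structure and an elementary remark on the topology of a uniformly discrete space.
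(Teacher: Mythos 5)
Your proposal is correct and follows essentially the same route as the paper: the paper also defines \( \func{G} \) as the identity on arrows, derives full faithfulness directly from Lemma~\ref{lem:treestospaces}, notes injectivity on objects as obvious, and reads off uniform discreteness, \( |\func{G}(\tr)| = |\tr| \), and the weight claim from the remarks made when \( \func{G} \) was defined. Your extra bookkeeping (checking that \( d_T \) takes values in \( D \), and recovering \( \lev_T \) and \( \leq_T \) from the metric for injectivity on objects) just makes explicit what the paper leaves implicit.
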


In particular, for every countable linear order \( L \) satisfying the first part of Condition~\ref{condition3} there is a categorical full embedding from the category of pruned \( L \)-trees into the category of uniformly discrete Polish ultrametric spaces, which combined with the functor \( \func{U} \) from Section~\ref{subsec:U} yields also a categorical full embedding from the category of pruned \( L \)-trees into the category of perfect locally compact Polish ultrametric spaces.
This can be extended to an arbitrary countable linear order \( L \) because, by the construction in the proof of~\ref{thm:exactdistancesnew-2} \( \Rightarrow \) \ref{thm:exactdistancesnew-1''} of Theorem~\ref{thm:exactdistancesnew}, there is a categorical full embedding from \( \cat{T}_L^{\mathrm{pr}} \) to \( \cat{T}_{L'}^{\mathrm{pr}} \), where \( L' = \omega^* + L \) is still countable and has no minimum.

Since the restriction of \( \func{G} \) to \( \Aut(\tr) \) is the identity, 
the underlying sets of \( \tr \) and \( \func{G}(\tr) \) are the same set \( T \), and \( d_T \) induces the discrete topology on it, we also have:

\begin{corollary} \label{cor:treestospaces}
For every \( \tr \in \cat{T}_L^{\mathrm{pr}} \),
\[  
\Aut(\tr) = \Iso(\func{G}(\tr)) .
\]
\end{corollary}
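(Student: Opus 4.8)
The plan is to deduce the corollary almost immediately from Lemma~\ref{lem:treestospaces}, first identifying the two groups as literally the same set of bijections of \( T \), and then checking that they carry the same topology.

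First I would unwind the definitions. By construction \( \func{G}(\tr) \) has underlying set \( T \), and \( \func{G} \) acts as the identity on points. An element of \( \Aut(\tr) \) is then a bijection \( \varphi \colon T \to T \) that is an embedding of the \( L \)-tree \( \tr \) into itself (an isomorphism with equal domain and codomain), while an element of \( \Iso(\func{G}(\tr)) \) is a bijection \( \varphi \colon T \to T \) that is an isometric embedding of \( \func{G}(\tr) \) into itself. Applying Lemma~\ref{lem:treestospaces} in the case \( \tr' = \tr \), a map \( \varphi \colon T \to T \) is an embedding of \( L \)-trees if and only if it is an isometric embedding of \( \func{G}(\tr) \); restricting this equivalence to bijections yields \( \Aut(\tr) = \Iso(\func{G}(\tr)) \) as subsets of \( \Sym(T) \).

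Next I would verify that the two group topologies coincide. The group \( \Aut(\tr) \) is equipped with the pointwise convergence topology with respect to the discrete topology on \( T \), so a neighborhood basis of the identity is given by the pointwise stabilizers \( \{ \varphi : \varphi(t) = t \text{ for all } t \in F \} \) with \( F \subseteq T \) finite. For \( \Iso(\func{G}(\tr)) \), a neighborhood basis of the identity consists of the sets \( \{ \varphi : d_T(\varphi(t),t) < \varepsilon \text{ for all } t \in F \} \) with \( F \subseteq T \) finite and \( \varepsilon > 0 \). Here I would invoke Condition~\ref{condition3}: since every nonzero value of \( d_T \) lies in \( D \) and \( \inf D > 0 \), the space \( (T,d_T) \) is uniformly discrete, so for any \( \varepsilon \leq \inf D \) the set \( \{ \varphi : d_T(\varphi(t),t) < \varepsilon \} \) is exactly the stabilizer \( \{ \varphi : \varphi(t) = t \} \). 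Hence the two neighborhood bases at the identity agree, the topologies coincide, and \( \Aut(\tr) = \Iso(\func{G}(\tr)) \) as topological groups.

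I do not expect any real obstacle, as the corollary is essentially a repackaging of Lemma~\ref{lem:treestospaces} together with the observations that \( \func{G} \) is the identity on underlying sets and on arrows. The only step requiring a little care is the topological one, where one must check that the uniform lower bound \( \inf D > 0 \) genuinely collapses the metric neighborhoods to pointwise stabilizers; this is precisely the reason \( \func{G} \) is arranged to take values in uniformly discrete spaces.
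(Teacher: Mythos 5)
Your proof is correct and takes essentially the same route as the paper, which deduces the corollary directly from Lemma~\ref{lem:treestospaces} together with the observations that \( \func{G} \) is the identity on arrows, that \( \tr \) and \( \func{G}(\tr) \) share the same underlying set \( T \), and that \( d_T \) induces the discrete topology on \( T \) because \( \inf D > 0 \). Your explicit comparison of neighborhood bases of the identity simply spells out this last point in more detail.
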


\section{Generalized wreath products} \label{sec:wreathproducts}

\subsection{The classical definition} \label{subsec:classicalwreathproduct}

Generalized wreath products were introduced by Hall~\cite{hall1962} (linear case) and Holland~\cite{holland1969} (general case) as powerful generalizations of the usual wreath product of two groups.
A further variant was introduced by Malicki~\cite{malick2014} to study isometry groups of Polish ultrametric spaces with certain special features, called \( W \)-spaces.
Here we present their definitions in a unified and quite general framework. 
Although our applications to isometry groups of Polish ultrametric spaces involve wreath products of full permutation groups, up to Section~\ref{subsec:projectivewreathproduct} we follow the literature and consider wreath products \( \Wr^{S}_{\delta \in \Delta} H_\delta \) of arbitrary transitive permutation groups \( H_\delta \).

Each variant of the generalized wreath product is determined by two ingredients:%
\footnote{In~\cite{malick2014}, the partial order \( \Delta \) is called the ``underlying plenary family'' of \( \Wr^S_{\delta \in \Delta} H_\delta \), while \( S \) is called the ``underlying set'' of \( \Wr^S_{\delta \in \Delta} H_\delta \).}
\begin{enumerate-(a)}
\item
A nonempty partially ordered set \( \Delta \), together with a labeling function \( N \colon \Delta \to \mathrm{Card} \colon \delta \mapsto N_\delta \) such that \( N_\delta \neq 0 \) for all \( \delta \in \Delta \).
The order of \( \Delta \) will be denoted by \( \leq_\Delta \), or by \( \leq \) when \( \Delta \) is clear from the context.
The labeled partial order \( \langle \Delta, N \rangle \) is called the \markdef{skeleton} of the wreath product. 
We say that the skeleton is \markdef{countable} if \( |\Delta| \leq \aleph_0 \) and \( N_\delta \leq \omega \) for all \( \delta \in \Delta \). It is \markdef{linear} when \( \Delta \) is a linear order.
\item \label{skeleton-c}
A set \( S \subseteq \prod_{\delta \in \Delta} N_\delta \), called (\markdef{global}) \markdef{domain}
of the wreath product, closed under pointwise perturbations, i.e.\ such that for every \( x \in S \), \( \delta \in \Delta \), and \( i \in N_\delta \), there is some \( x^\delta_i \in S\) satisfying \( x^\delta_i(\delta) = i \) and \( x^\delta_i(\gamma) = x(\gamma) \) for all \( \gamma > \delta \).
\end{enumerate-(a)}

Despite the chosen notation, in general the element \( x^\delta_i \in S  \) in condition~\ref{skeleton-c} might not be unique, although in many cases there are natural choices for it.
For example, Holland~\cite{holland1969} and Malicki~\cite{malick2014} set \( x^\delta_i(\gamma) = 0 \) and \( x^\delta_i (\gamma) = x(\gamma) \) for every \( \gamma \not\geq \delta \), respectively. 
As we will see (Definition~\ref{def:wreathproduct}), these variants are inessential, as the actual choice of \( x^\delta_i \) does not matter at all.

Given a skeleton \( \langle \Delta, N \rangle \) and a domain \( S \), the associated generalized wreath product \( \Wr^{S}_{\delta \in \Delta} \) is an operator that takes as input a family \( (H_\delta)_{\delta \in \Delta} \), where each \( H_\delta \subseteq \Sym(N_\delta) \) is a transitive permutation group on \( N_\delta \) that will be used on the \( \delta \)-th coordinate, and produces a group of permutations of the domain \( S \).
The following definition comes from~\cite[p.~160]{holland1969}, except that we allow arbitrary domains instead of the specific one considered by Holland.
See below for details.


\begin{defin} \label{def:wreathproduct}
Let \( \langle \Delta,N \rangle \) be a skeleton, \( S \subseteq \prod_{\delta \in \Delta} N_\delta \) be a domain, and let \( (H_\delta)_{\delta \in \Delta} \) be a family of transitive permutation groups over the corresponding sets \( N_\delta \).
The \markdef{generalized wreath product} 
\[ 
\Wr^{S}_{\delta \in \Delta} H_\delta
\] 
is the group of all permutations \( g \in \mathrm{Sym}(S) \) satisfying the following two conditions,%
\footnote{The letter ``H'' in the enumeration refers to Holland, who first introduced and studied wreath products of infinite families of groups over arbitrary partial orders.}
for all \( x,y \in S \) and \( \delta \in \Delta \):
\begin{enumerate-(H1)}
\item \label{Wr1}
\( x|_\delta = y|_\delta\) if and only if \( g(x)|_\delta = g(y)|_\delta \);
\item \label{Wr2}
the map \( i \mapsto g(x^\delta_i)(\delta)\) is a permutation of \( N_\delta\) belonging to \( H_\delta \).
\end{enumerate-(H1)}
Condition~\ref{Wr1} implies that~\ref{Wr2} is independent of the choice of the elements \( x^\delta_i \).
\end{defin}

\begin{remark} \label{rmk:permutationisautomatic}
Condition~\ref{Wr1} already yields that the map \( i\mapsto g(x_i^{\delta })(\delta ) \) considered in condition~\ref{Wr2} is a permutation of \( N_\delta \).
Therefore, if each \( H_\delta \) is the full permutation group \( \Sym(N_\delta) \), as it will be often the case in this paper, condition~\ref{Wr2} can be dropped. 
\end{remark}

In most cases, the domain \( S \) of the generalized wreath product is determined through a family \( \AAA \) of admissible supports.
More in detail, 
let \( \AAA \subseteq \pow(\Delta) \) be a nonempty family of sets such that if \( A \in \AAA \) and \( A' \) has finite symmetric difference from \( A \), then \( A'\in \AAA \) as well. 
Then
\[  
S^\AAA = \left\{ x \in \prod\nolimits_{\delta \in \Delta} N_\delta \mid \supp(x) \in \AAA \right\}
\]
is a domain; 
indeed, the condition on \( \AAA \) ensures that for every \( x \in S^\AAA \), \( \delta \in \Delta \), and \( i \in N_\delta \), there is a canonical choice for \( x^\delta_i \), namely, the one obtained by setting \( x^\delta_i(\gamma) = x(\gamma) \) for all \( \gamma \neq \delta \) (as in Malicki's~\cite{malick2014}).
Thus we can form the wreath product \( \Wr^{S^\AAA}_{\delta \in \Delta} H_\delta \), that for the sake of simplicity will also be denoted by \( \Wr^{\AAA}_{\delta \in \Delta} H_\delta \).

Hall's generalized wreath product from~\cite{hall1962} corresponds to the case where \( \Delta \) is linearly ordered and \( S = S^{\Fin} \), where \(\Fin\) is the collection of all finite subsets of \(\Delta\).
Notice that for an arbitrary \( \AAA \) as above, if $\emptyset \in \AAA$ then \(\Fin \subseteq \AAA \), so that Hall's choice for \( \AAA \) is somewhat minimal.

Holland~\cite{holland1969} considers instead the case of an arbitrary partial order \( \Delta \) with \( N_\delta > 1 \) for all \( \delta \in \Delta \), and \( S \) the set of all \( x \in \prod_{\delta \in \Delta} N_\delta \) satisfying the \emph{maximum condition}, that is, \( S = S^{\Max}\) for \( \Max \) the collection of all \( A \subseteq \Delta \) such that every nonempty subset of \( A \) has a maximal element or, equivalently, such that all strictly increasing chains in \( A \) are finite.

Finally, in~\cite{malick2014} Malicki introduced a further variant of Holland's construction by restricting the attention to supports \( A \in \Max \) all of whose infinite descending chains have no lower bound in \( \Delta \). 
Since this variant was specifically introduced to study the isometry groups of \( W \)-spaces, we will denote by \( \UM \) the collection of all supports \( A \subseteq \Delta \) with such property, and say that \( x \in \prod_{\delta \in \Delta} N_\delta \) satisfies the \emph{Malicki's condition} if \( \supp(x) \in \UM \).

\begin{remark} \label{rmk:wreathproduct}
In~\cite{malick2014}, condition~\ref{Wr1} is weakened by considering only the forward implication. This is justified by the fact that, by definition, all elements \( x \in S^{\UM} \) in the domain chosen by Malicki satisfy Holland's maximum condition. Together with~\ref{Wr2}, this entails that the forward implication in condition~\ref{Wr1} can be automatically reversed. 
Indeed, suppose that \( x|_{\delta} \neq y|_{\delta} \), and consider the set \( A = \{ \gamma \geq \delta \mid x(\gamma) \neq y(\gamma) \} \). Then \( A \) has a maximal element \( \bar \gamma \): if not, from any infinite strictly increasing sequence in \( A \) one could extract an infinite subsequence contained in either \( \supp(x) \) or \( \supp(y) \). By choice of \( \bar \gamma \), \( y \) is of the form \( x^{\bar \gamma}_i \) for \( i = y(\bar \gamma) \neq x(\bar \gamma) \). Therefore by condition~\ref{Wr2} we have \( g(y)(\bar \gamma) \neq g(x)(\bar \gamma)  \). Since \( \bar \gamma \geq \delta \), this means that \( g(x)|_\delta \neq g(y)|_\delta \), as desired.
Although we preferred to follow Holland's original formulation, in all definitions below we could instead follow Malicki, as the two approaches remain equivalent whenever all elements in the chosen domain satisfy the maximum condition, which will always be the case in this paper.
\end{remark}

Yet another variant is obtained by considering the collection \( \LF \) of locally finite supports, that is: \( A \in \LF \) if and only if \( A \cap \{ \gamma \in \Delta \mid \gamma \geq \delta \} \in \Fin \) for all \( \delta \in \Delta \).
Clearly, \( \Fin \subseteq \LF \subseteq \UM \subseteq \Max \).
Moreover, $\UM = \LF$ whenever \(\Delta\) has no infinite antichain with a lower bound: this happens in particular when \(\Delta\) is an $L$-tree, which is the case we consider in our main results. 



Notice that if \( \Delta = \{ \delta_0,\delta_1 \}\) with \( \delta_0 < \delta_1 \), so that necessarily \( \AAA = \Fin  = \pow(\Delta) \), then \( \Wr^{\AAA}_{\delta \in \Delta} H_\delta \) is the usual wreath product 
of the two groups \( H_{\delta_0} \) and \( H_{\delta_1} \).
On the other hand, if \( \Delta \) is an antichain and \( \AAA = \LF = \pow(\Delta) \), 
then \( \Wr^{\AAA}_{\delta \in \Delta} H_\delta \) is the direct product \( \prod_{\delta \in \Delta} H_\delta \).

Since the groups \( H_\delta \) are transitive, whenever \( \AAA \) is an ideal (i.e.\ it is closed under subsets and finite unions) \( \Wr^{\AAA}_{\delta \in \Delta} H_\delta \) is transitive too;  in particular, this happens when \( \AAA \in \{ \Fin, \LF, \UM, \Max \} \). 
However, \( \Wr^S_{\delta \in \Delta} H_\delta \) might fail to be transitive in the more general context of generalized wreath products over arbitrary domains.
\details{Se possibile con dominio indotto da supporti ammissibili. Esempio di \( S \) (chiuso e separabile, se possibile) per cui \( \Wr^S_{\delta \in \Delta} H_\delta \) non sia transitivo, ed esempio di \( S \) che sia approximately homogeneous ma \( \Wr^S_{\delta \in \Delta} H_\delta \) non transitivo. }

For our purposes, the following weaker form of homogeneity will suffice. 

\begin{defin} \label{def:approximatehomogeneity}
A domain \( S \subseteq \prod_{\delta \in \Delta} N_\delta \) is \markdef{approximately homogeneous} if for every \( x,y \in S \) and \( \delta \in \Delta \) there is \( g \in \Wr^S_{\delta \in \Delta} \Sym(N_\delta) \) such that \( g(x)|_\delta = y|_\delta \).    
\end{defin}

Since \( \Wr^{\LF}_{\delta \in \Delta} H_\delta \) is always transitive, all domains of the form \( S^{\LF} \) are approximately homogeneous.

There is a natural way to turn each generalized wreath product \( \Wr^{S}_{\delta \in \Delta} H_\delta \) into a topological group. First of all, we equip \( \prod_{\delta \in \Delta} N_\delta \) with the topology \( \tau_\Delta \) generated by
the sets of the form
\[
V_{x,\gamma} = \left\{ y \in \prod\nolimits_{\delta \in \Delta} N_\delta \mid y|_\gamma = x|_\gamma \right\},
\]
for \( x \in \prod_{\delta \in \Delta} N_\delta \) and \( \gamma \in \Delta \). 
Alternatively, \( \tau_\Delta \) can be presented as the pullback of the product topology on \( \prod_{\delta \in \Delta} \left( \prod_{\gamma \geq \delta} N_\gamma \right) \), where each \( \prod_{\gamma \geq \delta} N_\gamma \) is given the discrete topology, along the embedding \( x \mapsto (x|_\delta)_{\delta \in \Delta} \).
Topology \( \tau_{\Delta } \) is Hausdorff, it is usually strictly finer than the product topology, and it is designed to match the combinatorics behind generalized wreath products: indeed, it is the coarsest topology which makes open all congruences \( \equiv_\gamma \) (for \( \gamma \in \Delta \)) on \( \prod_{\delta \in \Delta} N_\delta \) introduced by Holland~\cite{holland1969} as a crucial ingredient in his analysis.

Unless otherwise stated, we endow \( \Wr^{S}_{\delta \in \Delta} H_\delta \) with the pointwise convergence topology with respect to the relativization of \( \tau_\Delta \) to \( S \), which will simply be denoted by \( \tau \). 
Notice that when \( \Delta \) is an antichain and \( \AAA = \LF = \pow (\Delta ) \), we recover the usual product topology on \( \prod_{\delta \in \Delta} H_\delta = \Wr^{\AAA}_{\delta \in \Delta} H_\delta \).

Suppose that the skeleton \( \langle \Delta, N \rangle \) is countable. Then the topological space \( \prod_{\delta \in \Delta} N_\delta \) can be equipped with a natural compatible complete ultrametric \( d_\Delta \) as follows: enumerate \( \Delta \) as \( (\delta_m)_{m < M} \) for the appropriate \( M \leq \omega \), and given distinct \(x,y \in \prod_{\delta \in \Delta} N_\delta \), let \( d_\Delta (x,y)= 2^{-m} \) with \( m < M \) smallest such that \( x|_{\delta_m} \neq y|_{\delta_m} \).
It follows that each domain \( S \subseteq \prod_{\delta \in \Delta} N_\delta \) is metrizable too.
We say that \( S \) is \markdef{separable} if it is such when endowed with the relative topology induced by \( \tau_\Delta \); by countability of \( \langle \Delta, N \rangle \), this happens precisely when \( |S_\delta| \leq \aleph_0 \) for all \( \delta \in \Delta \), where 
\begin{equation} \label{eq:Sdelta} 
S_\delta = \{ x|_\delta \mid x \in S \} .
\end{equation}
If \( S \) is separable, then \( \Wr^S_{\delta \in \Delta} H_\delta \) is second-countable,%
\footnote{One can verify that if \( S \) is approximately homogeneous, then the reverse implication holds as well.%
\details{Let \( \delta \in \Delta \) be such that \( |S_\delta| = \kappa > \aleph_0 \). Let \( (s_\alpha)_{\alpha < \kappa} \) be an enumeration of \( S_\delta \), and \( x_\alpha \in S \) be such that \( x_\alpha |_{\delta} = s_\alpha \). 
Since \( \Wr^S_{\delta \in \Delta} H_\delta \) is transitive (Ci vuole ``approximately homogeneous''!!!),
for every \( \alpha < \kappa \) there is \( g_\alpha \in \Wr^S_{\delta \in \Delta} H_\delta \) such that \( g_\alpha (x_0) = x_\alpha \). 
The basic open neighborhoods of \( g_\alpha \) determined by \( x_0 \) and \( \delta \) (i.e. \( \{ g \in \Wr^S_{\delta \in \Delta} H_\delta \mid g(x_0)|_{\delta} = g_\alpha(x_0)|_\delta \} \)) form a \( \kappa \)-sized family of pairwise disjoint nonempty open sets.}
}
and hence also metrizable by the Birkhoff-Kakutani theorem. 

Furthermore, we say that \( S \) is \markdef{closed} if it is \( \tau_\Delta \)-closed in \( \prod_{\delta \in \Delta} N_\delta \);
this obviously implies that when equipped with the restriction of the ultrametric \( d_\Delta \), the metric space \( S \) is complete.
For example, both Holland's \( S^{\Max} \) and Malicki's \( S^{\UM} \) are closed, as is our \( S^{\LF}\). 
In contrast, if \( \Delta \) is not well-founded, then Hall's \( S^{\Fin} \) might fail to be closed: its \( \tau_\Delta \)-closure is precisely \( S^{\LF}\), and this is one reason why generalized wreath products over domains induced by locally finite supports play a special role in our framework. 

By the above discussion, every closed separable domain \( S \) over a countable skeleton \( \langle \Delta, N \rangle \) is a Polish ultrametric space
when equipped with the (restriction of the) distance \( d_\Delta \).
Another way to see this is the following. Suppose that \( S \) is separable. Then \( \prod_{m<M} S_{\delta_m} \), when equipped with the product of the discrete topologies on the sets \( S_{\delta_m} \), is homeomorphic to a closed subset of the Baire space \( \pre{\omega}{ \omega }\). 
Under this identification, \( d_{\Delta} \) is just the pullback of the usual metric on \( \pre{\omega}{ \omega } \) along the embedding \( x \mapsto (x|_{\delta_m})_{m<M} \).
If \( S \) is also closed, then the range of such embedding is closed in \( \pre{\omega}{\omega}\), and the result easily follows.

Equip the group \( \Iso(S,d_\Delta) \) with the pointwise convergence topology, so that it is a Polish group (\cite[Example 9 of \S9.B]{Kechris1995}). 
By condition~\ref{Wr1}, the generalized wreath product \( \Wr^S_{\delta \in \Delta} H_\delta \) is a topological subgroup of \( \Iso(S,d_\Delta) \): we show that it is indeed Polish when all groups \( H_\delta \) are closed.

\begin{proposition} \label{prop:Polishtopologyfrommetric}
Let \( \langle \Delta,N \rangle \) be a countable skeleton, and \( S \subseteq \prod_{\delta \in \Delta} N_\delta \) be a closed separable domain. If each \( H_\delta \subseteq \Sym(N_\delta) \) is a closed group, then \( \Wr^S_{\delta \in \Delta} H_\delta \) is a Polish group.
\end{proposition}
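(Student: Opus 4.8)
The plan is to realize \( \Wr^S_{\delta \in \Delta} H_\delta \) as a \emph{closed} subgroup of the Polish group \( \Iso(S, d_\Delta) \), and then invoke the fact that closed subgroups of Polish groups are Polish. By the discussion preceding the statement, \( (S, d_\Delta) \) is a Polish ultrametric space, so \( \Iso(S, d_\Delta) \) is a Polish group; moreover, condition~\ref{Wr1} guarantees that \( \Wr^S_{\delta \in \Delta} H_\delta \) is a topological subgroup of \( \Iso(S, d_\Delta) \), its topology \( \tau \) coinciding with the subspace topology (recall that the relativization of \( \tau_\Delta \) to \( S \) and the metric topology of \( d_\Delta \) agree). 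Thus everything reduces to proving that \( \Wr^S_{\delta \in \Delta} H_\delta \) is closed in \( \Iso(S, d_\Delta) \). Since the latter is metrizable, I would argue with sequences: fix \( (g_n)_{n \in \omega} \) in \( \Wr^S_{\delta \in \Delta} H_\delta \) converging to some \( g \in \Iso(S, d_\Delta) \), and show that \( g \) satisfies conditions~\ref{Wr1} and~\ref{Wr2}.

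The single fact I would extract from \( g_n \to g \) is that, for every \( x \in S \) and every fixed index \( m \), convergence in \( d_\Delta \) forces eventual agreement of \( g_n(x) \) and \( g(x) \) on the cone above \( \delta_m \), where \( (\delta_m)_{m<M} \) is the enumeration of \( \Delta \) used to define \( d_\Delta \); that is, \( g_n(x)|_{\delta} = g(x)|_{\delta} \) for all large \( n \) and each fixed \( \delta \in \Delta \). Condition~\ref{Wr1} for \( g \) then follows immediately from the same condition for the \( g_n \): if \( x|_\delta = y|_\delta \), then \( g_n(x)|_\delta = g_n(y)|_\delta \) for all \( n \), and passing to a large \( n \) at which both \( g_n(x) \) and \( g_n(y) \) have stabilized on the cone above \( \delta \) yields \( g(x)|_\delta = g(y)|_\delta \); the converse implication is symmetric, using that \( g_n(x)|_\delta \neq g_n(y)|_\delta \) whenever \( x|_\delta \neq y|_\delta \).

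For condition~\ref{Wr2}, fix \( \delta \in \Delta \) and \( x \in S \), and consider the maps \( \sigma_n \colon i \mapsto g_n(x_i^\delta)(\delta) \) and \( \sigma \colon i \mapsto g(x_i^\delta)(\delta) \) on \( N_\delta \) (here each \( x_i^\delta \in S \) by closure of the domain under pointwise perturbations). Since \( g \) already satisfies~\ref{Wr1}, Remark~\ref{rmk:permutationisautomatic} ensures that \( \sigma \) is a genuine permutation of \( N_\delta \), while each \( \sigma_n \) lies in \( H_\delta \) by hypothesis. Applying the eventual-agreement fact above to the points \( x_i^\delta \) for each fixed \( i \in N_\delta \) gives \( \sigma_n(i) = \sigma(i) \) for all large \( n \), i.e.\ \( \sigma_n \to \sigma \) pointwise. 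The key point is that pointwise convergence of permutations to a permutation is exactly convergence in the (group) topology of \( \Sym(N_\delta) \): if \( \sigma \) is a bijection and \( \sigma_n \to \sigma \) pointwise, then also \( \sigma_n^{-1}(j) = \sigma^{-1}(j) \) for all large \( n \), since \( \sigma_n(\sigma^{-1}(j)) \to \sigma(\sigma^{-1}(j)) = j \) forces \( \sigma_n(\sigma^{-1}(j)) = j \) eventually. Hence \( \sigma_n \to \sigma \) in \( \Sym(N_\delta) \), and closedness of \( H_\delta \) yields \( \sigma \in H_\delta \), which is condition~\ref{Wr2} for \( g \).

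Since \( g \) satisfies~\ref{Wr1} and~\ref{Wr2}, we conclude \( g \in \Wr^S_{\delta \in \Delta} H_\delta \), so this subgroup is closed in \( \Iso(S, d_\Delta) \) and therefore Polish. I expect the only genuinely delicate step to be the treatment of~\ref{Wr2}: one must ensure that the relevant convergence takes place in the topology for which \( H_\delta \) is assumed closed, rather than merely in the a priori weaker topology of pointwise convergence of the maps alone. The observation that these two coincide for limits that happen to be bijections is precisely what removes this difficulty; in particular, no control over the inverses \( g_n^{-1} \) is needed, as the only convergence used throughout is \( g_n \to g \) evaluated at points of \( S \).
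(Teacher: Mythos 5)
Your proof is correct and takes essentially the same route as the paper: both realize \( \Wr^S_{\delta \in \Delta} H_\delta \) as a closed subgroup of the Polish group \( \Iso(S,d_\Delta) \), and your sequential verification of~\ref{Wr1} and~\ref{Wr2} is precisely an unwinding of the paper's more compact argument that~\ref{Wr1} is a closed condition and that the wreath product equals the intersection of the preimages of the closed sets \( H_\delta \) under the continuous maps \( f_{\delta,z} \colon g \mapsto \big( i \mapsto g(x^\delta_i)(\delta) \big) \). The only superfluous step is your digression on inverses: since the paper equips \( \Sym(N_\delta) \) with the pointwise convergence topology, in which \( H_\delta \) is assumed closed, pointwise convergence \( \sigma_n \to \sigma \) with \( \sigma \) a bijection already yields \( \sigma \in H_\delta \) directly.
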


\begin{proof}
Since under our assumptions \( (S,d_\Delta) \) is a Polish metric space, \( \Iso(S,d_\Delta) \) is a Polish group.
It is clear that~\ref{Wr1} is a closed condition, hence the subgroup \( G \subseteq \Iso(S,d_\Delta) \) of all \( g \) satisfying it is Polish.\details{Direi che $G$ è l'intersezione di tutti i $\Iso(S,d_{\Delta })$ al variare dell'enumerazione $\delta_n$. Ci insegna qualcosa?}
For any \( \delta \in \Delta \) and \( z \in S_\delta \), consider the map \( f_{\delta,z} \colon G \to \Sym(N_\delta) \) sending \( g \) to the permutation \( i \mapsto g(x^\delta_i)(\delta) \) for some (equivalently, any) \( x \in S \) with \( x|_\delta = z \): by~\ref{Wr1}, the choice of \( x \) is irrelevant, and \( f_{\delta,z}(g) \in \Sym(N_\delta) \) by Remark~\ref{rmk:permutationisautomatic}. Since \( f_{\delta,z} \) is continuous and 
\[ 
\Wr^S_{\delta \in \Delta} H_\delta = \bigcap_{\delta \in \Delta} \bigcap_{z \in S_\delta} f_{\delta ,z}^{-1}(H_\delta) ,
\]
the wreath product \( \Wr^S_{\delta \in \Delta} H_\delta \) is a closed subgroup of \( G \), hence we are done.
\end{proof}

%

A situation where Proposition~\ref{prop:Polishtopologyfrommetric} applies, and that will be crucial in this paper, is when \( S = S^{\LF} \). For every skeleton \( \langle \Delta, N \rangle \), it holds \( S^{\Fin} \subseteq S^{\LF} \subseteq S^{\UM} \subseteq S^{\Max} \), and moreover: 
\begin{itemizenew}
\item
\( S^{\LF} \subsetneq S^{\UM} \) if and only if \( \Delta \) contains an infinite antichain \( \{ \delta_n \mid n \in \omega \}\) bounded from below and such that \( N_{\delta_n} > 1 \) for all \( n \in \omega \): in this case, \( \Wr^{\UM}_{\delta \in \Delta} H_\delta \) is not second-countable, and hence neither Polish.
\item
\( S^{\UM} \subsetneq S^{\Max} \) if and only if \( \Delta \) contains an infinite decreasing chain \( \{ \delta_n \mid n \in \omega \}\) bounded from below and such that \( N_{\delta_n} > 1 \) for all \( n \in \omega \): in this case, \( \Wr^{\Max}_{\delta \in \Delta} H_\delta \) is not Polish again because it is not second-countable.
\item 
when \( \Delta \) is a countable linear order,
\( S^{\Fin}\ \subsetneq S^{\LF} \) if and only if \( \Delta \) has no minimum and \( \{ \delta \in \Delta \mid N_\delta > 1 \} \) is coinitial in \( \Delta \): in this case, \( \Wr^{\Fin}_{\delta \in \Delta} H_\delta \) is not completely metrizable, and hence it is not a Polish group.
\details{Let \( (\delta_n)_{n \in \omega} \) be coinitial in \( \Delta \) with \( N_{\delta_n} > 1 \). Consider the groups \( H = \Wr^{\Fin}_{\delta \in \Delta } H_\delta \) and \( G = \Wr^{\LF}_{\delta \in \Delta } H_\delta \). Then \( H \) embeds into \( G \) as follows. Given \( g \in H \), let \( g' \in G \) be defined by setting \( g'(x) = y \iff \) for each \( \delta \) it holds \( y|_\delta = g(x|_\delta^+)|_\delta\), where \( x|_\delta^+ \in S^{\Fin}\) is the only element which follows \( x \) up to \( \delta \) and is \( 0 \) below it. [Alternatively, one can identify \( g \) with the maps \( g_\delta \colon S^{\Fin}_\delta \to S^{\Fin}_\delta \): every such map canonically induces an element \( g' \in G \) because \( S^{\Fin}_\delta = S^{\LF}_\delta \).] Now, \( G \) is a Polish group and \( H \), up to the identification \( g \mapsto g' \) above, is a subgroup of it. Indeed, \( H = \{ g \in G \mid g(S^{\Fin}) = S^{\Fin } \} \). The claim is that \( H \) is \( F_\sigma \) hard as a subset of \( G \). To this aim, we define a continuous map from the Cantor space to \( G \) reducing the usual \( F_\sigma \)-complete set (=``finitely many \( 1\)s'') to \( H \). For each \( n \), fix \( h_n \in H_{\delta_n} \) sending \( 0 \) to \( 1 \). The idea is that each time a new bit \( w(n) \) of the input element \( w \) of the Cantor space is revealed, we better approximate its image \( g_w\in  G \) by describing \( (g_w)_{\delta_n} \). This is done by letting \( (g_w)_{\delta_n} \) extend \( (g_w)_{\delta_{n-1}}\) trivially (=identity on the new coordinates) if \( w(n) = 0 \), while if \( w(n) = 1\) then on the last coordinate (=\( \delta_n \)) we use \( h_n \). [In other word, \( g_w\) acts identically execept for the coordinates \( \delta_n \) such that \( w(n) = 1 \).] In this way, if \( w \) has infinitely many \( 1\)s, then \( g_w\) will send the element \( x \in S^{\Fin }\) with empty support into an element \( g_w(x) \notin S^{\Fin}\), and hence \( g_w \notin H \). If \( w \) instead has finitely many \( 1 \)s, then \( g_w \) is the identity except for the finitely many coordinates where the \( h_n\) have been used, hence \( g_w(S^{\Fin}) = S^{\Fin}\) and \( g_w \in H\).}
\end{itemizenew}
This implies that even when the skeleton \( \langle \Delta, N \rangle \) is countable and the groups \( H_\delta \) are closed, Hall's \( \Wr^{\Fin}_{\delta \in \Delta} H_\delta \) with \( \Delta \) linear, Holland's \( \Wr^{\Max}_{\delta \in \Delta} H_\delta \), and Malicki's \( \Wr^{\UM}_{\delta \in \Delta} H_\delta \) are not Polish groups, unless they coincide with \( \Wr^{\LF}_{\delta \in \Delta} H_\delta \). 


Since \( \Iso(U) \) is always a Polish group when \( U \) is Polish ultrametric, 
the above discussion shows that none of the variants of classical generalized wreath products from the literature can be used to solve our problem, and that locally finite supports are instead a promising choice. 
Nevertheless, in the case of Hall's finite supports, there is a different topology that can turn \( \Wr^{\Fin}_{\delta \in \Delta} H_\delta \) into a Polish group when \( \langle \Delta ,N \rangle\) is countable and each \( H_\delta \) is closed. 
Indeed, in such a situation the domain \( S^{\Fin} \) is countable and can be given the discrete topology,%
\footnote{This is basically the only situation where this alternative approach can be used, as in general none of \( S^{\Max} \), \( S^{\UM} \), or \( S^{\LF} \) is countable.}
and then we can again equip \( \Wr^{\Fin}_{\delta \in \Delta} H_\delta \subseteq \Sym(S^{\Fin}) \) with the induced pointwise convergence topology, which will be denoted by \( \tau^* \).
When \( \Delta \) is linear, \( \tau^*\) coincides with \( \tau \) if \( \Delta \) has a minimum, but it is usually strictly finer otherwise.
The alternative topology \( \tau^* \) will occasionally be used in Theorems~\ref{thm:homogeneousdiscrete} and ~\ref{thm:Urysohn}, where Hall's generalized wreath products naturally show up.

\subsection{Generalized wreath products over local domains} \label{subsec:wreathproductlocal}

Recall from~\eqref{eq:Sdelta} that for every \( \delta \in \Delta \) we let \( S_\delta = \{ x|_\delta \mid x \in S \} \).
By condition~\ref{Wr1}, every \( g \in \Wr^{S}_{\delta \in \Delta} H_\delta \) acts on \( S \) in a ``local'' way, by sending every \( z \in S_\delta \) to \( g(x)|_\delta \in S_\delta\) for some (equivalently, any) \( x \in S \) such that \( x|_\delta = z \). Therefore, \emph{when \( S\) is closed} the wreath product \( \Wr^{S}_{\delta \in \Delta} H_\delta \) can be presented as the group
of all permutations \( g \in \Sym(\Sloc) \), where \( \Sloc = \bigcup_{\delta \in \Delta} S_\delta \), satisfying the following conditions: \( g(z) \in S_\delta \) for every \( z \in S_\delta \) (and hence \( g(S_\delta) = S_\delta \)), \( g \) preserves restrictions (i.e.\ \( g(z|_\gamma) = g(z)|_\gamma \) for every \( \gamma \geq \delta \) and \( z \in S_\delta \)), and \( g \) satisfies the obvious reformulation of~\ref{Wr2} (i.e.\ condition~\ref{Wr2generalizedlocal} below).%
\footnote{Alternatively, each such \( g \in \Sym(\Sloc) \) can be construed as a family \( (g_\delta)_{\delta \in \Delta} \in \prod_{\delta \in \Delta} \Sym(S_\delta) \) of coherent (with respect to restrictions) ``local'' permutations, each of which satisfies an analogue of~\ref{Wr2}.}
In this setting, the topology \( \tau \) on \( \Wr^{S}_{\delta \in \Delta} H_\delta \) is the pointwise convergence topology on \( \Sym(\Sloc) \), where \( \Sloc \) is given the discrete topology. 
If the skeleton \( \langle \Delta,N \rangle \) and all the sets \( S_{\delta} \) are countable, then \( \Sloc \) is countable, and hence either \( \Sym(\Sloc) \) is a finite symmetric group or \( \Sym(\Sloc) \cong \Sym(\omega) \).
If furthermore all the groups \( H_\delta \) are closed, then \( \Wr^{S}_{\delta \in \Delta} H_\delta \) is, up to isomorphism, a closed subgroup of \( \Sym(\omega) \); in particular, we recover Proposition~\ref{prop:Polishtopologyfrommetric}.

Nothing in the description above depends on the fact that the sets \( S_\delta \) come from a global domain \( S \subseteq \prod_{\delta \in \Delta} N_\delta \). Therefore generalized wreath products can be naturally rephrased as permutation groups over arbitrary families of local domains. Fix a skeleton \( \langle \Delta, N \rangle \). 
Given any \( \Sloc \subseteq \bigcup_{\delta \in \Delta} \left( \prod_{\gamma \geq \delta} N_\gamma \right) \) and \( \delta \in \Delta \), we let \( \Sloc_\delta = \Sloc \cap \prod_{\gamma \geq \delta} N_\gamma \). The sets \( \Sloc_\delta \) clearly form a partition of \( \Sloc \).

\begin{defin} \label{localdomains}
A \markdef{family of local domains} (over the skeleton \( \langle \Delta, N \rangle\)) is a collection \( \Sloc \subseteq \bigcup_{\delta \in \Delta} \left(\prod_{\gamma \geq \delta} N_\gamma \right) \) such that \( \Sloc_\delta \neq \emptyset \) for every \( \delta \in \Delta \), and the following conditions are satisfied:
\begin{enumerate-(a)}
\item \label{localdomains-a}
if \( \gamma \geq \delta \), then \( \Sloc_\gamma = \{ z|_\gamma \mid z \in \Sloc_{\delta} \} \);
\item \label{localdomains-c}
\( \{ z^\delta_i \mid i \in N_\delta \} \subseteq \Sloc_\delta \) for every \( z \in \Sloc_\delta \), where \( z^\delta_i(\delta) = i \) and \( z^\delta_i(\gamma) = z(\gamma) \) if \( \gamma > \delta \).
\end{enumerate-(a)} 
\end{defin}

Notice that the family of local domains \( \Sloc \) is countable exactly when the skeleton \( \langle \Delta,N \rangle \) is countable and \( |\Sloc_\delta| \leq \aleph_0  \) for all \( \delta \in \Delta \).  

\begin{defin} \label{def:generalizedlocal}
Let \( \langle \Delta, N \rangle\) be a skeleton, \( \Sloc \) be a family of local domains, and \( (H_\delta)_{\delta \in \Delta} \) be a family of transitive permutation groups over the sets \( N_\delta \). The group
\[  
\Wr^{\Sloc}_{\delta \in \Delta} H_\delta,
\]
which is still called \markdef{generalized wreath product}, is the group of all permutations \( g \in \Sym(\Sloc)\) satisfying the following conditions,%
\footnote{The letter ``G'' in the enumeration stands for ``generalized''.} 
for all \( \gamma \geq \delta \) and \( z \in \Sloc_\delta \):
\begin{enumerate-(G1)}
\item \label{Wr1generalizedlocal}
\( g(\Sloc_\delta) = \Sloc_\delta \) and \( g(z|_\gamma) = g(z)|_\gamma \);
\item \label{Wr2generalizedlocal}
the map \( i \mapsto g(z^\delta_i)(\delta) \) is a permutation of \( N_\delta \) belonging to \( H_\delta \).
\end{enumerate-(G1)}
\end{defin}

The notion of approximate homogeneity from Definition~\ref{def:approximatehomogeneity} translates to the following:

\begin{defin} \label{def:localhomogeneity}
A family of local domains \( \Sloc = \bigcup_{\delta \in \Delta} \Sloc_\delta \) is \markdef{locally homogeneous} if for every \( \delta \in \Delta \) and \( z,z' \in \Sloc_{\delta } \) there is \( g \in \Wr^{\Sloc}_{\delta \in \Delta} \Sym(N_\delta) \) such that \( g(z) = z' \).
\end{defin}

The group \( \Wr^{\Sloc}_{\delta \in \Delta} H_\delta \) is again equipped with the pointwise convergence topology \( \tau \) (where \( \Sloc \) is given the discrete topology), which turns it into a closed (and hence Polish) subgroup of \( \Sym(\omega) \) whenever \( \Sloc \) is countable and the groups \( H_\delta \) are closed. 

As discussed above, the classical generalized wreath products \( \Wr^S_{\delta \in \Delta} H_\delta \) (for \( S \subseteq \prod_{\delta \in \Delta} N_\delta \) a closed domain) of Section~\ref{subsec:classicalwreathproduct} can be viewed as a particular instance of the generalized wreath products \( \Wr^{\Sloc}_{\delta \in \Delta} H_\delta \) from Definition~\ref{def:generalizedlocal} in which the family of local domains is \( \Sloc = \bigcup_{\delta \in \Delta} S_\delta \).
This process can be reversed.
More precisely, say that the family \( \Sloc \) is \markdef{full} if for every \( \bar\delta \in \Delta \) and \( z \in \Sloc_{\bar\delta}  \) there is \( x \in \prod_{\delta \in \Delta} N_\delta  \) such that \( x|_{\bar \delta} = z \) and \( x|_\gamma \in \Sloc_\gamma \) for every \( \gamma \in \Delta \) 
(equivalently: there is \( (z_\delta)_{\delta \in \Delta} \in \prod_{\delta \in \Delta} \Sloc_\delta \) such that \( z_{\bar \delta} = z \) and \( (z_{\delta})|_{\gamma} = z_{\gamma}\) if \( \delta \leq \gamma \)). 
For example, if the partial order \( \Delta \) is linear and has countable coinitiality, or if \( \Delta \) is an antichain, then every family \( \Sloc \) over the skeleton \( \langle \Delta,N \rangle \) is full. 
On the other hand, it is not hard to produce (even finite) families of local domains over non-linear skeletons which are not full.  \details{Counterexample with four elements:
\begin{itemize}
\item \( \Delta =\{ a,b,c,d\} \) with \( c<a\), \(c<b\), \(d<a\), \(d<b \) and no other \( < \)-relation
\item \( N_a=N_b=N_c=N_d=2 \)
\item \( \Sloc_c=\{\{ (a,0),(b,0),(c,j)\} ,\{ (a,1),(b,1),(c,j)\}\}_{j\in 2} \)
\item \( \Sloc_d=\{\{ (a,0),(b,1),(d,j)\} ,\{ (a,1),(b,0),(d,j)\}\}_{j\in 2} \)
\end{itemize} }
To every full family of local domains \( \Sloc \) we can canonically associate the closed domain
\[ 
S = \left\{ x \in \prod\nolimits_{\delta \in \Delta} N_\delta \mid \forall \gamma \in \Delta \, (x|_\gamma \in \Sloc_\gamma )\right\} , 
\]
which clearly satisfies \( S_\gamma = \Sloc_\gamma \), where as before \( S_\gamma = \{ x|_\gamma \mid x \in S \} \). Vice versa, if \( S \) is a closed domain, then 
\[ 
\Sloc = \bigcup_{\delta \in \Delta} S_\delta 
\] 
is a full family of local domains satisfying \( \Sloc_\gamma = S_\gamma \) for every \( \gamma \in \Delta \). Thus there is a one-to-one correspondence between closed domains and full families of local domains. 
Under such correspondence, \( S \) is separable if and only if \( \Sloc \) is countable, and the weak forms of homogeneity introduced in Definitions~\ref{def:approximatehomogeneity} and~\ref{def:localhomogeneity} are preserved as well. 
Therefore, we have the following equivalence:

\begin{proposition} \label{prop:closedvsfull}
Let \( \langle \Delta, N \rangle\) be a skeleton 
and \( (H_\delta)_{\delta \in \Delta} \) be a family of transitive permutation groups over the sets \( N_\delta \). For every topological group \( G \), the following are equivalent:
\begin{enumerate-(1)}
\item 
\( G \cong \Wr^S_{\delta \in \Delta} H_\delta \) for some closed (separable, approximately homogeneous) domain \( S \);
\item 
\( G \cong \Wr^{\Sloc}_{\delta \in \Delta} H_\delta \) for some full (countable, locally homogeneous) family of local domains \( \Sloc \).
\end{enumerate-(1)}
\end{proposition}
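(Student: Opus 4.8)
The plan is to reduce the proposition to the one-to-one correspondence between closed domains and full families of local domains established in the discussion preceding the statement, and then to check that this correspondence carries through all the relevant structure: the topological group itself, separability versus countability, and the two weak homogeneity notions. Since the two conditions in the statement are both of the form ``\( G \) is isomorphic to such-and-such a wreath product'', it suffices to exhibit, for each instance appearing on one side, a matching instance on the other side computing the \emph{same} topological group.

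First I would treat the passage from a closed domain to a family of local domains. Suppose \( G \cong \Wr^S_{\delta \in \Delta} H_\delta \) for a closed domain \( S \). Setting \( \Sloc = \bigcup_{\delta \in \Delta} S_\delta \) yields a full family of local domains with \( \Sloc_\gamma = S_\gamma \) for every \( \gamma \), as already observed. The crucial point, made when generalized wreath products over local domains were introduced, is that because \( S \) is closed every \( g \in \Wr^S_{\delta \in \Delta} H_\delta \) is completely determined by its ``local'' action on \( \Sloc \), and \( \Wr^S_{\delta \in \Delta} H_\delta \) is exactly the group of permutations of \( \Sloc \) satisfying conditions~\ref{Wr1generalizedlocal} and~\ref{Wr2generalizedlocal} of Definition~\ref{def:generalizedlocal}. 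Hence \( \Wr^S_{\delta \in \Delta} H_\delta = \Wr^{\Sloc}_{\delta \in \Delta} H_\delta \) as abstract groups, and since both carry the pointwise convergence topology on \( \Sloc \) (with \( \Sloc \) discrete), they coincide as topological groups; thus \( G \cong \Wr^{\Sloc}_{\delta \in \Delta} H_\delta \).

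For the reverse direction, given \( G \cong \Wr^{\Sloc}_{\delta \in \Delta} H_\delta \) for a full family \( \Sloc \), I would set \( S = \{ x \in \prod_{\delta \in \Delta} N_\delta \mid \forall \gamma \, (x|_\gamma \in \Sloc_\gamma) \} \). This is a closed domain, and here fullness is precisely what guarantees \( S_\gamma = \Sloc_\gamma \) for every \( \gamma \): the inclusion \( S_\gamma \subseteq \Sloc_\gamma \) is immediate, while fullness ensures that each \( z \in \Sloc_\gamma \) extends to a genuine global element of \( S \), giving the reverse inclusion. With \( S_\gamma = \Sloc_\gamma \) in hand, the same local reformulation used above gives \( \Wr^S_{\delta \in \Delta} H_\delta = \Wr^{\Sloc}_{\delta \in \Delta} H_\delta \), and hence \( G \cong \Wr^S_{\delta \in \Delta} H_\delta \).

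Finally I would match the bracketed qualifiers. Since in both directions \( S_\delta = \Sloc_\delta \) for all \( \delta \), separability of \( S \) (which, for a countable skeleton, amounts to \( |S_\delta| \leq \aleph_0 \) for all \( \delta \)) corresponds exactly to countability of \( \Sloc \); and comparing Definition~\ref{def:approximatehomogeneity} with Definition~\ref{def:localhomogeneity}, approximate homogeneity of \( S \) and local homogeneity of \( \Sloc \) amount to the same condition once the two wreath products are identified and \( \Sloc_\delta = S_\delta = \{ x|_\delta \mid x \in S \} \). I expect the one point demanding genuine care—rather than a real obstacle—to be the verification that fullness yields \( S_\gamma = \Sloc_\gamma \) in the second direction: this is the single place where a naive construction could lose information, since a non-full family would produce a closed \( S \) with \( S_\gamma \subsetneq \Sloc_\gamma \), and it is exactly to rule this out that fullness was imposed.
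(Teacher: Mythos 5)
Your proposal is correct and takes essentially the same approach as the paper, which states Proposition~\ref{prop:closedvsfull} without a separate proof precisely because it is the summary of the preceding discussion: the bijective correspondence \( S \mapsto \bigcup_{\delta \in \Delta} S_\delta \) between closed domains and full families of local domains, the ``local'' reformulation of \( \Wr^{S}_{\delta \in \Delta} H_\delta \) as a group of permutations of \( \Sloc \) (valid exactly when \( S \) is closed, with the pointwise convergence topologies matching), and the preservation of separability/countability and of the two homogeneity notions under this correspondence. Your observation that fullness is the exact ingredient giving \( S_\gamma = \Sloc_\gamma \) in the reverse direction is the same point the paper makes when introducing fullness.
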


For notational simplicity, when the domain \( S = S^{\AAA} \) is induced by a family of admissible supports \( \AAA \subseteq \pow(\Delta) \), we denote by \( \Sloc^{\AAA} \) the corresponding family of local domains \( \bigcup_{\delta \in \Delta} S_\delta \).
The family 
\[ 
\Sloc^{\Max} = \bigcup\nolimits_{\delta \in \Delta} \left\{ x|_\delta \mid {x \in \prod\nolimits_{\delta \in \Delta }N_\delta} \wedge {\supp(x) \in \Max} \right\} 
\]
induced by Holland's \( S^{\Max} \) will be particularly relevant in our work. 
When there is no danger of confusion (i.e.\ when \( S^{\AAA} \) is a closed domain), we again write \( \Wr^{\AAA}_{\delta \in \Delta} H_\delta \) instead of \( \Wr^{\Sloc^{\AAA}}_{\delta \in \Delta } H_\delta \): this little abuse of notation does not cause problems because, as discussed, in such a situation we have \( \Wr^{S^{\AAA}}_{\delta \in \Delta} H_\delta \cong \Wr^{\Sloc^{\AAA}}_{\delta \in \Delta} H_\delta \).

\subsection{Projective wreath products} \label{subsec:projectivewreathproduct}

By replacing in Definition~\ref{def:generalizedlocal} the restriction operator with arbitrary systems of projections, we obtain a new class of groups, called \emph{projective} wreath products.

\begin{defin} \label{def:projections}
Let \( \langle \Delta, N \rangle \) be a skeleton. 
A \markdef{system of projections} (over the skeleton \( \langle \Delta, N \rangle \)) is a pair \( \langle \Sloc, \bpi \rangle \) satisfying the following conditions:
\begin{itemizenew}
\item 
\( \Sloc \subseteq \bigcup_{\delta \in \Delta} \left(\prod_{\gamma \geq \delta} N_\gamma \right) \) is such that \( \Sloc_\delta \neq \emptyset \) for every \( \delta \in \Delta \), and moreover \( \{ z^\delta_i \mid i \in N_\delta \} \subseteq \Sloc_\delta \) for every \( z \in \Sloc_\delta \);
\item
\( \bpi = (\pi_{\delta\gamma})_{\gamma \geq \delta} \) is a family of surjective maps \( \pi_{\delta \gamma} \colon \Sloc_\delta \to \Sloc_\gamma \) such that for all \( z,z' \in \Sloc_\delta \) and \( \beta \geq \gamma \geq \delta \)
\begin{enumerate-(a)}
\item \label{def:projections-a}
\( \pi_{\delta\gamma}(z) = \pi_{\delta\gamma}(z') \) if and only if \( z|_\gamma = z'|_\gamma \);
\item \label{def:projections-b}
\( \pi_{\gamma\beta} \circ \pi_{\delta\gamma} = \pi_{\delta\beta} \).
\end{enumerate-(a)}
Notice that for every \( \delta \in \Delta \), the projection \( \pi_{\delta\delta} \) is a permutation of \( \Sloc_\delta \) by~\ref{def:projections-a}, and hence the identity on \( \Sloc_\delta \) by~\ref{def:projections-b}.
\end{itemizenew}
\end{defin}

In our results, quite often \( \Sloc \) will be a family of local domains itself: in these cases, the first item of Definition~\ref{def:projections} is automatically satisfied by part~\ref{localdomains-c} of Definition~\ref{localdomains}. 
Notice also that if \( \bpi \) is \markdef{trivial}, i.e.\ \( \pi_{\delta \gamma}(z) = z|_\gamma \) for every \( \gamma \geq \delta \) and \( z \in \Sloc_\delta \), then any \( \Sloc \) as in Definition~\ref{def:projections} is a family of local domains; conversely, every family of local domains can be turned into a system of projections by pairing it with the trivial projections (i.e.\ restrictions).
Finally, we say that a system of projections \( \langle \Sloc,\bpi \rangle \) is \markdef{countable} if so is \( \Sloc \).

\begin{defin} \label{def:projective}
Let \( \langle \Delta, N \rangle\) be a skeleton, and let \( \langle \Sloc, \bpi \rangle \) be a system of projections. Let also \( (H_\delta)_{\delta \in \Delta} \) be a family of transitive permutation groups over the sets \( N_\delta \). The \markdef{projective} (\markdef{generalized}) \markdef{wreath product}
\[  
\Wr^{\Sloc, \bpi}_{\delta \in \Delta} H_\delta
\]
is the group of all permutations \( g \in \Sym(\Sloc)\) satisfying the following conditions,%
\footnote{The letter ``P'' in the enumeration stands for ``projective''.} 
for all \( \gamma \geq \delta \) and \( z \in \Sloc_\delta \):
\begin{enumerate-(P1)}
\item \label{Wr1projective}
\( g(\Sloc_\delta) = \Sloc_\delta \) and \( g(\pi_{\delta \gamma}(z)) = \pi_{\delta \gamma}(g(z)) \);
\item \label{Wr2projective}
the map \( i \mapsto g(z^\delta_i)(\delta) \) is a permutation of \( N_\delta \) belonging to \( H_\delta \).
\end{enumerate-(P1)}
\end{defin}

When \( \Sloc = \bigcup_{\delta \in \Delta} S_\delta \) is the family of local domains canonically induced by a  closed global domain \( S \subseteq \prod_{\delta \in \Delta} N_\delta \), we simply write \( \Wr^{S,\bpi}_{\delta \in \Delta} H_\delta \) instead of \( \Wr^{\Sloc,\bpi}_{\delta \in \Delta} H_\delta \); if moreover \( S = S^{\AAA} \) for some family of admissible supports \( \AAA \), then we further simplify the notation and write \( \Wr^{\AAA,\bpi}_{\delta \in \Delta} H_\delta \). 

Coherently with Definition~\ref{def:localhomogeneity}, we say that a system of projections \( \langle \Sloc,\bpi \rangle \) is \markdef{locally homogeneous} if for every \( \delta \in \Delta \) and \( z,z' \in \Sloc_\delta \) there is \( g \in \Wr^{\Sloc,\bpi}_{\delta \in \Delta} \Sym(N_\delta ) \) such that \( g(z) = z' \).
The notion of fullness can be naturally adapted to the new context as well: a system of projections \( \langle \Sloc,\bpi \rangle \) is \markdef{full} if for every \( \bar \delta \in \Delta \) and \( z \in \Sloc_{\bar \delta}\) there is \( (z_\delta)_{\delta \in \Delta} \in \prod_{\delta \in \Delta} \Sloc_\delta \) such that \( z_{\bar \delta} = z \) and \( (z_{\delta })_{\delta\in\Delta } \) is \markdef{coherent}, that is, \( \pi_{\delta \gamma}(z_\delta) = z_\gamma\) for all \( \delta \leq \gamma \).

The group \( \Wr^{\Sloc,\bpi}_{\delta \in \Delta} H_\delta \) is again equipped with the pointwise convergence topology \( \tau \) (where \( \Sloc \) is discrete), so that it is a closed, and hence Polish, subgroup of \( \Sym(\omega) \) whenever \( \Sloc \) is countable and the groups \( H_\delta \) are closed. 

It is clear that the generalized wreath products \( \Wr^{\Sloc}_{\delta \in \Delta} H_\delta \) from Section~\ref{subsec:wreathproductlocal} are precisely the projective wreath products in which the family of projections \( \bpi \) is trivial. 
Conversely, we are now going to show that under the mild assumption \( \Sloc \subseteq \Sloc^{\Max}\) every projective generalized wreath product can be realized as a generalized wreath product over some family of local domains, \emph{up to isomorphism}. 
We say that a system of projections \( \langle \Sloc,\bpi \rangle \) has \markdef{finite character} if for every \( \delta \in \Delta \), the set \( \{ \gamma \in \Delta \mid \gamma \geq \delta \} \) can be partitioned into finitely many convex sets \( C_0, \dotsc, C_n \) such that for all \( i \leq n \) and all \( \beta \geq \gamma \) with \( \beta, \gamma \in C_i \), the projection \( \pi_{\gamma \beta} \) is trivial (i.e.\ \( \pi_{\gamma\beta} (z) = z|_\beta \) for every \( z \in \Sloc_\gamma \)).

\begin{theorem} \label{thm:proj->unchained}
Let \( \langle \Delta,N \rangle \) be a skeleton, and let \( (H_\delta)_{\delta \in \Delta} \) be a family of transitive permutation groups over the sets \( N_\delta \). Let \( \langle \Sloc, \bpi \rangle \) be a system of projections such that \( \Sloc \subseteq \Sloc^{\Max} \).
Then there is family of local domains \( \Sloc' \) such that
\[  
\Wr^{\Sloc'}_{\delta \in \Delta} H_\delta \,  \cong \, \Wr^{\Sloc,\bpi}_{\delta \in \Delta} H_\delta.
\]
If moreover \( \langle \Sloc,\bpi \rangle \) has finite character, then we can further require \( \Sloc' \subseteq \Sloc^{\Max} \).
\end{theorem}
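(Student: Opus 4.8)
The plan is to absorb the projections \( \bpi \) into a relabeling of coordinates, so that they become honest restrictions. For each \( \delta \in \Delta \) I would define a map \( \phi_\delta \colon \Sloc_\delta \to \prod_{\gamma \geq \delta} N_\gamma \) by setting \( \phi_\delta(z)(\gamma) = \pi_{\delta\gamma}(z)(\gamma) \) for every \( \gamma \geq \delta \). Since \( \pi_{\delta\delta} \) is the identity, \( \phi_\delta \) preserves the \( \delta \)-th coordinate, i.e.\ \( \phi_\delta(z)(\delta) = z(\delta) \); and the coherence condition~\ref{def:projections-b} immediately yields the key identity \( \phi_\delta(z)|_\gamma = \phi_\gamma(\pi_{\delta\gamma}(z)) \) for all \( \gamma \geq \delta \), which says precisely that \emph{restriction in the new coordinates corresponds to projection in the old ones}. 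I would then set \( \Sloc'_\delta = \phi_\delta(\Sloc_\delta) \), \( \Sloc' = \bigcup_{\delta \in \Delta} \Sloc'_\delta \), and let \( \Phi \colon \Sloc \to \Sloc' \) be the map whose restriction to each \( \Sloc_\delta \) is \( \phi_\delta \).

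The first substantial step is to prove that each \( \phi_\delta \) is injective. Given distinct \( z, z' \in \Sloc_\delta \), the set \( U = \{ \gamma \geq \delta \mid z(\gamma) \neq z'(\gamma) \} \) is nonempty and contained in \( \supp(z) \cup \supp(z') \); since \( \Sloc \subseteq \Sloc^{\Max} \), both supports lie in \( \Max \), hence so does \( U \) (an infinite strictly increasing chain in the union would meet one of the two supports infinitely often). Thus \( U \) has a maximal element \( \bar\gamma \). Using condition~\ref{def:projections-a} together with~\ref{def:projections-b}, I would show that \( \pi_{\delta\bar\gamma}(z) \) and \( \pi_{\delta\bar\gamma}(z') \) agree at every \( \beta > \bar\gamma \) but are distinct, forcing them to differ at \( \bar\gamma \) itself; that is, \( \phi_\delta(z)(\bar\gamma) \neq \phi_\delta(z')(\bar\gamma) \). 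This is the one place where the hypothesis \( \Sloc \subseteq \Sloc^{\Max} \) is essential, and I expect it to be the main obstacle: without a maximal coordinate of disagreement the projections could scramble the data beyond recovery.

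Granting injectivity, \( \Phi \) is a bijection, and I would verify that \( \Sloc' \) is a family of local domains in the sense of Definition~\ref{localdomains}: condition~\ref{localdomains-a} follows from the key identity together with surjectivity of the \( \pi_{\delta\gamma} \), while condition~\ref{localdomains-c} follows from the computation \( \phi_\delta(z^\delta_i) = (\phi_\delta(z))^\delta_i \) (both sides take value \( i \) at \( \delta \) and, for \( \gamma > \delta \), agree because \( z^\delta_i|_\gamma = z|_\gamma \) and hence \( \pi_{\delta\gamma}(z^\delta_i) = \pi_{\delta\gamma}(z) \) by~\ref{def:projections-a}). A routine check then shows that conjugation \( g \mapsto \Phi \circ g \circ \Phi^{-1} \) carries \( \Wr^{\Sloc,\bpi}_{\delta \in \Delta} H_\delta \) onto \( \Wr^{\Sloc'}_{\delta \in \Delta} H_\delta \): conditions~\ref{Wr1projective} and~\ref{Wr2projective} for \( g \) translate, via the key identity and the preservation of the \( \delta \)-th coordinate, into conditions~\ref{Wr1generalizedlocal} and~\ref{Wr2generalizedlocal} for the conjugate (using \( g'((\phi_\delta(z))^\delta_i)(\delta) = g(z^\delta_i)(\delta) \) for the latter), and vice versa. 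Since \( \Phi \) is a bijection of the discrete index sets, conjugation is a homeomorphism for the pointwise convergence topologies, so the isomorphism is topological.

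Finally, for the finite character refinement I would check that \( \Sloc' \subseteq \Sloc^{\Max} \), i.e.\ that each \( \phi_\delta(z) \) has support in \( \Max \). Suppose \( \gamma_0 <_\Delta \gamma_1 <_\Delta \cdots \) is an infinite chain in \( \supp(\phi_\delta(z)) \). Partitioning \( \{ \gamma \in \Delta \mid \gamma \geq \delta \} \) into the finitely many convex pieces \( C_0, \dots, C_n \) provided by finite character, by pigeonhole infinitely many \( \gamma_k \) lie in a single \( C_i \); let \( \gamma_{k_0} \) be the least of these and put \( w_0 = \pi_{\delta\gamma_{k_0}}(z) \in \Sloc_{\gamma_{k_0}} \). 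For \( \gamma_k \geq \gamma_{k_0} \) in this sub-chain the whole interval \( [\gamma_{k_0}; \gamma_k]_\Delta \) lies in the convex set \( C_i \), so \( \pi_{\gamma_{k_0}\gamma_k} \) is trivial, and~\ref{def:projections-b} gives \( \phi_\delta(z)(\gamma_k) = \pi_{\gamma_{k_0}\gamma_k}(w_0)(\gamma_k) = w_0(\gamma_k) \neq 0 \). Hence the infinite chain sits inside \( \supp(w_0) \), contradicting \( w_0 \in \Sloc^{\Max}_{\gamma_{k_0}} \). Thus \( \supp(\phi_\delta(z)) \in \Max \), as required.
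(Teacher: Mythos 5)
Your proposal is correct and follows essentially the same route as the paper: your map \( \phi_\delta \) is exactly the paper's rewriting map \( \rho \) restricted to \( \Sloc_\delta \), your injectivity argument via a maximal coordinate of disagreement in \( \supp(z) \cup \supp(z') \) is the paper's proof of the backward direction of its Claim (and is the same use of the maximum condition), and your pigeonhole-plus-convexity argument for the finite character refinement matches the paper's verbatim. The only cosmetic difference is that you obtain the key identity \( \phi_\delta(z)|_\gamma = \phi_\gamma(\pi_{\delta\gamma}(z)) \) directly from condition~\ref{def:projections-b} and then recover the paper's equivalence~\ref{proj->unchained-ii} from injectivity, a mild streamlining of the same computation.
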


\begin{proof}
For every \( \delta \in \Delta \) and \( z \in \Sloc_\delta \), let \( \rho(z) \in \prod_{\gamma \geq \delta} N_\delta\) be defined by setting \( \rho(z)(\gamma) = \pi_{\delta \gamma}(z)(\gamma) \), for every \( \gamma \geq \delta \).
The ``rewriting map'' \( \rho \colon \Sloc \to\bigcup_{\delta\in\Delta }\big(\prod_{\gamma\ge\delta }N_{\delta } \big) \) has the following properties.  
\begin{claim} \label{claim:proj->unchained}
For every \( \gamma \geq \delta \), \( z,z' \in \Sloc_\delta \), and \( z'' \in \Sloc_\gamma \), 
\begin{enumerate-(i)}
\item \label{proj->unchained-i}
\( z|_\gamma = z'|_\gamma \iff  \rho(z)|_\gamma = \rho(z')|_\gamma \),
\item \label{proj->unchained-ii}
\( \pi_{\delta \gamma}(z) = z''  \iff \rho(z)|_{\gamma} = \rho(z'') \),
\item \label{proj->unchained-iii}
\( \rho(z)(\delta) = z(\delta) \).
\end{enumerate-(i)}
\end{claim}

\begin{proof}[Proof of the claim]
\ref{proj->unchained-i}
Suppose first that \( z|_\gamma = z'|_\gamma \). Then for every \( \gamma' \geq \gamma \) we have \( z|_{\gamma'} = z'|_{\gamma'} \), and hence \( \pi_{\delta \gamma'}(z) = \pi_{\delta \gamma'}(z') \) by condition~\ref{def:projections-a} of Definition~\ref{def:projections}. 
Therefore \( \rho(z)(\gamma') = \pi_{\delta \gamma'}(z)(\gamma') = \pi_{\delta \gamma'}(z')(\gamma') = \rho(z')(\gamma') \) for every \( \gamma' \geq \gamma \), that is, \( \rho(z)|_\gamma = \rho(z')|_\gamma \). 

Now suppose that \( z|_\gamma \neq z'|_\gamma \). By the maximum condition, we can find a maximal \( \gamma' \geq \gamma \) such that \( z|_{\gamma'} \neq z'|_{\gamma'} \), so that \( \pi_{\delta \gamma'}(z) \neq \pi_{\delta \gamma'}(z') \) by Definition~\ref{def:projections}\ref{def:projections-a}. Fix any \( \gamma'' > \gamma' \). By maximality of \( \gamma' \), \( z|_{\gamma''} = z'|_{\gamma''} \), and hence \( \pi_{\delta \gamma''}(z) = \pi_{\delta \gamma''}(z') \). 
On the other hand, by condition~\ref{def:projections-b} of Definition~\ref{def:projections} we have \( \pi_{\delta \gamma''} (z) = \pi_{\gamma' \gamma''}(\pi_{\delta \gamma'}(z))\) and  \( \pi_{\delta \gamma''} (z') = \pi_{\gamma' \gamma''}(\pi_{\delta \gamma'}(z'))\), hence \( \pi_{\delta \gamma'}(z)|_{\gamma''} = \pi_{\delta \gamma'}(z')|_{\gamma''} \) for every \( \gamma'' > \gamma' \) by Definition~\ref{def:projections}\ref{def:projections-a} again. Since \( \pi_{\delta \gamma'}(z) \neq \pi_{\delta \gamma'}(z') \), necessarily \( \pi_{\delta \gamma'}(z)(\gamma') \neq \pi_{\delta \gamma'}(z')(\gamma') \), hence \( \rho(z)(\gamma') \neq \rho(z')(\gamma') \), and thus \( \rho(z)|_\gamma \neq \rho(z')|_\gamma \).

\ref{proj->unchained-ii}
Using the surjectivity of \( \pi_{\delta \gamma} \), pick \( z^* \in \Sloc_\delta \) such that \( \pi_{\delta \gamma}(z^*) = z'' \).
Then by Definition~\ref{def:projections}\ref{def:projections-a} and~\ref{proj->unchained-i} we have 
\[ 
\pi_{\delta \gamma}(z) = z'' \iff z|_\gamma = z^*|_\gamma \iff \rho(z)|_\gamma = \rho(z^*)|_\gamma , 
\]
hence it is enough to show that \( \rho(z^*)|_\gamma = \rho(z'') \). But this follows from the fact that, by Definition~\ref{def:projections}\ref{def:projections-b} and the choice of \( z^* \), for every \( \gamma' \geq \gamma \)
\[ 
\rho(z^*)(\gamma') = \pi_{\delta \gamma'}(z^*)(\gamma') = \pi_{\gamma \gamma'}(\pi_{\delta \gamma}(z^*))(\gamma') = \pi_{\gamma \gamma'}(z'')(\gamma') = \rho(z'')(\gamma') .
\]

\ref{proj->unchained-iii}
Recall that \( \pi_{\delta \delta}(z) = z \). Thus, \( \rho(z)(\delta)= \pi_{\delta \delta}(z)(\delta) = z(\delta) \).
\end{proof}

Set \( \Sloc'_\delta = \rho(\Sloc_\delta) \) and \( \Sloc' = \bigcup_{\delta \in \Delta} \Sloc'_\delta \). 
By~Claim~\ref{claim:proj->unchained}\ref{proj->unchained-i}, \( \rho \restriction \Sloc_\delta \colon \Sloc_\delta \to \Sloc'_\delta \) is injective, and hence a bijection. 
If moreover \( \langle \Sloc,\bpi \rangle \) has finite character, then \( \Sloc'_\delta \subseteq \Sloc^{\Max} \) for every \( \delta \in \Delta \).
Indeed, suppose towards a contradiction that there are \( z \in \Sloc_\delta \) and an infinite sequence \( \delta < \delta_0 < \delta_1 < \delta_2 < \dotsc \) such that \( \rho(z)(\delta_j) \neq 0 \) for all \( j \in \omega \). Let \( C_0, \dotsc, C_n \) be a finite partition of \( \{ \gamma \in \Delta \mid \gamma \geq \delta \} \) witnessing the finite character of \( \langle \Sloc,\bpi \rangle \) with respect to \( \delta \). By convexity, we can assume that there is \( \bar \imath \leq n \) such that \( \delta_j \in C_{\bar \imath}\) for all \( j \in \omega \). Let \( z' = \pi_{\delta \delta_0 }(z) \in \Sloc_{\delta_0} \). By choice of \( C_{\bar \imath} \),
\[ 
z'(\delta_j) = (z'|_{\delta_j})(\delta_j) =  \pi_{\delta_0 \delta j}(z')(\delta_j) = \pi_{\delta \delta_j}(z)(\delta_j) = \rho(z)(\delta_j) \neq 0 
\]
for all \( j \in \omega \), contradicting \( \Sloc_{\delta_0} \subseteq \Sloc^{\Max}\).

The surjectivity of \( \pi_{\delta \gamma} \) entails that for every \( \gamma \geq \delta \) we have \( \Sloc_\gamma = \{ \pi_{\delta \gamma} (z) \mid z \in \Sloc_\delta \} \), hence by Claim~\ref{claim:proj->unchained}\ref{proj->unchained-ii} we have that \( \Sloc' \) satisfies condition~\ref{localdomains-a} of Definition~\ref{localdomains}, i.e.\ \( \Sloc'_\gamma = \{ z'|_\gamma \mid z' \in \Sloc'_\delta \} \).
%
%
Condition~\ref{localdomains-c} of the same definition easily follows from items~\ref{proj->unchained-i} and \ref{proj->unchained-iii} in Claim~\ref{claim:proj->unchained}, together with the fact that \( \Sloc \) satisfies the first item of Definition~\ref{def:projections}. Thus \( \Sloc' \) is a family of local domains.

Given \( g \in \Sym( \Sloc )\) we let \( g' = \rho \circ g \circ \rho^{-1} \). Then \( g' \in \Sym(\Sloc') \) because \( \rho \colon \Sloc \to \Sloc' \) is a bijection, and the map \( g \mapsto g' \) is an isomorphism between \( \Sym(\Sloc ) \) and \( \Sym(\Sloc' ) \) (where both groups are endowed with the pointwise convergence topology with respect to the discrete topology on \( \Sloc \) and \( \Sloc' \), respectively). It remains to show that its restriction to \( \Wr^{\Sloc,\bpi}_{\delta \in \Delta} H_\delta \) is onto \( \Wr^{\Sloc'}_{\delta \in \Delta} H_\delta \).

By Claim~\ref{claim:proj->unchained}\ref{proj->unchained-ii}, for every \( \gamma \geq \delta \) and \( w \in \Sloc_\delta \) 
\begin{equation} \label{eq:proj->unchained-ii}
\rho(w)|_\gamma = \rho(\pi_{\delta \gamma}(w)).
\end{equation}
Also, if \( w' = \rho(w) \) then by definition of \( g' \)
\begin{equation} \label{eq:g'}
g'(w') = g'(\rho(w)) =  \rho (g(w)).
\end{equation}
As \( \rho \restriction \Sloc_\delta \) is a bijection between \( \Sloc_\delta \) and \( \Sloc'_\delta \), it follows that \( g (\Sloc_\delta) = \Sloc_{\delta } \) if and only if \( g'(\Sloc'_\delta) = \Sloc'_\delta \).
Moreover, by equations~\eqref{eq:proj->unchained-ii} and~\eqref{eq:g'}, we have that for every \( z \in \Sloc_\delta \) and \( z' = \rho(z) \) 
\begin{align*}
g'(z'|_\gamma) & = g'(\rho(z)|_\gamma) = g'(\rho(\pi_{\delta \gamma}(z))) = \rho(g(\pi_{\delta \gamma}(z))) \quad \text{and} \\
g'(z')|_\gamma & = \rho(g(z))|_\gamma = \rho(\pi_{\delta \gamma}(g(z))).
\end{align*}
Since \( \rho \) is injective, it follows that 
\[ 
g'(z'|_\gamma)  = g'(z')|_\gamma \iff \rho(g(\pi_{\delta \gamma}(z))) = \rho(\pi_{\delta \gamma}(g(z))) \iff g(\pi_{\delta \gamma}(z)) = \pi_{\delta \gamma}(g(z)). 
\]
Therefore \( g \) satisfies~\ref{Wr1projective} if and only if \( g' \) satisfies~\ref{Wr1generalizedlocal}.

By items~\ref{proj->unchained-i} and~\ref{proj->unchained-iii} of Claim~\ref{claim:proj->unchained} we further have \( \rho(z^\delta_i) = \rho(z)^\delta_i \) for every \( i \in N_\delta \). Therefore, using also~\eqref{eq:g'}
\[
g'(\rho(z)^\delta_i) = g'(\rho(z^\delta_i)) = \rho(g(z^\delta_i)),
\]
hence using again Claim~\ref{claim:proj->unchained}\ref{proj->unchained-iii} we get 
\[ 
g'(\rho(z)^\delta_i)(\delta) = \rho(g(z^\delta_i))(\delta) = g(z^\delta_i)(\delta) . 
\]
This means that the bijections \( i \mapsto g(z^\delta_i)(\delta) \) and \( i \mapsto g'((z')^\delta_i)(\delta) \) are identical, hence \( g \) satisfies~\ref{Wr2projective} if and only if \( g' \) satisfies~\ref{Wr2generalizedlocal}. This concludes our proof.
\end{proof}


An important instance where Theorem~\ref{thm:proj->unchained} applies consists of projective generalized wreath products of the form \( \Wr^{\LF, \bpi}_{\delta \in \Delta} H_\delta \) because \( \LF \subseteq \Max \) implies \( \Sloc^{\LF} \subseteq \Sloc^{\Max} \).

\begin{remark} \label{rmk:proj->unchained} 
It follows from the proof of Theorem~\ref{thm:proj->unchained} that \( \Sloc'\) is locally homogeneous if and only if so is \( \langle \Sloc,\bpi \rangle \).
\end{remark}

\subsection{Some useful lemmas}

The following lemma implies that condition~\ref{Wr2projective} in Definition~\ref{def:projective} can be dropped if \( H_\delta = \Sym(N_\delta) \) for all \( \delta \in \Delta \).

\begin{lemma} \label{lem:P2isautomatic}
Let \( \langle \Delta,N \rangle \) and \( \langle \Sloc , \bpi \rangle \) be as in Definition~\ref{def:projective}. Then for every \( \delta \in \Delta \), \( z \in \Sloc_\delta \), and \( g \in \Sym(\Sloc) \) satisfying condition~\ref{Wr1projective}, the map \( i \mapsto g(z^\delta_i)(\delta) \) is a permutation of \( N_\delta \).
\end{lemma}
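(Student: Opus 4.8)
The plan is to show that $g$ sends the ``$\delta$-fibre through $z$'', i.e.\ the set $\{ z^\delta_i \mid i \in N_\delta \}$, bijectively onto the $\delta$-fibre through $g(z)$, namely $\{ (g(z))^\delta_k \mid k \in N_\delta \}$; reading off the value at the coordinate $\delta$ will then give that $\phi \colon i \mapsto g(z^\delta_i)(\delta)$ is a bijection of $N_\delta$. First I would record well-definedness: each $z^\delta_i$ lies in $\Sloc_\delta$ by the first item of Definition~\ref{def:projections}, and $g(\Sloc_\delta) = \Sloc_\delta$ by~\ref{Wr1projective}, so $g(z^\delta_i) \in \Sloc_\delta \subseteq \prod_{\gamma \geq \delta} N_\gamma$ and hence $\phi(i) = g(z^\delta_i)(\delta) \in N_\delta$.

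The key structural step is the claim that $g(z^\delta_i)$ and $g(z)$ agree on every coordinate strictly above $\delta$. Since $z^\delta_i$ and $z$ differ only at $\delta$, for every $\gamma > \delta$ we have $z^\delta_i|_\gamma = z|_\gamma$, whence $\pi_{\delta\gamma}(z^\delta_i) = \pi_{\delta\gamma}(z)$ by condition~\ref{def:projections-a} of Definition~\ref{def:projections}. Applying $g$ to this equality and invoking the intertwining relation in~\ref{Wr1projective} on both sides gives $\pi_{\delta\gamma}(g(z^\delta_i)) = \pi_{\delta\gamma}(g(z))$, and a second appeal to~\ref{def:projections-a} yields $g(z^\delta_i)|_\gamma = g(z)|_\gamma$ for every $\gamma > \delta$; evaluating at $\gamma = \beta$ shows $g(z^\delta_i)(\beta) = g(z)(\beta)$ for all $\beta > \delta$. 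Consequently $g(z^\delta_i) = (g(z))^\delta_{\phi(i)}$.

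Injectivity of $\phi$ is then immediate: if $\phi(i) = \phi(j)$, then $g(z^\delta_i)$ and $g(z^\delta_j)$ agree above $\delta$ (both coincide with $g(z)$ there) and also at $\delta$, so they are equal; as $g$ is a bijection, $z^\delta_i = z^\delta_j$, forcing $i = j$. For surjectivity I would first note that $g^{-1}$ again satisfies~\ref{Wr1projective}: clearly $g^{-1}(\Sloc_\delta) = \Sloc_\delta$, and for $w \in \Sloc_\delta$, writing $z' = g^{-1}(w) \in \Sloc_\delta$, the relation for $g$ gives $\pi_{\delta\gamma}(w) = \pi_{\delta\gamma}(g(z')) = g(\pi_{\delta\gamma}(z'))$, so that $g^{-1}(\pi_{\delta\gamma}(w)) = \pi_{\delta\gamma}(z') = \pi_{\delta\gamma}(g^{-1}(w))$. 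Now fix $k \in N_\delta$ and set $v = g(z)$. Applying the structural step to $g^{-1}$ with base point $v$ shows that $u := g^{-1}(v^\delta_k)$ agrees with $g^{-1}(v) = z$ on all coordinates above $\delta$, so $u = z^\delta_i$ with $i = u(\delta)$. Then $g(z^\delta_i) = g(u) = v^\delta_k$, whence $\phi(i) = v^\delta_k(\delta) = k$.

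I expect the only genuinely delicate point to be surjectivity, since for infinite $N_\delta$ injectivity alone is not enough; the resolution is the symmetry observation that $g^{-1}$ obeys the same condition~\ref{Wr1projective}, which lets the structural step be run in reverse. Everything else is a direct unwinding of~\ref{def:projections-a} through the intertwining relation~\ref{Wr1projective}; in particular, neither the cocycle condition~\ref{def:projections-b} nor any maximum condition is needed here.
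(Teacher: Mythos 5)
Your proof is correct and follows essentially the same route as the paper's: the structural step showing that \( g(z^\delta_i) \) agrees with \( g(z) \) strictly above \( \delta \) (via the biconditional~\ref{def:projections-a} of Definition~\ref{def:projections} and the intertwining condition~\ref{Wr1projective}) gives injectivity, and surjectivity is obtained by pulling the fibre element \( g(z)^\delta_k \) back along \( g^{-1} \), exactly as in the paper. The only cosmetic difference is that you explicitly verify that \( g^{-1} \) again satisfies~\ref{Wr1projective} and rerun the forward step, where the paper instead concludes by a short contradiction using the same biconditional; your observation that neither~\ref{def:projections-b} nor any maximum condition is used also matches the paper's argument.
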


\begin{proof}
For any \( i,j \in N_\delta \) and \( \gamma > \delta \), we have \( z^\delta_i|_\gamma = z^\delta_j|_{\gamma} \), and therefore \( \pi_{\delta \gamma}(z^\delta_i) = \pi_{\delta \gamma} (z^\delta_j) \) by Definition~\ref{def:projections}\ref{def:projections-a}. By~\ref{Wr1projective}, it follows that \( \pi_{\delta \gamma} (g(z^\delta_i)) = \pi_{\delta \gamma}(g(z^\delta_j)) \), hence \( g(z^\delta_i)|_\gamma = g(z^\delta_j)|_\gamma \) by Definition~\ref{def:projections}\ref{def:projections-a} again. On the other hand, by injectivity of \( g \) we have that if \( i \neq j \) then \( g(z^\delta_i) \neq g(z^\delta_j) \), which combined with the discussion above yields \( g(z^\delta_i)(\delta) \neq g(z^\delta_j)(\delta) \). This proves that the map \( i \mapsto g(z^\delta_i)(\delta) \) is injective. 

To prove surjectivity, given any \( j \in N_\delta \) let \( z' = g(z)^\delta_j \) and, using surjectivity of \( g \), set \( z'' = g^{-1}(z') \): we need to show that \( z'' = z^\delta_i \) for some \( i \in N_\delta \). If this was not the case, \( z''|_\gamma \neq z|_\gamma \) for some  \( \gamma > \delta \). 
Arguing as above, we then would get \( z'|_\gamma = g(z'')|_\gamma \neq g(z)|_\gamma \), contradicting the choice of \( z' \).
\end{proof}

It follows that when each \( H_\delta \) is the full permutation group \( \Sym(N_\delta) \), we can conveniently reformulate \( \Wr^{\Sloc,\bpi}_{\delta \in \Delta} H_\delta \) in terms of certain automorphisms of a partial order \( P^{\Sloc,\bpi} \) canonically associated to it. 
More in detail, let \( P^{\Sloc,\bpi} \) be obtained by endowing \( \Sloc \) with the ordering \( \preceq \) defined as follows: if \( z \in \Sloc_\delta \) and \( z' \in \Sloc_\gamma \), then \( z \preceq z' \) if and only if \( \delta \leq \gamma \) and \( \pi_{\delta \gamma}(z) = z' \). 
In particular, if \( \bpi \) is trivial then \( z \preceq z' \iff z \supseteq z' \) for every \( z,z' \in \Sloc \). 
We refer to \( P^{\Sloc,\bpi} = (\Sloc, {\preceq}) \) as the \markdef{canonical partial order} associated to \( \Wr^{\Sloc,\bpi}_{\delta \in \Delta} \Sym(N_\delta) \). 
When \( \Sloc \) and \( \bpi \) are clear from the context, we might drop them from the notation and simply write \( P \) instead of \( P^{\Sloc, \bpi}\), in which case we also conflate \( P \) with its domain and denote its order by \( \leq_P \) instead of \( \preceq \).
We say that an automorphism \( f \in \Aut(P) \) of \( P = P^{\Sloc, \bpi} \) is \markdef{localized} if \( f(\Sloc_{\delta })=\Sloc_{\delta } \) for every \( \delta\in\Delta \).
The subgroup of \( \Aut(P) \) consisting of all localized automorphisms will be denoted by \( \AutL(P) \).
Notice that since \( P=P^{\Sloc,\bpi} \) is a countable partial order when \( \Sloc \) is countable, \( \Aut(P) \) can be construed as a closed subgroup of \( \Sym(\omega) \), and \( \AutL(P) \) becomes a closed subgroup of \( \Aut(P) \subseteq \Sym(\omega) \).
By Lemma~\ref{lem:P2isautomatic}, we obviously get \( \AutL(P) =  \Wr^{\Sloc, \bpi}_{\delta \in \Delta} \Sym(N_\delta) \).

To prove our main results, we need to connect generalized wreath products with automorphism groups of \( L \)-trees.
Let \( \tr = (T,{\leq_T},\lev_T) \) be a countable pruned \( L \)-tree.
Recall that since \( \tr \) can be viewed as a first-order structure with domain \( \omega \), its automorphism group \( \Aut(\tr) \) can be construed as a closed subgroup of \( \Sym(\omega) \), from which it inherits the usual product topology.
Consider the condensed tree \( \Delta(\tr) \) of \( \tr \). 
We define a labeling \( N_\tr \colon \Delta(\tr) \to \omega \cup \{ \omega \} \colon \delta \mapsto N^\tr_\delta \) as follows: for \( \delta \in \Delta(\tr) \), fix any \( t \in \delta \) and set \( N^\tr_\delta = |C_t| \), 
where 
\begin{equation} \label{eq:Ct} 
C_t =\{ t\}\cup\{ t' \sim t \mid \spl{t,t'} = \lev_T(t) \} .
\end{equation}
It is clear that the definition of \( N^\tr_\delta \) does not depend on the choice of \( t \in \delta \).

In the next sections, we are going to show that \( \Aut(\tr) \cong \Wr^{\Sloc,\bpi}_{\delta \in \Delta(\tr)} \Sym(N^\tr_\delta) \), under various assumptions on \( \tr \) and for the appropriate choices of \( \Sloc \) and \( \bpi \). To this aim, we will often
employ the following lemma, which provides a sufficient condition to obtain \( \Aut(\tr) \cong \AutL(P) \) for \( P \) the canonical partial order associated to \( \Wr^{\Sloc, \bpi}_{\delta \in \Delta(\tr)} \Sym(N^\tr_\delta) \).

\begin{lemma} \label{lemma:g} 
Let \( \tr = (T,{\leq_T},\lev_T) \) be a countable pruned \( L \)-tree, and let \( P = P^{\Sloc, \bpi} \) be the canonical partial order associated to the projective wreath product \( \Wr^{\Sloc, \bpi}_{\delta \in \Delta(\tr)} \Sym(N^\tr_\delta) \).
Let
\( g \colon T \to P \) be an order-isomorphism
such that \( g(t) \in \Sloc_{[t]} \) for every \( t \in T \).
Then the map \( \varphi \mapsto f_\varphi \) defined by setting \( f_\varphi = g \circ \varphi \circ g^{-1} \) witnesses \( \Aut(\tr) \cong \AutL(P) \). Therefore, \( \Aut(\tr) \cong \Wr^{\Sloc, \bpi}_{\delta \in \Delta(\tr)} \Sym(N^\tr_\delta) \).
\end{lemma}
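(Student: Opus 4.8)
The plan is to show that conjugation by \( g \) restricts to a topological isomorphism of \( \Aut(\tr) \) onto \( \AutL(P) \); since the discussion preceding the lemma already records the identity \( \AutL(P) = \Wr^{\Sloc,\bpi}_{\delta \in \Delta(\tr)} \Sym(N^\tr_\delta) \) (via Lemma~\ref{lem:P2isautomatic}), this yields the final claim. As \( \tr \) is countable, \( T \) and hence \( \Sloc \) are countable sets, and \( g \colon T \to \Sloc \) is a bijection inducing the conjugation map \( c_g \colon \Sym(T) \to \Sym(\Sloc) \), \( \sigma \mapsto g \circ \sigma \circ g^{-1} \). This \( c_g \) is a group isomorphism and, because it sends the subbasic open set \( \{ \sigma : \sigma(t) = s \} \) to \( \{ \tau : \tau(g(t)) = g(s) \} \) (and likewise for \( c_g^{-1} \)), a homeomorphism for the pointwise convergence topologies. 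Since \( g \) is an order-isomorphism between \( (T, {\leq_T}) \) and \( P = (\Sloc, {\preceq}) \), the map \( c_g \) restricts to an isomorphism between the group of \( {\leq_T} \)-order-automorphisms of \( T \) and \( \Aut(P) \). It therefore suffices to check that, under \( c_g \), the subgroup \( \Aut(\tr) \) corresponds exactly to the subgroup \( \AutL(P) \).

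The key preliminary observation is that \( g \) maps each \( \sim \)-class onto the corresponding local domain. Indeed, the hypothesis \( g(t) \in \Sloc_{[t]} \) says that \( g \) sends the \( \sim \)-class \( \delta \in \Delta(T) \) (viewed as a subset of \( T \)) into \( \Sloc_\delta \); since the \( \sim \)-classes partition \( T \), the sets \( \Sloc_\delta \) partition \( \Sloc \), and \( g \) is a bijection, no inclusion can be proper, so \( g(\delta) = \Sloc_\delta \), i.e.\ \( g^{-1}(\Sloc_\delta) = \delta \), for every \( \delta \in \Delta(T) \). Consequently, for any \( {\leq_T} \)-order-automorphism \( \varphi \) of \( T \) with image \( f_\varphi = c_g(\varphi) \), the automorphism \( f_\varphi \) is localized (that is, \( f_\varphi(\Sloc_\delta) = \Sloc_\delta \) for all \( \delta \)) if and only if \( \varphi \) preserves each \( \sim \)-class setwise (that is, \( \varphi(\delta) = \delta \) for all \( \delta \in \Delta(T) \)).

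It then remains to identify, among \( {\leq_T} \)-order-automorphisms, those preserving each \( \sim \)-class setwise with the elements of \( \Aut(\tr) \). If \( \varphi \in \Aut(\tr) \), then by the very definition of \( \sim \) we have \( \varphi(t) \sim t \) for every \( t \in T \), whence \( \varphi(\delta) \subseteq \delta \) and, by bijectivity, \( \varphi(\delta) = \delta \); thus \( f_\varphi \in \AutL(P) \). Conversely, if an order-automorphism \( \varphi \) of \( (T, {\leq_T}) \) preserves each \( \sim \)-class setwise, then \( \varphi(t) \in [t] \) for every \( t \), so \( \varphi(t) \sim t \) and hence \( \lev_T(\varphi(t)) = \lev_T(t) \) (recall that \( \sim \)-equivalent nodes share the same level); being a level-preserving order-automorphism, \( \varphi \) belongs to \( \Aut(\tr) \). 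This shows \( c_g(\Aut(\tr)) = \AutL(P) \), and since \( c_g \) is a topological group isomorphism of \( \Sym(T) \) onto \( \Sym(\Sloc) \), its restriction \( \varphi \mapsto f_\varphi \) is the desired topological isomorphism \( \Aut(\tr) \cong \AutL(P) \).

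I expect the only genuinely load-bearing point — rather than an obstacle — to be the backward direction of this last correspondence: an abstract \( {\leq_T} \)-order-automorphism need not a priori respect the level function \( \lev_T \), and setwise preservation of the \( \sim \)-classes is exactly what forces level preservation, through the implication \( \varphi(t) \sim t \Rightarrow \lev_T(\varphi(t)) = \lev_T(t) \). Everything else is routine bookkeeping with the definitions of \( \AutL(P) \), of the canonical order \( \preceq \), and of the pointwise convergence topology.
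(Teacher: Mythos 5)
Your proof is correct and takes essentially the same route as the paper's: conjugate by \( g \), use the hypothesis \( g(t) \in \Sloc_{[t]} \) together with the fact that the \( \sim \)-classes partition \( T \) and the sets \( \Sloc_\delta \) partition \( \Sloc \) to get \( f_\varphi \in \AutL(P) \) from \( \varphi(t) \sim t \), and for surjectivity let localization force \( [\varphi_f(t)] = [t] \), whence level preservation and \( \varphi_f \in \Aut(\tr) \). Your explicit isolation of the identity \( g(\delta) = \Sloc_\delta \) is merely a tidier packaging of the partition argument the paper uses implicitly in both directions.
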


\begin{proof}
Let \( \varphi \in \Aut(\tr) \).
It is clear that \( f_\varphi \in \Aut(P) \). 
Given any \( z\in P \), let \( \delta \in \Delta(\tr) \) be such that \( z \in \Sloc_\delta \) and \( t  = g^{-1}(z) \), so that \( [t] = \delta \) by choice of \( g \). 
Since \( t \sim \varphi(t) \), we get that \( [\varphi(t)] = [t] =  \delta \), and hence \( f_\varphi(z) \in \Sloc_\delta \) by choice of \( g \) again. 
Since \( z\in P \) was arbitrary, this yields \( f_\varphi \in \AutL(P) \). 
It easily follows that \( \varphi \mapsto f_\varphi \) is a topological group embedding of \( \Aut(\tr) \) into \( \AutL(P) \): we claim that it is also onto. 

Given \( f \in \AutL(P) \), let \( \varphi_f = g^{-1} \circ f \circ g \), so that \( f_{\varphi_f} = f \): we only need to show that \( \varphi_f \in \Aut(\tr) \). Clearly \( \varphi_f \colon T \to T\) is bijective, and it satisfies \( t \leq_T t' \iff \varphi_f(t) \leq_T \varphi_f(t') \) for all \( t,t' \in T \) because \( g \) is order-preserving.
Fix any \( t \in T \). 
By definition of \( \varphi_f \), we have \( g(\varphi_f(t)) = f(g(t))\).
By \( f \in \AutL(P) \) and choice of \( g \), we must have \( [t] = [\varphi_f(t)]\), so that \( t \sim \varphi_f(t) \) and, in particular, \( \lev_T(t) = \lev_T(\varphi_f(t))\). 
This concludes the proof that \( \varphi_f \in \Aut(\tr) \).
\end{proof}

Conversely, the following lemmas provide a sufficient condition for turning a projective wreath product into the automorphism group of some \( L \)-tree.
A skeleton \( \langle \Delta,N \rangle \) is said to be \markdef{treeable} if there is a linear order \( L \) and a map \( \lev_\Delta \colon \Delta \to L \) such that \( (\Delta,{\leq_\Delta},\lev_\Delta) \) is a \emph{pruned} \( L \)-tree; when we want to single out a specific linear order \( L \) as above, we say that \( \langle \Delta,N \rangle \) is \markdef{\( L \)-treeable}.

\begin{lemma} \label{lem:generalcase}
Let \( P = P^{\Sloc,\bpi} \) be the canonical partial order associated to a projective wreath product of the form \( \Wr^{\Sloc,\bpi}_{\delta \in \Delta} \Sym(N_\delta)\), for some countable skeleton \( \langle \Delta,N \rangle \) and some system of projections \( \langle \Sloc,\bpi \rangle \) with \( \Sloc \subseteq \Sloc^{\Max} \). 
Suppose that \( \langle \Delta,N \rangle \) is \( L\)-treeable for some linear order \( L \) without a maximum.
Then there is a map \( \lambda_P \colon P \to L \) such that \( (P,{\leq_P},\lev_P) \) is a pruned \( L \)-tree and \( \lambda_P \) is constant on each \( \Sloc_\delta \).
\end{lemma}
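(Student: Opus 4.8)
The plan is to equip $P = P^{\Sloc,\bpi}$ with the level map $\lev_P(z) = \lev_\Delta(\delta)$ for the unique $\delta \in \Delta$ with $z \in \Sloc_\delta$; this is manifestly constant on each $\Sloc_\delta$, so the whole content of the statement is that $(P,{\leq_P},\lev_P)$ is a pruned $L$-tree. Throughout I would repeatedly exploit the \emph{tree property} of $\Delta$: since $\Delta$ is a pruned $L$-tree via $\lev_\Delta$, for every $\gamma \in \Delta$ the cone $\{\eta \in \Delta \mid \eta \geq_\Delta \gamma\}$ equals $\{\gamma|_\ell \mid \ell \geq_L \lev_\Delta(\gamma)\}$ and is a \emph{chain}, order-isomorphic to $[\lev_\Delta(\gamma);+\infty)_L$ via $\lev_\Delta$. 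Conditions~\ref{def:L-tree-1} and~\ref{def:L-tree-2} of Definition~\ref{def:L-tree} are then immediate: if $z \prec z'$ with $z \in \Sloc_\delta$, $z' \in \Sloc_\gamma$, then $\pi_{\delta\gamma}(z) = z'$ forces $\delta <_\Delta \gamma$ (equality would give $z = \pi_{\delta\delta}(z) = z'$) and hence $\lev_\Delta(\delta) <_L \lev_\Delta(\gamma)$; and for $z \in \Sloc_\delta$, $\ell \geq_L \lev_P(z)$ I set $z|_\ell := \pi_{\delta,\delta|_\ell}(z) \in \Sloc_{\delta|_\ell}$, whose uniqueness as a $\leq_P$-extension at level $\ell$ follows from uniqueness of $\delta|_\ell$ in $\Delta$ together with $z|_{\lev_P(z)} = \pi_{\delta\delta}(z) = z$.

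For~\ref{def:L-tree-3} and~\ref{def:L-tree-4} the crucial tool is the following consequence of $\Sloc \subseteq \Sloc^{\Max}$: every $u \in \Sloc_\gamma$ has $\supp(u)$ contained in the chain $\{\eta \geq_\Delta \gamma\}$ and satisfying the maximum condition, and a subset of a \emph{chain} satisfying the maximum condition has no infinite strictly increasing sequence, hence is reverse-well-ordered and therefore, if nonempty, has a $\leq_\Delta$-maximum. Fixing $z \in \Sloc_\delta$, $z' \in \Sloc_{\delta'}$ and $\ell_0 = \max\{\lev_P(z),\lev_P(z')\}$, I would first note that the disagreement set $E = \{\ell \geq_L \ell_0 \mid z|_\ell \neq z'|_\ell\}$ is downward closed in $[\ell_0;+\infty)_L$, because $z|_\ell = z'|_\ell$ propagates upward: $(z|_\ell)|_{\ell'} = z|_{\ell'}$ by Definition~\ref{def:projections}\ref{def:projections-b}. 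For levels with $\delta|_\ell \neq \delta'|_\ell$ one automatically has $z|_\ell \neq z'|_\ell$ (distinct fibres), and by~\ref{def:L-tree-4} for $\Delta$ these are exactly the $\ell \in [\ell_0;\spl{\delta,\delta'}]$ when $\delta,\delta'$ are $\leq_\Delta$-incomparable, and none when they are comparable. On the up-set $M = \{\ell \geq_L \ell_0 \mid \delta|_\ell = \delta'|_\ell\}$, I fix any anchor $\ell_2 \in M$, put $\gamma_2 = \delta|_{\ell_2}$, $u = \pi_{\delta\gamma_2}(z)$, $u' = \pi_{\delta'\gamma_2}(z') \in \Sloc_{\gamma_2}$, and use Definition~\ref{def:projections}\ref{def:projections-a}--\ref{def:projections-b} to rewrite, for $\ell \geq_L \ell_2$, the equality $z|_\ell = z'|_\ell$ as coincidence of the literal coordinates $u(\eta) = u'(\eta)$ for all $\eta \geq_\Delta \delta|_\ell$. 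Applying the chain/maximum-condition fact to $\{\eta \geq_\Delta \gamma_2 \mid u(\eta) \neq u'(\eta)\} \subseteq \supp(u)\cup\supp(u')$ gives a top disagreement $\eta^*$ (if any), so that $E \cap [\ell_2;+\infty)_L$ is either empty or has maximum $\lev_\Delta(\eta^*)$.

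Since $E$ is downward closed, a nonempty intersection $E \cap [\ell_2;+\infty)_L$ with a maximum already pins down $\max E$; a short case split then shows $E$ is empty or has a maximum in all cases, equal to $\spl{\delta,\delta'}$ precisely when $\delta,\delta'$ are incomparable and there is no disagreement above the $\Delta$-split. This establishes~\ref{def:L-tree-4}, with $\spl{z,z'} = \max E$, and~\ref{def:L-tree-3} follows at once: because $L$ has no maximum, any $\ell >_L \max E$ (or $\ell = \ell_0$ in the comparable case) yields $z|_\ell = z'|_\ell$. The step I expect to be the main obstacle is exactly this maximality in~\ref{def:L-tree-4}: the common up-set $M$ typically has \emph{no minimum} in $L$, so one cannot anchor the comparison at the bottom of $M$; the resolution is that the maximum condition on the chain-supported elements $u,u'$ forbids disagreements from accumulating no matter how low the anchor $\ell_2$ is taken, while downward-closedness of $E$ is what upgrades ``a maximum on each upper segment'' to ``a global maximum''.

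Finally, for prunedness, given $z \in \Sloc_\delta$ I would first use that $\Delta$ is pruned to fix a branch $\beta \in [\Delta]$ with $\beta(\lev_\Delta(\delta)) = \delta$, and then lift $\beta$ to a branch of $P$ through $z$. Upward I set $b(\ell) = z|_\ell$ for $\ell \geq_L \lev_P(z)$; downward, along a strictly decreasing sequence coinitial in $L$ (which exists since the countability of the skeleton makes $\Sloc$, and hence $L$, countable), I repeatedly invoke the surjectivity of the projections $\pi_{\beta(\ell_{n+1})\beta(\ell_n)}$ to choose coherent preimages, exactly as in the proof of Lemma~\ref{lem:inhabitated}, and then fill in the remaining levels by projecting down from the chosen coinitial nodes. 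The resulting $b$ is a branch of $P$ with $b(\lev_P(z)) = z$, so $(P,{\leq_P},\lev_P)$ is pruned, completing the verification.
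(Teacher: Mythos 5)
Your proposal is correct and takes essentially the same route as the paper's proof: you inherit the level map from \( \lev_\Delta \), verify conditions~\ref{def:L-tree-3} and~\ref{def:L-tree-4} by projecting both elements into a common fibre and extracting a maximal disagreement coordinate from the supports (which lie in the chain above the anchor and satisfy the maximum condition) via conditions (a) and (b) of Definition~\ref{def:projections}, and obtain prunedness from surjectivity of the projections together with countable coinitiality of \( L \), exactly as the paper does, with your unified disagreement-set analysis being only a bookkeeping variant of the paper's case-by-case reductions. One small repair: \( \Sloc \) need not be countable under the stated hypotheses, but the fact you actually need — that \( L \) is countable — follows directly from the surjectivity of \( \lev_\Delta \) and the countability of \( \Delta \).
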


\begin{proof}
Let \( \lev_\Delta \colon \Delta \to L \) be such that \( (\Delta,{\leq_\Delta},\lev_\Delta) \) is a pruned \( L \)-tree, and let \( \lev_P(z) = \lev_{\Delta}(\delta) \) for every \( z \in \Sloc_\delta \), so that by construction \( \lev_P(z) = \lev_P(z') \) whenever \( z \) and \( z' \) belong to the same \( \Sloc_\delta \). Items~\ref{def:L-tree-1} and~\ref{def:L-tree-2} of Definition~\ref{def:L-tree} are obviously satisfied, because they are satisfied by the \( L \)-tree \( (\Delta,{\leq_\Delta},\lev_\Delta) \); in particular, for \( z \in \Sloc_\delta \) and \( \ell \geq \lev_P(z) = \lev_\Delta(\delta) \), the unique \( z'' \in P \) such that \( z'' \geq_P z \) and \( \lev_P(z'') = \ell \) is \( \pi_{\delta\gamma}(z) \), for \( \gamma = \delta|_\ell \). 

To check that \ref{def:L-tree-3} is satisfied, pick any \( z,z' \in P = \Sloc \), and let \( \delta,\delta' \in \Delta \) be such that \( z \in \Sloc_\delta \) and \( z' \in \Sloc_{\delta'} \). 
Since \( (\Delta,{\leq_\Delta},\lev_{\Delta}) \) is an \( L \)-tree, there is \( \gamma \in \Delta \) such that \( \gamma \geq_{\Delta} \delta \) and \( \gamma \geq_{\Delta} \delta' \). 
Replacing \( z \) and \( z' \) with \( \pi_{\delta\gamma}(z) \) and \( \pi_{\delta'\gamma}(z') \), respectively, we can assume without loss of generality that \( \delta = \delta' \). Since \( L \) has no maximum and \( \supp(z),\supp(z') \in \Max \), there is \( \gamma \geq_{\Delta} \delta \) such that \( z(\gamma') = z'(\gamma') = 0 \) for all \( \gamma' \geq_{\Delta} \gamma \). This means that \( z|_\gamma = z'|_\gamma \), and therefore \( \pi_{\delta \gamma}(z) = \pi_{\delta \gamma}(z') \) by condition~\ref{def:projections-a} in the second item of Definition~\ref{def:projections}. 

Next we check that also item~\ref{def:L-tree-4} of Definition~\ref{def:L-tree} is satisfied. Consider two \( \leq_P \)-incomparable \( z \in \Sloc_{\delta} \) and \( z' \in \Sloc_{\delta'} \), for some \( \delta, \delta' \in \Delta \). 

Suppose first that \( \delta \) and \( \delta' \) are \( \leq_{\Delta} \)-comparable, say \( \delta \leq_{\Delta} \delta' \) for the sake of definiteness. Replacing \( z \) with \( \pi_{\delta \delta'}(z) \), we can assume \( \delta = \delta' \). Let \( \bar \gamma = \max \{ \gamma \geq_{\Delta} \delta \mid z(\gamma) \neq z'(\gamma) \} \): such a maximum exists because by hypothesis \( \supp(z), \supp(z') \in \Max \).  
Then for every \( \gamma' \geq_{\Delta} \delta \), we have \( z|_{\gamma'} = z'|_{\gamma'} \) if and only if \( \gamma' >_{\Delta} \bar \gamma \).
Using again condition~\ref{def:projections-a} in the second item of Definition~\ref{def:projections}, this means
that \( \pi_{\delta \gamma'}(z) = \pi_{\delta \gamma'}(z') \) if and only if \( \gamma' >_{\Delta} \bar \gamma \), therefore \( \spl{z,z'}\) is well-defined and equals \( \lev_{\Delta}(\bar \gamma)\).

Assume now that \( \delta \) and \( \delta' \) are \( \leq_{\Delta} \)-incomparable, and let
\( \ell = \spl{\delta,\delta'} \).
If \( \pi_{\delta \gamma}(z) = \pi_{\delta' \gamma}(z') \) for every \( \ell' > \ell \) and \( \gamma = \delta|_{\ell'} = \delta'|_{\ell'} \), then clearly \( \ell = \spl{z,z'} \). Otherwise we can replace \( z \) and \( z' \) with \( \pi_{\delta \gamma}(z) \) and \( \pi_{\delta'\gamma}(z') \), respectively, where \( \gamma = \delta|_{\ell'} = \delta'|_{\ell'} \) for some \( \ell' > \ell \) such that \( \pi_{\delta \gamma}(z) \neq \pi_{\delta'\gamma}(z') \). This shows that without loss of generality we can again assume \( \delta = \delta' \), and thus we can argue as in the previous paragraph. 

Finally, the fact that the \( L \)-tree \( (P,{\leq_P},\lev_P) \) is pruned follows from the fact that \( (\Delta,{\leq_\Delta},\lev_\Delta) \) is pruned, together with the surjectivity of the maps \( \pi_{\delta \gamma}\) and the fact that \( L \) is countable (and hence has countable coinitiality, if it does not have a minimum).
\end{proof}

\begin{lemma} \label{lemma:fromwreathtotrees}
Let \( P = P^{\Sloc,\bpi} \) be the canonical partial order associated to the projective wreath product \( \Wr^{\Sloc,\bpi}_{\delta \in \Delta} \Sym(N_\delta) \), where \( \langle \Sloc,\bpi \rangle \) is a countable system of projections. 
Suppose that there are a linear order \( L \) without minimum and a map \( \lev_P \colon P \to L \) such that \( (P,{\leq_P},\lev_P) \) is a pruned \( L \)-tree and \( \lev_P \) is constant on \( \Sloc_\delta \), for every \( \delta \in \Delta \).
Then there is a countable pruned \( L' \)-tree \( \tr_P = (T,{\leq_T},\lev_T) \), where \( L' = \omega^* \cdot L \), such that \( \AutL(P) \cong \Aut(\tr_P) \), and therefore \( \Wr^{\Sloc,\bpi}_{\delta \in \Delta} \Sym(N_\delta) \cong \Aut(\tr_P) \).

\details{
{\color{blue}Moreover, let \( \lev_\Delta \colon \Delta \to L \) be defined by \( \lev_\Delta(\delta) = \lev_P(z) \) for some/any \( z \in \Sloc_\delta \), and assume that \( \langle \Sloc,\bpi \rangle\) is locally homogeneous. 
Then \( (\Delta,{\leq_\Delta},\lev_\Delta) \) is a weak \( L \)-tree, and if it is an \( L \)-tree then \( \tr_P\) can be required to be special.} 
}
\end{lemma}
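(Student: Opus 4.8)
The plan is to realize $\AutL(P)$, which by Lemma~\ref{lem:P2isautomatic} equals $\Wr^{\Sloc,\bpi}_{\delta\in\Delta}\Sym(N_\delta)$, as the full automorphism group of a tree obtained from $P$ by \emph{rigidly tagging} the partition $\{\Sloc_\delta\}_{\delta\in\Delta}$. Observe first that a localized automorphism is precisely a level-preserving order-automorphism of $P$ that fixes every block $\Sloc_\delta$ setwise; indeed, putting $\lev_\Delta(\delta):=\lev_P(z)$ for $z\in\Sloc_\delta$ (well defined by hypothesis), any $f$ preserving each $\Sloc_\delta$ automatically preserves $\lev_P$, and conversely the only order-symmetries of $P$ that fail to be localized are those swapping distinct blocks $\Sloc_\delta,\Sloc_{\delta'}$ lying at the same $L$-level $\ell$. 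The purpose of passing to $L'=\omega^*\cdot L$ is exactly to create, just below each original level $\ell$, a fresh copy of $\omega^*$ of auxiliary levels $(m^*,\ell)$ with $m\ge 1$ in which such tags can be inserted, while the nodes of $P$ are kept at the top levels $(0^*,\ell)$.

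Concretely, I would fix for each $\ell\in L$ an injection of $\{\delta : \lev_\Delta(\delta)=\ell\}$ into $\omega$ (possible since $\Sloc$ is countable), and then build $\tr_P=(T,\le_T,\lev_T)$ by: placing a copy of $P$ at the levels $(0^*,\ell)$, so that $\lev_T(z)=(0^*,\lev_\Delta(\delta))$ for $z\in\Sloc_\delta$; subdividing the cone below each $z$ along the column $\{(m^*,\ell):m\ge 1\}$; and attaching to the $m$-th subdivision node an extra pendant precisely when $m$ belongs to the set coding the index of $\delta$. The pendants would be chosen to be rigid, pairwise non-isomorphic infinite descending combs whose local shape is distinguishable from the cones occurring in $P$. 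In this way the decoration hanging below $z$ depends only on $\delta$, is rigid, and — across distinct $\delta,\delta'$ at the same $L$-level — is non-isomorphic to the decoration below $\delta'$.

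The verification then has two stages. First I would check that $\tr_P$ is a \emph{pruned} $L'$-tree: conditions~\ref{def:L-tree-1} and~\ref{def:L-tree-2} are immediate, condition~\ref{def:L-tree-3} and the existence of $\spl{\cdot,\cdot}$ in~\ref{def:L-tree-4} follow from the corresponding facts for $P$ (for pairs whose split already occurs at a $P$-level) together with the explicit shape of the subdivisions and pendants (for pairs that first separate inside a single column), and prunedness holds because every node of $\tr_P$ — whether it comes from $P$, from a subdivision, or from a pendant — admits a downward continuation to arbitrarily low levels. Second, I would analyze the restriction map $\varphi\mapsto\varphi\restriction P$ from $\Aut(\tr_P)$ to order-automorphisms of $P$: since $\varphi$ preserves $L'$-levels it maps the top levels $(0^*,\ell)$ to themselves, and using that $P$-nodes are structurally distinguishable from decoration it carries $P$ onto $P$, so $\varphi\restriction P\in\Aut(P)$. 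Because the tags rigidly encode $\delta$, the restriction preserves each block, i.e.\ $\varphi\restriction P\in\AutL(P)$, and rigidity of the tags forces $\varphi$ to be determined by $\varphi\restriction P$. Conversely every $f\in\AutL(P)$ extends, uniquely, to $\tr_P$ because the tag below all $z\in\Sloc_\delta$ depends only on $\delta$ and is rigid. Hence $\varphi\mapsto\varphi\restriction P$ is a bijection onto $\AutL(P)$; as it and its inverse are continuous for the pointwise-convergence topologies (both on countable discrete domains $T$ and $\Sloc$), it is a topological isomorphism $\Aut(\tr_P)\cong\AutL(P)$, and combining with $\AutL(P)=\Wr^{\Sloc,\bpi}_{\delta\in\Delta}\Sym(N_\delta)$ yields the statement.

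The delicate point — and the step I expect to be the main obstacle — is the design of the pendants. Because $L'=\omega^*\cdot L$ has no minimum, no branch of an $L'$-tree can terminate, so the pendants cannot be finite dead-end subtrees: they must descend through all lower levels and will therefore unavoidably occupy $(0^*,\ell'')$-levels alongside genuine $P$-nodes. The tags must consequently be made \emph{structurally} distinguishable from $P$ everywhere, not merely separated by level, while remaining rigid and mutually non-isomorphic within each level; and one must simultaneously ensure that this decoration does not perturb $\spl{\cdot,\cdot}$, so that condition~\ref{def:L-tree-4} survives in $\tr_P$. Producing a single family of pendants that is at once rigid, pairwise non-isomorphic at each level, never confusable with a $P$-cone, and compatible with prunedness and with the split function is the technical heart of the argument; everything else is bookkeeping.
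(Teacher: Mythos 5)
Your blueprint is the paper's construction in all but the last step: $T_0=\omega^*\times P$ with a tag hanging off each column at a position coding $\delta$, restriction/lifting between $\Aut(\tr_P)$ and $\AutL(P)$, and continuity both ways. But the step you defer as ``the technical heart'' --- designing the pendants and proving automorphisms cannot confuse them with genuine $P$-nodes --- is exactly the content of the proof, so as written there is a genuine gap; moreover the route you sketch for closing it (rigid, pairwise non-isomorphic combs) points in an unnecessarily hard direction. Since the tag's \emph{position} along the column already codes $\delta$ (fix a bijection $\#\colon\Delta\to\omega\setminus\{0\}$ and attach a single pendant branching off just below $t_z=(\#(z),z)$), the pendant's \emph{shape} carries no information: it can be a bare descending chain $\{z_\ell \mid \ell\le_{L'}(\#(z),\lev_P(z))\}$ occupying every lower level. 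No rigidity and no mutual non-isomorphism of pendants is then needed; prunedness is immediate, and the split function extends painlessly because each chain separates from everything else exactly at $(\#(z),\lev_P(z))$.

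The distinguishability problem you correctly identify --- pendant nodes inevitably sit at levels $(0,\ell'')$ alongside genuine column nodes, so why should $h\in\Aut(\tr_P)$ preserve the copy of $P$? --- is solved not by decorating the pendants but by a comparability criterion on lower cones. Since $P$ is pruned and $L$ has no minimum, every $z$ admits some $z'<_P z$; then $t_{z'}$ and the top node $z'_{(\#(z'),\lev_P(z'))}$ of $z'$'s pendant are $\leq_T$-incomparable, lie below every column node of $z$, and split strictly below that node's level, whereas the cone below any pendant node is a chain. Hence any level-preserving $h$ maps column nodes to column nodes and pendant nodes to pendant nodes. The branch point is likewise level-detectable: there exists $t$ with $\spl{t_z,t}=\lev_T(t_z)$ (witnessed by the pendant top) precisely when the column branches there, and since $\#(z)\neq 0$ this forces $h(t_z)=t_y$ for some $y$ with $\#(y)=\#(z)$, i.e.\ $y$ in the same block $\Sloc_\delta$ --- which is what puts the restriction in $\AutL(P)$ and shows $h$ is determined by it. Until you supply an argument of this kind, your proof is a correct skeleton with its central verification missing; and if you insist on shape-coded combs, you take on the extra burden of constructing, for possibly infinitely many $\delta$ at a single level, pendants that are simultaneously rigid, pairwise non-isomorphic, never confusable with $P$-cones, and neutral for $\spl{\cdot,\cdot}$ --- none of which the positional coding requires.
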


\begin{proof}
Consider the product \( T_0 =  \omega^* \times P = \omega^* \times \Sloc\) equipped with the antilexicographic ordering.
For each \( (n,z) \in T_0 \), let \( \lev_{T_0}(n,z) = (n,\lev_P(z)) \in L'\). 
Then \( \tr_0 = (T_0,{\leq_{T_0}}, \lev_{T_0})  \) is a countable pruned \( L' \)-tree because \( (P,{\leq_P},\lev_P) \) is a countable pruned \( L \)-tree.
Now fix a bijection \( \# \colon \Delta \to \omega \setminus \{ 0 \} \), and for every \( z \in P = \Sloc \) let \( \# (z) = \# (\delta) \) for the unique \( \delta \in \Delta \) such that \( z \in \Sloc_\delta \). 
Let \( T \) be obtained from \( T_0 \) by adding for each \( z \in P \) and \( \ell \leq_{L'} (\#(z),\lev_P(z))\) a new element \( z_\ell \) with \( \lev_T(z_\ell) = \ell \). 
Let \( z_\ell \leq_T y_{\ell'} \iff z = y \wedge \ell \leq_{L'} \ell' \), and for every \( (n,y) \in T_0 \), let \( z_\ell \leq_T (n,y) \iff z <_P y \vee ( z = y \wedge n < \#(z)) \) (see Figure~\ref{fig:T_P}).
We claim that \( \tr_P = (T,{\leq_T},\lev_T) \) is as required.

\begin{figure} \centering
\hspace{0.5cm}
\begin{tikzpicture}
\draw [fill] (0,0) circle [radius=0.05];
\node at (0,0.4) {\( (0,z) \)};
\draw [fill] (0,-1) circle [radius=0.05];
\node [right] at (0.2,-1) {\( (1,z) \)};
\draw [thick] (0,0) to (0,-1);
\node at (0,-1.4) {\( \vdots \)};
\draw [fill] (0,-2) circle [radius=0.05];
\draw [fill] (0,-3) circle [radius=0.05];
\draw [fill] (0,-4) circle [radius=0.05];
\node [right] at (0.2,-4) {\( t_z = (\#(z),z)  \)};
\draw [fill] (0,-5) circle [radius=0.05];
\draw [thick] (0,-2) to (0,-5);
\draw [fill] (-1,-4) circle [radius=0.05];
\node [left] at (-1.2,-4) {\( z_{(\#(z),\lev_P(z))} \)};
\draw [fill] (-1,-5) circle [radius=0.05];
\draw [thick] (0,-3) to (-1,-4) to (-1,-5);
\node at (0,-5.4) {\( \vdots \)};
\node at (-1,-5.4) {\( \vdots \)};
\draw [fill] (-1,-6) circle [radius=0.05];
\node [left] at (-1.2,-6) {\( z_{(n,\lev_P(z))} \)};
\draw [fill] (-1,-7) circle [radius=0.05];
\node [left] at (-1.2,-7) {\( z_{(n+1,\lev_P(z))} \)};
\draw [fill] (0,-6) circle [radius=0.05];
\node [right] at (0.2,-6) {\( (n,z) \)};
\draw [fill] (0,-7) circle [radius=0.05];
\node [right] at (0.2,-7) {\( (n+1,z) \)};
\draw [thick] (0,-6) to (0,-7);
\draw [thick] (-1,-6) to (-1,-7);
\node at (0,-7.4) {\( \vdots \)};
\node at (-1,-7.4) {\( \vdots \)};
\node at (-1,-8.4) {\( \vdots \)};
\draw [fill] (-1,-9) circle [radius=0.05];
\node [left] at (-1.2,-9) {\( z_\ell \)};
\node at (-1,-9.4) {\( \vdots \)};
\draw [<->] (3.2,0) to (3.2,-7.9);
\draw [dotted] (-0.2,0) to (3.5,0);
\draw [dotted] (-0.2,-7.9) to (3.5,-7.9);
\node [right] at (3.3,-3.9) {\(  \omega^* \times \{ \lev_P(z) \} \)};
\draw [very thick,dashed,->] (-4.5,-10) to (-4.5,0.9);
\node [left] at (-4.7,-0.4) {\(  L' = \omega^* \cdot L \)};
\end{tikzpicture}
\caption{The tree \( \tr_P \) is obtained by replacing each \( z \in P \) with the tree depicted in the figure (with the order described in the text), where \( (0,z) \) is identified with the original \( z \).
The nodes \( z_\ell \) on the left branch are \( \leq_T \)-incomparable with every node which is not strictly \( \leq_T \)-above \( t_z \).}
\label{fig:T_P}
\end{figure}

Clearly, \( \tr_P \) is a countable pruned \( L' \)-tree. 
We need to show that \( \AutL(P) \cong \Aut(\tr_P) \). 
Every \( f \in \AutL(P) \) can be canonically lifted to some \( f^\uparrow \in \Aut(\tr_P) \) by setting \( f^\uparrow(n,z) = (n,f(z)) \) for all \( (n,z) \in T_0 \), and \( f^\uparrow(z_\ell) = f(z)_\ell \) for all \( z_\ell \in T \setminus T_0 \): the latter makes sense because \( f \in \AutL(P) \) entails \( \#(z) = \#(f(z)) \) and \( \lev_P(z) = \lev_P(f(z))\) for all \( z \in P \).
The map \( f \mapsto f^\uparrow \) is a topological group embedding, so we only need to show that it is onto.
Fix any \( h \in \Aut(\tr_P) \). For every \( z  \in P \), consider the node \( t_z = (\#(z),z) \in T_0 \subseteq T \).

\begin{claim} \label{claim:t_p1}
There is \( y \in P \) such that \( \lev_P(y) = \lev_P(z) \), \( \#(y) = \#(z) \), and \( h(t_z) = t_y\).
\end{claim}

\begin{proof}[Proof of the claim]
By construction, there are \( \leq_T \)-incomparable \( t',t'' \leq_T t_z \) with \( \spl{t',t''} <_L \lev_T(t_z) \): indeed, using the fact that \( P \) is pruned and \( L \) has no minimum, we can pick any \( z' <_P z \) and let \( t' = t_{z'} \) and \( t'' = z'_{(\#(z'),\lev_P(z'))} \), so that \( \spl{t',t''} = (\#(z'),\lev_P(z')) <_{L'} (\#(z),\lev_P(z)) = \lev_T(t_z) \). 
In contrast, every \( y_\ell \in T \setminus T_0 \) is such that all \( t',t'' \leq_T y_\ell \) are \( \leq_T \)-comparable. 
Since \( h \) is also level-preserving, it follows that \( h(t_z) = (\#(z),y) \) for some \( y \in P \) with \( \lev_P(y) = \lev_P(z) \).
Since there is \( t \in T \) such that  \( \spl{t_z,t} = (\#(z),\lev_P(z)) \), as witnessed by \( t = z_{(\#(z),\lev_P(z))} \), the same must hold for \( (\#(z),y) \), and thus \( \#(y)= \#(z) \) because \( \#(z) \neq 0 \).
Therefore \( (\#(z),y) = (\#(y),y) = t_y \), and we are done. 
\end{proof}

Let \( f \colon P \to P \) be defined by setting \( f(z) = y \) if and only if \( h(t_z) = t_y \). It is easy to verify that \( f \in \Aut(P) \), and in fact \( f \in \AutL(P) \) because \( \#(z) = \#(f(z)) \) by Claim~\ref{claim:t_p1}.
Finally, the fact that \( h(t_z) = (\#(z),f(z)) \) easily implies that \( h(z_\ell) = f(z)_\ell \) for all relevant \( \ell \in L' \). It follows that \( h = f^\uparrow \), as desired.

\details{\color{blue}
We now come to the additional part.
The fact that \( (\Delta, {\leq_\Delta}, \lev_\Delta) \) is a weak \( L \)-tree easily follows from the fact that \( (P,{\leq_P},\lev_P) \) enjoys the same property. 
Suppose now that \( (\Delta, {\leq_\Delta}, \lev_\Delta) \) is an \( L \)-tree: we need to check that \( \tr_P \) is special. 
Notice that for each \( z \in \Sloc_\delta \subseteq P \), its orbit \( [z] = \{ f(z) \mid f \in \AutL(P) \} \) under the natural action of \( \AutL(P) \) coincides with \( \Sloc_\delta \); in other words, the condensed tree of \( (P,{\leq_P},\lev_P) \) is (canonically isomorphic to) \( (\Delta,{\leq_\Delta},\lev_\Delta) \). 
For \( t \in T \), let \( [t] = \{ h(t) \mid h \in \Aut(\tr_P) \} \) be the orbit of \( t \) under the natural action of \( \Aut(\tr_P) \).
Since \( f \mapsto f^\uparrow\) is an isomorphism between \( \AutL(P) \) and \( \Aut(\tr_P) \), for every \( n \in \omega^* \) and \( \ell \leq_{L'} (\#(z),\lev_P(z))\) we have
\[  
[(n,z)] = \{ n \} \times \Sloc_\delta = \{ (n,z') \mid z' \in \Sloc_\delta \} \quad \text{and} \quad [z_\ell] = \{ z'_\ell \mid z' \in \Sloc_\delta \}.
\]
This shows that, up to isomorphism, \( \Delta(\tr_P) \) 
can be construed as the partial order obtained by applying the above procedure to \( (\Delta,{\leq_\Delta},\lev_\Delta) \) rather than \( (P,{\leq_P},\lev_P) \): first every \( \delta \in \Delta \) is replaced by a decreasing chain of nodes of the form \( (n,\delta) \), for \( n \in \omega^* \), and then we add the nodes of the form \( \delta_\ell \) for every \( \ell \leq_{L'} (\#(\delta),\lev_{\Delta}(\delta)) \). 
It easily follows that \( \Delta(\tr_P) \) is an \( L' \)-tree because so is \( (\Delta,{\leq_\Delta},\lev_\Delta) \).}
\end{proof}

In order to use Lemmas~\ref{lem:generalcase} and~\ref{lemma:fromwreathtotrees} together, we need to show that we can always assume, without loss of generality, that the underlying treeable skeleton of a projective wreath product \( \Wr^{\Sloc,\bpi}_{\delta \in \Delta} \Sym(N_\delta) \) is actually \( L \)-treeable for some linear order \( L \) without minimum and maximum.

\begin{lemma} \label{lem:nominandmax}
Suppose that \( \Wr^{\Sloc,\bpi}_{\delta \in \Delta} \Sym(N_\delta) \) is a projective wreath product over an \( L \)-treeable skeleton \( \langle \Delta,N \rangle \), for some linear order \( L \). Then there is a linear order \( L' \supseteq L \) with neither maximum nor minimum, an \( L' \)-treeable skeleton \( \langle \Delta',N' \rangle \), and a system of projections \( \langle \Sloc',\bpi' \rangle \) such that
\[  
\Wr^{\Sloc,\bpi}_{\delta \in \Delta} \Sym(N_\delta) \cong \Wr^{\Sloc',\bpi'}_{\delta \in \Delta'} \Sym(N'_\delta).
\]
Moreover:
\begin{itemizenew}
\item 
if \( \langle \Delta,N \rangle \) is countable, so is \( \langle \Delta',N' \rangle \);
\item 
if \( \Delta \) is linear, so is \( \Delta' \);
\item 
if \( \Sloc \) is countable, so is \( \Sloc'\), and if \( \Sloc \subseteq \Sloc^{\Max}\), then \( \Sloc' \subseteq \Sloc^{\Max}\) too;
\item 
if \( \bpi \) is trivial then \( \bpi' \) is trivial, and the family of local domains \( \Sloc' \) is full whenever \( \Sloc \) is full;
\item
if \( \langle \Sloc,\bpi \rangle \) is locally homogeneous, then so is \( \langle \Sloc',\bpi' \rangle \).
\end{itemizenew}
\end{lemma}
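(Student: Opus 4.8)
The plan is to obtain $L'$ from $L$ by inserting a coinitial copy of $\omega^*$ below $L$ whenever $L$ has a minimum, and a cofinal copy of $\omega$ above $L$ whenever $L$ has a maximum; if $L$ already lacks one of these, nothing is added on that side. Then $L' \supseteq L$ has neither a minimum nor a maximum. Correspondingly, I build $\langle \Delta',N' \rangle$ from $\langle \Delta,N \rangle$ by attaching chains of \emph{dummy} nodes, each carrying the label $1$, so that the new factors $\Sym(N'_\delta) = \Sym(1)$ are trivial and cannot affect the group. Concretely: if $L$ has a maximum, then by condition~\ref{def:L-tree-3} of Definition~\ref{def:L-tree} any two nodes of $\Delta$ agree at the top level, so $\Delta$ has a single root $r$, and I extend it upward by a single chain $r = r_0 <_{\Delta'} r_1 <_{\Delta'} \cdots$ indexed by the new top levels; if $L$ has a minimum $\ell_{\min}$, then below each leaf $t \in \Lev_{\ell_{\min}}(\Delta)$ I hang a single descending chain $t >_{\Delta'} \ell^t_0 >_{\Delta'} \ell^t_1 >_{\Delta'} \cdots$ indexed by the new bottom levels. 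All nodes of $\Delta$ keep their labels, and every new node is labeled $1$.

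The verification that $\Delta'$ is a pruned $L'$-tree is routine: condition~\ref{def:L-tree-2} holds because the cone above each new node is again a chain with exactly one node per level (ascending through $\Delta$ and then through the $r_i$); condition~\ref{def:L-tree-3} is witnessed by $r_0$, or inherited from $\Delta$ when $L$ has no maximum; and for condition~\ref{def:L-tree-4} two incomparable new nodes $\ell^t_i,\ell^{t'}_j$ split exactly where their supporting leaves $t,t'$ split, i.e.\ $\spl{\ell^t_i,\ell^{t'}_j} = \spl{t,t'} \in L$. Prunedness follows by extending any branch of $\Delta$ upward along the $r_i$ and downward along the chain below the leaf it passes through. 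Next I define $\langle \Sloc',\bpi' \rangle$. For $\delta \in \Delta$ there is a canonical bijection between $\Sloc_\delta$ and a set $\Sloc'_\delta \subseteq \prod_{\gamma \geq_{\Delta'} \delta} N'_\gamma$, obtained by appending the value $0$ on every new coordinate $r_i$; for a bottom node $\ell^t_i$, I let $\Sloc'_{\ell^t_i}$ be the copy of $\Sloc_t$ obtained by appending $0$ on all the new coordinates below $t$ and on the $r_i$. The projections $\bpi'$ are induced by $\bpi$ through these bijections and are trivial between new coordinates; one checks that $\langle \Sloc',\bpi' \rangle$ satisfies Definition~\ref{def:projections}, and that $\Sloc'$ is a family of local domains in the sense of Definition~\ref{localdomains} when $\bpi$ is trivial.

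The isomorphism $\Wr^{\Sloc,\bpi}_{\delta \in \Delta} \Sym(N_\delta) \cong \Wr^{\Sloc',\bpi'}_{\delta \in \Delta'} \Sym(N'_\delta)$ then follows by observing that every $g' \in \Wr^{\Sloc',\bpi'}_{\delta \in \Delta'} \Sym(N'_\delta)$ is completely determined by its restriction to $\bigcup_{\delta \in \Delta} \Sloc'_\delta \cong \Sloc$: it must fix each singleton $\Sloc'_{r_i}$, and since $N'_{\ell^t_i} = 1$ condition~\ref{Wr2projective} imposes nothing at the bottom nodes, while condition~\ref{Wr1projective} forces $g'$ on $\Sloc'_{\ell^t_i}$ to be the transport of $g'$ on $\Sloc'_t$ along the bijection $\pi'_{\ell^t_i t}$. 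Conversely, each $g \in \Wr^{\Sloc,\bpi}_{\delta \in \Delta} \Sym(N_\delta)$ extends uniquely to such a $g'$, and the two assignments are mutually inverse continuous group homomorphisms. The preservation clauses are read off the construction: countability is clear since only countably many dummy nodes (each labeled $1$) are added; linearity is preserved because a linear $\Delta$ has a single node per level, so the attached chains keep $\Delta'$ linear; $\Sloc \subseteq \Sloc^{\Max}$ is preserved because the new coordinates are constantly $0$ and hence never enter any support; and fullness, triviality of $\bpi$, and local homogeneity all transfer through the bijections (for local homogeneity, lift a witnessing $g$ to $g'$ as above). The main obstacle is the bookkeeping in checking that $\Delta'$ is a pruned $L'$-tree and, above all, making precise the claim that the dummy coordinates carry no information, so that $g \mapsto g'$ is genuinely a bijection respecting~\ref{Wr1projective}--\ref{Wr2projective}; the case analysis on whether $L$ has a minimum or a maximum, together with the single-root consequence of~\ref{def:L-tree-3}, is where one must take care.
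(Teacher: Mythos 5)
Your proposal is correct and matches the paper's proof in all essentials: the paper likewise extends \( L \) to \( L+\omega \) on top (adding a single chain of nodes labeled \( 1 \) above everything, which by condition~\ref{def:L-tree-3} is the same as your chain above the unique root) and to \( \omega^*+L \) on the bottom (hanging a decreasing chain of trivially-labeled nodes below each node of \( \Lev_{\min L}(\Delta) \)), with zero-padded local domains, trivially extended projections, and the isomorphism given by unique extension/restriction exactly as you describe. The only cosmetic difference is that the paper performs the two extensions sequentially rather than simultaneously.
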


\begin{proof}
Let \( \lev_\Delta \colon \Delta \to L \) witness that \( \langle \Delta,N \rangle \) is treeable.
If \( L \) has a maximum, we can canonically extend the pruned \( L \)-tree \( ( \Delta, {\leq_\Delta}, \lev_{\Delta} ) \) to a pruned \( L'\)-tree \( (\Delta', {\leq_{\Delta'}}, \lev_{\Delta'} ) \), where \( L' = L+\omega \), \( \Lev_{\ell'}(\Delta') = \{ \delta_{\ell'} \} \) is a singleton for every \( \ell' \in L' \setminus L \), and \( \delta_{\ell'} \geq_{\Delta'} \delta' \) for every \( \delta ' \in \Delta' \) with \( \lev_{\Delta'}(\delta') \leq \ell' \). Further setting \( N'_\delta = N_\delta \) for \( \delta \in \Delta \) and  \( N'_{\delta_{\ell'}} = 1 \) for all \( \ell' \in L' \setminus L \), we get that the skeleton \( \langle \Delta',N' \rangle \) is \( L' \)-treeable, as witnessed by \( \lev_{\Delta'} \). 
Then for every \( \delta \in \Delta \) we define \( \iota_\delta \colon \Sloc_\delta \to \prod_{\gamma \geq_{\Delta'} \delta } N'_\delta \)  by setting \( \iota_\delta(z)(\gamma) = z(\gamma) \) if \( \gamma \in \Delta \) and \( \iota_\delta(z)(\gamma) = 0  \) otherwise. If we let \( \Sloc'_\delta \) be the range of such map, then \( \iota_\delta  \) is a bijection between \( \Sloc_\delta \) and \( \Sloc'_\delta \). For \( \ell' \in L' \setminus L \), instead, we have no other choice than letting \( \Sloc'_{\delta_{\ell'}} \) be the singleton containing the only sequence with empty support.
Finally, we can extend \( \bpi = (\pi_{\delta \gamma})_{\gamma \geq_\Delta \delta} \) to \( \bpi' = (\pi'_{\delta \gamma})_{\gamma \geq_{\Delta'} \delta}\) in the obvious way: since \( \Sloc'_{\delta_{\ell'}}\) contains only one element for every \( \ell' \in L' \setminus L \), then \( \pi'_{\delta \delta_{\ell'}} \) needs to be a constant map for every \( \delta \le_{\Delta'} \delta_{\ell'} \), and indeed \( \pi'_{\delta \delta_{\ell'}}(z) = z|_{\ell'} \) for every \( z \in \Sloc'_\delta \); while for \( \delta,\gamma \in \Delta \) we let \( \pi'_{\delta \gamma } = \iota_\gamma \circ \pi_{\delta \gamma} \circ \iota_\delta^{-1} \). 
It is clear that, by construction, \( \Wr^{\Sloc,\bpi}_{\delta \in \Delta} \Sym(N_\delta) \cong \Wr^{\Sloc',\bpi'}_{\delta \in \Delta'} \Sym(N'_\delta )\), as witnessed by the isomorphism \( g \mapsto g' \) defined by setting \( g'(z) = \iota_\delta(g(\iota_\delta^{-1}(z)))\) for every \( \delta \in \Delta \) and \( z \in \Sloc'_\delta \), and \( g'(z) = z \) for the unique \( z \in \Sloc'_{\delta_{\ell'}} \) if \( \ell' \in L' \setminus L \).

The previous paragraph shows that, without loss of generality, we can assume that \( L \) has no maximum.
Suppose now that \( L \) has a minimum \( \ell = \min L \) (otherwise we are done). Then we can replace \( L \) with \( L' = \omega^* + L  \), \( \Delta \) with \( \Delta' = \Delta \cup \{ (\delta ,n) \mid \delta \in \Lev_\ell(\Delta), n \in \omega^* \} \), ordered so that \( \{ (\delta,n) \mid n \in \omega^* \} \) forms a decreasing chain below every \( \delta \in \Lev_\ell(\Delta) \):
\( (\delta,n) \leq_{\Delta'} \delta' \) if and only if either \( \delta' = (\delta,m) \) for some \( m \leq n \) or else \( \delta' \in \Delta \) and \( \delta \leq_{\Delta} \delta' \).
Let also \( \lev_{\Delta'}\) be the extension of \( \lev_{\Delta} \) obtained by setting \( \lev_{\Delta'}(\delta') = n \in \omega^*\) for every \( \delta' = (\delta,n) \in \Delta' \setminus \Delta \).
The map \( N \) on \( \Delta \) is then extended to a map \( N'\) on \( \Delta' \) by letting \( N'_{\delta'} = 1 \) for every \( \delta' \in \Delta' \setminus \Delta \). In this way, \( \langle \Delta', N' \rangle \) is an \( L' \)-treeable skeleton, as witnessed by \( \lev_{\Delta'} \). 
Set \( \Sloc'_\delta = \Sloc_\delta \) for every \( \delta \in \Delta \), and for \( \delta' = (\delta,n) \in \Delta' \setminus \Delta \) let \( \Sloc'_{\delta'} = \{ z \in \prod_{\gamma \geq_{\Delta'} \delta'} N'_\gamma \mid z|_{\delta} \in \Sloc_\delta \} \). 
Finally, extend \( \bpi = (\pi_{\delta \gamma})_{\gamma \geq_{\Delta} \delta}\) to \( \bpi' = (\pi'_{\delta \gamma})_{\gamma \geq_{\Delta'} \delta }\) by letting \( \pi'_{\delta' \gamma } \) be trivial for every \( \delta' = (\delta,n) \in \Delta' \setminus \Delta \) and \( \delta' \leq_{\Delta'} \gamma \leq_{\Delta'} \delta \).
By construction, \( \Wr^{\Sloc,\bpi}_{\delta \in \Delta} \Sym(N_\delta) \cong \Wr^{\Sloc',\bpi'}_{\delta \in \Delta'} \Sym(N'_\delta)\) via the isomorphism \( g \mapsto g' \) defined by letting \( g' \) be the extension of \( g \) such that for every \( \delta' = (\delta,n) \in \Delta' \setminus \Delta \) and \( z \in \Sloc'_{\delta'} \), \( g'(z) \) is the unique \( z' \in \Sloc'_{\delta'} \) such that \( z'|_\delta = g(z|_\delta) \). 

The ``moreover'' part holds by construction, and thus the proof is complete.
\end{proof}

\section{Main results} \label{sec:main}

To turn the automorphism group of a countable pruned \( L \)-tree \( \tr \) into a generalized wreath product, we need to employ a specific labeling procedure which turns nodes of \( \tr \) into corresponding generalized sequences with finite support. To this aim, we will repeatedly use Lemma~\ref{lem:labelsfinitesupport} below.

Let \( \tr = (T, {\leq_T}, \lev_T) \) be a weak \( L \)-tree. An \markdef{upward closed chain} \( C \) is a subset of \( T \) all of whose elements are \( \leq_T \)-comparable and such that if \( t \in C \) and \( t' \geq_T t \), then \( t' \in C \). If \( C \subseteq T \) is an upward closed chain and \( t \in T \), we write \( t \leq_T C \) (respectively, \( t <_T C \)) if \( t \leq_T t'\) (respectively, \( t <_T t' \)) for all \( t' \in C \). We say that \( C \) is \markdef{proper} if there is \( t \in T \) such that \( t <_T C \). Notice that if \( C \subseteq T\) is a (proper) upward closed chain in \( \tr \), then \( [C] = \{ [t] \mid t \in C \} \subseteq \Delta(T) \) is a (proper) upward closed chain in \( \Delta(\tr) \). For every \( \delta \in \Delta(T) \) with \( \delta <_{\Delta(T)} [C] \), we let
\[  
\tr^\delta_C = \{ t \in T \mid {\delta \leq_{\Delta(T)} [t]} \, \wedge \, {t <_T C} \}.
\]

\begin{lemma} \label{lem:labelsfinitesupport}
Let \( \tr = (T, {\leq_T}, \lev_T) \) be a countable pruned \( L \)-tree, and consider the associated domain \( S^{\LF} \subseteq \prod_{\delta \in \Delta(\tr)} N^\tr_\delta \). 
Let \( C \subseteq T \) be a (possibly empty) proper upward closed chain, and \( \bar\delta \in \Delta(T) \) be such that \( \bar\delta <_{\Delta(T)} [C] \). 
Let \( \bar z \in \prod_{\gamma \in [C]} N^{\tr}_\gamma\) be such that \( \supp( \bar z ) \) is finite, and for every \( \gamma \) such that \( \bar\delta \leq_{\Delta(T)} \gamma <_{\Delta(T)} [C] \) let \( \Sloc^{\LF}_{\gamma,\bar z} = \{ z \in \Sloc^{\LF}_\gamma \mid z \supseteq \bar z \}  \). 
Then there is a bijection \( t \mapsto z_t \) between \( \tr^{\bar \delta}_C \) and \(\bigcup \{ \Sloc^{\LF}_{\gamma,\bar z} \mid \bar \delta \leq_{\Delta(T)} \gamma <_{\Delta(T)} [C] \} \) such that \( z_t \in \Sloc^{\LF}_{[t]} \) and \( t \leq_T t' \iff z_t \supseteq z_{t'}\) for every \( t ,t' \in \tr^{\bar\delta}_C \).
\end{lemma}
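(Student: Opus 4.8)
The plan is to build an explicit coordinate system on the nodes below \( C \), reading off each \( z_t \) from the positions of the ancestors of \( t \) inside the ``sibling sets'' \( C_s \) of~\eqref{eq:Ct}. First I would record two structural reductions. Since \( t <_T c \) for every \( c \in C \) and every \( t \in \tr^{\bar\delta}_C \), each \( c \in C \) is an ancestor of \( t \); hence all the nodes to be labelled share \( C \) as a common upper part, and \( [t] <_{\Delta(T)} [C] \). Moreover \( \Delta(\tr) \) satisfies conditions~\ref{def:L-tree-1} and~\ref{def:L-tree-2}, so the cone \( \{ \gamma \mid \gamma \geq_{\Delta(T)} [t] \} \) is a chain along which \( [C] \) is a terminal segment; thus a candidate label \( z_t \in \Sloc^{\LF}_{[t]} \) is a finitely supported function on this chain which must equal \( \bar z \) on \( [C] \) and is free on the classes lying in \( [\,[t];[C)\,) \). (Recall that \( \Sloc^{\LF}_\gamma \) is exactly the set of finitely supported elements of \( \prod_{\beta \geq_{\Delta(T)} \gamma} N^\tr_\beta \).)

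The coordinate itself is the position inside a sibling set. For \( t \in \tr^{\bar\delta}_C \) and a class \( \gamma' \) with \( [t] \leq_{\Delta(T)} \gamma' <_{\Delta(T)} [C] \), write \( s' = t|_{\ell'} \), where \( \ell' \in L \) is the level of \( \gamma' \), so that \( [s'] = \gamma' \) and \( s' \in C_{s'} \). I would fix, for each sibling set \( C_s \), a bijective enumeration \( e_{C_s}\colon C_s \to N^\tr_{[s]} \) (recall \( |C_s| = N^\tr_{[s]} \)) and set \( z_t(\gamma') = e_{C_{s'}}(s') \), together with \( z_t(\gamma') = \bar z(\gamma') \) for \( \gamma' \in [C] \). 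Along \( C \) the enumerations are pinned so that each \( c \in C \) receives the index \( \bar z([c]) \); this is consistent because \( \bar z \) has finite support, so \( c \) is required to be the zero element \( e_{C_c}^{-1}(0) \) for all but finitely many \( c \).

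The delicate point is the choice of the zero elements \( 0_{C_s} = e_{C_s}^{-1}(0) \) off \( C \), and this is where countability of \( T \) enters. I would fix an enumeration \( (t_n)_{n\in\omega} \) of \( T \) and assign zeros stagewise: when processing \( t_n \), for every \( \ell \) with \( [t_n|_\ell] <_{\Delta(T)} [C] \) whose sibling set \( C_{t_n|_\ell} \) has not yet been assigned a zero, declare \( 0_{C_{t_n|_\ell}} = t_n|_\ell \). A node \( t_n \) then deviates from the zeros only at levels \( \ell \) where \( C_{t_n|_\ell} \) had already been assigned by some earlier \( t_m \); but \( t_m \) is assigned in \( C_{t_n|_\ell} \) only when it agrees with \( t_n \) above \( \ell \), and among such levels \( t_n \) can disagree with the assigned zero only at \( \ell = \spl{t_n,t_m} \). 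Hence \( \supp(z_{t_n}) \) is contained in the finite set \( \{\spl{t_n,t_m} \mid m<n\} \) together with the finitely many \( C \)-levels where \( \bar z \neq 0 \), so indeed \( z_t \in \Sloc^{\LF}_{[t]} \) with \( z_t \supseteq \bar z \). The same split analysis yields injectivity: if \( t \neq t' \) have a common base class, they differ at \( \sigma = \spl{t,t'} \), where \( t|_\sigma \) and \( t'|_\sigma \) lie in the same sibling set and so receive distinct indices, whence \( z_t \neq z_{t'} \).

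Order reversal is read straight off the definition: if \( t \leq_T t' \) then \( t|_{\ell'} = t'|_{\ell'} \) for every \( \ell' \geq_L \lev_T(t') \), so \( z_{t'} = z_t \restriction \{\gamma' \geq_{\Delta(T)} [t']\} \), i.e.\ \( z_t \supseteq z_{t'} \); the converse follows from injectivity. For surjectivity I would reconstruct \( t \) from any finitely supported \( z \in \Sloc^{\LF}_{\gamma,\bar z} \) by making the finitely many prescribed sibling choices: each \( C_s \) is fully present in \( T \) and shares its cone above \( \ell' \), so replacing a zero sibling by the \( z(\gamma') \)-th one keeps the branch below \( C \), and pruned-ness (using Lemma~\ref{lem:inhabitated} to descend to level \( \gamma \)) produces an actual node \( t\in\tr^{\bar\delta}_C \) with \( z_t = z \). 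The main obstacle is precisely this simultaneous bookkeeping: arranging the zero sections so that all countably many branches deviate only finitely (finiteness of the \( z_t \)) while still realizing every prescribed finite deviation (surjectivity), with no well-ordering or immediate-successor structure on \( L \) to drive a naive level-by-level recursion — the staged diagonalization over the enumeration of \( T \) is what makes both hold at once.
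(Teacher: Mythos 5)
Your labelling scheme (position inside the sibling sets \( C_s \), designated zeros, support controlled by splitting levels) is the same skeleton as the paper's proof, and your arguments for finiteness of \( \supp(z_t) \), injectivity, and order reversal are sound. The genuine gap is surjectivity: assigning the zeros greedily along an \emph{arbitrary} enumeration of \( T \) does not make the map onto. Your reconstruction step tacitly needs ``zero descents'' --- for each node \( s \) and each lower class, a node below \( s \) all of whose intermediate coordinates vanish --- and prunedness plus Lemma~\ref{lem:inhabitated} only supply \emph{some} node below \( s \), not one with the prescribed coordinates; when \( L \) is dense there is no level-by-level recursion to build one. Concretely, take \( L = \QQ \), \( C = \emptyset \), and \( \tr \) the (homogeneous, pruned, countable) tree whose nodes at level \( q \) are the finitely supported functions \( [q;+\infty)_\QQ \to 2 \) ordered by reverse extension, so every sibling set has two elements. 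Since a sibling set \( C_u \) with \( u <_T v \) can only receive its zero from a node whose column passes through \( v \), an adversarial enumeration can arrange that the successive ``first-processed'' strict descendants \( t_0 >_T t_1 >_T \cdots \) of some node have levels \( \lambda_i \) decreasing to some \( \lambda^* \in \QQ \), with each \( t_{i+1} \) carrying a fresh nonzero ambient coordinate in \( (\lambda_{i+1};\lambda_i) \) (this scheduling is legitimate: any non-chain node lies below only finitely many \( t_i \), so it can be queued after the relevant chain element). A topmost-disagreement argument then shows that any node \( u \) at a level below \( \lambda^* \) whose constructed label is identically zero must satisfy \( u|_{\lambda_i} = t_{i-1} \) for every \( i \), forcing \( u \) to have infinite support in the tree --- no such node exists. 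So the all-zero element of \( \Sloc^{\LF}_{\gamma,\bar z} \), for \( \gamma \) of level below \( \lambda^* \), is outside the range: your map is injective, order-reversing, and lands in finitely supported sequences, yet misses labels it must hit.

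The paper avoids this by anchoring the zeros at the \emph{bottom} class rather than distributing them along an enumeration of all of \( T \): it enumerates \( \tr^{\bar\delta}_C \cap \bar\delta \) as \( (t_i)_{i<I} \), sets \( i_t = \min \{ i \mid t_i \leq_T t \} \), and defines the coordinate as \( \Lambda(t) = |\{ t' \in C_t \mid i_{t'} < i_t \}| \), so the zero of each sibling set is its member of least base index. This choice is coherent downward in exactly the way yours is not: if \( j = i_t \), then \( i_{t_j|_\ell} = j \) and \( \Lambda(t_j|_\ell) = 0 \) for all intermediate \( \ell \), i.e., each base node's column realizes the needed zero descent from the level where it splits off earlier columns. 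Surjectivity is then proved by induction on \( |\supp(z) \setminus \supp(\bar z)| \), zeroing out the \( L \)-least new support coordinate and producing an explicit base-node witness. If you repair your construction by replacing the enumeration of \( T \) with an enumeration of the base level and defining zeros via minimal base index, you recover the paper's argument.
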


\begin{proof} 
Let \( \bar \ell = \lev_{\Delta(T)}(\bar \delta) \), and let \( (t_i)_{i < I} \) be an enumeration without repetitions of \( \tr_C^{\bar \delta } \cap \bar \delta = \{ t \in \bar \delta \mid t <_T C \} \), for the appropriate \( I \leq \omega \). 
For \( t \in \tr^{\bar \delta}_C \), let 
\[
i_{t} = \min \{ i < I \mid t \geq_T t_i \} 
\]
and
\[
\Lambda(t) = |\{ t' \in C_t \mid i_{t'} < i_t \}|,
\]
where \( C_t \) is as in~\eqref{eq:Ct}.
Since \( C_t \subseteq \tr^{\bar \delta}_C \) and \( |C_t| = N^{\tr}_{[t]} \), the function \( \Lambda \) maps bijectively each \( C_t \)
onto the corresponding \( N^{\tr}_{[t]}\).

For \( t \in \tr^{\bar \delta}_C \), let \( z_t \in \prod\nolimits_{\gamma \geq_{\Delta(T)} [t]} N^{\tr}_{\gamma} \) be defined by \( z_t(\gamma) = \bar z (\gamma)\) if \( \gamma \in [C] \), and \( z_t(\gamma) = \Lambda(t|_{\lev_{\Delta(T)}(\gamma)}) \) if \( [t] \leq_{\Delta(T)} \gamma <_{\Delta(T)} [C] \).
By construction, if \( t \leq_T t'\) then \( z_t \supseteq z_{t'} \).

First we show that \( z_t \in \Sloc^{\LF}_{[t]}\), which amounts to showing that \( \supp(z_t) \) is finite.
To this aim, it is enough to consider the case where \( t = t_i \in \tr_C^{\bar \delta } \cap \bar \delta \) and argue by induction on \( i < I \). 
The case \( i = 0 \) is easy: \( \Lambda(t_0|_{\lev_{\Delta(T)}(\gamma)}) = 0 \) for all \( \bar \delta \leq_{\Delta(T)} \gamma <_{\Delta(T)} [C] \), hence \( \supp(z_{t_0}) = \supp(\bar z) \) is finite. 
For \( i > 0 \), let \( \ell_i = \min_L \{ \spl{t_i,t_j} \mid j < i \} \), which exists because there are only finitely many \( j \) to be considered. 
By definition of \( \tr^{\bar \delta}_C \), we have \( \bar \ell \leq_L \ell_i < \lev_T(t') \) for every \( t' \in C \).
Then \( \Lambda(t_i|_{\ell}) = 0 \) for \( \bar \ell \leq_L \ell <_L \ell_i \), while there is \( \bar\jmath < i \) such that \( t_i|_\ell = t_{\bar\jmath}|_\ell \), and hence \( \Lambda(t_i|_\ell) = \Lambda(t_{\bar\jmath}|_\ell) \), for all \( \ell >_L \ell_i \).
It follows that \( |\supp(z_{t_i})| \leq |\supp(z_{t_{\bar\jmath}})| + 1 \), and since \(\supp(z_{t_{\bar\jmath}}) \) is finite by inductive hypothesis, \( \supp(z_{t_i}) \) is finite as well.

We already observed that, by construction, \( t \leq_T t' \) implies \( z_t \supseteq z_{t'} \), for all \( t ,t' \in \tr^{\bar \delta}_C \);
we now show that \( t \nleq_T t' \) implies \( z_t \not\supseteq z_{t'} \). If \( t' <_T t \), then the result easily follows from 
\( [t'] <_{\Delta(T)} [t] \).
Suppose now that \( t \) and \( t' \) are \( \leq_T \)-incomparable, and let \( \ell=\spl{t,t'} \). Since
\( t,t' \in \tr^{\bar \delta}_C \), we have \( \bar \delta \leq_{\Delta(T)} [t|_\ell] , [t'|_\ell] <_{\Delta(T)} [C] \), hence in particular \( t|_\ell \sim t'|_{\ell} \) because \( \Delta(\tr) \) is a weak \( L \)-tree.
It follows that \( t'|_\ell \in C_{t|_\ell} \), and thus by construction \( \Lambda(t|_\ell) \neq \Lambda(t'|_\ell)\). Therefore, \( z_t(\gamma) \neq z_{t'}(\gamma) \) for \( \gamma = [t|_\ell] = [t'|_\ell] \), and in particular \( z_t \not\supseteq z_{t'} \).

It remains to prove that \( t \mapsto z_t \) is bijective. 
Injectivity follows from the equivalence \( t \leq_T t' \iff z_t \supseteq z_{t'} \).
As for surjectivity, it is enough to show that for every \( z \in \Sloc^{\LF}_{\bar \delta,\bar z} \) there is \( i < I \) such that \( z = z_{t_i} \).
Let \( k = | \supp(z) \setminus \supp(\bar z) | \). We argue by induction on \( k \in \omega \). 
If \( k = 0 \), then by construction \( z= z_{t_0} \). 
Assume now that \( k > 0 \), and let \( \ell \) be \( L \)-smallest such that \(  \bar \delta|_{\ell} \in  \supp(z) \setminus \supp(\bar z) \). 
By inductive hypothesis, there is \( i < I \)  such that \( z' = z_{t_{i}}\), where \( z' \in \Sloc^{\LF}_{\bar\delta,\bar z} \) is such that \( z'(\bar \delta|_\ell) = 0 \) and \( z'(\bar \delta|_{\ell'}) = z(\bar \delta|_{\ell'})  \) for all \( \ell'\geq_L \bar \ell \) with \( \ell' \neq \ell \). 
Let \( j = i_{t} \) for the unique \( t \in C_{t_i|_\ell}\) such that \( \Lambda(t) = z(\bar \delta|_\ell) \): then \( z_{t_j} = z \), as desired.
\end{proof}


\subsection{Homogeneous spaces}\label{sec:homogeneous}

Informally speaking, a mathematical structure is (one-point) homogeneous if for all points \( x , y \) in the structure, if the map sending \( x \) to \( y \) is a partial automorphism, then there is an automorphism of the whole structure which sends \( x \) to \( y \).
In the metric context, this translates to the following: 
a Polish ultrametric space \( U \) is \markdef{homogeneous} if \( \Iso(U) \) acts transitively on \( U \), that is, for every \( x,y \in U \) there is \( \psi \in \Iso(U) \) such that \( \psi(x) = y \).
Moving to the realm of weak \( L \)-trees \( \tr \), instead, we have that \( \tr \) is \markdef{homogeneous} if \( \Delta(\tr) \) is linear, i.e., for every \( t ,t' \in T \) with \( \lev_T(t) = \lev_T(t') \) there is \( f \in \Aut(\tr) \) such that \( f(t) = t' \). 
When \( L \) is countable, this is equivalent to requiring a homogeneity property of the body of \( \tr \), as shown in the next lemma.

\begin{lemma} \label{lem:homogeneoustree}
Suppose that \(\coi(L) \leq \aleph_0 \), and let \( \tr \) be a weak \( L \)-tree.
Then the following are equivalent:
\begin{enumerate-(1)}
\item \label{lem:homogeneoustree-1}
\( \tr \) is homogeneous;
\item \label{lem:homogeneoustree-2}
\( \tr \) is pruned, and for every \( b,b' \in [\tr] \) there is \( f \in \Aut(\tr) \) which takes \( b \) to \( b' \), i.e.\ \( f(b(\ell)) = b'(\ell) \) for all \( \ell \in L \).
\end{enumerate-(1)}
\end{lemma}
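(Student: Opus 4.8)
The plan is to prove the two implications separately, with essentially all of the work going into \ref{lem:homogeneoustree-1} \(\Rightarrow\) \ref{lem:homogeneoustree-2}; the reverse implication is immediate. For \ref{lem:homogeneoustree-2} \(\Rightarrow\) \ref{lem:homogeneoustree-1}, given \( t,t' \in T \) with \( \lev_T(t) = \lev_T(t') = \bar\ell \), I would use prunedness to pick \( b, b' \in [\tr] \) with \( b(\bar\ell) = t \) and \( b'(\bar\ell) = t' \), and then any automorphism \( f \) taking \( b \) to \( b' \) satisfies \( f(t) = f(b(\bar\ell)) = b'(\bar\ell) = t' \); hence \( t \sim t' \), which is exactly homogeneity.

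For the forward direction, first I would record that \( \tr \) is pruned. Since \( \Delta(\tr) \) is linear, at each level there is a unique \( \sim \)-class, and the map sending \( \ell \) to the class at level \( \ell \) is the unique branch \( b \) of \( \Delta(\tr) \); it is a genuine element of \( [\Delta(\tr)] \) because any two classes are comparable and comparability respects levels. Given \( t \in T \) with \( \bar\ell = \lev_T(t) \), we have \( t \in [t] = b(\bar\ell) \), so Lemma~\ref{lem:inhabitated} (which is where \( \coi(L) \leq \aleph_0 \) enters) yields \( b' \in [\tr] \) with \( b'(\bar\ell) = t \), i.e.\ \( t \) lies on a branch.

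The heart of the proof is then to match any two branches \( b, b' \in [\tr] \) by a single automorphism. I would build \( f \) as a pointwise limit of automorphisms \( f_n \) along a strictly decreasing coinitial sequence \( (\ell_n)_{n \in \omega} \) in \( L \) (the case where \( L \) has a minimum being settled in one step, exactly as in Lemma~\ref{lem:inhabitated}). Two observations drive the construction. First, \emph{matching propagates upward}: if an automorphism sends \( b(\ell_n) \) to \( b'(\ell_n) \), then applying \( \cdot|_\ell \) it automatically sends \( b(\ell) \) to \( b'(\ell) \) for every \( \ell \geq_L \ell_n \). Second --- and this is the crucial point --- homogeneity yields a \emph{relative} homogeneity inside cones: if \( p \) has level \( \ell^+ \) and \( q, q' \leq_T p \) both have level \( \ell^- <_L \ell^+ \), then there is \( h \in \Aut(\tr) \) with \( h(q) = q' \) fixing every node of level \( \geq_L \ell^+ \). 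To see this, take a global \( g \in \Aut(\tr) \) with \( g(q) = q' \); since \( q|_{\ell^+} = p = q'|_{\ell^+} \), such \( g \) automatically fixes \( p \), hence maps the cone \( \{ s \mid s \leq_T p \} \) bijectively onto itself, and defining \( h \) to equal \( g \) on this cone and the identity elsewhere gives the desired automorphism (the only nodes outside the cone comparable with a cone node are the ancestors of \( p \), which \( h \) fixes, so order is preserved).

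With these tools the recursion is routine: starting from any \( f_0 \) with \( f_0(b(\ell_0)) = b'(\ell_0) \), at stage \( n \) I apply relative homogeneity with \( p = b'(\ell_n) \), \( q = f_n(b(\ell_{n+1})) \), and \( q' = b'(\ell_{n+1}) \) (both \( \leq_T p \)) to get \( h \) fixing all nodes of level \( \geq_L \ell_n \), and set \( f_{n+1} = h \circ f_n \). Then \( f_{n+1}(b(\ell_{n+1})) = b'(\ell_{n+1}) \), and \( f_{n+1} \) agrees with \( f_n \) on \( \{ t \mid \lev_T(t) \geq_L \ell_n \} \). By coinitiality each node eventually lands in such an agreement region, so \( f_n \) stabilizes pointwise to a level- and order-preserving map \( f \); stabilization also yields injectivity and surjectivity, so \( f \in \Aut(\tr) \) with \( f(b(\ell)) = b'(\ell) \) for all \( \ell \). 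I expect the main obstacle to be precisely the relative homogeneity step together with the verification that the restrict-and-extend map \( h \) is a genuine automorphism, since this is what upgrades the pointwise (one-level-at-a-time) homogeneity to global branch-transitivity, and it is also what keeps the limit construction convergent.
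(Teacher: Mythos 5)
Your proof is correct and follows essentially the same route as the paper's: prunedness via Lemma~\ref{lem:inhabitated} applied to the unique branch of \( \Delta(\tr) \), then a recursion along a strictly decreasing coinitial sequence \( (\ell_n)_{n\in\omega} \) producing automorphisms \( f_n \) with \( f_n(b(\ell_n)) = b'(\ell_n) \) that stabilize above each level, with the limit giving the desired \( f \). Your ``relative homogeneity'' step is just a repackaging of the paper's cut-and-paste definition of \( f_{n+1} \) (a fresh global automorphism on the cone below \( b(\ell_n) \), \( f_n \) elsewhere) as composition with a cone-supported automorphism, and your verification of it is exactly the check the paper leaves implicit.
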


\begin{proof} 
\ref{lem:homogeneoustree-1} \( \Rightarrow \) \ref{lem:homogeneoustree-2}.
The fact that \( \tr \) is pruned follows from Lemma~\ref{lem:inhabitated} applied to the unique branch of \( \Delta(\tr) \). The second part is obvious if \( L \) has a minimum, so suppose that \( L \) has a strictly \( \leq_L \)-decreasing coinitial sequence \( (\ell_n)_{n \in \omega} \), and let \( b,b' \in [\tr] \). 
We recursively construct automorphisms \( f_n \in \Aut(\tr)  \) such that \( f_n(b(\ell_n)) = b'(\ell_n) \) and \( f_n \restriction \tr|_{\ell_{n-1}} = f_{n-1} \restriction \tr|_{\ell_{n-1}} \) if \( n > 0 \).
Let \( f_0 \in \Aut(\tr) \) be such that \( f(b(\ell_0)) = b'(\ell_0) \). Suppose now that \( f_n \in \Aut(\tr) \) as above has been defined. Let \( f'_{n+1} \in \Aut(\tr) \) be such that \( f'_{n+1}(b(\ell_{n+1})) = b'(\ell_{n+1}) \), so that also \( f'_{n+1}(b(\ell_n)) = b'(\ell_n) \). Define \( f_{n+1} \in \Aut(\tr) \) by letting for every \( t \in T \)
\[  
f_{n+1}(t) = 
\begin{cases}
f'_{n+1}(t) & \text{if } t \leq_T b(\ell_{n}) \\
f_n(t) & \text{otherwise}.
\end{cases}
\]
It is easy to check that \( f_{n+1} \) is as required.
The sequence of automorphisms \( (f_n)_{n \in \omega} \) converges to \( f = \bigcup_{n \in \omega} f_n \restriction \tr|_{\ell_n} \in \Aut(\tr) \), which clearly takes \( b \) to \( b' \).

\ref{lem:homogeneoustree-2} \( \Rightarrow \) \ref{lem:homogeneoustree-1}.
Obvious.
\end{proof}

\begin{theorem} \label{thm:homogeneous}
For every topological group \( G \), the following are equivalent:
\begin{enumerate-(1)}
\item \label{thm:homogeneous-1}
\( G \cong \Iso(U) \) for some homogeneous Polish ultrametric space \( U \);
\item \label{thm:homogeneous-2}
\( G \cong \Aut(\tr) \) for some countable homogeneous pruned \( L \)-tree \( \tr \);
\item \label{thm:homogeneous-3} 
\( G \cong \Wr^{\LF}_{\delta \in \Delta} \Sym(N_\delta) \) for some countable linear skeleton \( \langle \Delta, N \rangle \);
\item \label{thm:homogeneous-4}
\( G \cong \Wr^{S}_{\delta \in \Delta} \Sym(N_\delta) \) for some countable linear skeleton \( \langle \Delta, N \rangle \) and some approximately homogeneous closed separable domain \( S \subseteq S^{\Max} \);
\item \label{thm:homogeneous-5}
\( G \cong \Wr^{\Sloc}_{\delta \in \Delta} \Sym(N_\delta) \) for some countable linear skeleton \( \langle \Delta, N \rangle \) and some countable locally homogeneous family of local domains \( \Sloc \subseteq \Sloc^{\Max} \);
\item \label{thm:homogeneous-6}
\( G \cong \Wr^{\Sloc,\bpi}_{\delta \in \Delta} \Sym(N_\delta) \) for some countable linear skeleton \( \langle \Delta, N \rangle \) and some countable locally homogeneous system of projections \( \langle \Sloc, \bpi \rangle \) with \( \Sloc \subseteq \Sloc^{\Max} \).
\end{enumerate-(1)}
\end{theorem}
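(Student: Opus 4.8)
The plan is to prove the cycle of implications $\ref{thm:homogeneous-1} \Rightarrow \ref{thm:homogeneous-2} \Rightarrow \ref{thm:homogeneous-3} \Rightarrow \ref{thm:homogeneous-4} \Rightarrow \ref{thm:homogeneous-5} \Rightarrow \ref{thm:homogeneous-6} \Rightarrow \ref{thm:homogeneous-2}$ together with the closing implication $\ref{thm:homogeneous-3} \Rightarrow \ref{thm:homogeneous-1}$; since then $\ref{thm:homogeneous-1} \Rightarrow \ref{thm:homogeneous-2} \Rightarrow \ref{thm:homogeneous-3} \Rightarrow \ref{thm:homogeneous-1}$, this suffices. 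The three steps $\ref{thm:homogeneous-3} \Rightarrow \ref{thm:homogeneous-4} \Rightarrow \ref{thm:homogeneous-5} \Rightarrow \ref{thm:homogeneous-6}$ are the routine ``widening'' passages to ever more general notions of wreath product. For $\ref{thm:homogeneous-3} \Rightarrow \ref{thm:homogeneous-4}$ I note that $S^{\LF}$ is a closed separable domain contained in $S^{\Max}$ which is approximately homogeneous because $\Wr^{\LF}_{\delta \in \Delta} \Sym(N_\delta)$ is transitive. For $\ref{thm:homogeneous-4} \Rightarrow \ref{thm:homogeneous-5}$ I invoke Proposition~\ref{prop:closedvsfull}, which turns the closed domain $S$ into a full, countable, locally homogeneous family of local domains $\Sloc = \bigcup_\delta S_\delta \subseteq \Sloc^{\Max}$ realizing the same group up to isomorphism. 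Finally $\ref{thm:homogeneous-5} \Rightarrow \ref{thm:homogeneous-6}$ is immediate on pairing $\Sloc$ with the trivial projections.

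For $\ref{thm:homogeneous-6} \Rightarrow \ref{thm:homogeneous-2}$ I feed the data through the structural lemmas of Section~\ref{sec:wreathproducts}. First, Lemma~\ref{lem:nominandmax} lets me assume, without changing the isomorphism type of the group and while preserving linearity of $\Delta$, the inclusion $\Sloc \subseteq \Sloc^{\Max}$, countability, and local homogeneity, that the underlying (linear, hence trivially treeable) skeleton is $L$-treeable for a linear order $L$ with neither minimum nor maximum. Then Lemma~\ref{lem:generalcase} (applicable since $\Sloc \subseteq \Sloc^{\Max}$ and $L$ has no maximum) equips the canonical partial order $P = P^{\Sloc,\bpi}$ with a level map making it a pruned $L$-tree, and Lemma~\ref{lemma:fromwreathtotrees} (applicable since $L$ has no minimum) produces a countable pruned $L'$-tree $\tr_P$, with $L' = \omega^* \cdot L$, such that $\Aut(\tr_P) \cong \AutL(P) = \Wr^{\Sloc,\bpi}_{\delta \in \Delta} \Sym(N_\delta) \cong G$. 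The point to check is that $\tr_P$ is \emph{homogeneous}: by local homogeneity the condensed tree of $P$ is exactly the skeleton $\Delta$, and the construction of $\tr_P$ turns $\Delta(\tr_P)$ into the analogous ``doubling'' of $\Delta$; since $\Delta$ is linear, so is $\Delta(\tr_P)$, as required.

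For $\ref{thm:homogeneous-2} \Rightarrow \ref{thm:homogeneous-3}$ I use the labeling machinery directly. Given a countable homogeneous pruned $L$-tree $\tr$, its condensed tree $\Delta = \Delta(\tr)$ is a countable linear order; I take $\Delta$ and $N = N^{\tr}$ as skeleton, with $\Sloc = \Sloc^{\LF}$ and trivial projections. The goal is an order-isomorphism $g \colon T \to P^{\Sloc^{\LF},\mathrm{triv}}$ with $g(t) \in \Sloc^{\LF}_{[t]}$ for every $t$, since Lemma~\ref{lemma:g} then yields $\Aut(\tr) \cong \Wr^{\LF}_{\delta \in \Delta} \Sym(N_\delta)$. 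When $\Delta$ has a minimum $\bar\delta$, a single application of Lemma~\ref{lem:labelsfinitesupport} with $C = \emptyset$ and empty $\bar z$ gives a bijection $t \mapsto z_t$ of $T = \tr^{\bar\delta}_\emptyset$ onto $\Sloc^{\LF}$, and the equivalence $t \leq_T t' \iff z_t \supseteq z_{t'}$ is precisely the order of $P$ under trivial projections. When $\Delta$ has no minimum, I build $g$ as a coherent limit: fixing a coinitial decreasing sequence in $\Delta$ and applying Lemma~\ref{lem:labelsfinitesupport} to the growing cones $\tr^{\bar\delta_n}_{C}$, each time taking $\bar z$ to be the labels already assigned above, so that the successive bijections agree on overlaps and glue to a global $g$.

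Finally, the functorial endpoints close the loop. For $\ref{thm:homogeneous-1} \Rightarrow \ref{thm:homogeneous-2}$, I first observe that a homogeneous Polish ultrametric space $U$ has countable distance set: the spheres around a fixed point are pairwise disjoint nonempty clopen sets, so separability forces only countably many distinct radii, and homogeneity makes this the whole distance set. Corollary~\ref{cor:spacestotrees2} then gives a countable special pruned $\QQ$-tree $\tr = \func{F}_{\QQ}(U)$ with $\Iso(U) \cong \Aut(\tr)$; by~\eqref{eq:simforultra} and transitivity of $\Iso(U)$, any two balls of equal radius are $\sim$-equivalent, so $\Delta(\tr)$ is linear and $\tr$ is homogeneous. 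For $\ref{thm:homogeneous-3} \Rightarrow \ref{thm:homogeneous-1}$, I take $U = (S^{\LF}, d_\Delta)$, a Polish ultrametric space on which the transitive group $\Wr^{\LF}_{\delta \in \Delta}\Sym(N_\delta) \leq \Iso(U)$ acts, so that $U$ is homogeneous; applying $\func{F}$ to $U$ and Lemma~\ref{lemma:g} to the natural labeling of its balls by finite-support partial sequences identifies $\Iso(U) \cong \Aut(\func{F}(U)) \cong \Wr^{\LF}_{\delta \in \Delta}\Sym(N_\delta) = G$. I expect the main obstacle to lie in these backward passages to a genuine \emph{space}: one must simultaneously guarantee that the constructed space is homogeneous and that its \emph{full} isometry group is no larger than the intended wreath product, the latter being exactly what the combination of $\func{F}$ (Corollary~\ref{cor:spacestotrees}) with the rigid identification of $\Aut(\func{F}(U))$ via Lemma~\ref{lemma:g} secures; a secondary technical difficulty is the coherence of the labeling in $\ref{thm:homogeneous-2} \Rightarrow \ref{thm:homogeneous-3}$ when $\Delta$ lacks a minimum.
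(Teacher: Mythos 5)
Your overall architecture (the cycle \ref{thm:homogeneous-1} \( \Rightarrow \) \ref{thm:homogeneous-2} \( \Rightarrow \) \ref{thm:homogeneous-3} \( \Rightarrow \) \ref{thm:homogeneous-1}, plus the routine widenings \ref{thm:homogeneous-3} \( \Rightarrow \) \ref{thm:homogeneous-4} \( \Rightarrow \) \ref{thm:homogeneous-5} \( \Rightarrow \) \ref{thm:homogeneous-6} and the closing \ref{thm:homogeneous-6} \( \Rightarrow \) \ref{thm:homogeneous-2}) matches the paper, and your treatments of \ref{thm:homogeneous-1} \( \Rightarrow \) \ref{thm:homogeneous-2} and \ref{thm:homogeneous-2} \( \Rightarrow \) \ref{thm:homogeneous-3} are essentially the paper's arguments. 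But two of your steps have genuine gaps. In \ref{thm:homogeneous-6} \( \Rightarrow \) \ref{thm:homogeneous-2}, routing through Lemma~\ref{lemma:fromwreathtotrees} cannot work: the tree \( \tr_P \) it produces is \emph{never} homogeneous. Every \( h \in \Aut(\tr_P) \) is of the form \( f^\uparrow \) for a localized \( f \), so the orbit of a side-branch node \( z_\ell \) (for \( z \in \Sloc_\delta \)) is \( \{ z'_\ell \mid z' \in \Sloc_\delta \} \); for \( \delta \neq \delta' \) the nodes \( z_\ell \) and \( z'_{\ell'} \) (\( z' \in \Sloc_{\delta'} \)) are \( \leq_T \)-incomparable and lie in distinct orbits even when their \( L' \)-levels agree, so \( \Delta(\tr_P) \) is not linear — your claim that it is the ``doubling'' of the linear \( \Delta \) overlooks these mutually incomparable hairs, which are there by design (they force automorphisms to be localized). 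The paper avoids the detour entirely: after using only the first half of Lemma~\ref{lem:nominandmax} to remove a maximum from \( L = (\Delta, \leq_\Delta) \), it observes that \( \tr = (P, {\leq_P}, \lev_P) \) with \( \lev_P(z) = \delta \) for \( z \in \Sloc_\delta \) is \emph{itself} a countable pruned \( L \)-tree (this is where \( \Sloc \subseteq \Sloc^{\Max} \) and the absence of a maximum enter), and since \( \Delta \) is linear the level map separates the sets \( \Sloc_\delta \), so \( \Aut(\tr) = \AutL(P) \cong G \) automatically; local homogeneity of \( \langle \Sloc,\bpi \rangle \) then yields homogeneity of \( \tr \) directly. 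The \( \omega^* \)-doubling is needed only for non-linear skeletons, exactly where homogeneity is not at stake.

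The second gap is in \ref{thm:homogeneous-3} \( \Rightarrow \) \ref{thm:homogeneous-1}: the metric \( d_\Delta \) is the wrong one. Since \( d_\Delta(x,y) = 2^{-m} \) for the least \( m \) with \( x|_{\delta_m} \neq y|_{\delta_m} \), an isometry of \( (S^{\LF}, d_\Delta) \) need only respect the relations \( x|_{c} = y|_{c} \) for \( c \) in the non-increasing set of running minima \( \min\{\delta_0,\dotsc,\delta_m\} \), not condition~\ref{Wr1} at every \( \delta \). Concretely, take \( \Delta = \omega \) with the usual order and \( N_\delta = 2 \): for any enumeration the running minima are eventually \( 0 \), and with the increasing enumeration \( d_\Delta \) is the uniformly \( 1 \)-discrete metric, so \( \Iso(S^{\LF},d_\Delta) = \Sym(S^{\LF}) \), which is strictly larger than \( \Wr^{\Fin}_{\delta \in \omega} \Sym(2) \) (the latter preserves each \( \equiv_\delta \)). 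Your subsequent appeal to \( \func{F} \) and Lemma~\ref{lemma:g} cannot repair this, since \( \Iso(U) \cong \Aut(\func{F}(U)) \) is an isomorphism of the \emph{full} isometry group, which is already too large. The paper's fix is to embed \( \Delta \) order-preservingly into \( \RR^+ \) (with \( \inf \Delta = 0 \) when \( \Delta \) has no minimum) and set \( d(x,y) = \max\{ \delta \in \Delta \mid x(\delta) \neq y(\delta) \} \), which exists on \( S^{\LF} \) and satisfies \( x|_\delta = y|_\delta \iff d(x,y) < \delta \) for \emph{every} \( \delta \); then \( \Iso(U) = \Wr^{\LF}_{\delta \in \Delta} \Sym(N_\delta) \) exactly (condition~\ref{Wr2} being automatic by Remark~\ref{rmk:permutationisautomatic}), after direct verifications of completeness, separability, and homogeneity.
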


\begin{proof}
\ref{thm:homogeneous-1} \( \Rightarrow \) \ref{thm:homogeneous-2}.
Let \( D \) be the distance set of \( U \), and pick any \( L \) satisfying Condition~\ref{condition2} from Section~\ref{subsec:F}: then \( \tr = \func{F}(U) \) is a countable pruned \( L \)-tree satisfing \( \Aut(\tr) \cong \Iso(U) \) (Theorem~\ref{thm:spacestotrees} and Corollary~\ref{cor:spacestotrees}), so we only need to check that \( \tr \) is homogeneous. 
This amounts to showing that \( t \sim t' \) for every \( t,t' \in T \) with \( \lev_T(t) = \lev_T(t') = \ell \).
By construction, there are \( x,y \in U \) such that \( t = (B_d(x,\ell),\ell)\) and \( t' = (B_d(y,\ell),\ell) \). Since \( U \) is homogeneous, there is \( \psi \in \Iso(U) \) such that \( \psi(x) = y \). 
Then by definition of \( \func{F} \) the automorphism \( \func{F}(\psi) \in \Aut(\tr) \) is such that \( \func{F}(\psi)(t) = (B_d(\psi(x),\ell),\ell) = t' \).

\ref{thm:homogeneous-2} \( \Rightarrow \) \ref{thm:homogeneous-3}.
Let \( \tr \) be as in~\ref{thm:homogeneous-2}: we claim that \( \Aut(\tr) \cong \Wr^{\LF}_{\delta \in \Delta(\tr)} \Sym(N^\tr_\delta) \), which is enough because the countable skeleton \( \langle \Delta(\tr), N_{\tr} \rangle \) is linear by homogeneity of \( \tr \). 
Let \( P \) be the canonical partial order associated to the wreath product \( \Wr^{\LF}_{\delta \in \Delta(\tr)} \Sym(N^\tr_\delta) \), that is, \( P = \Sloc = \bigcup_{\delta \in \Delta(T)} \Sloc^{\LF}_\delta \) and, for every \( z,z' \in P \), \( z \leq_P z' \iff z \supseteq z' \).
By Lemma~\ref{lemma:g}, it is enough to show that there is an order-isomorphism \( g \colon T \to P \) such that \( g(t) \in \Sloc^{\LF}_{[t]} \), for every \( t \in T \). 
 
Let \( \ell_0\) be the minimum of \( L \) if it exists, or the first element of a strictly \( L \)-decreasing sequence \( (\ell_n)_{n \in \omega} \) coinitial in \( L \) if \( L \) has no minimum. 
Let \( \delta_0 \) be the unique member of \( \Lev_{\ell_0}(\Delta(\tr))  \).
Apply Lemma~\ref{lem:labelsfinitesupport} with \( C = \emptyset\) and \( \bar \delta = \delta_0 \).
The resulting map 
\[ 
g_0 \colon \tr|_{\ell_0} \to P|_{\delta_0}, \qquad t \mapsto z_t
\]
is such that 
\begin{equation} \label{eq:homogeneousorderisomorphism} 
t \leq_T t' \iff z_t \supseteq z_{t'} \qquad \text{and} \qquad  z_t \in \Sloc^{\LF}_{[t]} 
\end{equation}
for every \( t,t' \in T \) with \( \lev_T(t), \lev_T(t') \geq \ell_0 \), where \( P|_{\delta_0} = \bigcup_{\delta \geq_{\Delta(T)} \delta_0 } \Sloc^{\LF}_{\delta} \).
In particular, if \( \ell_0 = \min L \), then \( g = g_0 \) is already as desired.

If \( L \) has no minimum, instead, we repeat the process. Given any \( n > 0 \), let \( \delta_n \) be the unique element of \( \Lev_{\ell_n}(\Delta(\tr))\) and \( P|_{\delta_n} = \bigcup_{\delta \geq_{\Delta(T)} \delta_n} \Sloc^{\LF}_{\delta} \). 
Suppose that we already defined the function \( g_{n-1} \colon \tr|_{\ell_{n-1}} \to P|_{\delta_{n-1}} \) so that~\eqref{eq:homogeneousorderisomorphism} is satisfied with \( z_t = g_{n-1}(t) \): we want to extend \( g_{n-1} \) to \( g_n \colon \tr|_{\ell_n} \to P|_{\delta_n} \) by defining \( t \mapsto z_t \) on the set \( T_n = T|_{\ell_n} \setminus T|_{\ell_{n-1}} \)
so that~\eqref{eq:homogeneousorderisomorphism} is still satisfied for all \( t,t' \in \tr|_{\ell_n} \).
To this aim, for any \( \bar t \in \Lev_{\ell_{n-1}}(\tr) \) apply Lemma~\ref{lem:labelsfinitesupport} with \( C = \{ t \in T \mid \bar t \leq_T t \} \), \( \bar \delta = \delta_n \), and \( \bar z = z_{\bar t } = g_{n-1}(\bar t) \), and denote by \( g_n^{\bar t} \) the resulting function \( t \mapsto z_t \) on \( \tr^{\bar \delta}_C \). 
Then \( g_n = g_{n-1} \cup \bigcup \{ g_n^{\bar t} \mid \bar t \in \Lev_{\ell_{n-1}}(\tr) \}  \) is as required.

Since \( \Delta(\tr) \) is linear and \( (\ell_n)_{n \in \omega} \) is coinitial in \( L \), we get \( P = \bigcup_{n \in \omega} P|_{\delta_n}\), and thus \( g = \bigcup_{n \in \omega} g_n \) is as required.

\ref{thm:homogeneous-3} \( \Rightarrow \) \ref{thm:homogeneous-1}.
Without loss of generality, we may assume that \( \Delta \subseteq \RR^+ \), and that \( \inf \Delta = 0 \) if \( \Delta \) has no minimum.
Set \( U = S^{\LF} \subseteq \prod_{\delta \in \Delta} N_\delta \), and for distinct \( x,y \in U \) let \( d(x,y) = \max \{ \delta \in \Delta \mid x(\delta) \neq y(\delta) \} \), which exists because \( x,y \in S^{\LF} \). 
It is easy to verify that \( d \) is an ultrametric, and that \( (U,d) \) is separable by countability of \( \Delta \): a countable dense set is given by \( S^{\Fin} \subseteq U \).
Moreover, \( d \) is also complete. This is obvious if \( \Delta \) has a minimum, because in this case \( (U,d) \) is uniformly discrete. 
If instead \( \Delta \) has no minimum, fix any sequence \( (\delta_k)_{k \in \omega} \) coinitial in \( \Delta \). 
If \( (x_n)_{n \in \omega} \) is \( d \)-Cauchy, then for every \( k \in \omega \) there is \( M_k \) such that \( x_n(\delta) = x_m(\delta) \) for all \( n,m \geq M_k \) and \( \delta \geq_\Delta \delta_k \): 
for \( \delta \in \Delta \), let \( x \in \prod_{\delta \in \Delta} N_\delta \) be defined by \( x(\delta) = x_{M_k}(\delta) \) for some (equivalently, any) \( k \) such that \( \delta_k \leq_{\Delta} \delta \).
Then \( \supp(x) \in \LF \) because for every \( \delta \in \Delta \) we have \( x|_\delta = x_n|_\delta \) for all sufficiently large \( n \in \omega\), 
and for the same reason \( (x_n)_{n \in \omega} \) converges to \( x \).

We next show that \( (U,d) \) is homogeneous. To this aim, it is enough to show that for each \( x \in U \) there is \( \psi \in \Iso(U) \) such that \( \psi(x) = x_0 \), where \( x_0 \in U \) is such that \( \supp(x_0) = \emptyset \). For each \( \delta \), let \( \psi_\delta \in \Sym(N_\delta) \) be the identity if \( \delta \notin \supp(x) \), or the permutation swapping \( 0 \) with \( x(\delta) \) and fixing the rest of \( N_\delta \) if \( \delta \in \supp(x) \). It is easy to verify that the map \( \psi \colon U \to U \) defined by setting \( \psi(y)(\delta) = \psi_\delta(y(\delta)) \) for all \( y \in U \) and \( \delta \in \Delta \) is as desired.

Finally, for every \( \psi \in \Sym(U) = \Sym(S^{\LF}) \) one can easily check that \( \psi \in \Iso(U) \) if and only if \( \psi \in \Wr^{\LF}_{\delta \in \Delta} \Sym(N_\delta) \). Indeed, condition~\ref{Wr1} from Definition~\ref{def:wreathproduct} precisely amounts to requiring that \( \psi \) preserve distances with respect to \( d \) because \( x|_\delta = y|_\delta \iff d(x,y) < \delta \) for every \( x,y \in U \) and \( \delta \in \Delta \), while~\ref{Wr2} follows automatically from~\ref{Wr1} by Remark~\ref{rmk:permutationisautomatic}.
This shows that \( \Wr^{\LF}_{\delta \in \Delta} \Sym(N_\delta) = \Iso( U ) \), and we are done. 




\ref{thm:homogeneous-3} \( \Rightarrow \) \ref{thm:homogeneous-4}, \ref{thm:homogeneous-4} \( \Rightarrow \) \ref{thm:homogeneous-5}, and \ref{thm:homogeneous-5} \( \Rightarrow \) \ref{thm:homogeneous-6} are obvious.

\ref{thm:homogeneous-6} \( \Rightarrow \) \ref{thm:homogeneous-2}.
Let \( P = P^{\Sloc,\bpi}\) be the canonical partial order associated to \( \Wr^{\Sloc,\bpi}_{\delta \in \Delta} \Sym(N_\delta) \). Set \( L = (\Delta,\leq_\Delta) \).
Then the skeleton \( \langle \Delta,N \rangle \) is trivially \( L \)-treeable, hence by Lemma~\ref{lem:nominandmax} we can assume that \( L \) has no maximum.
Consider the function \( \lev_P \colon P \to \Delta \) defined by \( \lev_P(z) = \delta \) for every \( z \in \Sloc_\delta \). Then \( \tr = (P,\leq_P, \lev_P) \) is a countable pruned \( L \)-tree. 
(Here we use that \( \Sloc \subseteq \Sloc^{\Max} \) and that \( \Delta \) has no maximum.)
Since \( \leq_\Delta \) is linear, \( \Aut(\tr) = \AutL(P) \cong \Wr^{\Sloc,\bpi}_{\delta \in \Delta} \Sym(N_\delta) \). Together with the fact that \( \langle \Sloc,\bpi \rangle \) is locally homogeneous, this entails that \( \tr \) is homogeneous, as desired.
\end{proof}

\begin{remark} \label{rmk:homogeneous}
Since in~\ref{thm:homogeneous-3} of Theorem~\ref{thm:homogeneous} the skeleton \( \langle \Delta,N \rangle \) is linear, we also have \( \Wr^{\LF}_{\delta \in \Delta} \Sym(N_\delta) = \Wr_{\delta\in\Delta }^{ \UM } \Sym(N_{\delta }) \). Moreover, since \( \Delta \) is linear and has coinitiality at most \( \aleph_0 \), every family of local domains as in~\ref{thm:homogeneous-5} is automatically full.
\end{remark}


An interesting subclass of the ultrametric spaces studied above consists of the homogeneous Polish ultrametric spaces \( U \) which are discrete; by homogeneity, the latter is equivalent to the fact that \( U \) is uniformly discrete. 

\begin{theorem} \label{thm:homogeneousdiscrete}
For every topological group \( G \), the following are equivalent:
\begin{enumerate-(1)}
\item \label{thm:homogeneousdiscrete-1}
\( G \cong \Iso(U) \) for some (uniformly) discrete homogeneous Polish ultrametric space \( U \);
\item \label{thm:homogeneousdiscrete-2}
\( G \cong \Aut(\tr) \) for some countable homogeneous pruned \( L \)-tree \( \tr \), where \( L \) is a linear order with minimum; 
\item \label{thm:homogeneousdiscrete-3}
\( G \cong \Wr^{\LF}_{\delta \in \Delta} \Sym(N_\delta) \) for some countable linear skeleton \( \langle \Delta, N \rangle \) such that \( \Delta \) has a minimum; 
\item \label{thm:homogeneousdiscrete-4}
\( G \cong \Wr^{S}_{\delta \in \Delta} \Sym(N_\delta) \) for some countable linear skeleton \( \langle \Delta, N \rangle \) such that \( \Delta \) has a minimum and some approximately homogeneous closed separable domain \( S \subseteq S^{\Max} \);
\item \label{thm:homogeneousdiscrete-5}
\( G \cong \Wr^{\Sloc}_{\delta \in \Delta} \Sym(N_\delta) \) for some countable linear skeleton \( \langle \Delta, N \rangle \) such that \( \Delta \) has a minimum and some countable locally homogeneous family of local domains \( \Sloc \subseteq \Sloc^{\Max} \);
\item \label{thm:homogeneousdiscrete-6}
\( G \cong \Wr^{\Sloc,\bpi}_{\delta \in \Delta} \Sym(N_\delta) \) for some countable linear skeleton \( \langle \Delta, N \rangle \) such that \( \Delta \) has a minimum and some countable locally homogeneous system of projections \( \langle \Sloc, \bpi \rangle \) with \( \Sloc \subseteq \Sloc^{\Max} \);
\item \label{thm:homogeneousdiscrete-7}
\( G \cong \Wr^{\Fin}_{\delta \in \Delta} \Sym(N_\delta) \) for some countable linear skeleton \( \langle \Delta, N \rangle \), where \( \Wr^{\Fin}_{\delta \in \Delta} \Sym(N_\delta) \) is equipped with the topology \( \tau^* \).
\end{enumerate-(1)}
\end{theorem}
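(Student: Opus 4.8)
The plan is to obtain the equivalences among \ref{thm:homogeneousdiscrete-1}--\ref{thm:homogeneousdiscrete-6} by rerunning the proof of Theorem~\ref{thm:homogeneous} almost verbatim, while carrying along the extra bookkeeping that the space is \emph{uniformly} discrete precisely when $\inf D>0$, and that this corresponds exactly to the underlying linear order having a minimum. Three points will deserve attention. For \ref{thm:homogeneousdiscrete-1}$\Rightarrow$\ref{thm:homogeneousdiscrete-2}, since $U$ is uniformly discrete we have $\inf D>0$, so the first bullet of Lemma~\ref{lem:spacestotreesspecial} lets us choose $L$ with $\min L=\inf D$; then $\func{F}(U)$ is a countable homogeneous pruned $L$-tree with $L$ having a minimum (homogeneity being verified as in Theorem~\ref{thm:homogeneous}, and $\Aut(\func{F}(U))\cong\Iso(U)$ by Corollary~\ref{cor:spacestotrees}). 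For \ref{thm:homogeneousdiscrete-3}$\Rightarrow$\ref{thm:homogeneousdiscrete-1}, when $\Delta$ has a minimum $\delta_0$ the metric $d(x,y)=\max\{\delta\mid x(\delta)\neq y(\delta)\}$ on $U=S^{\LF}$ satisfies $d\geq\delta_0>0$, so $U$ is uniformly discrete (and homogeneous, exactly as before). Finally, in \ref{thm:homogeneousdiscrete-6}$\Rightarrow$\ref{thm:homogeneousdiscrete-2} one applies Lemma~\ref{lem:nominandmax} to remove a possible maximum from $L=\Delta$; since that procedure only appends levels on top, it preserves the minimum, so the resulting homogeneous pruned tree still has a least level. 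The remaining implications \ref{thm:homogeneousdiscrete-3}$\Rightarrow$\ref{thm:homogeneousdiscrete-4}$\Rightarrow$\ref{thm:homogeneousdiscrete-5}$\Rightarrow$\ref{thm:homogeneousdiscrete-6} are immediate, the minimum of $\Delta$ being retained throughout.

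It remains to fit \ref{thm:homogeneousdiscrete-7} into the cycle, and the implication \ref{thm:homogeneousdiscrete-3}$\Rightarrow$\ref{thm:homogeneousdiscrete-7} is immediate: when $\Delta$ is linear with a minimum we have $S^{\Fin}=S^{\LF}$ and the two topologies coincide, $\tau^*=\tau$ (see the discussion of $\tau^*$ in Section~\ref{subsec:classicalwreathproduct}), so $\Wr^{\Fin}_{\delta\in\Delta}\Sym(N_\delta)$ equipped with $\tau^*$ is literally the same topological group as $\Wr^{\LF}_{\delta\in\Delta}\Sym(N_\delta)$.

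The real work is \ref{thm:homogeneousdiscrete-7}$\Rightarrow$\ref{thm:homogeneousdiscrete-2}, where $\Delta$ is an arbitrary countable linear order, possibly without a minimum (in which case $S^{\Fin}\subsetneq S^{\LF}$ and $\tau^*$ is strictly finer than $\tau$). The idea I would pursue is to build a tree whose \emph{bottom level} is $S^{\Fin}$ itself, so that fixing bottom nodes coincides with the $\tau^*$-basic condition of fixing points of $S^{\Fin}$. Concretely, I would set $L'=\{\ell^*\}+\Delta$ with a fresh minimum $\ell^*$, add a root on top of $\Delta$ (a single node labeled $1$, automatically fixed and guaranteeing the existence of common upper bounds), let the nodes at level $\ell\in\Delta$ be the classes $\{y\in S^{\Fin}\mid y|_\ell=x|_\ell\}$ ordered by reverse inclusion, and place each $x\in S^{\Fin}$ at level $\ell^*$ below exactly the classes containing it. Because every $x\in S^{\Fin}$ has finite support, for incomparable bottom nodes $x,x'$ the quantity $\max\{\delta\mid x(\delta)\neq x'(\delta)\}$ exists and serves as their split, so the resulting triple $\tr_\Delta$ is an $L'$-tree; it is pruned since each node lies on the branch determined by any $x$ in it, and it is countable. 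One then checks that $\Aut(\tr_\Delta)\cong\Wr^{\Fin}_{\delta\in\Delta}\Sym(N_\delta)$ with $\tau^*$: an automorphism is determined by its action on the bottom level (each upper node is the set of bottom nodes below it), this action is exactly a permutation of $S^{\Fin}$ preserving all the conditions $x|_\delta=y|_\delta$, i.e.\ an element of $\Wr^{\Fin}$, and fixing bottom nodes generates the same topology as $\tau^*$. Transitivity of $\Wr^{\Fin}$ on $S^{\Fin}$ (via finitary coordinate swaps) and on each level makes $\Delta(\tr_\Delta)$ linear, so $\tr_\Delta$ is homogeneous, and $L'$ has a minimum by construction.

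The main obstacle is precisely this last construction, and the subtlety lies entirely in the case where $\Delta$ has no minimum: there $S^{\Fin}$ is a proper dense subset of the branch space $S^{\LF}$, and $\tau^*$ is strictly finer than $\tau$. The key point is that by populating the bottom level with $S^{\Fin}$ rather than $S^{\LF}$ we force the full-equality condition $g(x)=x$ to become an open subgroup, which is exactly what distinguishes $\tau^*$ from $\tau$. Verifying that the tree remains pruned even though the infinite-support branches of the upper part acquire no bottom node, and that the pointwise-on-nodes topology collapses to $\tau^*$ and not to $\tau$, is where the care is needed; everything else reduces to the already-established Theorem~\ref{thm:homogeneous}.
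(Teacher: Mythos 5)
Your proposal is correct, and for items \ref{thm:homogeneousdiscrete-1}--\ref{thm:homogeneousdiscrete-6} and the implication \ref{thm:homogeneousdiscrete-3} \( \Rightarrow \) \ref{thm:homogeneousdiscrete-7} it matches the paper's proof exactly: the paper likewise reruns Theorem~\ref{thm:homogeneous} noting that \( \inf D > 0 \) lets one pick \( L \) with a minimum, that a minimum of \( \Delta \) forces the space \( (S^{\LF},d) \) to be uniformly discrete, that in \ref{thm:homogeneousdiscrete-6} \( \Rightarrow \) \ref{thm:homogeneousdiscrete-2} only the first (max-removing) half of Lemma~\ref{lem:nominandmax} should be used so the minimum survives, and that with \( \min \Delta \) present one has \( S^{\Fin} = S^{\LF} \) and \( \tau^* = \tau \). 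Where you genuinely diverge is in closing the cycle from \ref{thm:homogeneousdiscrete-7}: the paper proves \ref{thm:homogeneousdiscrete-7} \( \Rightarrow \) \ref{thm:homogeneousdiscrete-3} in two lines, by extending \( \langle \Delta, N \rangle \) to \( \langle \Delta',N' \rangle \) with a new bottom element \( \bar\delta \) carrying \( N'_{\bar\delta} = 1 \); then \( S'^{\LF} = S'^{\Fin} \cong S^{\Fin} \) and the \( \tau \)-subbasic condition at \( \bar\delta \), namely \( g(x)|_{\bar\delta} = x|_{\bar\delta} \), \emph{is} the full-fix condition \( g(x) = x \), so \( \Wr^{\LF}_{\delta \in \Delta'} \Sym(N'_\delta) \) with \( \tau \) is literally \( \Wr^{\Fin}_{\delta \in \Delta} \Sym(N_\delta) \) with \( \tau^* \), and the already-proved equivalences take over. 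You instead prove \ref{thm:homogeneousdiscrete-7} \( \Rightarrow \) \ref{thm:homogeneousdiscrete-2} directly, hand-building an \( L' \)-tree whose bottom level is \( S^{\Fin} \) and re-verifying the tree axioms, prunedness, homogeneity, the identification of \( \Aut(\tr_\Delta) \) with \( \Wr^{\Fin} \), and the collapse of the node-wise topology to \( \tau^* \). The two arguments rest on the same key insight --- adjoin a minimum level at which point-stabilizers become open --- but the paper's implementation buys brevity by reusing \ref{thm:homogeneousdiscrete-3} \( \Rightarrow \) \ref{thm:homogeneousdiscrete-2} as a black box, while yours makes the topological mechanism (why \( \tau^* \) and not \( \tau \) appears) explicit at the cost of redoing the correspondence by hand; your construction is in effect what one obtains by composing the paper's skeleton extension with the tree correspondence. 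Two cosmetic points to tighten: take the level-\( \ell \) nodes to be pairs \( (\{ y \in S^{\Fin} \mid y|_\ell = x|_\ell \}, \ell) \), as the functor \( \func{F} \) does with \( (B,\ell) \), since otherwise \( \lev_T \) can be ill-defined when classes at distinct levels coincide as sets (e.g.\ when some \( N_\delta = 1 \)); and note that the added root is only needed when \( \Delta \) has a maximum (if \( \Delta \) has no maximum, two finite-support sequences already agree on a cone), though adding it unconditionally is harmless.
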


\begin{proof}
The equivalence of items from~\ref{thm:homogeneousdiscrete-1} to~\ref{thm:homogeneousdiscrete-6} can be proved as in Theorem~\ref{thm:homogeneous}, taking into account the following observations:
\begin{itemizenew}
\item 
In the implication~\ref{thm:homogeneousdiscrete-1} \( \Rightarrow \) \ref{thm:homogeneousdiscrete-2}, we can choose \( L \) with a minimum because the distance set \( D \) of \( U \) is bounded away from \( 0 \).
\item 
The proof of~\ref{thm:homogeneousdiscrete-2} \( \Rightarrow \) \ref{thm:homogeneousdiscrete-3} is actually simpler because \( L \) has a minimum.
\item 
Since \( \leq_\Delta \) has a minimum, in the implication~\ref{thm:homogeneousdiscrete-3} \( \Rightarrow \) \ref{thm:homogeneousdiscrete-1} we can assume that \( \Delta \subseteq \RR^+ \) is bounded away from \( 0 \), so that the resulting Polish ultrametric space \( (S^{\LF},d) \) is uniformly discrete.
\item
When employing Lemma~\ref{lem:nominandmax} in the proof of~\ref{thm:homogeneousdiscrete-6} \( \Rightarrow \) \ref{thm:homogeneousdiscrete-2}, we can use only the first part of its proof to ensure that \( L = \Delta \) has no maximum but maintains its minimum.
\end{itemizenew}

The implication~\ref{thm:homogeneousdiscrete-3} \( \Rightarrow \) \ref{thm:homogeneousdiscrete-7} follows from the fact that when \( \Delta \) is linear and \( \leq_\Delta \) has a minimum, then \( \Wr^{\LF}_{\delta \in \Delta} \Sym(N_\delta) = \Wr^{\Fin}_{\delta \in \Delta} \Sym(N_\delta) \) and, moreover, the topology \( \tau^*\) on \( \Wr^{\Fin}_{\delta \in \Delta} \Sym(N_\delta) \) coincides with the usual topology \( \tau \) of pointwise convergence with respect to \( \tau_\Delta \).
For the reverse implication~\ref{thm:homogeneousdiscrete-7} \( \Rightarrow \) \ref{thm:homogeneousdiscrete-3}, consider an arbitrary countable linear skeleton \( \langle \Delta,N \rangle \). Extend it to \( \langle \Delta',N' \rangle \) by adding a bottom element \( \bar \delta \) to \( \Delta \) and setting \( N'_{\bar \delta} = 1 \). Then \( \Wr^{\Fin}_{\delta \in \Delta} \Sym(N_\delta) \), when equipped with the topology \( \tau^* \), is isomorphic to \( \Wr^{\LF}_{\delta \in \Delta'} \Sym(N_\delta) \), hence we are done. 
\end{proof}

\begin{remark} \label{rmk:homogeneousdiscrete}
As in Remark~\ref{rmk:homogeneous}, in part~\ref{thm:homogeneousdiscrete-3} of Theorem~\ref{thm:homogeneousdiscrete} we again have that \( \Wr^{\LF}_{\delta \in \Delta} \Sym(N_\delta) = \Wr_{\delta\in\Delta }^{ \UM } \Sym(N_{\delta }) \), but since \( \leq_\Delta \) has minimum, this time we also have \( \Wr^{\LF}_{\delta \in \Delta} \Sym(N_\delta) = \Wr^{\Fin}_{\delta \in \Delta} \Sym(N_\delta) \). As before, every family of local domains as in~\ref{thm:homogeneousdiscrete-5} is automatically full.
\end{remark}

Interestingly, the equivalence between~\ref{thm:homogeneousdiscrete-1} and~\ref{thm:homogeneousdiscrete-7} in Theorem~\ref{thm:homogeneousdiscrete} shows that the class of all isometry groups of discrete homogeneous Polish ultrametric spaces coincides with the generalized wreath products defined by Hall in~\cite{hall1962} once we require that the underlying linear order is countable and the permutation groups are full.

Another class of interest in the context of homogeneous spaces is given by Polish ultrametric Urysohn spaces.
See~\cite{GaoShao2011} for more details and information.
\details{Since the distance set of a Polish ultrametric space is always countable, there is no universal Polish ultrametric space. However, if we fix a countable distance set \( D \subseteq \RR^+ \), then there is a (unique, up to isometry) Polish ultrametric space which is ultrahomogeneous,\todo{Vogliamo mettere una definizione esplicita in una footnote? (\`E standard e non lo usiamo mai.)} has distance set \( D \), and is universal for Polish ultrametric spaces with distance set contained in \( D \). Such a space is called Polish \( D \)-ultrametric Urysohn space, but since the set \( D \) can be canonically retrieved from the space as its distance set, we can safely drop the reference to it ans speak of \markdef{Polish ultrametric Urysohn spaces}. We refer the reader to~\cite{GaoShao2011} for more details and information.}
In order to characterize their isometry groups 
using generalized wreath products, we need to introduce some more terminology. A countable linear skeleton \( \langle \Delta,N \rangle \) is \markdef{maximal} if \( N \) always attains the maximal value, i.e.\ \( N_\delta = \omega \) for every \( \delta \in \Delta \); it is \markdef{quasi-maximal} if each \( N_\delta \) is either maximal or trivial, that is, \( N_\delta \in \{ 1, \omega \} \) for every \( \delta \in \Delta \). 
We provide two characterizations in terms of generalized wreath products: to our taste, the most appealing is obtained by considering only maximal countable linear skeletons, and then taking Hall's generalized wreath products together with generalized wreath products induced by locally finite supports over such skeletons. Along the way, we also consider a related class of spaces. A metric space is \markdef{\( r \)-discrete}, where \( r \in \RR^+ \), if all its points are at distance \( r \) from each other. A Polish ultrametric space \( U \) is \markdef{wide} if for every \( r \) in its distance set, \( U \) contains a copy of the countably infinite \( r \)-discrete space.

\begin{theorem} \label{thm:Urysohn}
For every topological group \( G \), the following are equivalent:
\begin{enumerate-(1)}
\item \label{thm:Urysohn-1}
\( G \cong \Iso(U) \) for some Polish ultrametric Urysohn space \( U \);
\item \label{thm:Urysohn-2}
\( G \cong \Iso(U) \) for some homogeneous wide Polish ultrametric space \( U \);
\item \label{thm:Urysohn-3}
\( G \cong \Wr^{\LF}_{\delta \in \Delta} \Sym(N_\delta) \) for some quasi-maximal countable linear skeleton \( \langle \Delta,N \rangle \);
\item \label{thm:Urysohn-4}
\( G \cong \Wr^{\LF}_{\delta \in \Delta} \Sym(N_\delta) \) or \( G \cong \Wr^{\Fin}_{\delta \in \Delta} \Sym(N_\delta) \), where the latter is equipped with the topology \( \tau^* \), for some maximal countable linear skeleton \( \langle \Delta,N \rangle \)
\end{enumerate-(1)}
\end{theorem}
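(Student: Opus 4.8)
The plan is to establish the cycle \ref{thm:Urysohn-1} \( \Rightarrow \) \ref{thm:Urysohn-2} \( \Rightarrow \) \ref{thm:Urysohn-3} \( \Rightarrow \) \ref{thm:Urysohn-4} \( \Rightarrow \) \ref{thm:Urysohn-1}, building throughout on Theorem~\ref{thm:homogeneous} (and, for the discrete end, on Theorem~\ref{thm:homogeneousdiscrete}) and isolating \emph{wideness} as exactly the feature that promotes a generic homogeneous skeleton to a quasi-maximal, hence maximal, one. The implication \ref{thm:Urysohn-1} \( \Rightarrow \) \ref{thm:Urysohn-2} is immediate: a Urysohn space is ultrahomogeneous, hence homogeneous, and for each \( r \) in its distance set \( D \) the countably infinite \( r \)-discrete space has distance set \( \{ r \} \subseteq D \), so universality makes it embed isometrically, witnessing wideness.

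For \ref{thm:Urysohn-2} \( \Rightarrow \) \ref{thm:Urysohn-3} I follow the proof of Theorem~\ref{thm:homogeneous} applied to the homogeneous space \( U \): choosing \( L \supseteq D \) as in Condition~\ref{condition2} and setting \( \tr = \func{F}(U) \), we obtain \( \Iso(U) \cong \Wr^{\LF}_{\delta \in \Delta(\tr)} \Sym(N^\tr_\delta) \) over the countable linear skeleton \( \langle \Delta(\tr), N_\tr \rangle \). It remains to see that wideness forces \( N_\tr \) to take only the values \( 1 \) and \( \omega \). Here I use the explicit label \( N^\tr_\delta = |C_t| \) from~\eqref{eq:Ct} together with the computation \( \spl{(B_d(x,\ell),\ell),(B_d(x',\ell),\ell)} = d(x,x') \) valid in \( \func{F}(U) \). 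If \( \ell = \lev_T(t) \notin D \), then no two distinct points of \( U \) are at distance \( \ell \), so \( C_t = \{ t \} \) and \( N^\tr_{[t]} = 1 \). If instead \( \ell \in D \), wideness yields an infinite \( \ell \)-discrete set \( \{ x_n \mid n \in \omega \} \); the balls \( B_d(x_n,\ell) \) are pairwise distinct, any two of them have split value exactly \( \ell \), and they are mutually isometric by homogeneity, so they all belong to a single \( C_t \) and \( N^\tr_{[t]} = \omega \). Thus \( \langle \Delta(\tr), N_\tr \rangle \) is quasi-maximal.

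For \ref{thm:Urysohn-3} \( \Rightarrow \) \ref{thm:Urysohn-4}, write \( \Delta = \Delta_\omega \cup \Delta_1 \), where \( \Delta_\omega = \{ \delta \mid N_\delta = \omega \} \) and \( \Delta_1 = \{ \delta \mid N_\delta = 1 \} \). The levels in \( \Delta_1 \) carry the trivial group \( \Sym(1) \) and are invisible both to the domain \( S^{\LF} \) (every element vanishes on them) and to the action, so restriction to \( \Delta_\omega \) identifies the underlying permutation group with a wreath product over the \emph{maximal} skeleton \( \Delta_\omega \). The only delicate point is the topology at the coinitial end. If the trivial levels are not coinitial below \( \Delta_\omega \), the identification is a topological isomorphism onto \( \Wr^{\LF}_{\delta \in \Delta_\omega} \Sym(N_\delta) \), giving the first alternative of \ref{thm:Urysohn-4}. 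If instead a coinitial chain of trivial levels lies below all of \( \Delta_\omega \) — the uniformly discrete situation where \( \inf D > 0 \) is not attained — then, exactly as in the equivalence \ref{thm:homogeneousdiscrete-3} \( \Leftrightarrow \) \ref{thm:homogeneousdiscrete-7} of Theorem~\ref{thm:homogeneousdiscrete}, collapsing those levels replaces \( \tau \) by the finer topology \( \tau^* \) on Hall's \( \Wr^{\Fin}_{\delta \in \Delta_\omega} \Sym(N_\delta) \), giving the second alternative. In both cases \( \Delta_\omega \) is maximal, as required.

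Finally, for \ref{thm:Urysohn-4} \( \Rightarrow \) \ref{thm:Urysohn-1} I realise the maximal skeleton as a set \( \Delta' \subseteq \RR^+ \) of distances — with \( \inf \Delta' = 0 \) in the \( \LF \) case and \( \inf \Delta' > 0 \) in the \( \Fin \)-with-\( \tau^* \) case, as allowed by Theorems~\ref{thm:homogeneous} and~\ref{thm:homogeneousdiscrete} respectively — and let \( U \) be the corresponding homogeneous (resp.\ uniformly discrete) space, so that \( G \cong \Iso(U) \). It then remains to verify that \( U \) is genuinely Urysohn, i.e.\ ultrahomogeneous and universal for Polish ultrametric spaces with distance set contained in \( D = \Delta' \). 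Since the skeleton is maximal, at every level each ball splits into \( \omega \) mutually isometric sub-balls; a back-and-forth argument then extends any isometry between finite subspaces (coming from \( U \) itself, for ultrahomogeneity, or from an arbitrary space with distances in \( D \), for universality) to an isometry of, or into, \( U \). Hence \( U \) is the Polish ultrametric Urysohn space with distance set \( D \), closing the cycle. I expect the main obstacle to be twofold: the bookkeeping at the coinitial end in \ref{thm:Urysohn-3} \( \Rightarrow \) \ref{thm:Urysohn-4} that decides between the \( \LF \) and the \( \Fin \)-with-\( \tau^* \) presentation, and the verification in \ref{thm:Urysohn-4} \( \Rightarrow \) \ref{thm:Urysohn-1} that maximality upgrades homogeneity and wideness all the way to full universality and ultrahomogeneity.
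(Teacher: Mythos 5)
Your proposal is correct, and for the first three implications it coincides with the paper's proof: \ref{thm:Urysohn-1} \( \Rightarrow \) \ref{thm:Urysohn-2} is the same observation, \ref{thm:Urysohn-2} \( \Rightarrow \) \ref{thm:Urysohn-3} is exactly the paper's argument (run the machinery of Theorem~\ref{thm:homogeneous} through \( \func{F} \) and compute \( |C_t| = \omega \) at levels in \( D \) by wideness, \( |C_t| = 1 \) at levels in \( L \setminus D \) since the split value then exceeds the level), and your \ref{thm:Urysohn-3} \( \Rightarrow \) \ref{thm:Urysohn-4} dichotomy — whether \( \Delta_\omega \) is coinitial in \( \Delta \) — is equivalent to the paper's case split on the existence of \( \bar\delta \) with \( N_\delta = 1 \) for all \( \delta \leq \bar\delta \), with the same conclusion (\( \tau \) survives in the coinitial case, \( \tau^* \) on Hall's \( \Wr^{\Fin} \) appears otherwise, since fixing \( x|_{\bar\delta} \) fixes \( x \) entirely).

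Where you genuinely diverge is \ref{thm:Urysohn-4} \( \Rightarrow \) \ref{thm:Urysohn-1}. The paper never proves Urysohn-ness of a constructed space: it \emph{starts} from the Polish ultrametric Urysohn space \( U \) with distance set \( D = \Delta \) (whose existence and uniqueness are taken as known), pushes it through the already-established constructions of \ref{thm:Urysohn-2} \( \Rightarrow \) \ref{thm:Urysohn-3} \( \Rightarrow \) \ref{thm:Urysohn-4}, and checks that the maximal skeleton recovered is \( \Delta \) itself — in the \( \tau^* \) case by inserting an extra point \( 0 < r < \inf D \) into \( L \) to force the first case of the reduction. You instead build \( U = S^{\LF} \) (resp.\ \( S^{\Fin} \)) with the max-metric via Theorems~\ref{thm:homogeneous} and~\ref{thm:homogeneousdiscrete} and then verify directly that it is the \( D \)-Urysohn space. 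This route works — the one-point extension property holds because, given finitely many prescribed distances with minimum \( r_{i_0} \) attained at \( x_{i_0} \), one copies \( x_{i_0} \) above level \( r_{i_0} \) and uses the \( \omega \)-many values at level \( r_{i_0} \) to dodge the finitely many constraints, after which back-and-forth plus completeness yields ultrahomogeneity and universality — and it amounts to re-deriving the concrete Gao--Shao description of the ultrametric Urysohn space (cf.\ the paper's closing remark that Theorem~\ref{thm:Urysohn} reproves Theorems 5.12 and 7.3 of~\cite{GaoShao2011}). So your approach buys an explicit model of \( U \) at the cost of the back-and-forth verification you left as a sketch, whereas the paper's round-trip trick buys brevity at the cost of invoking the abstract existence of \( U \). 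One caution on your phrasing: it is the specific \( S^{\LF} \)-structure (the one-point extension property) that yields Urysohn-ness, not homogeneity plus wideness as such — a homogeneous wide space need not be Urysohn, which is precisely why the theorem equates the \emph{groups} in \ref{thm:Urysohn-1} and \ref{thm:Urysohn-2} rather than the spaces; your text refers to the constructed \( U \), so this is a matter of emphasis, not an error.
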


\begin{proof}
Since Polish ultrametric Urysohn spaces are, by definition, ultrahomogeneous and universal for Polish ultrametric spaces with the same distance set (or a subset thereof), they are both homogeneous and wide: this shows that~\ref{thm:Urysohn-1} \( \Rightarrow \) \ref{thm:Urysohn-2}. 

For \ref{thm:Urysohn-2} \( \Rightarrow \) \ref{thm:Urysohn-3}, we employ the same argument as in the proof of the implications~\ref{thm:homogeneous-1} \( \Rightarrow \) \ref{thm:homogeneous-2} and \ref{thm:homogeneous-2} \( \Rightarrow \) \ref{thm:homogeneous-3} in Theorem~\ref{thm:homogeneous}. 
Given a homogeneous wide Polish ultrametric space \( U \), we consider its distance set \( D \), and then pick any linear order \( D \subseteq L \subseteq \RR^+ \) satisfying Condition~\ref{condition2} from Section~\ref{subsec:F}.
By construction, the countable pruned \( L \)-tree \( \tr = \func{F}(U) = (T,{\leq_T}, \lev_T) \), which is such that \( \Aut(\tr) \cong \Iso(U) \), is homogeneous and such that for every \( t \in T \):
\begin{itemizenew}
\item 
if \( \lev_T(t) \in D \), then \( |C_t| = \omega \) because \( U \) is wide;
\item 
if \( \lev_T(t) \in L \setminus D \), then \( |C_t| = 1 \) by definition of \( \tr = \func{F}(U) \): indeed, if \( \lev_T(t') = \lev_T(t) \) for some \( t' \neq t \), then \( \spl{t,t'} > \lev_T(t) \) because \( \lev_T(t) \notin D \).
\end{itemizenew}
This shows that the countable linear skeleton \( \langle \Delta(\tr), N_{\tr} \rangle \) is quasi-minimal, and since \( \Aut(\tr) \cong \Wr^{\LF}_{\delta \in \Delta(\tr)} \Sym(N^{\tr}_\delta) \) we are done.

We now prove~\ref{thm:Urysohn-3} \( \Rightarrow \) \ref{thm:Urysohn-4}. Let \( \langle \Delta,N \rangle \) be a quasi-maximal countable linear skeleton. We distinguish two cases. 
If there is \( \bar\delta \in \Delta \) such that \( N_{\delta} = 1 \) for every \( \delta \leq \bar\delta \), we let \( \Delta' = \{ \bar \delta \} \cup \{ \delta \in \Delta \mid N_\delta = \omega \} \); otherwise, we let \( \Delta' = \{ \delta \in \Delta \mid N_\delta = \omega \} \). It is easy to verify that if we let \( N' \) be the restriction of \( N \) to \( \Delta' \), then \( \Wr^{\LF}_{\delta \in \Delta} \Sym(N_\delta) \cong \Wr^{\LF}_{\delta \in \Delta'} \Sym(N'_\delta) \). 
If we are in the second case, then \( \langle \Delta',N' \rangle \) is already maximal, and we are done. 
If instead we are in the first case, then we consider the maximal countable linear skeleton \( \langle \Delta'',N'' \rangle \) obtained by dropping \( \bar \delta \) from \( \Delta ' \), and observe that \( \Wr^{\LF}_{\delta \in \Delta'} \Sym(N'_\delta) = \Wr^{\Fin}_{\delta \in \Delta'} \Sym(N'_\delta) \cong \Wr^{\Fin}_{\delta \in \Delta''} \Sym(N''_\delta)\) because \( \bar \delta = \min \Delta'\) and in the middle group \( \tau \) and \( \tau^* \) coincide.

Finally, in order to prove \ref{thm:Urysohn-4} \( \Rightarrow \) \ref{thm:Urysohn-1} fix a maximal countable linear skeleton \( \langle \Delta,N \rangle \). We first consider the case where \( G \cong \Wr^{\LF}_{\delta \in \Delta} \Sym(N_\delta) \). Without loss of generality we can assume that \( \Delta \subseteq \RR^+ \) and \( \inf \Delta = 0 \) if \( \Delta \) has no minimum. Let \( U \) be the (unique, up to isometry) Polish ultrametric Urysohn space with distance set \( D = \Delta \), which in particular is homogeneous and wide. If we follow the constructions in the proof of \ref{thm:Urysohn-2}~\( \Rightarrow \)~\ref{thm:Urysohn-3} and \ref{thm:Urysohn-3}~\( \Rightarrow \)~\ref{thm:Urysohn-4} one after the other, 
we easily obtain that \( \Iso(U) \cong \Wr^{\LF}_{\delta \in \Delta} \Sym(N_\delta) \). 
Suppose now that \( G \cong \Wr^{\Fin}_{\delta \in \Delta} \Sym(N_\delta) \), where the latter is equipped with \( \tau^* \).
We can again assume that \( \Delta \subseteq \RR^+\), but this time we also ensure that \( \inf \Delta > 0 \). Let again \( U \) be the Polish ultrametric Urysohn space with distance set \( D = \Delta \). If we argue as before but adding some \( 0 < r < \inf D \) to the linear order \( L \) in the proof of \ref{thm:Urysohn-2}~\( \Rightarrow \)~\ref{thm:Urysohn-3}, then when passing through the construction in the proof of~\ref{thm:Urysohn-3} \( \Rightarrow \) \ref{thm:Urysohn-4} we will be in the first case (i.e.\ the one where \( \bar \delta \) exists), and hence \( \Iso(U) \cong \Wr^{\Fin}_{\delta \in \Delta} \Sym(N_\delta) \) by construction.
\end{proof}

\begin{remark} \label{rmk:Urysohn}
For the sake of brevity, in Theorem~\ref{thm:Urysohn} we skipped the analogues of items~\ref{thm:homogeneous-2}, \ref{thm:homogeneous-4}, \ref{thm:homogeneous-5}, and~\ref{thm:homogeneous-6} from both Theorem~\ref{thm:homogeneous} and Theorem~\ref{thm:homogeneousdiscrete}; the interested reader can guess them by inspecting the proof above. Also, there are several variants of item~\ref{thm:Urysohn-3} in Theorem~\ref{thm:Urysohn} that can be obtained by further restricting the condition of quasi-minimality on the skeleton \( \langle \Delta,N \rangle\). For example, one could ask that \( N_\delta = \omega \) for every \( \delta \in \Delta \) which is not the minimum of \( \Delta \), and \( N_{\min \Delta} = 1 \) (alternatively, but still equivalently: \( N_{\min \Delta} \in \{ 1,\omega \}\)) if such a minimum exists. 
\end{remark}

We conclude the section by noticing that the characterization provided in Theorem~\ref{thm:Urysohn} and the constructions used in its proof easily allow to reprove various known results concerning \( \Iso(U) \) for \( U \) a Polish ultrametric Urysohn space, including Theorems 5.12 and 7.3 from~\cite{GaoShao2011}.

\subsection{Exact spaces}\label{sec:exactdistances}

If a Polish ultrametric space \( U \) is not homogeneous, it makes sense to consider its homogeneous components. 
These are the equivalence classes induced by the natural action of \( \Iso(U) \) on \( U \): the \markdef{homogeneous component} \( [x] \) of \( x \in U \) is the set \( \{ y \in U \mid \varphi(x) = y \text{ for some } \varphi \in \Iso(U) \} \).

\begin{defin}
Two homogeneous components \( [x] \) and \( [y] \) of a Polish ultrametric space have \markdef{exact distance} if their distance is realized by points, i.e., if the infimum in \( d([x],[y]) = \inf \{ d(x',y') \mid x' \in [x] , y' \in [y] \} \) is attained.
A Polish ultrametric space is \markdef{exact} if every two homogeneous components of the space have exact distance.
\end{defin}

Not all Polish ultrametric spaces are exact.
\details{
\begin{ex} \label{ex:notexact}
Recall the tree \( T \) from Example~\ref{ex:counterexampleDelta} and equip
\begin{multline*}
U = \Lev_T(0) 
= \{ t_\alpha^0 \mid \alpha \in \Can \text{ is eventually constant}\} \\ \cup \{ t_\alpha^\diamond  \mid \alpha \in \Can \text{ is eventually } 1\}
\end{multline*} 
with the metric given by
\begin{equation} \label{eq:exactdistancecounterex}
d(t^0_\alpha,t^0_{\alpha'}) = d(t^0_\alpha,t^\diamond_{\alpha'}) = d(t^\diamond_\alpha,t^\diamond_{\alpha'}) = 1+2^{-n}
\end{equation}
if \( \alpha \neq \alpha'\) and \( n \in \omega \) is smallest such that \( \alpha(n) \neq \alpha'(n) \), while
\[
d(t^0_\alpha,t^\diamond_\alpha) = 1 
\]
if \( \alpha \) is eventually constant with value \( 1 \). 

Then $(U,d)$ is a uniformly discrete Polish ultrametric space and, as in Example~\ref{ex:counterexampleDelta}, it has two homogeneous components, namely, \( [t^0_{0^{\NN}}] \) and \( [t^0_{1^{\NN}}] = [t^\diamond_{1^{\NN}}] \), which have distance \(1\).
However, points coming from different components have distance larger than \( 1 \) by~\eqref{eq:exactdistancecounterex}.
\end{ex}

\begin{lemma} \label{lem:exactdistances}
For every metric space \( U \)
the following are equivalent:
\begin{enumerate-(1)}
\item \label{lem:exactdistances-1}
\( U \) is exact;
\item \label{lem:exactdistances-2}
for every \( x, x_0 \in U \) there exists \( \psi \in \Iso(U) \) such that 
\[
d(\psi(x), x_0) = \inf  \{ d(\varphi(x),  x_0) \mid \varphi \in \Iso(U) \}.
\]
\end{enumerate-(1)}
\end{lemma}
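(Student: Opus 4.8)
The plan is to reduce the whole equivalence to the single identity
\[
\inf \{ d(\varphi(x), x_0) \mid \varphi \in \Iso(U) \} = d([x],[x_0]),
\]
valid for all \( x,x_0 \in U \), and then observe that condition~\ref{lem:exactdistances-2} is precisely the assertion that this common value is \emph{attained} with the second coordinate fixed to be \( x_0 \). First I would establish the displayed identity. Since \( \{ \varphi(x) \mid \varphi \in \Iso(U) \} = [x] \), the left-hand side equals \( \inf \{ d(x',x_0) \mid x' \in [x] \} \). The inequality \( d([x],[x_0]) \leq \inf \{ d(x',x_0) \mid x' \in [x] \} \) is immediate, because \( x_0 \in [x_0] \). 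For the reverse, given any \( x' \in [x] \) and \( y' \in [x_0] \), pick \( \chi \in \Iso(U) \) with \( \chi(x_0) = y' \); then \( \chi^{-1}(y') = x_0 \), and \( \chi^{-1}(x') \) lies in the \( \Iso(U) \)-orbit of \( x' \), i.e.\ \( \chi^{-1}(x') \in [x'] = [x] \), so
\[
d(x',y') = d(\chi^{-1}(x'),x_0) \geq \inf \{ d(x'',x_0) \mid x'' \in [x] \}.
\]
Taking the infimum over \( x',y' \) yields \( d([x],[x_0]) \geq \inf \{ d(x',x_0) \mid x' \in [x] \} \), completing the identity.

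With this identity in hand, condition~\ref{lem:exactdistances-2} reads: for all \( x,x_0 \in U \) there is \( x' \in [x] \) with \( d(x',x_0) = d([x],[x_0]) \); that is, the component distance is realized by a pair whose second member is exactly the prescribed point \( x_0 \). I would record this reformulation explicitly, since both implications then become short transport arguments.

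For \ref{lem:exactdistances-2} \( \Rightarrow \) \ref{lem:exactdistances-1}, I would take two arbitrary homogeneous components, pick representatives \( x \) and \( x_0 \), and feed them into~\ref{lem:exactdistances-2}: this hands us \( \psi \in \Iso(U) \) with \( d(\psi(x),x_0) = d([x],[x_0]) \), and the pair \( (\psi(x),x_0) \in [x] \times [x_0] \) realizes the component distance, so the two components have exact distance. For \ref{lem:exactdistances-1} \( \Rightarrow \) \ref{lem:exactdistances-2}, exactness supplies \( x' \in [x] \) and \( y' \in [x_0] \) with \( d(x',y') = d([x],[x_0]) \); choosing \( \chi \in \Iso(U) \) with \( \chi(x_0) = y' \) and setting \( x'' = \chi^{-1}(x') \in [x] \) gives \( d(x'',x_0) = d(x',y') = d([x],[x_0]) \), and any \( \psi \in \Iso(U) \) with \( \psi(x) = x'' \) witnesses~\ref{lem:exactdistances-2}.

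The one delicate point — and the step I would handle with care — is the transport argument that moves a distance-realizing pair so that its second coordinate becomes the fixed point \( x_0 \) (and, symmetrically, that the realizing witness on the \( [x] \)-side can be taken in the orbit of \( x \)). This rests only on the transitivity of \( \Iso(U) \) on each homogeneous component, together with the invariance of both the components (orbits) and the metric \( d \) under isometries; no completeness or separability of \( U \) is needed, consistent with the statement being for an arbitrary metric space.
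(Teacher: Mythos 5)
Your proof is correct and takes essentially the same route as the paper's: both hinge on the identification \( \{ \varphi(x) \mid \varphi \in \Iso(U) \} = [x] \) together with the transport trick of composing with an isometry carrying \( x_0 \) to the second member of a (near-)realizing pair, which yields \( \inf \{ d(\varphi(x),x_0) \mid \varphi \in \Iso(U) \} = d([x],[x_0]) \). The only difference is organizational: you isolate this identity upfront, proving it directly and unconditionally, whereas the paper records only the trivial equality with \( d([x],x_0) \) as a preliminary remark and obtains the nontrivial half \( d([x],x_0) \leq d([x],[x_0]) \) by the very same transport argument, phrased as a proof by contradiction inside the implication from condition (2) to condition (1).
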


\begin{proof}
Notice that \( \inf  \{ d(\varphi(x),  x_0) \mid \varphi \in \Iso(U) \} = d([x],x_0) \).

\ref{lem:exactdistances-1} \( \Rightarrow \) \ref{lem:exactdistances-2}.
Let \( r = \inf  \{ d(\varphi(x),  x_0) \mid \varphi \in \Iso(U) \} = d([x],x_0) \). Then it is enough to show that \( d(\psi(x),x_0) \leq r \) for some \( \psi \in \Iso(U) \). 
By \ref{lem:exactdistances-1}, let \( \bar x \in [x] \) and \( \bar x_0 \in [x_0] \) be such that \( d(\bar x,\bar x_0) = d([x],[x_0]) \). Let \( \varphi \in \Iso(U) \) be such that \( \varphi(\bar x_0)  = x_0 \). 
Since \( \varphi(\bar x) \in [x] \), there is \( \psi \in \Iso(U)\) such that \( \psi(x) = \varphi(\bar x) \).
Then \( d(\psi(x),x_0) = d(\varphi(\bar x),\varphi(\bar x_0)) = d(\bar x,\bar x_0) = d([x],[x_0]) \leq d([x],x_0) = r \).

\ref{lem:exactdistances-2} \( \Rightarrow \) \ref{lem:exactdistances-1}.
Given \( x,y \in U \), fix any \( x_0 \in [y] \).
By~\ref{lem:exactdistances-2}, there is \( \psi \in \Iso(U) \) such that \( d(\psi(x),x_0) = d([x],x_0) \). We claim that \( d([x],x_0) = d([x],[x_0]) \), which implies that \( \psi(x) \) and \( x_0 \) realize the distance between \( [x] \) and \( [y] \).
Clearly, \( d([x],[x_0]) \leq d([x],x_0) \). Suppose towards a contradiction that such inequality is strict, and let \( x' \in [x] \) and \( x'_0 \in [x_0] \) be such that \( d(x',x'_0) < d([x],x_0) \). Let \( \varphi \in \Iso(U) \) be such that \( \varphi(x'_0) = x_0 \). Then \( d(\varphi(x'),x_0) < d([x],x_0) \), which is a contradiction because \( \varphi(x') \in [x] \).
\end{proof}

Condition~\ref{lem:exactdistances-2} can be reformulated as: 

Indeed, it is not difficult to show that for every metric space \( U \), \( U \) is exact if and only if the following condition holds:
\begin{quote}
For every \( r \in \RR^+ \) and every \( x ,x_0 \in U \), if for all \( r'> r\) there is \( \varphi \in \Iso(U) \) such that \( d(\varphi(x),  x_0) \leq r' \), then there is \( \psi \in \Iso(U) \) such that \( d(\psi(x), x_0) \leq r \).
\end{quote}
}
As Lemma \ref{lem:exactdistancesprelim} shows, the counterpart of exactness for an \( L \)-tree \( \tr = (T, {\leq_T} , \lev_T) \) is the following:

\begin{enumerate}[label={(\( \star\))}, leftmargin=2pc]
\item \label{propertystar}
For every proper upward closed chain \( C \subseteq T \) and every \( t \in T  \) such that \( [t] \leq_{\Delta(T)} [C] \) there is \( \bar t \in [t] \) such that \( \bar t \leq_T C \). 
\end{enumerate}

\begin{remark} \label{rmk:propertystar}
Property~\ref{propertystar} needs to be checked only when \( C \) has no minimum; indeed if \( C \) has a minimum \( t' \) then, since \( [t] \leq_{\Delta(T)} [t'] \) by hypothesis, there is \( f \in \Aut(T) \) such that \( \bar t= f(t) \leq_T t' \leq_T C \).
\end{remark}

\begin{lemma} \label{lem:exactdistancesprelim}
Let \( \func{U}\), \( \func{F} \), and \( \func{G} \) be the functors from Section~\ref{sec:functors}. Let \( U \in \cat{U}_D \) be an exact Polish ultrametric space, and let \( \tr \in \cat{T}^{\mathrm{pr}}_L \) be a countable pruned \( L \)-tree satisfying property~\ref{propertystar}. Then:
\begin{enumerate-(i)}
\item \label{lem:exactdistancesprelim-i}
if \( \inf D > 0 \), so that \( \func{U} \) can be defined, then \( \func{U}(U) \) is exact;
\item \label{lem:exactdistancesprelim-ii}
\( \func{F}(U) \) satisfies property~\ref{propertystar};
\item \label{lem:exactdistancesprelim-iii}
if \( L \) has no minimum, then \( \func{G}(\tr) \) is exact.
\end{enumerate-(i)}
\end{lemma}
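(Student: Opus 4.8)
The plan is to prove the three items separately, in each case reducing exactness to the combinatorics exposed by the relevant functor, and using in the last item the fact that property~\ref{propertystar} is precisely the tree-side shadow of exactness.

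For~\ref{lem:exactdistancesprelim-i} I would first record, via Theorem~\ref{thm:toperfectlocallycompact}, that every isometry of $\func{U}(U)=U\times\bar U$ has the form $\func{U}(\psi)\colon(x,y)\mapsto(\psi(x),y)$ with $\psi\in\Iso(U)$; hence the homogeneous component of $(x,y)$ is $[x]\times\{y\}$, where $[x]$ denotes the component of $x$ in $U$. Given two components $[x]\times\{y\}$ and $[x']\times\{y'\}$, I would compute their distance straight from the definition of $\hat d$. If $[x]\neq[x']$, then any two representatives differ in the first coordinate, so the distance equals $d([x],[x'])$, which is attained because $U$ is exact. If $[x]=[x']$ and $y\neq y'$, then taking equal first coordinates gives $\hat d=\bar d(y,y')$, while distinct first coordinates give values $\geq\inf D>r_0\geq\bar d(y,y')$; so the distance is $\bar d(y,y')$, attained. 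In both cases the infimum is realized, so $\func{U}(U)$ is exact.

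For~\ref{lem:exactdistancesprelim-ii}, let $C$ be a proper upward closed chain in $\tr=\func{F}(U)$ and let $t=(B_d(x,\ell_0),\ell_0)$ with $[t]\leq_{\Delta(T)}[C]$; write each $c\in C$ as $(B_d(x_c,\ell_c),\ell_c)$. Properness yields a point $x_0\in\bigcap_{c\in C}B_d(x_c,\ell_c)$, since the center of any node strictly below $C$ lies in every ball of $C$. The hypothesis $[t]\leq_{\Delta(T)}[c]$ gives $t|_{\ell_c}\sim c$, and the description of $\sim$ for $\func{F}(U)$ recorded in~\eqref{eq:simforultra} (every automorphism is some $\func{F}(\psi)$) yields an isometry moving $x$ to within $\ell_c$ of $x_c$, so $d([x],x_c)<\ell_c$ for every $c$; combined with $d(x_0,x_c)<\ell_c$ the ultrametric inequality gives $d([x],x_0)<\ell_c$ for all $c$. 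Unwinding exactness of $U$ (the distance between the components $[x]$ and $[x_0]$ is realized, whence there is $\psi\in\Iso(U)$ with $d(\psi(x),x_0)=d([x],x_0)$), I would set $\bar t=(B_d(\psi(x),\ell_0),\ell_0)$. Then $\bar t\sim t$ via $\func{F}(\psi)$, and for each $c$ the ultrametric inequality gives $d(\psi(x),x_c)<\ell_c$, i.e.\ $\psi(x)\in B_d(x_c,\ell_c)$; together with $\ell_0\leq\ell_c$ (from Definition~\ref{def:L-tree}\ref{def:L-tree-1} applied in $\Delta(\tr)$) this gives $\bar t\leq_T c$. Hence $\bar t\leq_T C$, proving~\ref{propertystar}.

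For~\ref{lem:exactdistancesprelim-iii}, by Corollary~\ref{cor:treestospaces} the homogeneous components of $\func{G}(\tr)$ are exactly the $\sim$-classes in $T$, and a short orbit argument reduces exactness to showing that, for fixed $t,t_0\in T$ of levels $\ell_0,m$, the quantity $\inf_{s\in[t]}d_T(s,t_0)$ is attained. Inspecting the two clauses defining $d_T$, every such distance is $\geq\max(\ell_0,m)^-$, and the value $\max(\ell_0,m)^-$ is attained precisely when some $s\in[t]$ is $\leq_T$-comparable to $t_0$, equivalently when $t|_{\max(\ell_0,m)}\sim t_0|_{\max(\ell_0,m)}$. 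If such a comparable $s$ exists, the infimum is this value (or $0$ when $t_0\in[t]$) and is attained. Otherwise all $s\in[t]$ are incomparable to $t_0$, so $d_T(s,t_0)=\spl{s,t_0}^+$; since $j\mapsto j^+$ is an order embedding, the infimum is attained iff $\{\spl{s,t_0}\mid s\in[t]\}$ has a least element. To produce one I would put $\mathcal L=\{\ell\mid t|_\ell\sim t_0|_\ell\}$, an up-set with $\inf\mathcal L=j^{*}:=\inf_s\spl{s,t_0}$ and (in this case) $j^{*}\notin\mathcal L$, and consider the proper upward closed chain $C=\{t_0|_\ell\mid\ell\in\mathcal L\}$ (proper because $t_0<_T C$), noting $[t]\leq_{\Delta(T)}[C]$ since $[t_0|_\ell]=[t|_\ell]\geq_{\Delta(T)}[t]$. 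Property~\ref{propertystar} then yields $\bar t\in[t]$ with $\bar t\leq_T C$, i.e.\ $\bar t|_\ell=t_0|_\ell$ for all $\ell\in\mathcal L$, which forces $\spl{\bar t,t_0}=j^{*}$; thus the least split, and the minimal distance $(j^{*})^+$, is attained.

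I expect the main obstacle to be item~\ref{lem:exactdistancesprelim-iii}, and specifically the bookkeeping that translates property~\ref{propertystar} into attainment of the minimal orbit distance: one must check that the infimum of $d_T(\,\cdot\,,t_0)$ over $[t]$ is genuinely controlled by the single ordinal-like quantity $\inf_s\spl{s,t_0}$ (handling the comparable case, the interleaving order of the values $(\cdot)^-$ and $(\cdot)^+$, and the fact that $\inf\{j^+\}$ is attained exactly when $\min\{j\}$ exists), and that the chain $C$ built from the branch of $t_0$ is proper with $[t]\leq_{\Delta(T)}[C]$ so that~\ref{propertystar} applies. I anticipate the hypothesis that $L$ has no minimum enters here, ensuring that $C$ can be taken without a least element, so that it is the substantive content of~\ref{propertystar}, rather than the trivial case noted in Remark~\ref{rmk:propertystar}, that is invoked, and that no degenerate bottom level interferes with the split computation; confirming its precise role is the one point I would verify with care. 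Items~\ref{lem:exactdistancesprelim-i} and~\ref{lem:exactdistancesprelim-ii}, by contrast, should be routine once the component structure and the description of $\sim$ are in hand.
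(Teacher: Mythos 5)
Your proposal is correct and takes essentially the same route as the paper's own proof: item~\ref{lem:exactdistancesprelim-i} is the identical component computation, item~\ref{lem:exactdistancesprelim-ii} uses the same ingredients (a point of the witnessing ball lying in all balls of \( C \), the equivalence~\eqref{eq:simforultra}, and exactness to get one isometry working at all levels simultaneously --- your reformulation via attainment of \( \inf_\varphi d(\varphi(x),x_0) \) is precisely the composition \( \psi_1 \circ \psi_0 \) that the paper performs inline), and item~\ref{lem:exactdistancesprelim-iii} hinges on the very same chain \( C = \{ t_0|_\ell \mid t|_\ell \sim t_0|_\ell \} \), with property~\ref{propertystar} producing \( \bar t \in [t] \) whose split with \( t_0 \) is \( \max(L \setminus \mathcal{L}) \), realizing the distance \( \ell^+ \). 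The only cosmetic differences are that you package the comparable case through an orbit reduction instead of the paper's direct case split on \( [t] \leq_{\Delta(T)} [t'] \), and that your identification \( \inf \mathcal{L} = j^* \) is dispensable, since the construction delivers the minimum split directly.
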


\begin{proof}
\ref{lem:exactdistancesprelim-i}
Suppose that \( \inf D > 0 \), where \( D \) is the distance set of \( U \). 
Recall from Section~\ref{subsec:U} that \( \func{U}(U) = (U \times \bar U, \hat d ) \), where \( (\bar U, \bar d) \) is a strongly rigid perfect compact Polish ultrametric space whose distance set is contained in \( (0;\inf D) \). 
Recall also that, by the proof of Theorem~\ref{thm:toperfectlocallycompact}, all isometries in \( \Iso(\func{U}(U)) \) are of the form \( (x,y) \mapsto (\psi(x),y) \), for some \( \psi \in \Iso(U) \).
Therefore, the homogeneous component of an arbitrary \( (x,y) \in \func{U}(U) \) is \( [(x,y)] = [x] \times \{ y \} \), where \( [x] \) is the homogeneous component of \( x \) in \( U \). 
Suppose that \( (x,y),(x',y') \in \func{U}(U) \) are such that \( [(x,y)] \neq [(x',y')] \). If \( [x] = [x'] \), necessarily \( y \neq y' \), and thus \( \hat d([(x,y)],[(x',y')]) = \hat d ((x,y),(x,y')) \). If instead \( [x] \neq [x'] \), then \( \hat{d}([(x,y)],[(x',y')]) = d([x],[x'])  \) because \( [x] \cap [x'] = \emptyset \).
By the hypothesis on \( U \), there are \( z \in [x] \) and \( z' \in [x'] \) such that \( d(z,z') = d([x],[x']) \). Then \( z \neq z'\) because \( d([x],[x']) \geq \inf D > 0 \), and hence \( \hat d ((z,y),(z',y')) = d(z,z')  = \hat{d}([(x,y)],[(x',y')]) \). Since \( (z,y) \in [(x,y)] \) and \( (z',y') \in [(x',y')] \), we are done.

\ref{lem:exactdistancesprelim-ii}
Let \( \tr = (T,{\leq_T},\lev_T) \) be the \( L \)-tree \( \func{F}(U) \), for the appropriate \( D \subseteq L \subseteq \RR^+ \), and recall 
the definition of \( \func{F}(\psi) \) from Section~\ref{subsec:F}.
Let \( \hat t = (\hat B,\hat \ell) \) witness that the upward closed chain \( C \subseteq T \) is proper, and let \( t = (B,\ell) \) be such that \( [t] \leq_{\Delta(T)} [C] \): we want to show that there is \( \bar t \in [t] \) such that \( \bar t \leq_T C \). 
Let \( \hat x \in \hat B \) and \( x \in B \).
By the choice of \( \hat t \) and \( \hat x \), every \( t' \in C \) is of the form \( t' = (B_d(\hat x,\ell'),\ell') \), where \( \ell' = \lev_T(t') \).
Let \( L' = \{ \lev_T(t') \mid t' \in C \} \), and notice that \( \hat \ell < \ell' \) and \( \ell \leq \ell' \) for all \( \ell' \in L' \).

By the equivalence~\eqref{eq:simforultra}, the fact that \( [t] \leq_{\Delta(T)} [C] \) translates to the fact that for every \( \ell' \in L' \) there is \( \varphi \in \Iso(U) \) such that \( d(\varphi(x), \hat x) < \ell' \).
We claim that there is \( \psi \in \Iso(U) \) such that \( d(\psi(x), \hat x) < \ell' \) for all \( \ell' \in L' \). This is obvious if \( L' \) has a minimum.
If this is not the case, then we let \( r = \inf L' \), so that \( r < \ell' \) for every \( \ell' \in L' \), and we notice that by the observation above, \( d([x],[\hat x]) \leq r \). Since by hypothesis \( [x] \) and \( [\hat x] \) have exact distance, there are \( y \in [x] \) and \( \hat y \in [\hat x] \) such that \( d(y, \hat y) \leq r \). Let \( \psi_0,\psi_1 \in \Iso(U) \) be such that \( \psi_0( x) = y \) and \( \psi_1(\hat y) = \hat x \), and let \( \psi = \psi_1 \circ \psi_0 \). Then 
\[ 
d(\psi(x),\hat x) = d(\psi_0(x),\psi_1^{-1}(\hat x)) = d(y,\hat y) \leq r 
\]
and we are done.

Finally, we observe that for \( \psi \in \Iso(U) \) as above, \( \bar t = \func{F}(\psi)(t) \) is such that \( \bar t \leq_T C \). Indeed, for every \( t' \in C \) with \( \lev_T(t') = \ell' \in L' \) we have
\[ 
\bar t = (B_d(\psi(x),\ell),\ell) \leq_T (B_d(\psi(x),\ell'),\ell') = (B_d(\hat x,\ell'),\ell') = t'.
\]

\ref{lem:exactdistancesprelim-iii}
Let \( U = \func{G}(\tr) \), and recall from Section~\ref{subsec:G} that
\( U \) is of the form \( (T,d_T) \) and \( \Iso(U) = \Aut(T) \) (Corollary~\ref{cor:treestospaces}).
Therefore, 
the homogeneous components of \( U \) coincide with the classes \( [t] \in \Delta(T) \). 
Let \( [t],[t'] \in \Delta(T) \) be distinct.
We borrow from Section~\ref{subsec:G} the notation \( \ell^+ \) and \( \ell^- \), for \( \ell \in L \). 
If \( [t] <_{\Delta(T)} [t'] \), then \( d_T([t],[t']) \geq \ell^- \), where \( \ell = \lev_{\Delta(T)}([t']) \). 
On the other hand, \( t|_{\ell} \in [t'] \) because \( [t] \leq_{\Delta(T)} [t'] \), and by construction \( d_T(t,t|_{\ell}) = \ell^- \). 
This implies that  \( d_T([t],[t']) = \ell^-\), and we already know that such distance is realized by the points \( t \in [t] \) and \( t|_{\ell} \in [t'] \). 
The case where \( [t'] <_{\Delta(T)} [t] \) is symmetric, so let us assume that \( [t] \) and \( [t'] \) are \( \leq_{\Delta(T)} \)-incomparable.
Let \( L' = \{ \ell \in L \mid t'|_\ell \sim t|_\ell \} \) and \( C = \{ t'|_\ell \mid \ell \in L' \} \). Then \( C \subseteq T \) is an upward closed chain, and by case assumption \( t' \) witnesses that \( C \) is proper. Moreover, \( [t] \leq_{\Delta(T)} [C] \) by choice of \( L' \). By property~\ref{propertystar}, there is \( \varphi \in \Aut(T) = \Iso(U) \) such that \( \bar t = \varphi(t) \leq_T C \).
By case assumption, \( t' \) and \( \bar t \) are \( \leq_T \)-incomparable; let \( \ell = \spl{t',\bar t} \), so that \( d(t',\bar t) = \ell^+ \). By choice of \( L' \) and \( \bar t = \varphi(t) \leq_T C \), we have that \( \ell = \max (L\setminus L')  \), and there are no \( y \in [t] \) and \( y' \in [t'] \) such that \( y|_\ell = y'|_\ell \). Therefore \( d([t],[t']) = \ell^+ \), and such distance is realized by the points \( \bar t \) and \( t' \).
\end{proof}




\begin{theorem} \label{thm:exactdistancesnew} 
For every topological group \( G \), the following are equivalent:
\begin{enumerate-(1)}
\item \label{thm:exactdistancesnew-1}
\( G \cong \Iso(U) \) for some exact Polish ultrametric space \( U \);
\item \label{thm:exactdistancesnew-1'}
\( G \cong \Iso(U) \) for some exact perfect locally compact Polish ultrametric space \( U \);
\item \label{thm:exactdistancesnew-1''}
\( G \cong \Iso(U) \) for some exact uniformly discrete Polish ultrametric space \( U \);
\item \label{thm:exactdistancesnew-2}
\( G \cong \Aut(\tr) \) for some countable pruned \( L \)-tree \( \tr \) with property~\ref{propertystar} and some linear order \( L \);
\item \label{thm:exactdistancesnew-2'}
\( G \cong \Aut(\tr) \) for some countable pruned special \( L \)-tree \( \tr \) with property~\ref{propertystar} and some linear order \( L \) with a minimum;
\item \label{thm:exactdistancesnew-2''}
\( G \cong \Aut(\tr) \) for some countable pruned special \( \QQ \)-tree \( \tr \) with property~\ref{propertystar}; 
\item \label{thm:exactdistancesnew-3}
\( G \cong \Wr^{\LF}_{\delta \in \Delta} \Sym(N_\delta) \) for some countable treeable skeleton \( \langle \Delta,N \rangle \);
\item \label{thm:exactdistancesnew-3'}
\( G \cong \Wr^{S}_{\delta \in \Delta} \Sym(N_\delta) \) for some countable treeable skeleton \( \langle \Delta,N \rangle \) and some approximately homogeneous closed separable domain \( S \subseteq \prod_{\delta \in \Delta} N_\delta \);
\item \label{thm:exactdistancesnew-3''}
\( G \cong \Wr^{\Sloc}_{\delta \in \Delta} \Sym(N_\delta) \) for some countable treeable skeleton \( \langle \Delta,N \rangle \) and some countable locally homogeneous full family of local domains \( \Sloc \subseteq \Sloc^{\Max} \);
\item \label{thm:exactdistancesnew-3'''}
\( G \cong \Wr^{\Sloc,\bpi}_{\delta \in \Delta} \Sym(N_\delta) \) for some countable treeable skeleton \( \langle \Delta,N \rangle \) and some countable locally homogeneous full system of projections \( \langle \Sloc,\bpi \rangle\) such that \( \Sloc \subseteq \Sloc^{\Max} \).
\end{enumerate-(1)}
\end{theorem}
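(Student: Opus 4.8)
The plan is to prove the many equivalences by a web of implications organized around the three groups of conditions: the spaces \ref{thm:exactdistancesnew-1}--\ref{thm:exactdistancesnew-1''}, the trees \ref{thm:exactdistancesnew-2}--\ref{thm:exactdistancesnew-2''}, and the wreath products \ref{thm:exactdistancesnew-3}--\ref{thm:exactdistancesnew-3'''}. Several implications are immediate because one condition is a special case of another: \ref{thm:exactdistancesnew-1'} and \ref{thm:exactdistancesnew-1''} each imply \ref{thm:exactdistancesnew-1}; \ref{thm:exactdistancesnew-2'} and \ref{thm:exactdistancesnew-2''} each imply \ref{thm:exactdistancesnew-2}; \ref{thm:exactdistancesnew-3} implies both \ref{thm:exactdistancesnew-3'} (take \( S=S^{\LF} \), which is closed, separable, and approximately homogeneous since \( \Wr^{\LF}_{\delta\in\Delta}\Sym(N_\delta) \) is transitive) and \ref{thm:exactdistancesnew-3''} (pass to \( \Sloc^{\LF} \), a countable, full, locally homogeneous family of local domains with \( \Sloc^{\LF}\subseteq\Sloc^{\Max} \) because \( \LF\subseteq\Max \), using Proposition~\ref{prop:closedvsfull}); and \ref{thm:exactdistancesnew-3''} implies \ref{thm:exactdistancesnew-3'''} by reading a family of local domains as a system of projections with trivial \( \bpi \). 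It therefore suffices to close the diagram with a handful of substantive implications.

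For the functorial core I would prove \ref{thm:exactdistancesnew-1}\,\(\Rightarrow\)\,\ref{thm:exactdistancesnew-2}\,\(\Rightarrow\)\,\ref{thm:exactdistancesnew-1''}\,\(\Rightarrow\)\,\ref{thm:exactdistancesnew-1'}, together with \ref{thm:exactdistancesnew-1}\,\(\Rightarrow\)\,\ref{thm:exactdistancesnew-2''} and \ref{thm:exactdistancesnew-1''}\,\(\Rightarrow\)\,\ref{thm:exactdistancesnew-2'}, which (with the trivial implications) collapses all of \ref{thm:exactdistancesnew-1}--\ref{thm:exactdistancesnew-2''} into a single equivalence class. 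For \ref{thm:exactdistancesnew-1}\,\(\Rightarrow\)\,\ref{thm:exactdistancesnew-2} I apply \( \func{F} \) (choosing \( L \) as in Condition~\ref{condition2}): by Theorem~\ref{thm:spacestotrees} and Corollary~\ref{cor:spacestotrees} the tree \( \tr=\func{F}(U) \) is a countable pruned \( L \)-tree with \( \Aut(\tr)\cong\Iso(U) \), and it satisfies property~\ref{propertystar} by Lemma~\ref{lem:exactdistancesprelim}\ref{lem:exactdistancesprelim-ii}. For \ref{thm:exactdistancesnew-2}\,\(\Rightarrow\)\,\ref{thm:exactdistancesnew-1''} I apply \( \func{G} \), first replacing \( L \) by \( \omega^*+L \) (via the full embedding \( \cat{T}^{\mathrm{pr}}_L\to\cat{T}^{\mathrm{pr}}_{\omega^*+L} \) recorded after Theorem~\ref{thm:treestospaces}, which preserves both \( \Aut \) and property~\ref{propertystar}) so that \( L \) acquires no minimum; then \( \func{G}(\tr) \) is uniformly discrete with \( \Iso(\func{G}(\tr))=\Aut(\tr) \) (Corollary~\ref{cor:treestospaces}) and is exact by Lemma~\ref{lem:exactdistancesprelim}\ref{lem:exactdistancesprelim-iii}. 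For \ref{thm:exactdistancesnew-1''}\,\(\Rightarrow\)\,\ref{thm:exactdistancesnew-1'} I apply \( \func{U} \) (legitimate since \( \inf D>0 \)): Corollary~\ref{cor:toperfectlocallycompact} preserves the isometry group, \( \func{U}(U) \) is perfect locally compact, and exactness is preserved by Lemma~\ref{lem:exactdistancesprelim}\ref{lem:exactdistancesprelim-i}. Finally \ref{thm:exactdistancesnew-1}\,\(\Rightarrow\)\,\ref{thm:exactdistancesnew-2''} uses \( \func{F}_{\QQ} \) from Corollary~\ref{cor:spacestotrees2} (a special \( \QQ \)-tree) and \ref{thm:exactdistancesnew-1''}\,\(\Rightarrow\)\,\ref{thm:exactdistancesnew-2'} uses the first item of Lemma~\ref{lem:spacestotreesspecial} (a special \( L \)-tree with \( \min L=\inf D \)); in both cases property~\ref{propertystar} again comes from Lemma~\ref{lem:exactdistancesprelim}\ref{lem:exactdistancesprelim-ii}.

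It then remains to bridge the trees and the wreath products. For \ref{thm:exactdistancesnew-2''}\,\(\Rightarrow\)\,\ref{thm:exactdistancesnew-3} I use that a \emph{special} tree makes \( \Delta(\tr) \) a genuine pruned \( L \)-tree, so \( \langle\Delta(\tr),N_\tr\rangle \) is a countable treeable skeleton; then, exactly as in the proof of \ref{thm:homogeneous-2}\,\(\Rightarrow\)\,\ref{thm:homogeneous-3} of Theorem~\ref{thm:homogeneous} but now recursing over a coinitial sequence of \( L \) and over all nodes of \( \Delta(\tr) \) at each level, I build from Lemma~\ref{lem:labelsfinitesupport} an order-isomorphism \( g\colon T\to P \) onto the canonical partial order of \( \Wr^{\LF}_{\delta\in\Delta(\tr)}\Sym(N^\tr_\delta) \) with \( g(t)\in\Sloc^{\LF}_{[t]} \), and conclude by Lemma~\ref{lemma:g}; property~\ref{propertystar} is precisely what guarantees that the successive labelings fuse into a single \emph{surjective} \( g \). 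In the reverse direction \ref{thm:exactdistancesnew-3'''}\,\(\Rightarrow\)\,\ref{thm:exactdistancesnew-2} (which also covers \ref{thm:exactdistancesnew-3''}): by Lemma~\ref{lem:nominandmax} I may assume the treeing order \( L \) has neither minimum nor maximum; Lemma~\ref{lem:generalcase} (using \( \Sloc\subseteq\Sloc^{\Max} \) and that \( L \) has no maximum) turns \( P \) into a pruned \( L \)-tree with \( \lev_P \) constant on each \( \Sloc_\delta \); and Lemma~\ref{lemma:fromwreathtotrees} (using that \( L \) has no minimum) produces a countable pruned \( L' \)-tree \( \tr_P \) with \( \Aut(\tr_P)\cong\Wr^{\Sloc,\bpi}_{\delta\in\Delta}\Sym(N_\delta) \). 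This closes the loop once one checks that \( \tr_P \) has property~\ref{propertystar}, which I would deduce from the local homogeneity and fullness of \( \langle\Sloc,\bpi\rangle \): these are exactly the hypotheses allowing one to realize, below any proper upward closed chain \( C \) and within any prescribed class \( [t] \) with \( [t]\leq_{\Delta(T)}[C] \), a coherent representative \( \bar t\leq_T C \).

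The main obstacle I anticipate is the equivalence of \ref{thm:exactdistancesnew-3'} with the rest. Unlike the analogous item in Theorem~\ref{thm:homogeneous}, condition~\ref{thm:exactdistancesnew-3'} does \emph{not} impose \( S\subseteq S^{\Max} \), and over a treeable skeleton a separable closed approximately homogeneous domain may well contain elements whose support is an infinite ascending chain (separability does not forbid this), so Lemmas~\ref{lem:generalcase} and~\ref{lemma:fromwreathtotrees} do not apply to it directly. My plan here is to prove \ref{thm:exactdistancesnew-3'}\,\(\Rightarrow\)\,\ref{thm:exactdistancesnew-3}, exploiting that over a treeable skeleton every upward cone \( \{\gamma\geq_\Delta\delta\} \) is linearly ordered: the claim is that approximate homogeneity together with separability forces \( \Wr^S_{\delta\in\Delta}\Sym(N_\delta) \) to coincide, up to topological isomorphism, with \( \Wr^{\LF}_{\delta\in\Delta}\Sym(N_\delta) \) over the canonically associated family of local domains. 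Concretely, one runs the finite-support labeling of Lemma~\ref{lem:labelsfinitesupport} to biject each local domain \( S_\delta \) with \( \Sloc^{\LF}_\delta \), approximate homogeneity being exactly the ingredient that makes the finite-support elements suffice. Together with the two transfers of property~\ref{propertystar} above (the fusion into a surjective \( g \), and the verification of~\ref{propertystar} for \( \tr_P \)), this is where essentially all of the work of the theorem is concentrated; everything else is the bookkeeping indicated above.
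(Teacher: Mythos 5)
Your architecture matches the paper's almost move for move: the collapse of \ref{thm:exactdistancesnew-1}--\ref{thm:exactdistancesnew-2''} via \( \func{F} \), \( \func{F}_\QQ \), \( \func{G} \) (with the \( \omega^* + L \) padding) and \( \func{U} \), with property~\ref{propertystar} transported by Lemma~\ref{lem:exactdistancesprelim}, is exactly the paper's proof, and so are both bridges between trees and wreath products (Lemma~\ref{lem:labelsfinitesupport} plus Lemma~\ref{lemma:g} in one direction; Lemmas~\ref{lem:nominandmax}, \ref{lem:generalcase}, \ref{lemma:fromwreathtotrees} plus the fullness/local-homogeneity verification of~\ref{propertystar} for \( \tr_P \) in the other). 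One routing caution: the paper deliberately proves \ref{thm:exactdistancesnew-2'}~\( \Rightarrow \)~\ref{thm:exactdistancesnew-3}, not \ref{thm:exactdistancesnew-2''}~\( \Rightarrow \)~\ref{thm:exactdistancesnew-3}, because \( \min L \) provides a fixed bottom level \( \Lev_{\ell_0}(\Delta(\tr)) \) whose classes \( \delta_j \) can be enumerated and processed one at a time, property~\ref{propertystar} guaranteeing that every node of \( T_j \) lies below one of the chains \( C^{(t')} \). Starting from a \( \QQ \)-tree there is no bottom level, the slabs between consecutive \( \ell_n \) contain classes at densely many levels, and the fusion at limit stages is genuinely more delicate; since your web already contains \ref{thm:exactdistancesnew-1''}~\( \Rightarrow \)~\ref{thm:exactdistancesnew-2'}, you should bridge from~\ref{thm:exactdistancesnew-2'} as the paper does.

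The genuine gap is your treatment of~\ref{thm:exactdistancesnew-3'}. You are right that, as printed, it omits \( S \subseteq S^{\Max} \) (the bracket \( [S \subseteq S^{\Max}] \) in the table of Appendix~\ref{sec:table}, and the analogous items \ref{thm:homogeneous-4} of Theorem~\ref{thm:homogeneous} and \ref{thm:generalcase-3'} of Theorem~\ref{thm:generalcase}, show this is an accidental omission: with it, \ref{thm:exactdistancesnew-3'}~\( \Rightarrow \)~\ref{thm:exactdistancesnew-3''} is immediate from Proposition~\ref{prop:closedvsfull}, which is all the paper does). But your proposed repair --- that separability and approximate homogeneity force \( \Wr^{S}_{\delta \in \Delta} \Sym(N_\delta) \) to coincide with \( \Wr^{\LF}_{\delta \in \Delta} \Sym(N_\delta) \) via a labeling bijecting each \( S_\delta \) with \( \Sloc^{\LF}_\delta \) --- is false as stated. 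Take \( \Delta = \omega \) (treeable), \( N_\delta = 2 \), and \( S \) the set of eventually constant sequences: since \( \Delta \) has a minimum, \( \tau_\Delta \) is discrete, so \( S \) is closed; each \( S_\delta \) is countable; \( S \) is a domain; and \( S \) is approximately homogeneous, because the global coordinatewise flip lies in \( \Wr^{S}_{\delta \in \Delta} \Sym(N_\delta) \) and interchanges the eventually-\( 0 \) and eventually-\( 1 \) sheets. Yet the canonical partial order of this wreath product has \emph{two} connected components (no eventually-\( 0 \) element agrees with an eventually-\( 1 \) element above any \( \delta \)), while the canonical partial order of \( \Sloc^{\LF} \) is upward directed, so no order isomorphism compatible with the local domains can exist; indeed \( \Wr^{S}_{\delta \in \omega} \Sym(2) \cong \big( \Wr^{\Fin}_{\delta \in \omega} \Sym(2) \big) \wr \Sym(2) \), which is a wreath product with locally finite supports only over a \emph{different} skeleton (the chain \( \omega + 1 \)). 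So approximate homogeneity does not make finite supports suffice over a fixed skeleton; handling~\ref{thm:exactdistancesnew-3'} literally, without the \( \Max \) constraint, would require an additional germ/component analysis that your sketch does not contain. The clean fix is to read~\ref{thm:exactdistancesnew-3'} with the intended hypothesis \( S \subseteq S^{\Max} \) and invoke Proposition~\ref{prop:closedvsfull}.
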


\begin{proof} 
The implication \ref{thm:exactdistancesnew-1} \( \Rightarrow \) \ref{thm:exactdistancesnew-2''}
follows from Corollary~\ref{cor:spacestotrees2} and Lemma~\ref{lem:exactdistancesprelim}\ref{lem:exactdistancesprelim-ii}, and \ref{thm:exactdistancesnew-2''} \( \Rightarrow \) \ref{thm:exactdistancesnew-2} is obvious. 

To prove \ref{thm:exactdistancesnew-2} \( \Rightarrow \) \ref{thm:exactdistancesnew-1''}, 
let \( \tr \) be a countable pruned special \( L \)-tree with property~\ref{propertystar}. 
Without loss of generality, we can assume that \( L \) has no minimum. 
Indeed, if this is not the case we consider the linear order \( L' = \omega^*+L\). 
Since \( \tr \) is countable, \( \Lev_{\min L} (\tr) \) is countable as well. 
Let \( \tr' = (T',{\leq_{T'}},\lev_{t'}) \) be the \( L' \)-tree obtained by adding to \( T \) a new node \( (t,n) \) for every \( t \in \Lev_{\min L}(\tr) \) and \( n \in \omega^* \), and then stipulating that \( \lev_{T'}(t,n) = n \in \omega^*\), \( (t,n) \leq_{T'} t' \) if and only if \( t\le_Tt' \), and \( (t,n) \leq_{T'} (t',m) \) if and only if \( t = t' \) and \( n \geq m \), while \( \tr' |_{\min L} = \tr \).
Then \( \tr' \) is countable and pruned, \( \Aut(\tr') \cong \Aut(\tr) \), and \( \tr' \) still satisfies~\ref{propertystar}: in fact, this is a consequence of  \( \tr'|_{\min L} = \tr \) if the upward closed chain \( C \subseteq T' \) is proper also in \( \tr \) (i.e.\ \( t <_{T'} C \) for some \( t \in T \)), while if this is not the case then \( C \) has a minimum and Remark~\ref{rmk:propertystar} applies.

Therefore, if \( D \subseteq \RR^+ \) is the range of an embedding of \( 2 \cdot L \) into \( \RR^+\) satisfying \( \inf D > 0 \), then using Theorem~\ref{thm:treestospaces}, Corollary~\ref{cor:treestospaces}, and Lemma~\ref{lem:exactdistancesprelim}\ref{lem:exactdistancesprelim-iii} we get the desired implication. 

To prove \ref{thm:exactdistancesnew-1''} \( \Rightarrow \) \ref{thm:exactdistancesnew-1'} we use instead Theorem~\ref{thm:toperfectlocallycompact} and Corollary~\ref{cor:toperfectlocallycompact} together with Lemma~\ref{lem:exactdistancesprelim}\ref{lem:exactdistancesprelim-i},  picking as \( D' \subseteq \RR^+ \) the set \( D \cup \{ 2^{-n} \mid n \in \omega \} \), where \( D \) is the distance set of the uniformly discrete Polish ultrametric space given as input (\( \inf D > 0 \) is ensured by uniform discreteness).
Since \ref{thm:exactdistancesnew-1'} \( \Rightarrow \) \ref{thm:exactdistancesnew-1} is obvious, we have shown that  \ref{thm:exactdistancesnew-1}--\ref{thm:exactdistancesnew-2} and \ref{thm:exactdistancesnew-2''} are equivalent to each other.

To prove \ref{thm:exactdistancesnew-1''} \( \Rightarrow \) \ref{thm:exactdistancesnew-2'} we use again
the functor \( \func{F} \) from Section~\ref{subsec:F} (and in particular Theorem~\ref{thm:spacestotrees}, Corollary~\ref{cor:spacestotrees}, and Lemma~\ref{lem:spacestotreesspecial}, together with 
Lemma~\ref{lem:exactdistancesprelim}\ref{lem:exactdistancesprelim-ii}), exploiting the fact that \( \inf D > 0 \) when \( D \) is the distance set of a uniformly discrete Polish ultrametric space, so that we can 
choose \( L \) as in the first item of Lemma~\ref{lem:spacestotreesspecial}. The implication \ref{thm:exactdistancesnew-2'} \( \Rightarrow\) \ref{thm:exactdistancesnew-2} is again trivial, thus also \ref{thm:exactdistancesnew-2'} is equivalent to all other conditions up to \ref{thm:exactdistancesnew-2''}.

The implications~\ref{thm:exactdistancesnew-3} \( \Rightarrow \) \ref{thm:exactdistancesnew-3'} and~\ref{thm:exactdistancesnew-3''} \( \Rightarrow \) \ref{thm:exactdistancesnew-3'''} are obvious, while \ref{thm:exactdistancesnew-3'} \( \Rightarrow \) \ref{thm:exactdistancesnew-3''} follows from Proposition~\ref{prop:closedvsfull}. 

We now prove \ref{thm:exactdistancesnew-3'''} \( \Rightarrow \) \ref{thm:exactdistancesnew-2}. 
Fix any projective wreath product \( \Wr^{\Sloc,\bpi}_{\delta \in \Delta} \Sym(N_\delta) \) as in~\ref{thm:exactdistancesnew-3'''}. 
By Lemma~\ref{lem:nominandmax}, we can assume that \( \langle \Delta, N \rangle \) is \( L \)-treeable for some linear order \( L \) without maximum and minimum. 
Therefore we can apply Lemmas~\ref{lem:generalcase} and~\ref{lemma:fromwreathtotrees} to get an \( L' \)-tree \( \tr_P \), where \( P = P^{\Sloc,\bpi} \) is the canonical partial order associated to \( \Wr^{\Sloc,\bpi}_{\delta \in \Delta} \Sym(N_\delta) \) and \( L' = \omega^* \cdot L \), such that \( \Wr^{\Sloc,\bpi}_{\delta \in \Delta} \Sym(N_\delta) \cong \Aut(\tr_P) \). 
Therefore we only need to show that when \( \tr_P = (T,{\leq_T},\lev_T) \) is constructed as in the proof of Lemma~\ref{lemma:fromwreathtotrees}, it satisfies property~\ref{propertystar}.  
Let \( C \subseteq T \) be a proper upward closed chain, and \( t \in T \) be such that \( [t] \leq_{\Delta(T)} [C] \). 
If \( C \) contains a node of the form \( z_\ell \), then we can take any \( f \in \Aut(\tr_P)\) such that \( f(t) \leq_T z_\ell \) and get that \( \bar t = f(t) \leq_T C \) because \( \leq_T \) is linear on \( \{ t' \in T \mid t' \leq_T z_\ell \} \). 
Therefore we can assume that \( C \subseteq T_0 = \omega^* \times P \). If \( C \) has a minimum we are done by Remark~\ref{rmk:propertystar}, thus we can assume that \( C  = \omega^* \times C' \) for some proper upward closed chain \( C' \) in the \( L \)-tree \( (P,{\leq_P},\lev_P) \). 
It follows that, by construction of \( \tr_P \), we can without loss of generality assume that \( t \) is of the form \( t = (0,z) \) for some \( z \in P \). 
The condition \( [t] \leq_T [C] \) entails that for every \( z' \in C' \) there is \( f \in \AutL(P) \) such that \( f(z) \leq_P z' \), i.e.\ \( \pi_{\gamma \gamma'}(f(z)) = z' \) where \( \gamma \leq_\Delta \gamma' \) are such that \( z \in \Sloc_{\gamma} \) and \( z' \in \Sloc_{\gamma'} \).
Let \( \hat z \in P \) be such that \( \hat z <_P C' \).
Since \( \langle \Sloc,\bpi \rangle \) is full, there is some coherent \( (z_\delta)_{\delta \in \Delta} \in \prod_{\delta \in \Delta} \Sloc_\delta \) such that \( \hat z = z_\delta \) for the appropriate \( \delta \in \Delta \), which in particular entails that \( z' = z_{\gamma'} = \pi_{\delta \gamma'}(\hat z) \) for every \( z' \in C' \) as above. 
Let \( \bar z = z_\gamma \): then by coherence \( \pi_{\gamma \gamma'}(\bar z) = z' \), i.e.\ \( \bar z \leq_P z' \), for every \( z' \in C \), that is, \( \bar z \leq_P C' \).
Since \( z,\bar z \in \Sloc_\gamma \), by local homogeneity there is \( f \in \AutL(P) \) such that \( f(z) = \bar z \). Then setting \( \bar t = (0,\bar z) \) we have that \( \bar t \in [t] \), as witnessed by \( f^\uparrow \in \Aut(\tr_P) \), and \( \bar t \leq_T C \) by construction (using \( \bar z \leq_P C' \)).

Finally, we show that \ref{thm:exactdistancesnew-2'} \( \Rightarrow \) \ref{thm:exactdistancesnew-3}.
Let \( \ell_0 = \min L \), and
let \( (\delta_j)_{j < J }\) be an enumeration without repetitions of \( \Lev_{\ell_0}(\Delta(T)) \), for the appropriate \( J \leq \omega \). 
We want to define a bijection \( t \mapsto z_t \) between \( T \) and \( \Sloc^{\LF} \) such that \( z_t \in \Sloc^{\LF}_{[t]} \) and \( t \leq_T t' \iff z_t \supseteq z_{t'} \) for every \( t,t' \in T \). The map \( z \mapsto z_t \) is then an order isomorphism \( g \colon \tr \to P \), where \( P \) is the canonical partial order associated to \( \Wr^{\LF}_{\delta \in \Delta(\tr)} \Sym(N^\tr_\delta)\), as in the hypothesis of Lemma~\ref{lemma:g}, yielding \( \Aut(\tr) \cong \Wr^{\LF}_{\delta \in \Delta(\tr)} \Sym(N^\tr_\delta) \). 

For each \( j < J \), let \( T_j = \{ t \in T \mid [t] \geq_{\Delta(T)} \delta_j \} \).
We first define \( t \mapsto z_t \) on \( T_0 \) by applying Lemma~\ref{lem:labelsfinitesupport} with \( C = \emptyset \) and \( \bar \delta = \delta_0 \). Then we recursively extend the map to \( T_j \), supposing that it has already been defined on \( \bigcup_{j' < j} T_{j'} \). 
Using the hypothesis that \( \tr \) is special and that there are only finitely many \( j' < j \), we can let \( \bar\ell = \min \{ \spl{\delta_{j'},\delta_j} \mid j' < j \} \). 
Then the map \( t \mapsto z_t \) is already defined on all \( t \in T_j \) with \( \lev_T(t)>_L \bar \ell \) and we only need to define it on all \( t \in T_j \) such that  \( \ell_0 \leq_L \lev_T(t) \leq_L \bar \ell \). 
Fix \( j' < j \) such that \( \spl{\delta_{j'},\delta_j} = \bar \ell \).
For \( t' \in \delta_{j'}\), let \( C^{(t')}  = \{ t'|_{\ell'} \mid \ell' >_L \bar \ell \} \). Then \( C^{(t')} \) is an upward closed chain in \( \tr \), and \( t' \) witnesses that it is proper. By construction, we also have \( \delta_j <_{\Delta(T)} [C^{(t')}]\).
Thus we can apply Lemma~\ref{lem:labelsfinitesupport} with \( C  = C^{(t')} \), \( \bar \delta = \delta_j \), and \( \bar z = \bigcup \{ z_t \mid t \in C^{(t')} \} \) and define the map \( t \mapsto z_t \) on the nodes of \( \tr^{\delta_j}_{C^{(t')}}\). 

In order to show that this procedure defines the map \( t \mapsto z_t \) on the whole \( T_{j} \), we show that
%
for every \( t \in \delta_j \) there is \( t' \in \delta_{j'} \) such that \( t \leq_T C^{(t')}\). 
%
To this aim, fix any \( t'' \in \delta_{j'} \), and consider the proper upward closed chain \( C^{(t'')} \). By choice of \( j' \) and definition of \( C^{(t'')}\), we have \( [t] \leq_{\Delta(T)} [C^{(t'')}] \), hence by property~\ref{propertystar} there is \( f \in \Aut(\tr) \) such that \( f(t) \leq_T C^{(t'')}\). Then \( t \leq_T C^{(t')} \) for \( t' = f^{-1}(t'') \).

It is easy to check that, by construction, the map \( t \mapsto z_t \) just defined has all the desired properties,
\details{Let \( \hat t \in T_j \). 
If \( \lev_T(\hat t) >_L \bar \ell \), then \( \hat t \in T_{j'} \) for some \( j'<j \) and \( z_{\hat t} \) was already defined in such a way that \( z_{\hat t} \in \Sloc^{\LF}_{[\hat t]} \). 
If \( \lev_T(\hat t) \leq_T \bar \ell \), then pick \( t \in \delta_j \) such that \( t \leq_T \hat t \) (which exists by Lemma~\ref{lem:inhabitated} applied to the branch \( b_j \) uniquely determined by \( \delta_j \)) and let \( t' \in \delta_{j'} \) be as above. 
Then \( \hat t \in \tr^{\delta_j}_{C^{(t')}}\) and \( z_{\hat t} \) has been defined in the previous step, so that in particular \( z_{\hat t} \in \Sloc^{\LF}_{[\hat t]} \). 

Finally, we check that for every \( \hat t' \), \( \hat t \leq_T \hat t' \iff z_{\hat t} \supseteq z_{\hat t'} \). Let \( \hat t' \) be the element used to define \( z_{\hat t} \). 
If \( \lev_T(\hat t) >_L \bar \ell \), then \( \hat t ,\hat t' \in T_{j'} \) and we can apply the inductive hypothesis. Hence we can assume that \( \lev_T(\hat t) \leq_L \bar \ell \).

If \( \hat t \leq_T \hat t' \), then \( \hat t' \in T_j \) as well. If \( \lev_T(\hat t') >_L \bar \ell \), then \( \hat t' \in C^{(t')}\) and thus \( z_{\hat t} \supseteq z_{\hat t'} \) by construction. 
If instead \( \lev_T(\hat t' ) \leq_L \bar \ell \), then \( \hat t' \in C^{(t')} \) as well, and hence \( z_{\hat t} \supseteq z_{\hat t'} \) by Lemma~\ref{lem:labelsfinitesupport}. 

Conversely, assume that \( z_{\hat t} \supseteq z_{\hat t' } \). Since \( z_{\hat t'} \in \Sloc^{\LF}_{[\hat t']} \) and \( z_{\hat t} \in \Sloc^{\LF}_{[\hat t]}\), this implies \( [\hat t] \leq_{\Delta(T)} [\hat t'] \), and hence \( \hat t' \in T_j \). 
Let \( \bar z \)\todo[color=green!40]{da qui in poi la notazione è sbagliata} be as in the definition of \( z_t \), so that \( z_t \supseteq \bar z \). 
If \( \lev_T(\hat t') >_L \bar \ell \) then \( z_{\hat'} \subseteq \bar z\), hence \(\hat  t' \in C^{(t')} \) and \( \hat t \leq_T \hat t' \); otherwise, \( z_{\hat t'} \supsetneq \bar z \), which can happen only when \( \hat t' \in \tr^{\delta_j}_{C^{(t')}} \), and then \( \hat t \leq_T \hat t' \) by Lemma~\ref{lem:labelsfinitesupport}.
}
and since \( \bigcup_{j<J} T_j = T \), this concludes our proof.
\end{proof}

\begin{remark} \label{rmk:exactdistancesnew}
Since in~\ref{thm:exactdistancesnew-3} of Theorem~\ref{thm:exactdistancesnew} the skeleton \( \langle \Delta,N \rangle \) is treeable, we again have \( \Wr^{\LF}_{\delta \in \Delta} \Sym(N_\delta) = \Wr_{\delta\in\Delta }^{ \UM } \Sym(N_{\delta }) \). 
\end{remark}

\begin{remark} \label{rmk:rigidity}
It can be shown that when the wreath products appearing in conditions~\ref{thm:exactdistancesnew-3}--\ref{thm:exactdistancesnew-3'''} of  Theorem~\ref{thm:exactdistancesnew} are obtained from a countable special pruned \( L \)-tree \( \tr \) satisfying~\ref{propertystar}, their skeleton \( \langle \Delta(\tr),N^\tr \rangle \) is \markdef{rigid}, that is, there is no nontrivial automorphism \( g \) of the \( L \)-tree \( \Delta(\tr) \) satisfying
\( N^{\tr}_{g(\delta)} = N^\tr_\delta \) for all \(  \delta \in \Delta(T) \).
\end{remark}
\details{\todo{Per ora la faccenda della rigidit\`a \`e stata trasformata in un remark, visto che non gioca pi\`u alcun ruolo. Si pu\`o ripristinarla (adattando tutto il testo alla nuova notazione/presentazione), oppure spostarla nella sezione dei ``risultati aggiuntivi'' al fondo.}
We conclude with some structural information on the wreath products appearing in conditions~\ref{thm:exactdistancesnew-3}--\ref{thm:exactdistancesnew-3'''} of  Theorem~\ref{thm:exactdistancesnew}: 
we can show that when they are obtained from a countable special pruned \( L \)-tree \( \tr \) satisfying~\ref{propertystar},
their skeleton exhibits a natural form of rigidity.

\begin{lemma} \label{lem:ridigskeleton}
Let \( \tr \) be a countable pruned special \( L \)-tree satisfying property~\ref{propertystar}, and consider the 
\( L \)-tree-skeleton \( \langle \Delta(\tr), N^\tr \rangle \). 
Then there is no nontrivial automorphism \( g \) of the \( L \)-tree \( \Delta(\tr) \) satisfying
\( N^\tr_{g(\delta)} = N^\tr_\delta \) for all \(  \delta \in \Delta(\tr) \).
\end{lemma}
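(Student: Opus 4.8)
The plan is to show that any automorphism $g$ of the $L$-tree $\Delta(\tr)$ satisfying $N^\tr_{g(\delta)} = N^\tr_\delta$ for all $\delta$ must be the identity, by exhibiting a \emph{lift} of $g$ to a genuine automorphism $\hat g \in \Aut(\tr)$ that realizes $g$ at the level of $\sim$-classes, i.e.\ $[\hat g(t)] = g([t])$ for every $t \in T$. Once such a $\hat g$ is produced, the conclusion is immediate: being an automorphism of $\tr$, $\hat g$ satisfies $\hat g(t) \sim t$ by the very definition of $\sim$, so $[\hat g(t)] = [t]$; comparing with $[\hat g(t)] = g([t])$ and using that every element of $\Delta(T)$ is of the form $[t]$, we get $g(\delta) = \delta$ for all $\delta \in \Delta(T)$, so $g$ is trivial. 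Thus no nontrivial label-preserving automorphism can exist.

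To build $\hat g$, I would first invoke the reconstruction of $\tr$ from its skeleton. Since $\tr$ is a countable pruned special $L$-tree with property~\ref{propertystar}, the construction carried out in the proof of the implication \ref{thm:exactdistancesnew-2'} $\Rightarrow$ \ref{thm:exactdistancesnew-3} of Theorem~\ref{thm:exactdistancesnew} (the map $t \mapsto z_t$ feeding into Lemma~\ref{lemma:g}) provides an order isomorphism $\iota \colon T \to P$ onto the canonical partial order $P = P^{\Sloc^{\LF},\bpi}$ (with trivial $\bpi$) associated to $\Wr^{\LF}_{\delta \in \Delta(\tr)}\Sym(N^\tr_\delta)$, with moreover $\iota(t) \in \Sloc^{\LF}_{[t]}$ for every $t \in T$. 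In particular the fibres $\Sloc^{\LF}_\delta$ partition $P$, so $\iota(s) \in \Sloc^{\LF}_\delta$ holds precisely when $[s] = \delta$. (When $L$ has no minimum one repeats the recursive labeling along a coinitial sequence, exactly as in the proof of \ref{thm:homogeneous-2}~$\Rightarrow$~\ref{thm:homogeneous-3} of Theorem~\ref{thm:homogeneous}, so that $\iota$ is available for arbitrary countable $L$.)

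Next I would turn the label-preserving automorphism $g$ into an automorphism $G$ of the $L$-tree $P$ by ``pushing forward along $g$''. Because $g$ is a level- and order-preserving automorphism of $\Delta(\tr)$, it maps the cone $\{ \gamma \geq_{\Delta(T)} \delta \}$ bijectively onto $\{ \gamma' \geq_{\Delta(T)} g(\delta) \}$; using $N^\tr_{g(\gamma)} = N^\tr_\gamma$, I set, for $z \in \Sloc^{\LF}_\delta \subseteq \prod_{\gamma \geq \delta} N^\tr_\gamma$,
\[
G(z)(g(\gamma)) = z(\gamma) \qquad (\gamma \geq_{\Delta(T)} \delta).
\]
One checks that $G(z) \in \Sloc^{\LF}_{g(\delta)}$ (local finiteness of the support is preserved since $\supp(G(z)) = g(\supp(z))$ and $g$ is a cone-preserving bijection), that $G$ is a bijection of $P$ with inverse induced by $g^{-1}$, that it is level-preserving, as $\lev_P(G(z)) = \lev_{\Delta(T)}(g(\delta)) = \lev_{\Delta(T)}(\delta) = \lev_P(z)$, and that it respects the canonical order: since $\bpi$ is trivial, $z \preceq z' \iff z \supseteq z'$, and the relabeling manifestly preserves the extension relation. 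Hence $G \in \Aut(P)$. Setting $\hat g = \iota^{-1} \circ G \circ \iota$, the map $\hat g$ is an automorphism of $\tr$ (a conjugate of the $L$-tree automorphism $G$ by the $L$-tree isomorphism $\iota$); and for $t \in T$ we have $\iota(\hat g(t)) = G(\iota(t)) \in \Sloc^{\LF}_{g([t])}$ because $\iota(t) \in \Sloc^{\LF}_{[t]}$, whence $[\hat g(t)] = g([t])$ by disjointness of the fibres. This completes the lift, and with it the argument.

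The main obstacle I anticipate is the bookkeeping in the middle step: one must verify carefully that the pushforward $G$ lands inside the locally finite domain $\Sloc^{\LF}$ and respects $\preceq$, which is exactly where the hypothesis that $\tr$ is special (so that $\Delta(\tr)$ is an $L$-tree and ``automorphism of $\Delta(\tr)$'' is meaningful) together with property~\ref{propertystar} (so that the reconstruction $\iota$ with $\iota(t)\in\Sloc^{\LF}_{[t]}$ exists) enter. The conceptual heart, worth stressing, is that $\Delta(\tr)$ is already the quotient of $T$ by $\Aut(\tr)$: a label-preserving symmetry of this quotient pulls back to a genuine automorphism of $\tr$, which can only fix every $\sim$-class, leaving no room for nontrivial symmetry of the skeleton.
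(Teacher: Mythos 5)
Your overall strategy is the same as the paper's: both arguments reduce the lemma to producing a lift \( \hat g \in \Aut(\tr) \) of \( g \) with \( [\hat g(t)] = g([t]) \) for all \( t \in T \), after which \( g([t]) = [\hat g(t)] = [t] \) forces \( g \) to be trivial. Moreover, the conditional part of your construction is sound: granting an order-isomorphism \( \iota \colon T \to P \) with \( \iota(t) \in \Sloc^{\LF}_{[t]} \), the relabeling \( G(z)(g(\gamma)) = z(\gamma) \) does define an automorphism of \( P \) --- here it helps that the cone \( \{ \gamma \geq_{\Delta(T)} \delta \} \) is a chain in the \( L \)-tree \( \Delta(\tr) \), so \( \Sloc^{\LF}_\delta \) is exactly the finite-support part of \( \prod_{\gamma \geq \delta} N^\tr_\gamma \) and \( \supp(G(z)) = g(\supp(z)) \) stays finite --- and conjugating by \( \iota \) yields \( \hat g \in \Aut(\tr) \), with level-preservation following from \( [\hat g(t)] = g([t]) \) and the fact that \( g \) preserves levels.

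The genuine gap is the existence of \( \iota \) when \( L \) has no minimum. The paper constructs such a labeling over the \emph{given} skeleton only in two situations: when \( L \) has a minimum (the proof of \ref{thm:exactdistancesnew-2'} \( \Rightarrow \) \ref{thm:exactdistancesnew-3} in Theorem~\ref{thm:exactdistancesnew}), and when \( \Delta(\tr) \) is linear (the proof of \ref{thm:homogeneous-2} \( \Rightarrow \) \ref{thm:homogeneous-3} in Theorem~\ref{thm:homogeneous}). Your claim that the no-minimum case goes ``exactly as'' the homogeneous recursion does not hold up: that recursion uses linearity of \( \Delta(\tr) \) essentially, twice. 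At each stage \( \Lev_{\ell_n}(\Delta(\tr)) \) is a singleton \( \{ \delta_n \} \), so \( \delta_n <_{\Delta(T)} [C] \) holds automatically for the cone \( C \) of every \( \bar t \in \Lev_{\ell_{n-1}}(\tr) \), and the pieces \( \tr^{\delta_n}_C \) for distinct \( \bar t \) are pairwise disjoint. Without linearity, two distinct classes \( \bar\delta \neq \bar\delta' \) at level \( \ell_n \) can both lie below the same class, so the sets \( \tr^{\bar\delta}_C \) and \( \tr^{\bar\delta'}_C \) overlap and blind applications of Lemma~\ref{lem:labelsfinitesupport} assign conflicting labels; repairing this requires interleaving the coinitial recursion with the class-enumeration machinery of \ref{thm:exactdistancesnew-2'} \( \Rightarrow \) \ref{thm:exactdistancesnew-3} (minimal splitting levels, the chains \( C^{(t')} \), coverage via property~\ref{propertystar}, specialness to form \( \spl{\delta_{j'},\delta_j} \)), while keeping the new labels coherent with those fixed at earlier coinitial stages --- a combination carried out nowhere in the paper. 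Nor can you invoke Theorem~\ref{thm:exactdistancesnew}\ref{thm:exactdistancesnew-3} as a black box: it produces \emph{some} countable treeable skeleton, whereas your argument needs the labeling over the specific skeleton \( \langle \Delta(\tr), N^\tr \rangle \) on which \( g \) acts. The paper's own proof sidesteps the global labeling entirely: it lifts \( g \) directly by a level-by-level back-and-forth, building permutations \( \varphi^{(n)} \in \Sym(\Lev_{\ell_n}(\tr)) \) with \( \varphi^{(n)}(t) \in g([t]) \), coherent along a coinitial sequence, using Lemma~\ref{lem:inhabitated} and property~\ref{propertystar} at each step. Your argument is complete --- and an attractive structural alternative --- when \( L \) has a minimum; for arbitrary \( L \) you must either supply the interleaved labeling construction or fall back on a direct back-and-forth.
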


\begin{proof} 
Let \( g \in \Aut(\Delta(\tr)) \) be such that \( N^\tr_{g(\delta)} = N^\tr_\delta \) for all \(  \delta \in \Delta(\tr) \): we must show that \( g \) is the identity on \( \Delta(\tr) \). 
To this aim, we construct an automorphism \( f \in \Aut(\tr) \) such that \( f(t) \in g([t]) \) for all \( t \in T \), so that \( g([t]) = [f(t)] =  [t] \) because \( f \) itself witnesses \( f(t) \sim t \).

Let \( \ell_0 \) be the minimum of \( L \) if it exists, or the first element of a strictly \( L \)-decreasing sequence \( (\ell_n)_{n  \in \omega} \) coinitial in \( L \) if \( L \) has no minimum. 
We build \( \varphi^{(0)} \in \Sym(\Lev_{\ell_0}(\tr)) \) such that \( \varphi^{(0)}(t) \in g([t]) \) and \( t|_\ell = t'|_\ell \iff \varphi^{(0)}(t)|_\ell = \varphi^{(0)}(t')|_\ell \), for every \( t,t' \in \Lev_{\ell_0}(\tr) \) and \( \ell \geq_L \ell_0 \). 
This is done in stages, via a back-and-forth argument.

Let \( (t_i)_{i < I} \) be an enumeration without repetition of \( \Lev_{\ell_0}(\tr) \), for the appropriate \( I \leq \omega \). We begin by letting \( \varphi^{(0)}_0(t_0) \) be any element of \( g([t_0]) \). 
Suppose now that \( k > 0 \) and that a finite function \( \varphi^{(0)}_{k-1} \) has been defined. 
If \( k \) is even, we let \( i < I \) be least such that \( t_i \notin \dom(\varphi^{(0)}_{k-1})\) (if there is any, otherwise we set \( \varphi^{(0)} = \varphi^{(0)}_{k-1} \) and we are done), and extend \( \varphi^{(0)}_{k-1}\) to \( \varphi^{(0)}_k \) by deciding the value of \( \varphi^{(0)}_k(t_i) \), so that \( \dom(\varphi^{(0)}_k) = \dom(\varphi^{(0)}_{k-1}) \cup \{ t_i \} \). 
Let \( \bar \ell = \min_L \{ \spl{t_i,t} \mid t \in \dom(\varphi^{(0)}_{k-1}) \} \), and let \( \bar t_i = t_i|_{\bar \ell} \). 
We distinguish two cases. 
If there is \( \hat t \in \dom (\varphi^{(0)}_{k-1})\) such that \( \hat t|_{\bar \ell}  \in C_{\bar t_i} \)\todo[color=green!40]{ricordare \eqref{eq:Ct}, che qui la notazione si confonde con la \(C\) delle catene}, then since \( N^\tr_{[\bar t_i]} = N^\tr_{g[\bar t_i]}\) we can find \( s_i \in C_{\varphi^{(0)}_{k-1}(\hat t)|_{\bar \ell}} \subseteq g([\bar t_i]) \) such that \( s_i \neq \varphi^{(0)}_{k-1}(t)|_{\bar \ell} \) for all \( t \in \dom(\varphi^{(0)}_{k-1}) \) with \( t |_{\bar \ell} \in C_{\bar t_i}\). 
Using Lemma~\ref{lem:inhabitated} applied in the obvious way to the tree \( \tr|_{\ell_0} \), we can then find \( t'_i \in g([t_i])\) such that \( t'_i \leq_T s_i \) and set \( \varphi^{(0)}_k(t_i) = t'_i \). 
Suppose now that \( t|_{\bar \ell} \not\sim \bar t_i \) for every \( t \in \dom(\varphi^{(0)}_{k-1}) \) such that \( \spl{t_i,t} = \bar \ell \). 
Fix any such \( t \), and let \( s_i \) be any element of \( g([\bar t_i]) \). 
Then \( s_i \) and \( \varphi^{0}_{k-1}(t)|_{\bar \ell} \) satisfy the hypothesis of property~\ref{propertystar}, that is, for every \( \ell >_L \bar \ell \) there is \( s'_i \sim s_i \) such that \( s'_i|_\ell = (\varphi^{(0)}_{k-1}(t)|_{\bar \ell})|_\ell = \varphi^{(0)}_{k-1}(t)|_\ell \). 
Therefore by~\ref{propertystar} we can find \( s''_i \sim s_i \) such that \( s''_i |_\ell = \varphi^{(0)}_{k-1}(t)|_\ell \) for all \( \ell >_L \bar \ell \).
Notice that by case assumption, for every \( t \in \dom(\varphi^{(0)}_{k-1}) \) such that \( \spl{t_i,t} = \bar \ell  \) we have \( s''_i \neq \varphi^{(0)}_{k-1}(t)|_{\bar \ell} \), as otherwise \( g([t|_{\bar \ell}]) = [\varphi^{(0)}_{k-1}(t)|_{\bar \ell}] = [s''_i] = [s_i] =  g([\bar t_i]) \) and hence \( [\bar t_i] = [t|_{\bar \ell}] \), i.e.\ \(  t|_{\bar \ell} \sim \bar t_i \). 
Therefore we can set \( \varphi^{(0)}_{k}(t_i) = t'_i \) for any \( t'_i \in g([t_i]) \) with \( t'_i \leq_T s''_i \), which exists by Lemma~\ref{lem:inhabitated} again.
If \( k \) is odd, instead, we let \( i < I \) be least such that \( t_i \notin \ran(\varphi^{(0)}_{k-1}) \), and apply the above construction to \( g^{-1} \) and \( (\varphi^{(0)}_{k-1})^{-1} \) to ensure \( t_i \in \ran(\varphi^{(0)}_k) \). The construction stops after \( K \)-many stages, for the appropriate \( K \leq \omega \), i.e.\ when \( \varphi^{(0)} = \bigcup_{k < K} \varphi^{(0)}_k\) is a permutation of the whole \( \Lev_{\ell_0}(\tr) \): it is easy to check that such \( \varphi^{(0)} \) is as desired.

If \( \ell_0 \) was the minimum of \( L \), then we can conclude the proof by setting \( f(t) = \varphi^{(0)}(t')|_{\lev_T(t)} \) for some (equivalently, any) \( t' \in \Lev_{\ell_0}(\tr) \) such that \( t' \leq_T t \).

If instead \( L \) had no minimum, by recursion on \( n \in \omega \) we build permutations \( \varphi^{(n)} \in \Sym(\Lev_{\ell_n}(\tr))\) such that for all \( t,t' \in \Lev_{\ell_n}(\tr) \) and \( \ell \geq_L \ell_n \) the following conditions hold: \( \varphi^{(n)}(t) \in g([t])\), \( t|_\ell = t'|_\ell \iff \varphi^{(n)}(t)|_\ell = \varphi^{(n)}(t')|_\ell \), and moreover 
\( \varphi^{(n)}(t)|_{\ell_{n-1}} = \varphi^{(n-1)}(t|_{\ell_{n-1}}) \) if \( n > 0 \). (The latter is to ensure that \( \varphi^{(n)} \) is compatible with \( \varphi^{(n-1)} \).) This is done via a back-and-forth argument as in the case \( n = 0 \), with the following variations. Fix \( (t_i)_{i<I} \), an enumeration without repetitions of \( \Lev_{\ell_n}(\tr) \). We again let \( \varphi^{(n)}_0(t_0) \) be any element  \( t'_0 \in g([t_0]) \), but since \( n > 0 \) we also need to ensure that \( t'_0 |_{\ell_{n-1}} = \varphi^{(n-1)}(t_0|_{\ell_{n-1}})\): this can be done thanks to Lemma~\ref{lem:inhabitated} applied to \( \tr|_{\ell_n} \). 
At even stages \( k > 0 \), when we have to add \( t_i \) to \( \dom(\varphi^{(n)}_{k}) \), we again let \( \bar \ell  = \min_L \{ \spl{t_i,t}  \mid  t \in \dom(\varphi^{(n)}_{k-1}) \}  \), and compare it with \( \ell_{n-1} \).
If \( \bar \ell \geq_L \ell_{n-1} \), then we argue as in the case \( k = 0 \) and let \( \varphi^{(n)}_k(t_i) \)  be any node in \( g([t_i]) \) that is \( \leq_T \)-below \( \varphi^{(n-1)}(t_i|_{\ell_{n-1}}) \). If instead \( \bar \ell <_L \ell_{n-1} \), then we argue exactly as in the even nonzero stages of the case \( n = 0 \). For odd \( k > 0 \) we work similarly, but in the backward direction.

Once all the permutations \( \varphi^{(n)} \) have been defined, it is enough to define \( f \) by setting \( f(t) = \varphi^{(n)}(t')|_{\lev_T(t)} \) for any \( \ell_n \leq_L \lev_T(t) \) and \( t' \in \Lev_{\ell_n}(\tr) \) with \( t' \leq_T t\).
\end{proof}
}

\subsection{The general case}\label{sec:generalcase}

We finally consider the case of isometry groups of arbitrary Polish ultrametric spaces. 
Notice that the equivalence between~\ref{thm:generalcase-1} and~\ref{thm:generalcase-2} below is analogous to the equivalence between isometry groups of Polish ultrametric spaces and automorphism groups of countable \( R \)-trees implicitly obtained by Gao and Shao in~\cite[Lemma 6.8, Proposition 6.11, and Proposition 6.14]{GaoShao2011}.

\begin{theorem} \label{thm:generalcase}
For every topological group \( G \), the following are equivalent:
\begin{enumerate-(1)}
\item \label{thm:generalcase-1}
\( G \cong \Iso(U) \) for some Polish ultrametric space \( U \);
\item \label{thm:generalcase-1'}
\( G \cong \Iso(U) \) for some perfect locally compact Polish ultrametric space \( U \);
\item \label{thm:generalcase-1''}
\( G \cong \Iso(U) \) for some uniformly discrete Polish ultrametric space \( U \);
\item \label{thm:generalcase-2}
\( G \cong \Aut(\tr) \) for some countable pruned \( L \)-tree \( \tr \) and some linear order \( L \);
\item \label{thm:generalcase-2'}
\( G \cong \Aut(\tr) \) for some countable pruned special \( L \)-tree \( \tr \) and some linear order \( L \) with a minimum;
\item \label{thm:generalcase-2''}
\( G \cong \Aut(\tr) \) for some countable pruned special \( \QQ \)-tree \( \tr \); 
\item \label{thm:generalcase-3}
\( G \cong \Wr^{\LF,\bpi}_{\delta \in \Delta} \Sym(N_\delta) \) for some countable treeable skeleton \( \langle \Delta, N \rangle \) and some 
locally homogeneous system of projections of the form \( \langle \Sloc^{\LF}, \bpi \rangle \) with finite character; 
\item \label{thm:generalcase-3'}
\( G \cong \Wr^{\Sloc}_{\delta \in \Delta} \Sym(N_\delta) \) for some countable treeable skeleton \( \langle \Delta, N \rangle \) and some countable family of local domains \( \Sloc \subseteq \Sloc^{\Max} \); 
\item \label{thm:generalcase-3''}
\( G \cong \Wr^{\Sloc,\bpi}_{\delta \in \Delta} \Sym(N_\delta) \) for some countable treeable skeleton \( \langle \Delta, N \rangle \) and some countable system of projections \( \langle \Sloc,\bpi \rangle \) such that \( \Sloc \subseteq \Sloc^{\Max} \). 
\end{enumerate-(1)}
\end{theorem}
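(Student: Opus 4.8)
The plan is to organize the ten conditions into three clusters---the spaces \ref{thm:generalcase-1}--\ref{thm:generalcase-1''}, the trees \ref{thm:generalcase-2}--\ref{thm:generalcase-2''}, and the wreath products \ref{thm:generalcase-3}--\ref{thm:generalcase-3''}---and to reuse almost verbatim the machinery already set up for the homogeneous (Theorem~\ref{thm:homogeneous}) and exact (Theorem~\ref{thm:exactdistancesnew}) cases. For the space/tree cluster I would first observe that a Polish ultrametric space has countable distance set, so Corollary~\ref{cor:spacestotrees2} gives \ref{thm:generalcase-1} \( \Rightarrow \) \ref{thm:generalcase-2''}, while \ref{thm:generalcase-2''} \( \Rightarrow \) \ref{thm:generalcase-2} is trivial. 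For \ref{thm:generalcase-2} \( \Rightarrow \) \ref{thm:generalcase-1''} I would reduce to the case where \( L \) has no minimum (via the full embedding \( \cat{T}^{\mathrm{pr}}_L \to \cat{T}^{\mathrm{pr}}_{\omega^*+L} \) noted after Theorem~\ref{thm:treestospaces}) and then apply \( \func{G} \) together with Corollary~\ref{cor:treestospaces}. The implications \ref{thm:generalcase-1''} \( \Rightarrow \) \ref{thm:generalcase-1'} \( \Rightarrow \) \ref{thm:generalcase-1} use \( \func{U} \) (Corollary~\ref{cor:toperfectlocallycompact}) with \( D' = D \cup \{ 2^{-n} : n \in \omega \} \) and a trivial inclusion, and \ref{thm:generalcase-1''} \( \Rightarrow \) \ref{thm:generalcase-2'} \( \Rightarrow \) \ref{thm:generalcase-2} uses \( \func{F} \) with the special \( L \) having a minimum furnished by the first item of Lemma~\ref{lem:spacestotreesspecial}. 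This renders all of \ref{thm:generalcase-1}--\ref{thm:generalcase-2''} equivalent.

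For the wreath-product cluster, the obvious inclusions yield \ref{thm:generalcase-3} \( \Rightarrow \) \ref{thm:generalcase-3''} (since \( \Sloc^{\LF} \subseteq \Sloc^{\Max} \), forgetting finite character) and \ref{thm:generalcase-3'} \( \Rightarrow \) \ref{thm:generalcase-3''} (pairing a family of local domains with the trivial projections). The implication \ref{thm:generalcase-3''} \( \Rightarrow \) \ref{thm:generalcase-2} runs exactly as \ref{thm:exactdistancesnew-3'''} \( \Rightarrow \) \ref{thm:exactdistancesnew-2}: by Lemma~\ref{lem:nominandmax} we may assume the skeleton is \( L \)-treeable with \( L \) having neither minimum nor maximum, and then Lemmas~\ref{lem:generalcase} and~\ref{lemma:fromwreathtotrees} produce a countable pruned \( L' \)-tree \( \tr_P \) (with \( L' = \omega^* \cdot L \)) whose automorphism group is the given wreath product---here, unlike the exact case, there is no analogue of property~\ref{propertystar} to verify, so the argument is strictly shorter. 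Finally, \ref{thm:generalcase-3} \( \Rightarrow \) \ref{thm:generalcase-3'} is where the finite-character hypothesis earns its keep: Theorem~\ref{thm:proj->unchained}, applied to a system of projections with \( \Sloc^{\LF} \subseteq \Sloc^{\Max} \) and finite character, yields a countable family of local domains \( \Sloc' \subseteq \Sloc^{\Max} \) with \( \Wr^{\Sloc'}_{\delta \in \Delta} \Sym(N_\delta) \cong \Wr^{\LF,\bpi}_{\delta \in \Delta} \Sym(N_\delta) \), which is precisely \ref{thm:generalcase-3'}.

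The heart of the proof is the single remaining implication \ref{thm:generalcase-2'} \( \Rightarrow \) \ref{thm:generalcase-3}. Given a countable pruned special \( L \)-tree \( \tr \) with \( \min L \), I would set \( \langle \Delta, N \rangle = \langle \Delta(\tr), N^\tr \rangle \) (treeable, as \( \tr \) is special) and construct, by recursion along an enumeration \( (\delta_j)_{j<J} \) of \( \Lev_{\min L}(\Delta(\tr)) \), a bijective finite-support labeling \( g \colon T \to \Sloc^{\LF} \) with \( g(t) = z_t \in \Sloc^{\LF}_{[t]} \), built cone-by-cone through repeated use of Lemma~\ref{lem:labelsfinitesupport}---the same scaffolding as in \ref{thm:exactdistancesnew-2'} \( \Rightarrow \) \ref{thm:exactdistancesnew-3}. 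The crucial new point is that, absent property~\ref{propertystar}, the locally built labels need not be globally restriction-compatible; rather than forcing trivial projections, I would set \( \pi_{\delta\gamma}(z_t) = z_{t|_{\lev_{\Delta(T)}(\gamma)}} \), letting the projections absorb the discrepancy. One then checks that \( \langle \Sloc^{\LF}, \bpi \rangle \) is a genuine system of projections: Definition~\ref{def:projections}\ref{def:projections-b} is automatic from \( (t|_{\lev_{\Delta(T)}(\gamma)})|_{\lev_{\Delta(T)}(\beta)} = t|_{\lev_{\Delta(T)}(\beta)} \); surjectivity of \( \pi_{\delta\gamma} \) holds because every node has an ancestor in orbit \( \delta \); and Definition~\ref{def:projections}\ref{def:projections-a} amounts to the key invariant that restriction of labels tracks ancestry within each orbit, namely \( z_t|_\gamma = z_s|_\gamma \iff t|_{\lev_{\Delta(T)}(\gamma)} = s|_{\lev_{\Delta(T)}(\gamma)} \). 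By Lemma~\ref{lemma:g}, \( g \) being an order-isomorphism onto \( P^{\Sloc^{\LF},\bpi} \) then gives \( \Aut(\tr) \cong \Wr^{\LF,\bpi}_{\delta \in \Delta} \Sym(N_\delta) \).

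Two properties still have to be extracted, and verifying them is where I expect the real work to lie. Local homogeneity is essentially free: under \( g \), each \( \Sloc^{\LF}_\delta \) is the image of the single \( \Aut(\tr) \)-orbit \( \{ t : [t] = \delta \} \), so any two of its members are matched by some \( f_\varphi \in \AutL(P) \). Finite character is the delicate point. Fixing \( \delta \), the cone \( \{ \gamma \geq_\Delta \delta \} \) is a chain (a final segment of the branch of \( \Delta(\tr) \) through \( \delta \)), and along it the projections are trivial except when a cone-ceiling is crossed; these ceilings sit exactly at the finitely many split-levels \( \spl{\delta_{j'},\delta_j} \) with \( j' < j \), where \( \delta_j = \delta|_{\min L} \), encountered as the recursion climbs, and the corresponding sequence of step-indices \( j > j' > j'' > \cdots \) is strictly decreasing in \( \omega \), hence finite. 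This yields the required finite convex partition on which \( \bpi \) restricts to the trivial projections. The main obstacle, I expect, is to run the cone-by-cone recursion so that Definition~\ref{def:projections}\ref{def:projections-a} is preserved across merging chains (keeping \( g \) well defined, injective, and onto \( \Sloc^{\LF} \)) while every support stays locally finite; this is handled by assigning to each orphan root node---one not covered by an earlier ceiling---the fresh base dictated by its own already-labeled ancestral chain above the split level, so that within-orbit restriction-compatibility is maintained and all new discrepancies are pushed into the projections.
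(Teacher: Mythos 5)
Nine of your ten implications coincide with the paper's proof: the equivalence of \ref{thm:generalcase-1}--\ref{thm:generalcase-2''} is run exactly as in Theorem~\ref{thm:exactdistancesnew} with Lemma~\ref{lem:exactdistancesprelim} stripped out; \ref{thm:generalcase-3} \( \Rightarrow \) \ref{thm:generalcase-3'} is Theorem~\ref{thm:proj->unchained} (whose finite-character clause is indeed what keeps \( \Sloc' \subseteq \Sloc^{\Max} \)); \ref{thm:generalcase-3'} \( \Rightarrow \) \ref{thm:generalcase-3''} is trivial; and \ref{thm:generalcase-3''} \( \Rightarrow \) \ref{thm:generalcase-2} is Lemma~\ref{lem:nominandmax} followed by Lemmas~\ref{lem:generalcase} and~\ref{lemma:fromwreathtotrees}, with --- as you correctly note --- no analogue of~\ref{propertystar} to verify. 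Your guiding identity for \ref{thm:generalcase-2'} \( \Rightarrow \) \ref{thm:generalcase-3}, namely \( \pi_{\delta\gamma}(g(t)) = g(t|_{\ell}) \) with the projections absorbing the failure of global restriction-compatibility, is also exactly the paper's idea, as are your verifications of local homogeneity via Lemma~\ref{lemma:g} and of finite character via the strictly decreasing stage indices.

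However, your implementation of \ref{thm:generalcase-2'} \( \Rightarrow \) \ref{thm:generalcase-3} has a genuine gap at the base-definition step of the cone-by-cone recursion. When the ancestral chain \( C = \{ t^*|_{\ell'} \mid \ell' >_L \bar\ell \} \) of an ``orphan'' \( t^* \in \delta_j \) crosses more than one earlier ceiling, the already-assigned labels along \( C \) are \emph{not} coordinatewise coherent across those ceilings --- by your own design, since precisely those discrepancies were pushed into the projections --- so ``the fresh base dictated by its own already-labeled ancestral chain'' is not a well-defined finite-support element \( \bar z \in \prod_{\gamma \in [C]} N_\gamma \), and Lemma~\ref{lem:labelsfinitesupport} cannot be invoked with it as stated. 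The same incoherence undermines your surjectivity argument: given \( z \in \Sloc^{\LF}_\delta \), the nodes \( s_{\ell'} \) determined by \( z_{s_{\ell'}} = z|_{\gamma'} \) at earlier stages need not form a \( \leq_T \)-chain once a ceiling is crossed. The paper avoids the recursion altogether: it applies Lemma~\ref{lem:labelsfinitesupport} once per bottom orbit \( \delta_j \) with \( C = \emptyset \), obtaining \( J \)-many \emph{parallel}, internally restriction-coherent labelings \( z^{(j)} \) of the entire cones \( \tr_j = \{ t \mid [t] \geq_{\Delta(T)} \delta_j \} \); it then sets \( g(t) = z^{(j_{[t]})}_t \), where \( j_\delta \) is least with \( \delta_{j_\delta} \leq_{\Delta(T)} \delta \), and defines \( \pi_{\delta\gamma}\big(z^{(j_\delta)}_t\big) = z^{(j_\gamma)}_{t|_\ell} \) --- your identity, but with well-definedness, Definition~\ref{def:projections}\ref{def:projections-a}, surjectivity, and finite character (triviality where \( j_\gamma = j_\delta \), with \( j_\gamma \) non-increasing along cones) all falling out of the within-labeling coherence guaranteed by the lemma, so no cross-stage invariant has to be maintained. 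Any repair of your version would have to replace the union-base by the ``diagonal'' \( \bar z(\gamma) = z_{t^*|_{\lev_{\Delta(T)}(\gamma)}}(\gamma) \) (finite support, since only finitely many ceilings are crossed), which is the paper's parallel-labeling device --- essentially the rewriting map \( \rho \) of Theorem~\ref{thm:proj->unchained} --- in disguise.
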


\begin{proof}
The equivalence of items~\ref{thm:generalcase-1}--\ref{thm:generalcase-2''} can be proved as in Theorem~\ref{thm:exactdistancesnew} (forgetting about Lemma~\ref{lem:exactdistancesprelim}).
\details{
The implication \ref{thm:generalcase-1} \( \Rightarrow \) \ref{thm:generalcase-2''}
follows from Corollary~\ref{cor:spacestotrees2} and the observation following it, and \ref{thm:generalcase-2''} \( \Rightarrow \) \ref{thm:generalcase-2} is obvious. 

To prove \ref{thm:generalcase-2} \( \Rightarrow \) \ref{thm:generalcase-1''}, first observe that, as in the proof of \ref{thm:exactdistancesnew-2} \( \Rightarrow \) \ref{thm:exactdistancesnew-1'} of Theorem~\ref{thm:exactdistancesnew}, we can assume that the countable linear order \( L \) has no minimum. 
Therefore, if \( D \subseteq \RR^+ \) is the range of an embedding of \( 2 \cdot L \) into \( \RR^+\) satisfying \( \inf D > 0 \), then using Theorem~\ref{thm:treestospaces} and Corollary~\ref{cor:treestospaces} we get the desired implication (recall that the condition \( \inf D > 0 \) ensures that the Polish ultrametric space we obtained is uniformly discrete).

To prove \ref{thm:generalcase-1''} \( \Rightarrow \) \ref{thm:generalcase-1'} we use instead Theorem~\ref{thm:toperfectlocallycompact} and Corollary~\ref{cor:toperfectlocallycompact}, picking as \( D' \subseteq \RR^+ \) the set \( D \cup \{ 2^{-n} \mid n \in \omega \} \), where \( D \) is the distance set of the uniformly discrete Polish ultrametric space given as input. Condition \( \inf D > 0 \) is ensured by uniform discreteness.
Since \ref{thm:generalcase-1'} \( \Rightarrow \) \ref{thm:generalcase-1} is obvious, we have shown that  \ref{thm:generalcase-1}--\ref{thm:generalcase-2} and \ref{thm:generalcase-2''} are equivalent to each other.

To prove \ref{thm:generalcase-1''} \( \Rightarrow \) \ref{thm:generalcase-2'} we can again use
the functor \( \func{F} \) from Section~\ref{subsec:F} (and in particular Theorem~\ref{thm:spacestotrees} and Corollary~\ref{cor:spacestotrees}), exploiting the fact that \( \inf D > 0 \) when \( D \) is the distance set of a uniformly discrete Polish ultrametric space, and thus we can choose \( L \) as in the first item of Lemma~\ref{lem:spacestotreesspecial}. The implication \ref{thm:generalcase-2'} \( \Rightarrow\) \ref{thm:generalcase-2} is again trivial, thus also \ref{thm:generalcase-2'} is equivalent to all other conditions up to \ref{thm:generalcase-2''}.
}
The implication~\ref{thm:generalcase-3} \( \Rightarrow \) \ref{thm:generalcase-3'} follows from Theorem~\ref{thm:proj->unchained}, and \ref{thm:generalcase-3'} \( \Rightarrow \) \ref{thm:generalcase-3''} is obvious. 

We now prove \ref{thm:generalcase-3''} \( \Rightarrow \) \ref{thm:generalcase-2}. Fix any projective wreath product \( \Wr^{\Sloc,\bpi}_{\delta \in \Delta} \Sym(N_\delta) \) such that \( \langle \Delta, N \rangle \) is countable and treeable, and \( \Sloc \subseteq \Sloc^{\Max} \) is countable. 
By Lemma~\ref{lem:nominandmax}, we can assume that \( \langle \Delta, N \rangle \) is \( L \)-treeable for some linear order \( L \) without maximum and minimum. 
Therefore we can apply Lemmas~\ref{lem:generalcase} and~\ref{lemma:fromwreathtotrees} to get the desired result.  

To conclude the proof, we show that \ref{thm:generalcase-2'} \( \Rightarrow \) \ref{thm:generalcase-3}.
Let \( \ell_0 \) be the minimum of \( L \), and let \( \tr = (T,{\leq_T},\lev_T) \) be a countable pruned special \( L \)-tree.
Let \( (\delta_j)_{j < J}\) be an enumeration without repetitions of \( \Lev_{\ell_0}(\Delta(\tr)) \), for the appropriate \( J \leq \omega \). 
For every \( j < J \), apply Lemma~\ref{lem:labelsfinitesupport} with \( C = \emptyset\) and \( \bar \delta = \delta_j \)
to obtain a bijection \( t \mapsto z^{(j)}_t\) between \( \tr_j = \tr^{\delta_j}_{\emptyset} \) and \( \bigcup_{\ell \in L} \Sloc^{\LF}_{\delta_j|_\ell} \) satisfying \( z^{(j)}_t \in \Sloc^{\LF}_{[t]} \) and, for all \( t,t' \in \tr_j \), 
\begin{equation} \label{eq:z^j_t} 
t \leq_T t' \iff z^{(j)}_t \supseteq z^{(j)}_{t'} ,
\end{equation}
so that in particular for every \( \ell \geq_L \lev_T(t)\) and \( \gamma = [t|_\ell] \) we have
\begin{equation} \label{eq:z^j_t2} 
z^{(j)}_{t|_\ell} = \big( z^{(j)}_t \big)|_\gamma.
\end{equation}

For each \( \beta \in \Delta(\tr) \), let \( j_\beta < J \) be least such that \( \delta_{j_\beta} \leq_{\Delta(\tr)} \beta \), and for \( \delta \leq_{\Delta(\tr)} \gamma \) define \( \pi_{\delta\gamma} \colon \Sloc^{\LF}_\delta \to \Sloc^{\LF}_\gamma \) as follows:
given \( z \in \Sloc^{\LF}_\delta \), let \( t \in \delta \) be the unique element of \( \tr_{j_\delta} \) such that \( z = z^{(j_\delta)}_t \), and then set \( \pi_{\delta\gamma}(z) = z^{(j_\gamma)}_{t|_\ell}\), where \( \ell = \lev_{\Delta(\tr)} (\gamma) \). In particular, by~\eqref{eq:z^j_t2} we get that 
\begin{equation} \label{eq:finitecharacter} 
j_\gamma = j_\delta \ \  \Longrightarrow \ \  \pi_{\delta \gamma}(z) = z|_\gamma .
\end{equation}
We claim that pairing \( \Sloc^{\LF}\) with the family \( \bpi = (\pi_{\delta\gamma})_{\gamma \geq_{\Delta(\tr)} \delta}\) defined above, we get a system of projections \( \langle \Sloc^{\LF},\bpi \rangle \) over the \( L \)-treeable (because \( \tr \) is special) skeleton \( \langle \Delta(\tr),N_\tr \rangle \) which has finite character. 

The family \( \Sloc^{\LF}\) obviously satisfies the first item of Definition~\ref{def:projections}. Fix any \( \gamma, \delta \in \Delta(\tr) \) such that \( \gamma \geq_{\Delta(\tr)} \delta\), and let \( \ell = \lev_{\Delta(\tr)}(\gamma) \) and \( \ell' = \lev_{\Delta(\tr)}(\delta) \). 
We first show that \( \pi_{\delta \gamma } \colon \Sloc^{\LF}_\delta \to \Sloc^{\LF}_\gamma \) is surjective. Let \( b \in [\Delta(\tr)] \) be the branch determined by \( \delta_{j_\delta} \) (i.e.\ \( b \) is such that \( b(\ell_0) = \delta_{j_\delta}\)). 
Given \( z' \in \Sloc^{\LF}_\gamma \), let \( t' \in \gamma = b(\ell) \) be such that \( z' = z^{(j_\gamma)}_{t'}\). By Lemma~\ref{lem:inhabitated}, there is \( t \in \delta = b(\ell') \) such that \( t|_\ell = t' \): set \( z = z_t^{(j_\delta)}\). Then \( \pi_{\delta \gamma}(z) = z'\), as desired. 

Pick now \( z,z' \in \Sloc^{\LF}_\delta  \), and let \( t,t' \in \delta \) be the node of \( \tr \) used to define \( \pi_{\delta \gamma}(z) \) and \( \pi_{\delta \gamma}(z')\), respectively.
Then by injectivity of \( s \mapsto z^{(j_\gamma)}_s \) and~\eqref{eq:z^j_t2} applied with \(j = j_\delta\)
we have
\begin{multline*}
\pi_{\delta \gamma}(z) = \pi_{\delta \gamma}(z') \iff z^{(j_\gamma)}_{t|_\ell} = z^{(j_\gamma)}_{t'|_\ell} \iff t|_\ell = t'|_{\ell} \\ \iff  z^{(j_\delta)}_{t|_\ell} = z^{(j_\delta)}_{t'|_\ell} \iff \big(z^{(j_\delta)}_t\big)|_\gamma = \big(z^{(j_\delta)}_{t'}\big)|_\gamma \iff z|_\gamma = z'|_\gamma.
\end{multline*}
This shows that~\ref{def:projections-a} in the second item of Definition~\ref{def:projections} is satisfied. To prove that also~\ref{def:projections-b} is satisfied, pick \( \beta \geq_{\Delta(\tr)} \gamma \) and let \( \ell' = \lev_{\Delta(\tr)}(\beta) \). Given \( z \in \Sloc^{\LF}_\delta\), let \( t \in \delta \) be the node of \( \tr \) used to define both \( \pi_{\delta \gamma }(z)\) and \( \pi_{\delta \beta }(z)\). Since \( \pi_{\delta \gamma}(z) = z^{(j_\gamma)}_{t|_\ell}\), it follows that \( t|_\ell \in \gamma \) is precisely the node of \( \tr \) used to define \( \pi_{\gamma \beta}(\pi_{\delta \gamma}(z))\). Therefore
\[  
\pi_{\gamma \beta}(\pi_{\delta \gamma}(z)) = z^{(j_\beta)}_{(t|_\ell)|_{\ell'}} = z^{(j_\beta)}_{t|_{\ell'}} = \pi_{\delta \beta}(z).
\]

This concludes the proof of the fact that \( \langle \Sloc^{\LF}, \bpi \rangle \) is a system of projections. The fact that it has finite character follows from~\eqref{eq:finitecharacter} and the fact that if \( \gamma \geq_{\Delta(\tr)} \delta \) then \( j_\gamma \leq j_\delta \), so that it is enough to let \( C_0, \dotsc, C_n \) be an enumeration of the nonempty sets of the form
\( \{ \gamma \geq_{\Delta(\tr)} \delta \mid j_\gamma = j \} \),
for \( j \leq j_\delta \).

We next show that \( \Aut(\tr) \cong \Wr^{\LF,\bpi}_{\delta \in \Delta(\tr)} \Sym(N^\tr_\delta) \).
To this aim let \( P = P^{\Sloc^{\LF},\bpi}\) be the canonical partial order associated to \( \Wr^{\LF,\bpi}_{\delta \in \Delta(\tr)} \Sym(N^\tr_\delta) \). By Lemma~\ref{lemma:g} it is enough to provide an order-isomorphism \( g \colon T \to P \) such that \( g(t) \in \Sloc^{\LF}_\delta \) for every \( \delta \in \Delta(\tr)\) and \( t \in \delta \): we claim that this can be realized by setting \( g(t) = z_t^{(j_\delta )}\). 
By construction, \( g \) is a bijection and \( g(t) \in \Sloc^{\LF}_\delta \). Moreover, for any \( t \in \tr \), \( \delta = [t] \in \Delta(\tr) \), and \( \ell \geq \lev_T(t) \), we have
\[  
g(t|_\ell) = z^{(j_\gamma)}_{t|_\ell} = \pi_{\delta \gamma}(z^{(j_\delta)}_t) = \pi_{\delta \gamma}(g(t)),
\]
where \( \gamma = [t|_\ell] \). 
This implies that \( t \leq_T t' \iff g(t) \leq_P g(t') \) for every \( t,t' \in T \).

Finally, to show that \( \langle \Sloc^{\LF}, \bpi \rangle \) is locally homogeneous, we observe that, by construction, for every \( \delta \in \Delta(\tr) \) and \( z,z' \in \Sloc^{\LF}_\delta \) there is \( \varphi \in \Aut(\tr) \) such that \( \varphi(g^{-1}(z)) = g^{-1}(z') \). By the way \( f_\varphi \in \AutL(P) = \Wr^{\LF,\bpi}_{\delta \in \Delta} \Sym(N^\tr_\delta) \) is defined in the statement of Lemma~\ref{lemma:g}, this means that \( f_\varphi(z) = z' \), as desired.
\end{proof}

\begin{remark} \label{rmk:generalcase}
As a by-product of Theorem~\ref{thm:generalcase}, every collection of systems of projections between those employed in items~\ref{thm:generalcase-3} and~\ref{thm:generalcase-3''} gives rise to the same class of Polish groups, and thus characterizes isometry groups of Polish ultrametric spaces.
For example, \ref{thm:generalcase-3} could be weakened by dropping the local homogeneity requirement or the request of having finite character (or both). On the other hand, \ref{thm:generalcase-3''} could be strengthened by restricting to projective wreath products over local homogeneous skeletons. The local homogeneity condition can be added to~\ref{thm:generalcase-3'} as well, thanks to Remark~\ref{rmk:proj->unchained}.
\end{remark}

\section{Additional results and open problems} \label{sec:openproblems}

The functors \( \func{F} \) and \( \func{G} \) from Sections~\ref{subsec:F} and \ref{subsec:G}, respectively, can be construed as Borel functions when restricted to the standard Borel spaces of (codes for) Polish ultrametric spaces in \( \cat{U}_D \) or countable pruned \( L \)-trees in \( \cat{T}_L \).
Moreover, categorical full embeddings are also reductions between the corresponding isomorphism and embeddability relations; for example, if \( \tr,\tr' \) are countable pruned \( L \)-trees,  then \( \tr \) embeds into \( \tr' \) if and only if there is an isometric embedding of \( \func{G}(\tr) \) into \( \func{G}(\tr')\), and \( \tr \) is isomorphic to \( \tr' \) if and only if \( \func{G}(\tr) \) and \( \func{G}(\tr') \) are isometric. 
Finally, categorical full embeddings preserve the automorphism groups of the objects (algebraically); for example, \( \Aut(\tr) \simeq \Iso(\func{G}(\tr))\). 
It then follows that Theorems~\ref{thm:spacestotrees} and~\ref{thm:treestospaces} can be used to reprove the main results of Sections 5.1 and 6.1 of~\cite{Camerlo:2015ar}. 
(Here we use that rooted combinatorial trees without terminal nodes can be identified with pruned \( \omega^* \)-trees in our sense, and that \( 2 \cdot \omega^* \cong \omega^* \) embeds into any ill-founded set of distances \( D \subseteq \RR^+ \).)

The same comments apply to the functor \( \func{U} \) from Section~\ref{subsec:U}. This yields the following Borel-reducibility results, which escaped the analysis in~\cite{Camerlo:2015ar} --- we refer the reader to that paper for more details on the concepts and the background involved.

\begin{theorem}
\begin{enumerate-(1)}
\item 
The relation of isometry on the class of perfect locally compact Polish ultrametric spaces is Borel bi-reducible with graph isomorphism, and thus is \( S_\infty \)-complete.
\item 
The relation of isometric embeddability on the class of perfect locally compact Polish ultrametric spaces is invariantly universal, and hence complete for analytic quasi-orders.
\end{enumerate-(1)}
\end{theorem}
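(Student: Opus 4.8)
The plan is to transport the corresponding results, which are already known for uniformly discrete (equivalently, pruned \( \omega^* \)-tree) spaces, to the perfect locally compact setting by means of the functor \( \func{U} \), exploiting that \( \func{U} \) is Borel on codes and is a categorical full embedding. Fix a countable \( D \subseteq \RR^+ \) with \( \inf D > 0 \) into which \( 2 \cdot \omega^* \cong \omega^* \) embeds, and a countable \( D' \supseteq D \) with \( \inf D' = 0 \); then Condition~\ref{condition3} holds for \( \func{G} \colon \cat{T}^{\mathrm{pr}}_{\omega^*} \to \cat{U}^0_D \) and Condition~\ref{condition1} holds for \( \func{U} \colon \cat{U}_D \to \cat{U}_{D'} \), and by Theorem~\ref{thm:toperfectlocallycompact} every space in the image of \( \func{U} \) is perfect locally compact. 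As observed at the start of this section, \( \func{G} \), \( \func{U} \), and their composition are Borel once restricted to the standard Borel spaces of codes, and as categorical full embeddings they reflect and preserve both isomorphisms and arbitrary arrows; hence each of them is simultaneously a Borel reduction of the isometry relation \( \cong \) to \( \cong \) and of the isometric embeddability relation \( \sqsubseteq \) to \( \sqsubseteq \) for the relevant classes.

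For item~(1), the upper bound is that isometry of Polish ultrametric spaces is classifiable by countable structures: via the Borel map \( U \mapsto \func{F}(U) \) (Theorem~\ref{thm:spacestotrees} and Remark~\ref{12072025}), or directly by~\cite[Theorem 5.12]{GaoShao2011}, two spaces are isometric exactly when the associated countable pruned \( L \)-trees are isomorphic, so isometry Borel reduces to graph isomorphism; restricting to the perfect locally compact subclass, which is a Borel subclass, only makes the relation smaller. For the lower bound I would start from isomorphism of rooted combinatorial trees without terminal nodes, identified with pruned \( \omega^* \)-trees, which is \( S_\infty \)-complete and hence Borel bi-reducible with graph isomorphism. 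Applying the Borel full embedding \( \func{U} \circ \func{G} \), Lemma~\ref{lem:treestospaces} and Theorem~\ref{thm:toperfectlocallycompact} give that \( \tr \cong \tr' \) iff \( \func{U}(\func{G}(\tr)) \) and \( \func{U}(\func{G}(\tr')) \) are isometric, so graph isomorphism Borel reduces to isometry on perfect locally compact Polish ultrametric spaces. The two bounds together yield Borel bi-reducibility with graph isomorphism, and therefore \( S_\infty \)-completeness.

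For item~(2), I would again transport a known result. Isometric embeddability on Polish ultrametric spaces is an analytic quasi-order, hence lies below the universal analytic quasi-order; the content is thus its invariant universality. Embeddability of pruned \( \omega^* \)-trees (equivalently, of combinatorial trees without terminal nodes) is invariantly universal by~\cite{Camerlo:2015ar}, and the Borel full embedding \( \func{U} \circ \func{G} \) reduces this embeddability relation to isometric embeddability on the perfect locally compact class while \emph{simultaneously} reducing the associated bi-embeddability equivalence. This faithfulness is precisely what propagates invariant universality: given any analytic quasi-order, one pulls back a Borel class witnessing invariant universality for trees and pushes it forward to a Borel class of perfect locally compact spaces invariant under bi-embeddability. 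Hence isometric embeddability on perfect locally compact Polish ultrametric spaces is invariantly universal, and in particular complete for analytic quasi-orders.

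The step I expect to be delicate is this last one: being a Borel bi-reduction does not by itself preserve invariant universality, since the witnessing reduction must also reduce the induced bi-embeddability equivalence and must land in a \emph{Borel} class invariant under that equivalence. The categorical fullness of \( \func{U} \circ \func{G} \) disposes of the first requirement for free, because a full faithful functor reflects and preserves both isomorphisms and arrows. The remaining point is to exhibit the Borel invariant class on the target; here I would use the explicit point-replacement definition of \( \func{U} \), which makes its image recognizable up to isometry by the Borel condition that every sufficiently small ball be isometric to the fixed strongly rigid space \( \bar U \), and then pass to its bi-embeddability saturation. Verifying that this saturation is Borel is the main technical obstacle, and is the analogue for \( \func{U} \) of the range computations already carried out for \( \func{F} \) and \( \func{G} \) in~\cite{Camerlo:2015ar}.
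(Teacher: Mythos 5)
Your overall architecture is exactly the paper's: the theorem is obtained there by observing that \( \func{U} \) (composed, when one starts from trees, with \( \func{G} \)) is Borel on codes and a categorical full embedding landing in perfect locally compact spaces (Theorem~\ref{thm:toperfectlocallycompact}), so that the \( S_\infty \)-completeness and invariant universality results of Sections 5.1 and 6.1 of \cite{Camerlo:2015ar} transfer; your treatment of item (1), including the identification of rooted combinatorial trees without terminal nodes with pruned \( \omega^* \)-trees and the use of \( 2 \cdot \omega^* \cong \omega^* \) to satisfy Conditions~\ref{condition3} and~\ref{condition1}, matches the paper and is correct.

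There is, however, a genuine slip in your final step for item (2). Invariant universality of isometric embeddability is defined relative to the companion equivalence \emph{isometry}: the witnessing Borel class must be closed under isometry, not under bi-embeddability, and accordingly the saturation whose Borelness must be verified is the isometry-saturation of the pushed-forward class. Your plan to ``pass to the bi-embeddability saturation'' aims at the wrong --- and strictly stronger --- target: bi-embeddability is a properly analytic equivalence relation, saturations of Borel sets under it are in general only analytic, and there is no reason to expect Borelness; nor is it needed, since by definition an isometry-invariant class suffices (and isometry-invariance is what the base result for trees, with isomorphism as companion equivalence, hands you). The bookkeeping that makes the paper's one-line transfer legitimate is: fullness of \( \func{U} \circ \func{G} \) gives a simultaneous Borel reduction of embeddability to isometric embeddability \emph{and} of isomorphism to isometry; strong rigidity of \( \bar U \) makes the range of \( \func{U} \) recognizable up to isometry (every sufficiently small ball is isometric to \( \bar U \), and collapsing these balls recovers the input space up to isometry), which yields a Borel inverse-up-to-isometry on the isometry-saturation of the range; hence the isometry-saturation of the image of a Borel isomorphism-invariant class of trees is Borel, and embeddability on it remains Borel bi-reducible with the given analytic quasi-order. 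You actually assemble all of these ingredients --- you cite the small-ball recognizability explicitly --- so the repair is local: replace ``bi-embeddability'' by ``isometry'' throughout your last paragraph, after which your argument coincides with the paper's intended one.
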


The characterizations presented in this paper might provide a new tool to understand  the complexity under Borel reducibility of the isomorphism relation on classes of non-Archimedean Polish groups (that is, closed subgroups of $ \Sym (\omega)$), a program that started by Kechris, Nies and Tent in~\cite{KNT2018}. 
In particular, one problem considered in this line of research is establishing the complexity of the isomorphism relation on isometry groups of Polish ultrametric spaces.
As a test case, one could first restrict the attention to Polish ultrametric Urysohn spaces. 
A related problem was already considered
in \cite{EGLMM2021,EGL}, where the authors studied the isomorphism types of isometry groups of $\Delta $-metric Urysohn spaces, and showed that they are completely determined by their distance set \( \Delta \) (up to a natural equivalence notion). It is natural to ask if a similar phenomenon arises also in the context of isometry groups of Polish ultrametric Urysohn spaces. 
Our structural results, and in particular Theorem~\ref{thm:Urysohn}, may help to show that there are many non-isomorphic such groups, as a step to prove that the isomorphism relation on them is very complicated.

Our results might also be helpful in tackling some natural open problems in the context of topological groups. For example, Gao and Kechris showed that if the Polish ultrametric space \( X \) is Heine-Borel, then \( \Iso(X) \) is the closure of an increasing union of compact subgroups, and hence it is amenable (\cite[Theorem 8.9]{GaoKec}). This raises the question of which other isometry groups of Polish ultrametric spaces are amenable. 
Similar questions might be asked for interesting group properties other than amenability, such as being oligomorphic, or having uncountable strong cofinality. Concerning the latter, for example, one might attempt to generalize Malicki's characterization~\cite[Theorem 5.3]{malick2014}, which crucially uses generalized wreath products, from \( W \)-spaces to arbitrary Polish ultrametric spaces.

In a somewhat different direction, a natural prolongation of the present work is to seek for characterizations of isometry groups of Polish ultrametric spaces belonging to some natural subclasses, such as compact spaces, proper spaces, Heine-Borel spaces, and so on.

On the group-theoretic side, it would be desirable to elaborate more on the various classes of generalized wreath products employed in our characterizations. For example, it is unclear whether the four classes identified by the columns of the table in Appendix~\ref{sec:table} are distinct from each other; by Proposition~\ref{prop:closedvsfull} and Theorem~\ref{thm:proj->unchained}, any generalized wreath product used to distinguish the various situations need to have an infinite skeleton. One might also wonder whether generalized wreath products over families of local domains \( \Wr^{\Sloc}_{\delta \in \Delta} H_\delta \) and projective wreath products \( \Wr^{\Sloc,\bpi}_{\delta \in \Delta} H_\delta \) enjoy natural universality properties analogous to those uncovered by Holland and Malicki for the groups of the form \( \Wr^{\Max}_{\delta \in \Delta} H_\delta \) and \( \Wr^{\UM}_{\delta \in \Delta} H_\delta \), respectively. We expect a positive answer for both problems, but even negative answers would have interesting consequences concerning the study of Polish ultrametric spaces.

\details{
\section{Further work}

\begin{enumerate}
\item
Find analogous characterizations for other classes of Polish ultrametric spaces: compact, proper, Heine-Borel, and so on. 
\item
Study (universality) properties of generalized wreath products over arbitrary families of local domains, and of projective wreath products. 
\item
Thorough comparison among the various kinds of wreath products. 
\item
Find applications.  For example, we expect that our analysis will enable us to reprove and generalize some existing structural results: 
\begin{itemize}
\item 
For which Polish ultrametric spaces \( X \) is the conjugacy equivalence relation on \( \Iso(X) \) Borel bi-reducible with graph isomorphism? (See Gao-Kechris)
\item 
Extend to all Polish ultrametric spaces, or at least to the exact ones, Malicki's characterization of the class of \( W \)-spaces whose isometry group has uncountable strong cofinality. 
\item
Determine when the isometry group of a Polish ultrametric space is amenable (see Gao-Kechris), or when it is oligomorphic, etc...
\end{itemize}
\end{enumerate}}

\newpage

\appendix
\section{Summarizing table} \label{sec:table}

In the following table, 
\( \langle \Delta,N \rangle \) is an arbitrary countable treeable skeleton, each global domain \( S \subseteq \prod_{\delta \in \Delta} N_\delta \) is assumed to be closed and separable, while all families of local domains \( \Sloc \) and all systems of projections \( \langle \Sloc,\bpi \rangle \) are assumed to be countable.

\begin{center}
\resizebox{\textwidth}{!}{%
\begin{tabular}{|c||c|c|c|c|} 
\hline
\multirow{5}{2.3cm}{\centering 
 \\ \( \Iso(U) \) for \\
\( U \) Polish ultrametric space} &
\multirow{5}{2.5cm}{\centering \( U \) homogeneous \\ and \\ unif.\ discrete} &
\multirow{5}{2.5cm}{\centering \( U \) homogeneous} &
\multirow{1}{3.3cm}{\centering \( U \) exact} &
\multirow{1}{2.5cm}{\centering all \( U \)} \bigstrut \\
\cline{4-5}
&
&
& 
\multirow{2}{3.3cm}{\centering \( U \) exact and \\ unif.\ discrete}&
\multirow{2}{2.5cm}{\centering \( U \) unif.\ discrete} \bigstrut \\
&
&
&
&
\\
\cline{4-5}
&
&
&
\multirow{2}{3.3cm}{\centering \( U \) exact and \\ perfect loc.\ compact} &
\multirow{2}{2.5cm}{\centering \( U \) perfect loc.\ compact} \bigstrut \\
&
&
&
&
\\
\hline \hline
\multirow{5}{2.3cm}{\centering 
  \( \Aut(\tr) \) for \( \tr \) countable pruned \( L \)-tree} &
\multirow{5}{2.5cm}{\centering \( \tr \) homogeneous \\ and \\ \( L \) has \( \min \)} &
\multirow{5}{2.5cm}{\centering \( \tr \) homogeneous} &
\multirow{1}{3.3cm}{\centering \( \tr \) \( L \)-tree with~\ref{propertystar}} &
\multirow{1}{2.5cm}{\centering all \( \tr \)} \bigstrut \\
\cline{4-5}
&
&
& 
\multirow{2}{3.3cm}{\centering \( \tr \) special \( L \)-tree with \ref{propertystar}  and \( L \) has \( \min \)}&
\multirow{2}{2.5cm}{\centering \( \tr \) special \( L \)-tree and \( L \) has \( \min \)} \bigstrut \\
&
&
&
&
\\
\cline{4-5}
&
&
&
\multirow{2}{3.3cm}{\centering \( \tr \) special \( \QQ \)-tree \\ with~\ref{propertystar}} &
\multirow{2}{2.5cm}{\centering \( \tr \) special \\ \( \QQ \)-tree} \bigstrut \\
&
&
&
&
\\
\hline \hline
\multirow{4}{2.3cm}{\centering Locally finite \\ supports: \\ \vspace{0.3cm}
\( \Wr^{\LF, -}_{\delta \in \Delta} \)} &
\multirow{4}{2.8cm}{\centering \(\Wr^{\LF}_{\delta \in \Delta} \Sym(N_\delta) \) \\ \vspace{0.2cm}\( \Delta \) linear with \( \min \)} &
\multirow{4}{2.5cm}{\centering \(\Wr^{\LF}_{\delta \in \Delta} \Sym(N_\delta) \) \\ \vspace{0.2cm} \( \Delta \) linear} &
\multirow{4}{3.3cm}{\centering \vspace{-0.4cm} \\ \(\Wr^{\LF}_{\delta \in \Delta} \Sym(N_\delta) \) \\ \vspace{0.2cm}\( {}^{(\dagger)}\) } &
\multirow{4}{2.5cm}{\centering \vspace{-0.4cm} \\  \(\Wr^{\LF,\bpi}_{\delta \in \Delta} \Sym(N_\delta) \) \\ \vspace{0.2cm}\( {}^{(\ddagger)} \)} \bigstrut \\
&
&
&
&
\\
&
&
&
&
\\
&
&
&
&
\\
\hline
\multirow{7}{2.5cm}{\centering ``Classical'' \\ generalized \\ wreath \\ products: \\ \vspace{0.3cm}
\( \Wr^{\Fin/\UM}_{\delta \in \Delta} \)} &
\multirow{2}{2.8cm}{\centering Hall's groups \\ with topology \( \tau^* \)} &
\multirow{7}{2.5cm}{\centering Malicki's groups \\ with \( \Delta \) linear} &
\multirow{7}{3.3cm}{\centering \vspace{0.2cm} \\  Malicki's \\ groups \\ \vspace{0.2cm}\({}^{(\dagger)}\)} &
\multirow{7}{2.5cm}{\centering ---} \bigstrut \\
&
&
&
&
\\
\cline{2-2}
&
\multirow{2}{2.5cm}{\centering Hall's groups \\ \( \Delta \) with \( \min \)} &
&
& \bigstrut
\\
&
&
&
&
\\
\cline{2-2}
&
\multirow{3}{2.8cm}{\centering Malicki's groups \\ \( \Delta \) linear with \( \min \)} &
&
& \bigstrut
\\
&
&
&
&
\\
&
&
&
&
\\
\hline
\multirow{5}{2.3cm}{\centering \( S \) global \\ domain: \\ \vspace{0.2cm} \( \Wr^S_{\delta \in \Delta} \)  \\ \vspace{0.2cm} \( \big[S \subseteq S^{\Max} \big] \)} &
\multirow{5}{2.8cm}{\centering \( \Wr^S_{\delta \in \Delta} \Sym(N_\delta) \) \\ \vspace{0.2cm} \( \Delta \) linear with \( \min \) \\ \( S \) appr.\ homog.} &
\multirow{5}{2.5cm}{\centering \( \Wr^S_{\delta \in \Delta} \Sym(N_\delta) \) \\ \vspace{0.2cm} \( \Delta \) linear \\ \( S \) appr.\ homog.} &
\multirow{5}{3.3cm}{\centering \vspace{-0.1cm} \\ \( \Wr^S_{\delta \in \Delta} \Sym(N_\delta) \) \\ \vspace{0.2cm} \( S \) appr.\ homog. \\ \vspace{0.2cm}\({}^{(\dagger)}\)} &
\multirow{5}{2.5cm}{\centering ---} \bigstrut \\
&
&
&
&
\\
&
&
&
&
\\
&
&
&
&
\\
&
&
&
&
\\
\hline
\multirow{5}{2.3cm}{\centering \( \Sloc \)  family of \\ local domains: \\ \vspace{0.2cm} \( \Wr^{\Sloc}_{\delta \in \Delta} \) \\ \vspace{0.2cm} \( \big[\Sloc \subseteq \Sloc^{\Max} \big] \)} &
\multirow{5}{2.8cm}{\centering  \vspace{-0.35cm} \\ \( \Wr^{\Sloc}_{\delta \in \Delta} \Sym(N_\delta) \) \\ \vspace{0.2cm} \( \Delta \) linear with \( \min \) \\ \( \Sloc \) local.\ homog. \\ \vspace{0.2cm}\({}^{(*)}\) } &
\multirow{5}{2.8cm}{\centering \vspace{-0.35cm} \\ \( \Wr^{\Sloc}_{\delta \in \Delta} \Sym(N_\delta) \) \\ \vspace{0.2cm} \( \Delta \) linear \\ \( \Sloc \) local.\ homog. \\ \vspace{0.2cm}\({}^{(*)}\)} &
\multirow{5}{3.3cm}{\centering \ \\ \vspace{-0.35cm}  \( \Wr^{\Sloc}_{\delta \in \Delta} \Sym(N_\delta) \) \\ \vspace{0.2cm} \( \Sloc \) full and \\ locally homogeneous \\ \vspace{0.2cm}\({}^{(\dagger)}\)} &
\multirow{5}{2.5cm}{\centering \vspace{0.3cm} \\ \( \Wr^{\Sloc}_{\delta \in \Delta} \Sym(N_\delta) \) 
\\ \vspace{0.57cm}\( {}^{(\sharp)} \)} \bigstrut \\
&
&
&
&
\\
&
&
&
&
\\
&
&
&
&
\\
&
&
&
&
\\
\hline
\multirow{6}{2.3cm}{\centering Projective \\ wreath \\ products: \\ \vspace{0.2cm} \( \Wr^{\Sloc, \bpi}_{\delta \in \Delta} \) \\ \vspace{0.2cm} \( \big[ \Sloc \subseteq \Sloc^{\Max} \big] \) } &
\multirow{6}{2.8cm}{\centering \vspace{-0.2cm} \\ \( \Wr^{\Sloc,\bpi}_{\delta \in \Delta} \Sym(N_\delta) \) \\ \vspace{0.2cm} \( \Delta \) linear with \( \min \) \\ \( \langle \Sloc,\bpi \rangle \) loc.\ homog. \\ \vspace{0.2cm}\({}^{(*)}\)} &
\multirow{6}{2.8cm}{\centering \vspace{-0.2cm} \\ \( \Wr^{\Sloc,\bpi}_{\delta \in \Delta} \Sym(N_\delta) \) \\ \vspace{0.2cm} \( \Delta \) linear \\ \( \langle \Sloc,\bpi \rangle \) loc.\ homog. \\ \vspace{0.2cm}\({}^{(*)}\)} &
\multirow{6}{3.3cm}{\centering \vspace{-0.2cm} \\ \( \Wr^{\Sloc,\bpi}_{\delta \in \Delta} \Sym(N_\delta) \) \\ \vspace{0.2cm} \( \langle \Sloc, \bpi \rangle \) full and \\ locally homogeneous \\ \vspace{0.2cm}\({}^{(\dagger)}\)} &
\multirow{6}{2.5cm}{\centering \vspace{0.4cm} \\  \( \Wr^{\Sloc,\bpi}_{\delta \in \Delta} \Sym(N_\delta) \) 
\\ \vspace{0.6cm}\( {}^{(\sharp)} \) 
} \bigstrut \\
&
&
&
&
\\
&
&
&
&
\\
&
&
&
&
\\
&
&
&
&
\\
&
&
&
&
\\
\hline
\end{tabular}%
}
\end{center}

\begin{footnotesize}
\begin{enumerate}
\item [\({}^{(*)}\)] 
When \( \Delta \) is linear, \( \Sloc \) and \( \langle \Sloc,\bpi \rangle \) are automatically full.
\item [\( {}^{(\dagger)}\)] 
One can add the requirement that the skeleton \( \langle \Delta,N \rangle \) be rigid.
\item [\( {}^{(\ddagger)}\)] 
We can require that the system 
of projections \( \langle \Sloc^{\LF}, \bpi \rangle \) 
has finite character, and also that it is locally homogeneous.
\item [\( {}^{(\sharp)}\)]
We can further require local homogeneity.
\end{enumerate}
\end{footnotesize}


Classes of groups in the same column of the above table are equivalent up to (topological) isomorphism.
For example, the first three lines of the second column assert that the isometry groups of homogeneous Polish ultrametric spaces coincide with the automorphism groups of homogeneous countable pruned \( L \)-trees, and they are exactly the wreath products with locally finite supports  over countable linear skeletons of full permutation groups. More in detail: 
the first column comes from Theorem~\ref{thm:homogeneousdiscrete} and Remark~\ref{rmk:homogeneousdiscrete}; 
the second column corresponds to Theorem~\ref{thm:homogeneous} and Remark~\ref{rmk:homogeneous}; 
the third column summarizes Theorem~\ref{thm:exactdistancesnew} and Remark~\ref{rmk:exactdistancesnew}; 
finally, the fourth column conveys the content of Theorem~\ref{thm:generalcase} and Remark~\ref{rmk:generalcase}.

\end{document}